\documentclass[10pt]{amsart}
\usepackage{amssymb}
\usepackage{bm}
\usepackage{graphicx}
\usepackage[all]{xy}

\vfuzz2pt 
\hfuzz2pt 
\newtheorem{thm}{Theorem}[section]
\newtheorem{cor}[thm]{Corollary}
\newtheorem{lem}[thm]{Lemma}
\newtheorem{prop}[thm]{Proposition}
\theoremstyle{definition}

\theoremstyle{remark}

\numberwithin{equation}{section}

\begin{document}

\title[Integrality of hypergeometric series with quadratic parameters]
{Criterion for the integrality of hypergeometric series with parameters from quadratic fields}%
\author[S.F. Hong]{Shaofang Hong}
\address{Mathematical College, Sichuan University, Chengdu 610064, P.R. China}
\email{sfhong@scu.edu.cn; s-f.hong@tom.com; hongsf02@yahoo.com}
\author[C.L. Wang]{Chunlin Wang}
\address{Center for Combinatorics, Nankai University, Tianjin 300071, P.R. China}
\email{wdychl@126.com}
\thanks{The research was supported partially by National Science
Foundation of China Grant \#11371260.}
\address{}%
\email{}%

\thanks{}%
\subjclass{Primary 11S80; Secondary 11G42, 14J32, 33C70}%
\keywords{Dwork map, Christol function, Hypergeometric series, $p$-Adic analysis,
Quadratic field, Integrality, Uniform distribution of roots of quadratic congruence.}%

\begin{abstract}
For the hypergeometric series with parameters from the rational fields,
there is an effective criterion due to Christol to decide whether the
hypergeometric series is N-integral or not. Christol criterion is a basic
and vital tool in the recent striking work of Delaygue, Rivoal and Roques
on the N-integrality of the hypergeometric mirror maps with rational
parameters. In this paper, we develop a systematic theory on the
N-integrality of the hypergeometric series with parameters from
quadratic fields. We first present a detailed $p$-adic analysis
to set up a criterion of the $p$-adic integrality of the 
hypergeometric series with parameters from rational fields. 
Consequently, we present two equivalent statements for the 
hypergeometric series with parameters from algebraic number 
fields to be N-integral. Finally, by using these results, 
introducing a new function that extends the Christol's function 
and developing a further $p$-adic analysis, we establish a criterion 
for the N-integrality of the hypergeometric series with parameters
from the quadratic fields. In the process, there are two important
ingredients. One is the uniform distribution result of roots of a
quadratic congruence which is due to Duke, Friedlander and Iwaniec
together with Toth. Another one is an upper bound on the number
of solutions of polynomial congruences given by Stewart in 1991. 
\end{abstract}
\maketitle
\section{Introduction}
Let $r$ and $s$ be positive integers. As usual, let $\mathbb{Q}, \mathbb{R}$
and $\mathbb{C}$ denote the fields of rational numbers, real numbers and
complex numbers, respectively. Let $\mathbb{Z}$ be the ring of integers.
For a prime number $p$, let $\mathbb{Q}_p$ and $\mathbb{Z}_p$ denote the
field of $p$-adic numbers and the ring of $p$-adic integers, respectively.
For given $\bm{\alpha}=(\alpha_1,...,\alpha_r)$ and $\bm{\beta}=(\beta_1,...,\beta_s)$
with $\alpha_i,\beta_j\in\mathbb{Q}\setminus\mathbb{Z}_{\le0}$ and
$\alpha_i\ne \beta_j$ for all integers $i$ and $j$ with $1\le i\le r$
and $1\le j\le s$, the {\it hypergeometric series associated with
$(\bm{\alpha},\bm{\beta})$}, denoted by $F_{\bm{\alpha},\bm{\beta}}(z)$,
is defined to be the formal power series
\begin{equation} \label{Eq1.1}
F_{\bm{\alpha},\bm{\beta}}(z):=\sum_{n=0}^{\infty}\frac{(\alpha_1)_n\cdots(\alpha_r)_n}
{(\beta_1)_n\cdots(\beta_s)_n}z^n,
\end{equation}
where $(x)_n$ is the {\it Pochhammer symbol} with $(x)_0:=1$
and for all integers $n$ with $n\ge 1$,
$$(x)_n:=x(x+1)\cdots(x+n-1).$$

If $\beta_s=1$, then $F_{\bm{\alpha,\beta}}(z)$ satisfies
\begin{equation} \label{Eq1.2}
L_{\bm{\alpha,\beta}}(F_{\bm{\alpha,\beta}}(z))=0,
\end{equation}
where $L_{\bm{\alpha,\beta}}$ is a linear differential operator defined by
$$L_{\bm{\alpha,\beta}}:=\prod_{j=1}^{s}\Big(z\frac{d}{dz}+\beta_j-1\Big)
-z\prod_{i=1}^{r}\Big(z\frac{d}{dz}+\alpha_i\Big). $$
Let
$$G_{\bm{\alpha,\beta}}(z):=\sum_{n=0}^{\infty}\frac{(\alpha_1)_n\cdots(\alpha_r)_n}
{(\beta_1)_n\cdots(\beta_s)_n}\Big(\sum_{i=1}^{r}H_{\alpha_i}(n)
-\sum_{j=1}^{s}H_{\beta_j}(n)\Big)z^n,$$
where for any $x\in\mathbb{C}\setminus\mathbb{Z}_{\le 0}$, one has
$$H_x(n):=\sum_{k=0}^{n-1}\frac{1}{x+k}.$$
Furthermore, if $\beta_{s-1}=1$, then
$G_{\bm{\alpha,\beta}}(z)+\log z F_{\bm{\alpha,\beta}}(z)$
is another solution of (\ref{Eq1.2}). Let
\begin{equation} \label{Eq1.3}
q_{\bm{\alpha,\beta}}(z):=\exp\Big(\frac{G_{\bm{\alpha,\beta}}(z)(z)
+\log z F_{\bm{\alpha,\beta}}(z)(z)}{F_{\bm{\alpha,\beta}}(z)}\Big)
=z\exp\Big(\frac{G_{\bm{\alpha,\beta}}(z)}
{F_{\bm{\alpha,\beta}}(z)}\Big).
\end{equation}
The power series $q_{\bm{\alpha,\beta}}(z)$ appears in the study
of mirror maps associated with the hypergeometric differential
equations (\ref{Eq1.2}). The arithmetic properties of $F_{\bm{\alpha,\beta}}(z)$ and
$q_{\bm{\alpha,\beta}}(z)$ have been extensively studied.
For example, Beukers and Heckman \cite{[BH]} investigated all algebraic
hypergeometric functions and Christol \cite{[Ch],[Ch2]} conjectured that
all globally bounded hypergeometric functions are diagonal.
Landau \cite{[La]} and Bober \cite{[Bo]} discussed the relationship between
the integrality of Taylor coefficients of certain hypergeometric series
and the integrality of values of certain step functions. We say that
a power series $F(z)\in 1+z\mathbb{Q}[[z]]$ is {\it N-integral} if there
exists a nonzero rational number $c$ such that $F(cz)\in\mathbb{Z}[[z]]$.
One would easily find that $F_{\bm{\alpha},\bm{\beta}}(z)$ is N-integral
if and only if for all but finitely many prime numbers $p$, one has
$F_{\bm{\alpha},\bm{\beta}}(z)\in\mathbb{Z}_p[[z]]$
(cf. Proposition 22 of \cite{[DRR]}). By using this fact, in 1986,
Christol \cite{[Ch]} was able to find a criterion
to decide for which pairs of $(\bm{\alpha},\bm{\beta})$ such that
$F_{\bm{\alpha},\bm{\beta}}(z)$ is $N$-integral.
Before stating Christol's ceriterion, we introduce some notation.

For all $x\in\mathbb{R}$, let $\langle x\rangle$ be the unique
number in $(0,1]$ such that $x-\langle x\rangle\in\mathbb{Z}$.
By this definition, one is easy to find that $x-\langle x\rangle$
is the largest integer satisfying $x-\langle x\rangle<x$.
Christol \cite{[Ch]} defined "$\preccurlyeq$" to be a total
order on $\mathbb{R}$ by $x\preccurlyeq y$
if and only if either $\langle x\rangle<\langle y\rangle$ or
$\langle x\rangle=\langle y\rangle$ and $x\ge y$.
For $\bm{\alpha}=(\alpha_1,...,\alpha_r)$ and $\bm{\beta}=(\beta_1,...,\beta_s)$,
let $d_{\bm{\alpha},\bm{\beta}}$ be the least common multiple of the
exact denominators of components of $\bm{\alpha}$ and $\bm{\beta}$.
For all positive integers $a$ coprime to $d_{\bm{\alpha},\bm{\beta}}$
and all $x\in\mathbb{R}$, one defines
\begin{equation} \label{Eq1.4}
\delta_{\bm{\alpha},\bm{\beta}}(x,a):=\#\{1\le i\le r: a\alpha_i\preccurlyeq x\}
-\#\{1\le j\le s: a\beta_j\preccurlyeq x\}.
\end{equation}
As in \cite{[DRR]}, we call the function
$\delta_{\bm{\alpha},\bm{\beta}}(x,a)$ {\it Christol
function}. Then Christol's criterion goes as follows.

\begin{thm} \cite{[Ch]} \label{thm1.1}
{\it Let $r$ and $s$ be positive integers.
For any $\bm{\alpha}=(\alpha_1,...,\alpha_r)$ and
$\bm{\beta}=(\beta_1,...,\beta_s)$
with $\alpha_i,\beta_j\in\mathbb{Q}\setminus\mathbb{Z}_{\le0}$,
$F_{\bm{\alpha},\bm{\beta}}(z)$ is $N$-integral if and only if
$\delta_{\bm{\alpha},\bm{\beta}}(x, a)\ge 0$ for all $x\in\mathbb{R}$
and all $a\in\{1,...,d_{\bm{\alpha},\bm{\beta}}\}$ coprime to
$d_{\bm{\alpha},\bm{\beta}}$.}
\end{thm}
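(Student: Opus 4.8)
The plan is to reduce the $N$-integrality of $F_{\bm\alpha,\bm\beta}(z)$ to a $p$-adic local condition, prime by prime, and then translate that local condition into the positivity of the Christol function. The starting point is the remark already recorded in the excerpt (following Proposition~22 of \cite{[DRR]}): $F_{\bm\alpha,\bm\beta}(z)$ is $N$-integral if and only if $F_{\bm\alpha,\bm\beta}(z)\in\mathbb{Z}_p[[z]]$ for all but finitely many primes $p$. The only primes that can matter are $p\mid d_{\bm\alpha,\bm\beta}$ and $p\le$ some explicit bound, or else $p$ with $p\nmid d_{\bm\alpha,\bm\beta}$; for the latter class of primes we will show that $p$-adic integrality of all coefficients is governed by a single arithmetic quantity attached to the residue class $a\equiv p^{-1}$ (or $p$) modulo $d_{\bm\alpha,\bm\beta}$, and this is exactly where $\delta_{\bm\alpha,\bm\beta}(x,a)$ enters.

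**Next I would** analyze the $p$-adic valuation of the general coefficient
$u_n:=\dfrac{(\alpha_1)_n\cdots(\alpha_r)_n}{(\beta_1)_n\cdots(\beta_s)_n}$
using the standard formula $v_p\big((\alpha)_n\big)=\sum_{k\ge 1}\big(\lfloor \alpha^{(p)}+ \cdots\rfloor\big)$-type expansions, more precisely the Legendre/Dwork description of $v_p\big((\alpha)_n\big)$ in terms of the base-$p$ digits of $n$ and the $p$-adic expansion of $\alpha$. For a prime $p\nmid d_{\bm\alpha,\bm\beta}$ each $\alpha_i$ is a $p$-adic integer (after clearing denominators one sees the denominators are units), and the Dwork map $x\mapsto\langle\frac{x+\text{digit}}{p}\rangle$-iteration packages $v_p(u_n)$ as a sum over the digits of $n$ of terms of the form $\delta_{\bm\alpha,\bm\beta}\big(\cdot, a\big)$ with $a$ ranging over the orbit of $1$ under multiplication by $p^{-1}$ modulo $d_{\bm\alpha,\bm\beta}$. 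Hence $v_p(u_n)\ge 0$ for all $n$ is equivalent to $\delta_{\bm\alpha,\bm\beta}(x,a)\ge 0$ for all $x$ and all such $a$; summing over all primes in a fixed residue class (Dirichlet) one recovers the condition for \emph{all} $a$ coprime to $d_{\bm\alpha,\bm\beta}$. This gives the implication that the Christol positivity condition implies $p$-adic integrality for almost all $p$, hence $N$-integrality.

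**For the converse**, I would argue contrapositively: if $\delta_{\bm\alpha,\bm\beta}(x_0,a_0)<0$ for some $x_0$ and some $a_0$ coprime to $d_{\bm\alpha,\bm\beta}$, pick (by Dirichlet's theorem) infinitely many primes $p\equiv a_0^{-1}\pmod{d_{\bm\alpha,\bm\beta}}$, and for each such $p$ construct an explicit $n$ (a single well-chosen digit, or a short digit string, reading off $x_0$) with $v_p(u_n)<0$. Because this happens for infinitely many $p$ and no single scaling constant $c$ can absorb infinitely many primes, $F_{\bm\alpha,\bm\beta}(cz)\notin\mathbb{Z}[[z]]$ for every $c$, so $F_{\bm\alpha,\bm\beta}(z)$ is not $N$-integral. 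The main obstacle in both directions is the bookkeeping in the Dwork-map / Legendre-formula computation: one must show that $v_p(u_n)$, as a function of the base-$p$ digits of $n$, is a \emph{monotone} (in fact a partial-sum-of-$\delta$) function, so that its sign over all $n$ is controlled by the single-digit values $\delta_{\bm\alpha,\bm\beta}(x,a)$; getting the order "$\preccurlyeq$" and the definition of $\langle\cdot\rangle$ to match the carries in base-$p$ addition exactly is the delicate part, and the chosen total order $\preccurlyeq$ on $\mathbb{R}$ is precisely what makes this matching work.
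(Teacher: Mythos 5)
Your outline is essentially the standard Christol argument, and it coincides with the machinery this paper builds in Section 2: the reduction to $p$-adic integrality for almost all $p$ is Proposition 22 of \cite{[DRR]}, the digit-by-digit expansion of $v_p(u_n)$ into values of $\delta_{\bm{\alpha},\bm{\beta}}(\cdot,a^l)$ is exactly the formula of Lemma 2.1 combined with the computation carried out in the proof of Theorem 1.2, and the reduction of ``$\delta_{\bm{\alpha},\bm{\beta}}(x,a)\ge 0$ for all real $x$'' to the finitely many test points $a\beta_k$ is Lemma 2.5. (Note that the paper only cites Theorem 1.1; what it actually proves is the sharper single-prime refinement, Theorem 1.2.) One claim in your middle paragraph is false as stated: for a \emph{fixed} prime $p$, ``$v_p(u_n)\ge 0$ for all $n$'' is not equivalent to ``$\delta_{\bm{\alpha},\bm{\beta}}(x,a^l)\ge 0$ for all $x$ and all $l$ in the orbit.'' Theorem 1.2 is precisely the corrected single-prime statement: when $r>s$ only $\delta_{\bm{\alpha},\bm{\beta}}(\cdot,a)\ge 0$ is forced (the terms with $l\ge 2$ are absorbed by the growth of $(r-s)\lfloor n/p^l\rfloor$), and when $r=s$ only the partial sums $\sum_{l\le h}\delta_{\bm{\alpha},\bm{\beta}}(a^l\beta_k,a^l)$ together with condition (1.8) are forced, not termwise nonnegativity. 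This overstatement is harmless for Theorem 1.1 because each direction of your proof uses only the valid half of the purported equivalence: for sufficiency, the hypothesis covers every $a^l$ (all are coprime to $d_{\bm{\alpha},\bm{\beta}}$), so every term in the digit expansion is nonnegative and hence $v_p(u_n)\ge 0$; for necessity, the single index $n=T_p(\beta_k)+1\le p$ isolates exactly the $l=1$ term, giving $v_p(u_n)=\delta_{\bm{\alpha},\bm{\beta}}(a_0\beta_k,a_0)$ for a large prime $p\equiv a_0^{-1}$, which is all you need once Lemma 2.5 lets you replace an arbitrary $x_0$ with $\delta_{\bm{\alpha},\bm{\beta}}(x_0,a_0)<0$ by some $a_0\beta_k$. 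With that local claim corrected, your argument is sound.
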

Christol's criterion (i.e., Theorem \ref{thm1.1}) plays an important
and key role on the study of N-integrality of the hypergeometric mirror maps
$q_{\bm{\alpha,\beta}}(z)$ in \cite{[DRR]}. It generalizes Landau's
criterion which is the basic tool of \cite{[De]} on the N-integrality of
$q_{\bm{\alpha, \beta}}(z)$.
On the other hand, if $F_{\bm{\alpha},\bm{\beta}}(z)$ is $N$-integral,
then it is not hard to see that the set of all $c\in\mathbb{Q}$ satisfying
$F_{\bm{\alpha},\bm{\beta}}(cz)\in\mathbb{Z}[[z]]$ can be written as
$C_{\bm{\alpha},\bm{\beta}}\mathbb{Z}$ for a unique positive
rational number $C_{\bm{\alpha},\bm{\beta}}$.
We call $C_{\bm{\alpha},\bm{\beta}}$ the {\it Eisenstein constant}
of $F_{\bm{\alpha},\bm{\beta}}(z)$. Delaygue, Rivoal and Roques
\cite{[DRR]} gave a formula of $C_{\bm{\alpha},\bm{\beta}}$.
By using Christol's function, very recently, Delaygue, Rivoal and Roques
obtained an explicit formula for the Eisenstein constant of
any those N-integral hypergeometric series with rational parameters.

The arithmetic property of $q_{\bm{\alpha,\beta}}(z)$ was first studied by Dwork.
In \cite{[Dw1]} and \cite{[Dw2]}, $q_{\bm{\alpha,\beta}}(z)$ was treated as
a $p$-adic analytic function.
To investigate the $p$-adic analytic continuation of $q_{\bm{\alpha,\beta}}(z)$,
Dwork developed a $p$-adic formal congruence theorem.
This theorem turned out to be a crucial tool to study the N-integrality of
$q_{\bm{\alpha,\beta}}(z)$.
Encouraged by numerical results, Lian and Yau started to consider
the N-integrality of $q_{\bm{\alpha,\beta}}(z)$.
By using Dwork's formal congruence theorem, Lian and Yau \cite{[LY1]}
were able to establish for the first time the N-integrality of
$q_{\bm{\alpha,\beta}}(z)$ for infinitely many cases.
After that, partial results in this direction were found by Lian and Yau \cite{[LY2]},
by Zudilin \cite{[Zu]}, by Krattenthaler and Rivoal \cite{[KR1],[KR2]},
by Delaygue \cite{[De]} and by Roques \cite{[Rq1], [Rq2]}.
So far the most general result, which contains all the interested cases,
was obtained by Delaygue, Rivoal and Roques \cite{[DRR]}.
They established a criterion to decide whether $q_{\bm{\alpha,\beta}}(z)$ is
N-integral or not for any disjoint $\bm{\alpha}$ and $\bm{\beta}$ under
the assumption that $F_{\bm{\alpha,\beta}}(z)$ is N-integral,
where $\bm{\alpha}$ and $\bm{\beta}$ are disjoint if and only if
$\alpha_i-\beta_j\notin\mathbb{Z}$ for all $(i,j)\in\{1,...,r\}\times\{1,...,s\}$.
The arithmetic properties of multivariable hypergeometric series and
hypergeometric mirror maps with rational parameters are also extensively considered.
For example, Adolphson and Sperber \cite{[AS1]} studied the $p$-integrality of
multivariable hypergeometric series with rational parameters.
Furthermore, Adolphson and Sperber \cite{[AS2]}, Beukers \cite{[B]},
Delaygue \cite{[De2]} and Krattenthaler and Rivoal \cite{[KR3]} investigated
the N-integrality of multivariable hypergeometric mirror maps.

In this paper, we initiate the investigation of the N-integrality of hypergeometric
series with parameters from algebraic number fields, and in particular,
we will develop a systematic theory on the N-integrality of
the hypergeometric series with parameters from quadratic fields.
Let's start with the $p$-adic integrality of $F_{\bm{\alpha,\beta}}(z)$,
where the parameters $\bm{\alpha}$ and $\bm{\beta}$ are given
as in Theorem \ref{thm1.1}. Let
\begin{equation} \label{Eq1.5}
M_{\bm{\alpha,\beta}}:=d_{\bm{\alpha,\beta}}\big(2+2\max_{1\le i\le r \atop 1\le j\le s}
\{|\alpha_i|,|\beta_j|\}\big)
\end{equation}
and
\begin{equation} \label{Eq1.6}
m_{\bm{\alpha,\beta}}:=\min_{1\le i\le r \atop 1\le j\le s}
\{\alpha_i-\langle\alpha_i\rangle, \beta_j-\langle\beta_j\rangle\}.
\end{equation}
We have the following criterion on the $p$-adic integrality of
$F_{\bm{\alpha,\beta}}(z)$ with the parameters $\alpha$ and
$\beta$ being rational numbers.

\begin{thm} \label{thm1.2}
Let $r$ and $s$ be positive integers. Let $\bm{\alpha}=(\alpha_1,...,
\alpha_r)$ and $\bm{\beta}=(\beta_1,...,\beta_s)$
with $\alpha_i,\beta_j\in\mathbb{Q}\setminus\mathbb{Z}_{\le0}$.
Let $p$ be a prime with $p>\max\{M_{\bm{\alpha,\beta}}, 3sd_{\bm{\alpha,\beta}}\}$.
Let $a$ be the integer such that $1\le a\le d_{\bm{\alpha,\beta}}$ and
$ap \equiv 1 \mod d_{\bm{\alpha,\beta}}$. Then each of the following is true.

{\rm (i).} If $r<s$, then $F_{\bm{\alpha,\beta}}(z)\not\in\mathbb{Z}_p[[z]]$.

{\rm (ii).} If $r>s$, then $F_{\bm{\alpha,\beta}}(z)\in\mathbb{Z}_p[[z]]$ if
and only if $\delta_{\bm{\alpha,\beta}}(x;a)\ge 0$ for any $x\in\mathbb{R}$.

{\rm (iii).} If $r=s$, then $F_{\bm{\alpha,\beta}}(z)\in\mathbb{Z}_p[[z]]$ if
and only if
\begin{equation} \label{Eq1.7}
\sum_{l=1}^{h}\delta_{\bm{\alpha,\beta}}(a^l\beta_k;a^l)\ge 0
\end{equation}
for all $h\in\{1,...,{\rm ord} (a)\}$ with ${\rm ord}(a)$ being
the order of $a$ modulo $d_{\bm{\alpha,\beta}}$ and all $k\in\{1,...,s\}$, and
\begin{equation} \label{Eq1.8}
\delta_{\bm{\alpha,\beta}}\Big(a^l\Big(\frac{e}{d_{\bm{\alpha,\beta}}}
+m_{\bm{\alpha,\beta}}\Big), a^l\Big)\ge 0
\end{equation}
for all $l\in\{1,...,{\rm ord}(a)\}$ and $e\in\{1,...,d_{\bm{\alpha,\beta}}\}$.
%
\end{thm}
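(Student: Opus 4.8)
The plan is to reduce the statement to a careful analysis of the $p$-adic valuations of the ratios of Pochhammer symbols $\frac{(\alpha_1)_n\cdots(\alpha_r)_n}{(\beta_1)_n\cdots(\beta_s)_n}$ for $p$ large compared to the denominator $d_{\bm{\alpha,\beta}}$ and the sizes of the parameters. First I would fix $n$ and write each $\alpha_i$ (resp.\ $\beta_j$) as $u_i/v_i$ in lowest terms, with $v_i\mid d_{\bm{\alpha,\beta}}$; since $p$ is coprime to $d_{\bm{\alpha,\beta}}$, the element $\alpha_i$ lies in $\mathbb{Z}_p$ and I can track $\mathrm{ord}_p((\alpha_i)_n)$ by counting the multiples of $p^k$ among $\alpha_i,\alpha_i+1,\dots,\alpha_i+n-1$. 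Writing the $p$-adic expansion of $n$ as $n=n_0+n_1p+\cdots+n_t p^t$ and letting $a$ be the inverse of $p$ modulo $d_{\bm{\alpha,\beta}}$, a standard computation (this is exactly the mechanism behind Christol's original proof, and behind Dwork's) expresses $\mathrm{ord}_p((\alpha_i)_n)$ in terms of how often $a^l\alpha_i$, reduced via $\langle\,\cdot\,\rangle$, falls below the truncations of $n$; summing over $i$ and subtracting the $\beta_j$ contributions then turns $\mathrm{ord}_p\!\big(\frac{(\alpha_1)_n\cdots(\alpha_r)_n}{(\beta_1)_n\cdots(\beta_s)_n}\big)$ into a sum of values of the Christol function $\delta_{\bm{\alpha,\beta}}$ at arguments $a^l\cdot(\text{truncation of } n)$. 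The hypotheses $p>M_{\bm{\alpha,\beta}}$ and $p>3sd_{\bm{\alpha,\beta}}$ are there precisely to guarantee that no unexpected cancellations or carries occur and that the ``fractional parts'' behave as if the parameters were genuinely in $(0,1]$, so that the combinatorics is clean.

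With that valuation formula in hand, part (i) is the easiest: when $r<s$, one exhibits a single value of $n$ — essentially $n$ a suitable truncation making one $\beta_j$-factor divisible by a high power of $p$ while the numerator contributes nothing — for which the valuation is negative; the deficit $r-s<0$ in the ``top-degree'' term of the formula forces $\delta_{\bm{\alpha,\beta}}<0$ somewhere regardless of the parameters, so $F_{\bm{\alpha,\beta}}(z)\notin\mathbb{Z}_p[[z]]$. For part (ii), $r>s$: the valuation of the $n$-th coefficient is, up to a nonnegative ``slope'' term coming from $r-s>0$, a sum of $\delta_{\bm{\alpha,\beta}}(a^l\cdot(\text{truncation}_l(n)); a^l)$ over the digits $l$; if all these are $\ge 0$ the coefficient is a $p$-adic integer, and conversely, by choosing $n$ to be a single ``digit block'' one isolates each value $\delta_{\bm{\alpha,\beta}}(x;a)$ and shows a negative one produces a non-integral coefficient. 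The point that the single exponent $a$ (and not all powers $a^l$) suffices in (ii) is that the strictly positive slope dominates, so only the worst single step matters; this needs a short monotonicity/telescoping argument using that $\delta_{\bm{\alpha,\beta}}(x;a)$ is bounded in absolute value by $r+s$.

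Part (iii), the balanced case $r=s$, is where the real work lies and where I expect the main obstacle. Now the slope term vanishes, so the valuation of the $n$-th coefficient is exactly a telescoping-type sum $\sum_l \delta_{\bm{\alpha,\beta}}(a^l\cdot t_l; a^l)$ with no positive buffer, and a single bad step can be compensated by later steps; conversely integrality must hold along \emph{every} chain of digit-truncations. I would first show that $F_{\bm{\alpha,\beta}}(z)\in\mathbb{Z}_p[[z]]$ is equivalent to: for every finite sequence of partial sums arising from $p$-adic digits, the running sum of $\delta$-values stays $\ge 0$. Then the crux is to recognize, as in the ergodic/Dwork-map picture, that the relevant arguments $a^l x$ run over the orbit of multiplication by $a$ on $\frac{1}{d_{\bm{\alpha,\beta}}}\mathbb{Z}/\mathbb{Z}$, together with a correction by $m_{\bm{\alpha,\beta}}$ accounting for the fact that the parameters are not in $(0,1]$ but merely $\equiv$ something mod $1$; the minimal ``base point'' for a periodic orbit through a value congruent to $\beta_k$ gives condition \eqref{Eq1.7} (the orbit returning to a $\beta$-value after $h$ steps), while the condition that \emph{no} transient excursion before entering the cycle goes negative, uniformly over all starting residues $e/d_{\bm{\alpha,\beta}}$ shifted by $m_{\bm{\alpha,\beta}}$, gives condition \eqref{Eq1.8}. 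Packaging ``nonnegativity of all partial sums along all digit-chains'' into the finite checkable list \eqref{Eq1.7}--\eqref{Eq1.8} is the combinatorial heart of the proof: one shows that if some partial sum along some chain were negative, then by pigeonhole on the finitely many residues one could extract either a negative periodic cycle (contradicting \eqref{Eq1.7}) or a negative transient prefix (contradicting \eqref{Eq1.8}), and conversely that \eqref{Eq1.7}--\eqref{Eq1.8} force every partial sum $\ge 0$. I would finish by assembling these three cases and remarking that letting $p$ range over all large primes and invoking the N-integrality/$p$-integrality dictionary recovers, in the rational case, the qualitative content of Christol's Theorem~\ref{thm1.1}.
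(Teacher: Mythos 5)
Your overall architecture matches the paper's: express $v_p$ of the coefficient via the Dwork map/Christol operator (the paper's Lemma \ref{lem2.1}), dispose of (i) by the negative slope $(r-s)\sum_l\lfloor n/p^l\rfloor$ dominating a logarithmic error, and prove (ii) by noting that for small $n$ only the $l=1$ term survives (giving the $\delta_{\bm{\alpha,\beta}}(x;a)\ge 0$ criterion via the paper's Lemma \ref{lem2.6}) while for large $n$ the positive slope absorbs the error. Parts (i) and (ii) of your sketch are essentially sound, modulo the stray remark that in (i) the deficit ``forces $\delta_{\bm{\alpha,\beta}}<0$ somewhere'' --- part (i) is unconditional and involves no $\delta$ at all.

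The gap is in part (iii), and it is twofold. First, your proposed pivot --- that $F_{\bm{\alpha,\beta}}(z)\in\mathbb{Z}_p[[z]]$ is \emph{equivalent} to every running partial sum of $\delta$-values along every digit-chain being $\ge 0$ --- is false in the balanced case: integrality only constrains the \emph{total} sum for each $n$, and a negative partial sum can be compensated by later terms. Necessity of \eqref{Eq1.7} and \eqref{Eq1.8} cannot be read off from that equivalence; it has to be extracted by constructing special $n$ for which the tail of the valuation formula vanishes identically, so that the valuation \emph{equals} a partial sum or a single term. The paper does this with $n=T_{p,h}(\beta_k)+1$ (yielding exactly $\sum_{l=1}^{h}\delta_{\bm{\alpha,\beta}}(a^l\beta_k;a^l)$) and with $n=mp^{l-1}$ (yielding the single term behind \eqref{Eq1.8} via Lemma \ref{lem2.7}); your proposal supplies neither construction. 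Second, for sufficiency the total sum is $\sum_{l}\delta_{\bm{\alpha,\beta}}(a^l\xi_l(n),a^l)$ where the argument $\xi_l(n)$ (the extremal element of the set $A_l(n)$) varies with $l$, so it is not a sum of values of one function along an orbit and a bare pigeonhole on residues does not apply. Two structural facts are indispensable and absent from your sketch: (a) along an initial segment $l\le L$ the extremal element stabilizes, $\xi_l=\xi_L$ (the paper's Lemma \ref{lem2.8}, proved by comparing $p$-adic digit expansions), which is what allows the fixed-argument partial-sum condition \eqref{Eq1.7} to control the head of the sum; and (b) when $\xi_L$ is one of the $\alpha_i$ rather than a $\beta_j$, a comparison lemma (the paper's Lemma \ref{lem2.9}(ii)) is needed to replace it by a $\beta$-argument or to fall back on the single-term condition \eqref{Eq1.8}, since \eqref{Eq1.7} only quantifies over the $\beta_k$. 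Without (a) and (b) your ``negative cycle or negative transient prefix'' dichotomy cannot be made to bite.
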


Now let $K$ be an algebraic number field and $\mathcal{O}_K$ be the ring of integers in $K$.
For a power series $F(z)\in 1+zK[[z]]$, we say that $F(z)$ is {\it N-integral}
if there is a nonzero element $c\in K$ such that $F(cz)\in\mathcal{O}_K[[z]]$.
For any $\bm{\alpha}=(\alpha_1,...,\alpha_r)$ and $\bm{\beta}=(\beta_1,...,\beta_s)$
with $\alpha_i,\beta_j\in K\setminus\mathbb{Z}_{\le0}$,
the equations (\ref{Eq1.1}) and (\ref{Eq1.3}) are obviously still valid.
We define $F_{\bm{\alpha,\beta}}(z)$ given by (\ref{Eq1.1}) to be the hypergeometric series
associated with $\bm{\alpha}$ and $\bm{\beta}$ and $q_{\bm{\alpha,\beta}}(z)$
given by (\ref{Eq1.3}) to be the associated mirror map.
We are interested in the N-integrality of $F_{\bm{\alpha,\beta}}(z)$ for
algebraic number parameters $\bm{\alpha}$ and $\bm{\beta}$.
The following question arises naturally: Does there exist an effective criterion
determining effectively the N-integrality of $F_{\bm{\alpha,\beta}}(z)$
for algebraic number parameters $\bm{\alpha}$ and $\bm{\beta}$?

For a prime ideal $\mathfrak{p}$ of $\mathcal{O}_K$,
let $K_{\mathfrak{p}}$ be the $\mathfrak{p}$-adic
completion of $K$, and $\mathcal{O}_{K,\mathfrak{p}}$
be the valuation ring of $K_\mathfrak{p}$.
For a prime number $p$, let $\mathbb{C}_p$ denote the completion
of the algebraic closure of $\mathbb{Q}_p$,
and $\mathcal{O}_p$ be the valuation ring of $\mathbb{C}_p$.
For any monomorphism $\sigma:K\mapsto\mathbb{C}_p$, we define
$$\sigma\big(F_{\bm{\alpha},\bm{\beta}}(z)\big):=\sum_{n=0}^{\infty}
\sigma\Big(\frac{(\alpha_1)_n\cdots(\alpha_r)_n}
{(\beta_1)_n\cdots(\beta_s)_n}\Big)z^n\in\mathbb{C}_p[[z]].$$
In this direction, we have the following result.

\begin{thm} \label{thm1.3}
Let $K$ be an algebraic number field and $\mathcal{O}_K$
be the ring of algebraic integers in $K$. Let
$\bm{\alpha}=(\alpha_1,...,\alpha_r)$ and $\bm{\beta}=(\beta_1,...,\beta_s)$
with $\alpha_i,\beta_j\in K\setminus\mathbb{Z}_{\le0}$.
Then the following statements are equivalent.

{\rm (i).} $F_{\bm{\alpha,\beta}}(z)$ is N-integral in $K$.

{\rm (ii).} $F_{\bm{\alpha,\beta}}(z)\in\mathcal{O}_{K,\mathfrak{p}}[[z]]$
for almost all prime ideals $\mathfrak{p}$ of $\mathcal{O}_K$.

{\rm (iii).} For almost all prime numbers $p$ and
for all monomorphisms $\sigma_p: K\rightarrow\mathbb{C}_p$,
the power series $\sigma_p(F_{\bm{\alpha,\beta}}(z))$
is contained in $\mathcal{O}_p[[z]]$.
\end{thm}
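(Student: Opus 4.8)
The plan is to prove (i) $\Rightarrow$ (ii), the equivalence (ii) $\Leftrightarrow$ (iii), and (ii) $\Rightarrow$ (i). Write $A_n:=\frac{(\alpha_1)_n\cdots(\alpha_r)_n}{(\beta_1)_n\cdots(\beta_s)_n}\in K$ for the $n$-th Taylor coefficient of $F_{\bm{\alpha,\beta}}(z)$, and for a prime ideal $\mathfrak{p}$ of $\mathcal{O}_K$ lying over the rational prime $p$ let $v_{\mathfrak{p}}$ be the normalized valuation and $e=e(\mathfrak{p}/p)$ its ramification index. Since $\mathcal{O}_K=\bigcap_{\mathfrak{p}}\mathcal{O}_{K,\mathfrak{p}}$, for every $c\in K^{\times}$ one has $F_{\bm{\alpha,\beta}}(cz)\in\mathcal{O}_K[[z]]$ if and only if $v_{\mathfrak{p}}(A_n)+n\,v_{\mathfrak{p}}(c)\ge 0$ for all $\mathfrak{p}$ and all $n\ge 0$; this reformulation is used throughout.

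The first two parts are soft. For (i) $\Rightarrow$ (ii): if $c\in K^{\times}$ witnesses N-integrality, only finitely many $\mathfrak{p}$ divide the fractional ideal $(c)$, and for $\mathfrak{p}\nmid(c)$ we have $v_{\mathfrak{p}}(c)=0$, which forces $v_{\mathfrak{p}}(A_n)\ge 0$ for all $n$, i.e.\ $F_{\bm{\alpha,\beta}}(z)\in\mathcal{O}_{K,\mathfrak{p}}[[z]]$. For (ii) $\Leftrightarrow$ (iii): fix a rational prime $p$; the monomorphisms $\sigma_p\colon K\to\mathbb{C}_p$ are parametrized by the prime ideals $\mathfrak{p}\mid p$ of $\mathcal{O}_K$ together with an embedding of $K_{\mathfrak{p}}$ into $\mathbb{C}_p$ over $\mathbb{Q}_p$, and each such $\sigma_p$ pulls the valuation of $\mathbb{C}_p$ back to a positive scalar multiple of $v_{\mathfrak{p}}$ on $K$. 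Hence $\sigma_p\big(F_{\bm{\alpha,\beta}}(z)\big)\in\mathcal{O}_p[[z]]$ if and only if $v_{\mathfrak{p}}(A_n)\ge 0$ for all $n$, if and only if $F_{\bm{\alpha,\beta}}(z)\in\mathcal{O}_{K,\mathfrak{p}}[[z]]$. Since each $p$ has only finitely many prime ideals above it, replacing ``for almost all $\mathfrak{p}$'' by ``for almost all $p$ and all $\sigma_p$'' changes nothing, and (ii) $\Leftrightarrow$ (iii) follows.

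The content is in (ii) $\Rightarrow$ (i), whose engine is a \emph{linear growth estimate}: for each prime ideal $\mathfrak{p}$ of $\mathcal{O}_K$ there is a constant $C_{\mathfrak{p}}\ge 0$ with $v_{\mathfrak{p}}(A_n)\ge -C_{\mathfrak{p}}\,n$ for all $n\ge 0$. I would prove this term by term in $(x)_n=\prod_{k=0}^{n-1}(x+k)$, bounding $v_{\mathfrak{p}}\big((\alpha_i)_n\big)$ from below and $v_{\mathfrak{p}}\big((\beta_j)_n\big)$ from above. If $x$ has a pole of order $m$ at $\mathfrak{p}$, then every factor $x+k$ with $k\in\mathbb{Z}$ has valuation $-m$, so $v_{\mathfrak{p}}\big((x)_n\big)=-mn$. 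If $x$ is $\mathfrak{p}$-integral, write $v_{\mathfrak{p}}\big((x)_n\big)=\sum_{l\ge 1}N_l$ with $N_l=\#\{0\le k<n:\ x+k\in\mathfrak{p}^l\}$; the congruence $k\equiv -x\pmod{\mathfrak{p}^l}$ in $k\in\mathbb{Z}$, when solvable, has its solutions in a single coset of $\mathfrak{p}^l\cap\mathbb{Z}=p^{\lceil l/e\rceil}\mathbb{Z}$, so $N_l\le n\,p^{-\lceil l/e\rceil}+1$, and $N_l=0$ once $l$ exceeds $\max_{0\le k<n}v_{\mathfrak{p}}(x+k)$, a quantity that is $O(\log n)$ because $x+k$ is a nonzero element of $K$ whose norm $N_{K/\mathbb{Q}}(x+k)\in\mathbb{Q}^{\times}$ has bounded denominator and archimedean size $O(k^{[K:\mathbb{Q}]})$. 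Summing the geometric series gives $v_{\mathfrak{p}}\big((x)_n\big)\le\frac{e}{p-1}\,n+O(\log n)$, whence the estimate. Granting it, let $\mathfrak{p}_1,\dots,\mathfrak{p}_t$ be the finitely many (by (ii)) prime ideals with $F_{\bm{\alpha,\beta}}(z)\notin\mathcal{O}_{K,\mathfrak{p}_i}[[z]]$, lying over rational primes $p_1,\dots,p_t$, and set $c:=\prod_{i}p_i^{N_i}\in\mathbb{Z}$ with each $N_i$ chosen so large that $e(\mathfrak{q}/p_i)\,N_i\ge C_{\mathfrak{q}}$ for every prime ideal $\mathfrak{q}\mid p_i$. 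Then $v_{\mathfrak{p}}(c)\ge 0$ for every $\mathfrak{p}$ and $v_{\mathfrak{p}}(c)\ge C_{\mathfrak{p}}$ whenever $F_{\bm{\alpha,\beta}}(z)\notin\mathcal{O}_{K,\mathfrak{p}}[[z]]$, so $v_{\mathfrak{p}}(A_nc^n)\ge 0$ for all $\mathfrak{p}$ and all $n$, i.e.\ $F_{\bm{\alpha,\beta}}(cz)\in\mathcal{O}_K[[z]]$, which is (i).

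The main obstacle I anticipate is the linear growth estimate in the number-field setting. Over $\mathbb{Q}$ the factors $\beta_j+k$ can be treated with bare hands, but over $K$ a parameter can be $\mathfrak{p}$-integral without being rational, and can even lie in $\mathbb{Z}_p$ inside $K_{\mathfrak{p}}$ (for instance $\sqrt{2}$ at a prime where $2$ is a $p$-adic square), so one cannot argue simply that $x$ remains a fixed positive $\mathfrak{p}$-adic distance from $\mathbb{Z}$; the uniform bound $v_{\mathfrak{p}}(x+k)=O(\log k)$, extracted from the denominator and archimedean size of the algebraic number $x+k$, is exactly what keeps the sum over $l$, with its ramification-dependent book-keeping, finite and linear in $n$. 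The dictionary between the primes above $p$ and the embeddings of $K$ into $\mathbb{C}_p$, and the choice of the integral scaling factor $c$, are routine by comparison.
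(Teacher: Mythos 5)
Your proposal is correct, and the skeleton matches the paper's: (i)$\Rightarrow$(ii) via the finitely many primes dividing the scaling factor, (ii)$\Leftrightarrow$(iii) via the dictionary between monomorphisms $\sigma_p\colon K\to\mathbb{C}_p$ and prime ideals $\mathfrak{p}\mid p$ with $v_{\mathfrak{p}}=v_p\circ\widehat{\sigma}_p$, and the converse direction by a linear lower bound $v_{\mathfrak{p}}(A_n)\ge -C_{\mathfrak{p}}n$ at the finitely many bad primes followed by an integral rescaling. The genuine difference is in how you obtain that linear bound. The paper (Lemmas 3.2 and 3.3) passes to the norm form $f(z)=\prod_{\tau\in G}(z+\tau(\gamma))\in\mathbb{Q}[z]$, subtracts the conjugates with poles, and controls $v_p\bigl(\prod_{k<n}f(k)\bigr)$ by invoking Stewart's 1991 theorem, which bounds the number of roots of $f(z)\equiv 0\bmod p^{l}$ modulo $p^l$ uniformly in $l$ by $2p^{\lfloor v_p(D_f)/2\rfloor}+\deg f-2$. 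You instead count the solutions of the single linear congruence $x+k\equiv 0\bmod{\mathfrak{p}^{l}}$ directly in $\mathcal{O}_{K,\mathfrak{p}}$: the integer solutions, when they exist, fill one coset of $\mathfrak{p}^{l}\cap\mathbb{Z}=p^{\lceil l/e\rceil}\mathbb{Z}$, giving $N_l\le n\,p^{-\lceil l/e\rceil}+1$, and the $O(\log n)$ cutoff in $l$ comes from the bounded denominator and polynomial archimedean growth of $N_{K/\mathbb{Q}}(x+k)$. This is entirely elementary and removes the dependence on Stewart's theorem, which the paper advertises as one of its "important ingredients"; what the paper's route buys in exchange is the standalone Lemma 3.2 on $v_p\bigl(\prod_{k<n}f(k)\bigr)$ for an arbitrary integer polynomial $f$ with no nonnegative integer zeros, a statement of independent utility. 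Your choice to prove (ii)$\Rightarrow$(i) rather than the paper's (iii)$\Rightarrow$(i) is immaterial given the equivalence, and the remaining bookkeeping (normalization of $v_{\mathfrak{p}}$ by $e(\mathfrak{p}|p)$, the choice of the exponents $N_i$ in $c$) is routine.
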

Clearly, if letting $K=\mathbb{Q}$, then Theorem \ref{thm1.3}
becomes Proposition 22 of \cite{[DRR]}. On the other hand,
Theorem \ref{thm1.3} gives two equivalent descriptions
of the N-integrality of hypergeometric series with algebraic number
parameters and so can be viewed as a rough answer to the above answer.
However, these equivalent descriptions are not efficient to
determine the N-integrality of $F_{\bm{\alpha,\beta}}(z)$.
For the quadratic field case, we success in giving
an effective criterion and hence we answer
the above question when $K$ is a quadratic field.

In the rest of this section, let $K=\mathbb{Q}(\sqrt{D})$ for a square-free
integer $D$ other than 1. Every element $\gamma\in K$ can be written as
$\gamma=\gamma_1+\gamma_2\sqrt{D}$ with $\gamma_1,\gamma_2\in\mathbb{Q}$.
And $\gamma \in \mathbb{Q}$ if and only if $\gamma_2=0$.
Then one can write
$$\bm{\alpha}=(\alpha_{11}+\alpha_{21}\sqrt{D}, ...,
\alpha_{1r}+\alpha_{2r}\sqrt{D})$$
and
$$\bm{\beta}=(\beta_{11}+\beta_{21}\sqrt{D},...,\beta_{1s}+\beta_{2s}\sqrt{D})$$
with $\alpha_{1i},\alpha_{2i},\beta_{1j}, \beta_{2j}$
being elements in $\mathbb{Q}$ for $1\le i\le r$ and $1\le j\le s$. Let
$$u:=\#\{0\le i\le r: \alpha_i\in\mathbb{Q}\}$$
and
$$v:=\#\{0\le j\le s: \beta_j\in\mathbb{Q}\}.$$
Note that we may have $u=0$ or $u=r$ as well as $v=0$ or $v=s$.
Without loss of any generality, suppose that $\alpha_1,...,\alpha_u$
and $\beta_1,...,\beta_v$ are rational while $\alpha_{u+1},...,\alpha_{r}$
and $\beta_{v+1},...,\beta_s$ are irrational.
Then $\alpha_{21}=\cdots\alpha_{2u}=\beta_{21}=\cdots=\beta_{2v}=0$,
and $\alpha_{2i}$ and $\beta_{2j}$ are nonzero for all integers $i$
and $j$ with $u+1\le i\le r$ and $v+1\le j\le s$. Set
$$\bm{\alpha}_1:=(\alpha_{11},..., \alpha_{1r}),\ \
\bm{\beta}_1:=(\beta_{11},..., \beta_{1s}),$$
$$\bm{\alpha}_2:=(\alpha_{21},..., \alpha_{2r}),\ \
\bm{\beta}_2:=(\beta_{21},..., \beta_{2s}),$$
and
$$\bm{\mu}:=(\alpha_1,...,\alpha_u),\ \
\bm{\nu}:=(\beta_1,...,\beta_v).$$
Let $d_{\bm{\alpha_1,\beta_1}}$ (resp. $d_{\bm{\alpha_2,\beta_2}}$, $d_{\bm{\bm{\mu,\nu}}}$) be
the least common multiple of the exact denominators of elements of $\bm{\alpha_1}$ and
$\bm{\beta_1}$ (resp. $\bm{\alpha_2}$ and $\bm{\beta_2}$, $\bm{\mu}$ and $\nu$). Define
$$E:={\rm lcm}(4D, d_{\bm{\alpha_1,\beta_1}},d_{\bm{\alpha_2,\beta_2}})
 \ \ {\rm and}\ \ G:=(\mathbb{Z}/E\mathbb{Z})^{\times}.$$
We correspond $a\in G$ with a positive integer $a$ with $a\in\{1, ..., E\}$ and $(a,E)=1$.
We mention without proof here that if two primes $p,q$ satisfies $p\equiv q \mod 4D$,
then $\big(\frac{D}{p}\big)=\big(\frac{D}{q}\big)$. Hence one can define
$$H:=\Big\{a\in(\mathbb{Z}/E\mathbb{Z})^{\times} : \Big(\frac{D}{p}\Big)=1 \
{\rm holds \ for\ all\ primes}\ p\ {\rm with}\ ap\equiv 1 \mod E \Big\},$$
and $I:=G\setminus H$.

For $a\in G$, $\epsilon\in (0,1)$ and $x\in\mathbb{R}$, we define
\begin{align*}
\Delta_{\bm{\alpha}, \bm{\beta}}(x,a,\epsilon)
:=&\#\{1\le i\le r: a\alpha_{1i}+\tilde{\alpha}_{2i}\epsilon \preccurlyeq x\}
-\#\{1\le j\le s: a\beta_{1j}+\tilde{\beta}_{2j}\epsilon \preccurlyeq x\},
\end{align*}
where $\tilde{\alpha}_{2i}:=\alpha_{2i}d_{\bm{\alpha_2,\beta_2}}$ and
$\tilde{\beta}_{2j}:=\beta_{2j}d_{\bm{\alpha_2,\beta_2}}$.
One may notice that if all components of $\bm{\alpha}$ and
$\bm{\beta}$ are rational, then for any fixed $x$ and $a$,
$$\Delta_{\bm{\alpha}, \bm{\beta}}(x,a,\epsilon)=\delta_{\bm{\alpha}, \bm{\beta}}(x,a)$$
holds for all $\epsilon\in(0,1)$. Hence our function
$\Delta_{\bm{\alpha}, \bm{\beta}}(x,a,\epsilon)$ generalizes the
Christol function $\delta_{\bm{\alpha}, \bm{\beta}}(x,a)$. Let
\begin{equation} \label{Eq1.9}
S_{\bm{\alpha,\beta}}:=\Big(\bigcup_{i=1}^5 S_i\Big)\cap (0,1),
\end{equation}
where
$$S_1:=\Big\{\frac{a\alpha_{1i}-a\alpha_{1i'}+n}
{\tilde{\alpha}_{2i'}-\tilde{\alpha}_{2i}}:
\ a\in H, n\in \mathbb{Z},  1\le i,i'\le r \ {\rm and}\
\tilde{\alpha}_{2i}\neq\tilde{\alpha}_{2i'} \Big\}, $$
$$S_2:=\Big\{\frac{a\beta_{1j}-a\beta_{1j'}+n}
{\tilde{\beta}_{2j'}-\tilde{\beta}_{2j}}:
\ a\in H, n\in \mathbb{Z}, 1\le j, j'\le s \ {\rm and}
\ \tilde{\beta}_{2j}\neq\tilde{\beta}_{2j'} \Big\},$$
$$S_3:=\Big\{\frac{a\alpha_{1i}-a\beta_{1j}+n}
{\tilde{\beta}_{2j}-\tilde{\alpha}_{2i}}:
\ a\in H, n\in \mathbb{Z},  1\le i\le r, 1\le j\le s \ {\rm and}
\ \tilde{\alpha}_{2i}\neq\tilde{\beta}_{2j} \Big\},$$
$$S_4:=\Big\{\frac{n-a\alpha_{1i}}{\tilde\alpha_{2i}}: \ a\in H,
n\in \mathbb{Z},  1\le i\le r, \tilde\alpha_{2i}\ne 0\Big\}$$
and
$$S_5:=\Big\{\frac{n-a\beta_{1j}}{\tilde\beta_{2i}}:\ a\in H,
n\in \mathbb{Z},  1\le j\le s, \tilde\beta_{2j}\ne 0\Big\}.$$
Evidently, $S_{\bm{\alpha,\beta}}$ is a finite set.
For quadratic parameters $\bm{\alpha}$ and $\bm{\beta}$
and using the notation above, we present the following
four statements:

{\bf Statement I}: $\delta_{\bm{\bm{\mu,\nu}}}(x,a)\ge 0$ for all $a\in I$ and $x\in\mathbb{R}$.

{\bf Statement II}:
$\delta_{\bm{\bm{\mu,\nu}}}(a^l(\frac{e}{d_{\bm{\bm{\mu,\nu}}}}+m_{\bm{\bm{\mu,\nu}}}),a^l)\ge 0$
for all $a\in I$, $e\in\{1,...,d_{\bm{\bm{\mu,\nu}}}\}$ and $l\in\{1,...,{\rm ord}(a)\}$,
where ${\rm ord}(a)$ is the order of $a$ modulo $E$.

{\bf Statement III}: $\sum_{l=1}^{h}\delta_{\bm{\bm{\mu,\nu}}}(a^l\beta_k,a^l)\ge 0$
for all $a\in I$, $k\in\{1,...,s\}$ and $h\in\{1,..., {\rm ord}(a)\}$.

{\bf Statement IV}: $\Delta_{\bm{\alpha}, \bm{\beta}}(x,a,\epsilon)\ge 0$ for all $a\in H$,
$\epsilon\in(0,1)\setminus S_{\bm{\alpha,\beta}}$ and $x\in\mathbb{R}$.\\

We can now state the following criterion on the N-integrality of
$F_{\bm{\alpha},\bm{\beta}}(z)$.

\begin{thm} \label{thm1.4}
Let $D$ be a square-free integer other than 1.
Let $\bm{\alpha}=(\alpha_1,...,\alpha_r)$ and $\bm{\beta}=(\beta_1,...,\beta_s)$
with $\alpha_i,\beta_j\in \mathbb{Q}(\sqrt{D})\setminus\mathbb{Z}_{\le 0}$.
Let $u$ and $v$ be the number of rational components of $\bm{\alpha}$ and
$\bm{\beta}$, respectively. Then each of the following is true:

{\rm (i).} If $u<v$, then the hypergeometric series $F_{\bm{\alpha},\bm{\beta}}(z)$
is not N-integral.

{\rm (ii).} If $u>v$, then the hypergeometric series $F_{\bm{\alpha},\bm{\beta}}(z)$
is N-integral if and only if the statements I and IV are true.

{\rm (iii).} If $u=v$, then the hypergeometric series $F_{\bm{\alpha},\bm{\beta}}(z)$
is N-integral if and only if the statements II, III and IV are true.
\end{thm}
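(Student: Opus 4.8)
The plan is to reduce the quadratic-field statement to the rational-field criteria already available, namely Theorem~\ref{thm1.2} and Theorem~\ref{thm1.3}, by analysing, prime by prime, how a monomorphism $\sigma_p\colon K\to\mathbb{C}_p$ acts on the parameters. By Theorem~\ref{thm1.3}, $F_{\bm{\alpha},\bm{\beta}}(z)$ is N-integral if and only if for almost all primes $p$ and all monomorphisms $\sigma_p\colon K\to\mathbb{C}_p$ one has $\sigma_p(F_{\bm{\alpha},\bm{\beta}}(z))\in\mathcal{O}_p[[z]]$. The behaviour of $\sigma_p$ splits according to whether $p$ splits or is inert in $K=\mathbb{Q}(\sqrt D)$, i.e.\ according to the Legendre symbol $\big(\frac{D}{p}\big)$, which by the remark in the text depends only on the residue of $p$ modulo $4D$; this is exactly the dichotomy encoded by the sets $H$ and $I$. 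When $p$ is inert (the residue class lies in $I$), $\sqrt D\notin\mathbb{Q}_p$ and the only subtlety is that the irrational parameters $\alpha_{u+1},\dots,\alpha_r,\beta_{v+1},\dots,\beta_s$ lie in the unramified quadratic extension; one shows that the series $\sigma_p(F_{\bm{\alpha},\bm{\beta}}(z))$ is $p$-integral iff the ``rational part'' $F_{\bm{\mu},\bm{\nu}}(z)$ coming from the genuinely rational parameters is $p$-integral, since the contribution of a complex-conjugate pair of irrational parameters to any $\mathfrak{p}$-adic valuation is a nonnegative even quantity that cannot obstruct integrality. This is where the cases $u<v$, $u>v$, $u=v$ enter: comparing the number of rational numerator versus denominator parameters is the $u$ versus $v$ comparison, and applying Theorem~\ref{thm1.2}(i),(ii),(iii) to $(\bm{\mu},\bm{\nu})$ over all $a\in I$ yields exactly Statements~I, II, and~III (the hypothesis $p>\max\{M,3sd\}$ excludes only finitely many primes, so it is harmless).

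When $p$ splits (the residue class lies in $H$), $\sqrt D\in\mathbb{Q}_p$, so $\sigma_p$ embeds $K$ into $\mathbb{Q}_p$ and every parameter $\alpha_{1i}+\alpha_{2i}\sqrt D$ becomes an honest $p$-adic number. The key computation is to locate, for such $p$, the image $\sigma_p(\sqrt D)\in\mathbb{Z}_p$: writing $ap\equiv 1\bmod E$, one checks that $\langle a(\alpha_{1i}+\alpha_{2i}\sqrt D)\rangle$ (the fractional part controlling the $p$-adic valuation of Pochhammer symbols, via the Dwork/Christol machinery underlying Theorem~\ref{thm1.2}) is governed by a quantity of the shape $a\alpha_{1i}+\tilde\alpha_{2i}\,\epsilon_p$, where $\epsilon_p\in(0,1)$ is essentially the fractional part of $\sigma_p(\sqrt D)/d_{\bm{\alpha_2,\beta_2}}$ (up to normalisation) and $\tilde\alpha_{2i}=\alpha_{2i}d_{\bm{\alpha_2,\beta_2}}$. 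Thus for split primes the relevant nonnegativity condition is precisely $\Delta_{\bm{\alpha},\bm{\beta}}(x,a,\epsilon_p)\ge 0$ for all $x$. One then invokes the equidistribution of roots of the quadratic congruence $t^2\equiv D\bmod p$ as $p$ ranges over primes in a fixed arithmetic progression — the Duke--Friedlander--Iwaniec--T\'oth theorem cited in the abstract — to conclude that the values $\epsilon_p$ are dense in $(0,1)$; hence requiring $p$-integrality for almost all split $p$ forces $\Delta_{\bm{\alpha},\bm{\beta}}(x,a,\epsilon)\ge 0$ for all $\epsilon$ in a dense set, and by the piecewise-constant (in $\epsilon$) nature of $\Delta$ with finitely many jump points — these jump points are exactly the finite exceptional set $S_{\bm{\alpha},\bm{\beta}}$ defined via $S_1,\dots,S_5$ — this is equivalent to $\Delta_{\bm{\alpha},\bm{\beta}}(x,a,\epsilon)\ge 0$ for all $\epsilon\in(0,1)\setminus S_{\bm{\alpha},\bm{\beta}}$, which is Statement~IV. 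Conversely, Statement~IV plus the knowledge that each $\epsilon_p\notin S_{\bm{\alpha},\bm{\beta}}$ for all but finitely many $p$ (again an equidistribution/counting input, sharpened by Stewart's 1991 bound on the number of solutions of polynomial congruences to control how often $\epsilon_p$ can land near a bad point) gives $p$-integrality at all large split primes.

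Assembling: $F_{\bm{\alpha},\bm{\beta}}(z)$ is N-integral iff it is $p$-integral (after $\sigma_p$) for almost all $p$, iff it is so for almost all inert $p$ \emph{and} almost all split $p$; the inert primes contribute I/II/III (in the three regimes $u<v$, $u>v$, $u=v$, with the $u<v$ case already fatal by Theorem~\ref{thm1.2}(i) since $F_{\bm{\mu},\bm{\nu}}(z)\notin\mathbb{Z}_p[[z]]$ for infinitely many $p$), and the split primes contribute~IV regardless. The part I expect to be the main obstacle is the analytic input: proving that the fractional parts $\epsilon_p$ attached to split primes are equidistributed in $(0,1)$ (so that no finite set of residues can be missed, making Statement~IV both necessary and sufficient) and, quantitatively, that they avoid the finite bad set $S_{\bm{\alpha},\bm{\beta}}$ for all sufficiently large $p$; this is exactly where the Duke--Friedlander--Iwaniec--T\'oth equidistribution theorem and Stewart's congruence bound are indispensable, and marrying these number-theoretic estimates cleanly with the $p$-adic valuation bookkeeping of Theorem~\ref{thm1.2} (in particular tracking the auxiliary conditions \eqref{Eq1.7} and \eqref{Eq1.8} through the substitution $\bm{\alpha}\mapsto\bm{\mu}$, $\bm{\beta}\mapsto\bm{\nu}$ and through all powers $a^l$) is the delicate core of the argument.
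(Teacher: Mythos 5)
Your overall architecture matches the paper's: reduce via Theorem~\ref{thm1.3} to $\mathfrak{p}$-adic integrality at almost all primes, split the primes according to $\big(\tfrac{D}{p}\big)$ (the classes $H$ and $I$), apply Theorem~\ref{thm1.2} to $(\bm{\mu},\bm{\nu})$ at inert primes to get Statements I--III, and use equidistribution at split primes to get Statement~IV. Two points, one minor and one a genuine gap. Minor: at inert primes you argue the irrational parameters contribute ``a nonnegative even quantity that cannot obstruct integrality''; for the reduction to be an \emph{equivalence} (needed in the necessity direction) you must show their contribution is exactly zero --- a positive contribution from an irrational $\alpha_i$ could mask a deficit in the rational part, and a positive contribution from an irrational $\beta_j$ would itself obstruct. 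The correct fact (Proposition~\ref{prop3.5}): since the minimal polynomial of $\gamma$ stays irreducible mod $p$ when $p$ is inert, $v_{\mathfrak{p}}(k+\gamma)=0$ for every integer $k\ge 0$, so $v_{\mathfrak{p}}((\gamma)_n)=0$ exactly. Also, Stewart's bound enters only in Lemma~\ref{lem3.2} inside the proof of Theorem~\ref{thm1.3}, not in the split-prime analysis.

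The genuine gap is in the necessity of Statement~IV. Integrality at a split prime $p$ gives
$\sum_{l\ge 1}\big(\sum_i\lceil (n-T_{p,l}(\sigma_p(\alpha_i)))/p^l\rceil-\sum_j\lceil\cdots\rceil\big)\ge 0$,
whereas $\Delta_{\bm{\alpha},\bm{\beta}}(\cdot,a,\epsilon_p)\ge 0$ is only the $l=1$ term. You assert that integrality ``forces'' the $l=1$ inequality, but the higher terms could compensate a negative $l=1$ term. The $l\ge 2$ terms vanish for $1\le n\le p$ precisely when $T_{p,2}(\sigma_p(\gamma))\ne T_p(\sigma_p(\gamma))$ for every parameter $\gamma$, i.e.\ when no lift of the mod-$p$ root of the associated quadratic survives mod $p^2$ below $p$. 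So one must discard the exceptional primes where this fails and still have enough primes left to run the density argument; that is exactly the estimate
$\#\{v/p: f(v)\equiv 0 \bmod p^2,\ 0\le v<p<x\}=o(\pi(x))$
of \eqref{Eq1.10} (Lemma~\ref{lem4.4}, proved by an accumulation-point argument from the uniform distribution theorem), which the paper singles out as a key ingredient and which your proposal never addresses. Conversely, for sufficiency you invoke equidistribution and a counting bound to keep $\epsilon_p$ away from $S_{\bm{\alpha},\bm{\beta}}$, but no analytic input is needed there: $\epsilon_l=T_{p,l}(\sigma_p(\sqrt{\tilde D}))/p^l$ has denominator a power of $p$ exceeding $M_2$, while every element of $S_{\bm{\alpha},\bm{\beta}}$ has denominator at most $M_2$, so $\epsilon_l\notin S_{\bm{\alpha},\bm{\beta}}$ automatically (this is how Lemma~\ref{lem5.10} proceeds, together with Lemma~\ref{lem5.6} to pass from $a$ to $a^l$).
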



This paper is organized as follows. In Section 2, we first explore
properties of Dwork map and Christol operator, and then use them to
study the $p$-adic integrality of hypergeometric series with rational
parameters and finally give the proof of Theorem \ref{thm1.2}.
Then in Section 3, we exploit the N-integrality for
the hypergeometric series with parameters from
algebraic number fields. The criterion for such
hypergeometric series to be N-integral is given in
Theorem \ref{thm1.3} above. Its proof needs one important
result of Stewart on the upper bound of the number
of solutions of polynomial congruences obtained
in 1991 \cite{[St]} which is one of important ingredients
of this paper. Consequently, we study in Section 4 roots
of quadratic congruences modulo various prime numbers.
We will make use of the uniform distribution theorem of Duke,
Friedlander, Iwaniec \cite{[DFI]} and Toth \cite{[To]} to
study the roots of quadratic congruences modulo various
prime numbers. Especially, we will show that
\begin{equation} \label{Eq1.10}
\#\Big\{\frac{v}{p}: p\in\mathcal{S},0\le v<p<x,
f(v)\equiv 0 \mod p^2\Big\}=o(\pi (x))
\end{equation}
if $f$ is a primitive irreducible polynomial of degree two
and integer coefficients and $x$ goes to infinity.
This result is one key step in the proof of Theorem
\ref{thm1.4} and another important ingredient of this paper.
Section 5 is devoted to the study of N-integrality of
the hypergeometric series with parameters from
quadratic fields. We will develop some properties on the
function $\langle \cdot \rangle$, the operator $T_{p, l}$
and the function $\Delta_{\bm{\alpha}, \bm{\beta}}$.
Then we use them together with (\ref{Eq1.10}) to give
the proof of Theorem \ref{thm1.4}. Finally, we provide
one example to demonstrate Theorem \ref{thm1.4} as
the conclusion of this paper.

In the forthcoming works, we will use the main results of
the current paper to study the Eisenstein constant of the
hypergeometric series $F_{\bm{\alpha},\bm{\beta}}(z)$ with
the parameters $\alpha $ and $\beta$ coming from quadratic
fields. Moreover, we will investigate the N-integrality
of the hypergeometric series $q_{\bm{\alpha,\beta}}(z)$
for the quadratic parameters $\alpha $ and $\beta$.

Throughout this paper, for any given rational number $\alpha$,
we denote by $d(\alpha)$
the denominator of $\alpha$. As usual, $\lfloor \cdot \rfloor$,
$\lceil \cdot \rceil$ and $\{ \cdot \}$ will stand for the
floor function, the ceil function and the fractional part function,
respectively. For any integers $a$ and $b$, let $(a,b)$
denote the greatest common divisor of $a$ and $b$.

\section{Dwork map, Christol operator, $p$-adic integrality
of the hypergeometric series with rational parameters and
proof of Theorem \ref{thm1.2}}

In this section, our purpose is to find an equivalent
condition for $F_{\bm{\alpha,\beta}}(z)\in\mathbb{Z}_p[[z]]$
for a given prime number $p$. We will first investigate properties
of Dwork map and Christol operator, and then use them to
study the $p$-adic integrality of hypergeometric series
with rational parameters and finally arrive at the proof
of Theorem \ref{thm1.2}.

Let $F_{\bm{\alpha,\beta}}(z)$ be a hypergeometric function
with rational parameters $\bm{\alpha}$ and $\bm{\beta}$.
We begin with the Dwork map. For any $\alpha\in\mathbb{Z}_p$,
there is an unique element in $\mathbb{Z}_p$, denoted by
$\mathfrak{D}_p(\alpha)$, satisfying that
$$p\mathfrak{D}_p(\alpha)-\alpha \in \{0,1,...,p-1 \}.$$
This defines a map
\begin{align*}
\mathfrak{D}_p: & \mathbb{Z}_p\rightarrow \mathbb{Z}_p,\\
                & \alpha\mapsto\mathfrak{D}_p(\alpha),
\end{align*}
which is called {\it Dwork map} as introduced first
by Dwork in \cite{[Dw1]}. For any positive integer $l$,
let $\mathfrak{D}_p^l$ denote the $l$-th iteration of
$\mathfrak{D}_p$. Then $\mathfrak{D}_p^l(\alpha)$ is
the unique element in $\mathbb{Z}_p$ such that
$$p^l\mathfrak{D}_p^l(\alpha)-\alpha\in \{0,1,...,p^l-1\}.$$
So one has
\begin{equation}\label{Eq2.0}
0\le\mathfrak{D}_p^l(\alpha)-\frac{\alpha}{p^l}\le 1-\frac{1}{p^l}.
\end{equation}
Christol \cite{[Ch]} introduced the operator $T_{p, l}$ which is defined by
\begin{equation}\label{Eq2.1}
T_{p,l}(\alpha):=p^l\mathfrak{D}^l_p(\alpha)-\alpha.
\end{equation}
We introduce a new operator $R_{p, l}$ that is defined by
\begin{equation} \label{Eq2.2}
R_{p,l}(\alpha):=p^l-T_{p,l}(\alpha).
\end{equation}
When $l=1$, $T_{p,l}(\alpha)$ and $R_{p,l}(\alpha)$ is denoted by
$T_{p}(\alpha)$ and $R_{p}(\alpha)$ for short.
Then $T_{p,l}(\alpha)$ is the unique element in $\{0, 1, ..., p^l-1\}$ such that
$$T_{p,l}(\alpha)+\alpha \equiv 0 \mod p^l,$$
and $R_{p,l}(\alpha)$ is the unique element in $\{1,...,p^l\}$ such that
$$R_{p,l}(\alpha)-\alpha \equiv 0 \mod p^l.$$
We also point out a basic fact which states that for any
positive integer $l$ and for any $\alpha \in \mathbb{Z}_p$,
one has
\begin{align} \label{Eq2.2'}
T_{p, l}(\alpha)\le T_{p, l+1}(\alpha).
\end{align}
Actually, since $T_{p,l}(\alpha)+\alpha \equiv 0 \mod p^l$ and
$T_{p,l+1}(\alpha)+\alpha \equiv 0 \mod p^{l+1}$, we deduce that
$T_{p,l}(\alpha)\equiv T_{p, l+1}(\alpha) \mod p^l$. One may write
$T_{p,l+1}(\alpha)=T_{p, l}(\alpha)+t p^l$ for some integer $t$.
But $T_{p, l}(\alpha)\in [0, p^l-1]$ and $T_{p, l+1}(\alpha)\ge 0$.
Then we must have $t\ge 0$. So (2.4) is true.

Let $v_p$ denote the $p$-adic valuation. For basic properties
of $v_p$, see, for example, \cite{[Ko]} and \cite{[Ro]}.
We have the following result on the $p$-adic valuation
of Pochhammer symbol.

\begin{lem} \label{lem2.1}
Let $\alpha\in\mathbb{Z}_p\setminus\mathbb{Z}_{\le 0}$.
Then for any nonnegative integer $n$, we have
\begin{align} \label{Eq2.3}
v_p\big((\alpha)_n\big)=&\sum_{l=1}^{\infty}\Big\lfloor\frac{n-1+R_{p,l}(\alpha)}{p^l}\Big\rfloor \\
\label{Eq2.3'} =&\sum_{l=1}^{\infty}\Big\lceil\frac{n-T_{p,l}(\alpha)}{p^l}\Big\rceil \\
\label{Eq2.3''}=&\sum_{l=1}^{\infty}\Big\lceil\frac{n+\alpha}{p^l}-\mathfrak{D}_p^{l}(\alpha)\Big\rceil.
\end{align}
\end{lem}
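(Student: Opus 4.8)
The plan is to reduce everything to a count of residues. Start from the factorization $(\alpha)_n=\prod_{k=0}^{n-1}(\alpha+k)$, which gives $v_p\big((\alpha)_n\big)=\sum_{k=0}^{n-1}v_p(\alpha+k)$. Because $\alpha\in\mathbb{Z}_p\setminus\mathbb{Z}_{\le 0}$, no factor $\alpha+k$ with $0\le k\le n-1$ vanishes, so each $v_p(\alpha+k)$ is a finite nonnegative integer and $v_p(\alpha+k)=\#\{l\ge 1:\ p^l\mid\alpha+k\}$. The key point is that for a rational integer $k$ one has $p^l\mid\alpha+k$ in $\mathbb{Z}_p$ if and only if $k\equiv T_{p,l}(\alpha)\bmod p^l$ in $\mathbb{Z}$, which is immediate from the defining properties $T_{p,l}(\alpha)+\alpha\equiv 0\bmod p^l$ and $T_{p,l}(\alpha)\in\{0,1,\dots,p^l-1\}$. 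Interchanging the two sums (legitimate, as all terms are nonnegative) gives
\[
v_p\big((\alpha)_n\big)=\sum_{l=1}^{\infty}\#\{0\le k\le n-1:\ k\equiv T_{p,l}(\alpha)\bmod p^l\}.
\]

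Next I would evaluate the inner count. Put $t:=T_{p,l}(\alpha)\in\{0,\dots,p^l-1\}$; the integers $k$ in $[0,n-1]$ with $k\equiv t\bmod p^l$ are $t,t+p^l,t+2p^l,\dots$, so their number is $\lfloor(n-1-t)/p^l\rfloor+1$ when $t\le n-1$ and is $0$ when $t\ge n$. In the second case one checks $-p^l\le n-1-t\le -1$ (using $n\ge 0$ and $t\le p^l-1$), so $\lfloor(n-1-t)/p^l\rfloor+1=0$ there as well; thus the count equals $\lfloor(n-1-t)/p^l\rfloor+1$ in all cases. Substituting $R_{p,l}(\alpha)=p^l-t$ rewrites this as $\lfloor(n-1+R_{p,l}(\alpha))/p^l\rfloor$, yielding (\ref{Eq2.3}). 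I would also note here that the series is really a finite sum, since the $l$-th term vanishes — equivalently $T_{p,l}(\alpha)\ge n$ — once $l>\max_{0\le k\le n-1}v_p(\alpha+k)$.

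The remaining two identities follow by elementary rewriting of the same summand. For (\ref{Eq2.3'}) apply the identity $\lfloor(m-1)/q\rfloor+1=\lceil m/q\rceil$, valid for an integer $m$ and a positive integer $q$, with $m=n-t$ and $q=p^l$, so that $\lfloor(n-1-t)/p^l\rfloor+1=\lceil(n-T_{p,l}(\alpha))/p^l\rceil$. For (\ref{Eq2.3''}) use $T_{p,l}(\alpha)=p^l\mathfrak{D}_p^l(\alpha)-\alpha$ from (\ref{Eq2.1}); then $(n-T_{p,l}(\alpha))/p^l=(n+\alpha)/p^l-\mathfrak{D}_p^l(\alpha)$, an expression which, despite its $p$-adic look, equals the rational number $(n-T_{p,l}(\alpha))/p^l$, and taking ceilings term by term gives the claimed form.

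I do not anticipate a real obstacle: the whole argument is bookkeeping with $p$-adic valuations and floor/ceiling identities. The only places needing a moment of care are the edge case $t\ge n$ in the counting step and the observation that the displayed series terminates, both handled as above.
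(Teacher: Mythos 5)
Your proof is correct, and it is more self-contained than the paper's. The paper does not actually prove (\ref{Eq2.3}) at all: it cites Christol for that identity, cites Delaygue--Rivoal--Roques for the floor-to-ceiling conversion $\lfloor (n-1+R_{p,l}(\alpha))/p^l\rfloor=\lceil (n-T_{p,l}(\alpha))/p^l\rceil$, and then performs only the trivial substitution $T_{p,l}(\alpha)=p^l\mathfrak{D}_p^l(\alpha)-\alpha$ to get (\ref{Eq2.3''}). You instead derive everything from scratch via the identity $v_p\big(\prod_k A_k\big)=\sum_{l\ge 1}\#\{k:\ A_k\equiv 0 \bmod p^l\}$ together with the observation that $p^l\mid \alpha+k$ iff $k\equiv T_{p,l}(\alpha)\bmod p^l$, and then count the residues in $[0,n-1]$. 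This is in fact the same counting identity the paper invokes later as (\ref{eq3.1.1}) in the proof of Lemma \ref{lem3.2}, so your argument is fully consistent with the paper's toolkit; what it buys is a proof of the lemma that does not outsource the key step to the literature. Your handling of the two delicate points --- the empty case $T_{p,l}(\alpha)\ge n$ (including $n=0$) and the fact that the displayed series is actually a finite sum --- is correct, and the floor/ceiling identity $\lfloor (m-1)/q\rfloor+1=\lceil m/q\rceil$ you use for (\ref{Eq2.3'}) is exactly the content of the cited identity (\ref{Eq2.4}).
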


\begin{proof}
Christol proved in \cite{[Ch]} that for a $p$-adic integer
$\alpha$ and any nonnegative integer $n$, one has
$$v_p\big((\alpha)_n\big)=\sum_{l=1}^{\infty}\Big\lfloor
\frac{n-1+R_{p,l}(\alpha)}{p^l}\Big\rfloor.$$
Delaygue, Roques and Rivoal proved in \cite{[DRR]}
that for all positive integers $l$, one has
\begin{equation} \label{Eq2.4}
\Big\lfloor\frac{n-1+R_{p,l}(\alpha)}{p^l}\Big\rfloor=
\Big\lceil\frac{n-T_{p,l}(\alpha)}{p^l}\Big\rceil.
\end{equation}
But by (\ref{Eq2.1}), one has
\begin{equation} \label{Eq2.5}
\Big\lceil\frac{n-T_{p,l}(\alpha)}{p^l}\Big\rceil=\Big\lceil
\frac{n+\alpha}{p^l}-\mathfrak{D}_p^{l}(\alpha)\Big\rceil.
\end{equation}
Then (\ref{Eq2.3''}) follows immediately form the above three equations.
Thus Lemma \ref{lem2.1} is proved.
%
\end{proof}

Let now $\bm{\alpha}=(\alpha_1,...,\alpha_r)$ and
$\bm{\beta}=(\beta_1,...,\beta_s)$
with $\alpha_i,\beta_j\in\mathbb{Z}_p\setminus\mathbb{Z}_{\le 0}$.
Let $F_{\bm{\alpha,\beta}}(z)\in\mathbb{Q}_p[[z]]$
be the formal power series defined by (1.1).
As an application of Lemma \ref{lem2.1},
we supply the following result.

\begin{lem} \label{lem2.2}
If $r<s$, then $F_{\bm{\alpha,\beta}}(z)\not\in\mathbb{Z}_p[[z]]$.
\end{lem}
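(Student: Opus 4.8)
The plan is to show that the $p$-adic valuation of the general coefficient $c_n := (\alpha_1)_n\cdots(\alpha_r)_n / \big((\beta_1)_n\cdots(\beta_s)_n\big)$ becomes negative for a suitable choice of $n$, which forces $F_{\bm{\alpha,\beta}}(z)\notin\mathbb{Z}_p[[z]]$. By Lemma \ref{lem2.1}, writing everything in the $\lceil\cdot\rceil$-form \eqref{Eq2.3'}, we have
$$v_p(c_n)=\sum_{l=1}^{\infty}\left(\sum_{i=1}^{r}\left\lceil\frac{n-T_{p,l}(\alpha_i)}{p^l}\right\rceil-\sum_{j=1}^{s}\left\lceil\frac{n-T_{p,l}(\beta_j)}{p^l}\right\rceil\right).$$
First I would observe that each term $\lceil (n-T_{p,l}(\gamma))/p^l\rceil$ is nonnegative for $n\ge 0$ (since $T_{p,l}(\gamma)\in\{0,\dots,p^l-1\}$, so $n-T_{p,l}(\gamma)>-p^l$) and, more importantly, is $0$ exactly when $0\le n\le T_{p,l}(\gamma)$. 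The strategy is to pick $n$ so that the $r$ ``$\alpha$''-ceilings vanish at level $l=1$ while at least one ``$\beta$''-ceiling is strictly positive; since $r<s$, after subtracting, the $l=1$ contribution will be at most $-1$, and I then need to control the remaining levels $l\ge 2$ so that they do not overwhelm this.

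The key step is the choice of $n$. Set $t:=\min_{1\le i\le r} T_{p,1}(\alpha_i)$ (if $r=0$ the argument is even easier — actually the hypothesis $r\ge 1$ is in force, but $r<s$ still gives $s\ge 2$). Take $n=t+1$. Hmm — I need $n\le T_{p,1}(\alpha_i)$ for all $i$, so actually I should take $n=t$ and arrange a strict inequality elsewhere, or better: choose $n$ to be a value of the form $T_{p,1}(\beta_{j_0})+1$ for the $\beta_{j_0}$ achieving $\min_j T_{p,1}(\beta_j)$, and then check separately whether the $\alpha$-ceilings vanish. A cleaner route, and the one I would actually pursue, is this: among all the $T_{p,1}(\gamma)$ for $\gamma$ ranging over the $\alpha_i$ and $\beta_j$, let $m$ be the minimum and let $\gamma_0$ achieve it; note $m=T_{p,1}(\gamma_0)\le p-1$. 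Now I claim one can choose $n\in\{1,\dots,p\}$ so that $\sum_i\lceil(n-T_{p,1}(\alpha_i))/p\rceil-\sum_j\lceil(n-T_{p,1}(\beta_j))/p\rceil<0$: indeed each ceiling at level $1$ with argument in $(-p,p]$ equals $0$ if $n\le T_{p,1}(\gamma)$ and $1$ if $n>T_{p,1}(\gamma)$, so the level-$1$ sum is $\#\{i: n>T_{p,1}(\alpha_i)\}-\#\{j: n>T_{p,1}(\beta_j)\}$, and taking $n=p$ makes this equal to $r-s<0$.

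So with $n=p$ the $l=1$ term is $r-s\le -1$. The remaining obstacle, which I expect to be the only real work, is bounding $\sum_{l\ge2}$ from below: I must show $\sum_{l=2}^{\infty}\big(\sum_i\lceil(p-T_{p,l}(\alpha_i))/p^l\rceil-\sum_j\lceil(p-T_{p,l}(\beta_j))/p^l\rceil\big)\ge 0$, which would give $v_p(c_p)\le r-s<0$ and finish the proof. For $l\ge 2$ and $0\le\gamma$ a $p$-adic integer, $T_{p,l}(\gamma)\in\{0,\dots,p^l-1\}$, so $p-T_{p,l}(\gamma)\le p<p^l$ and thus $\lceil(p-T_{p,l}(\gamma))/p^l\rceil\in\{0,1\}$, equal to $1$ iff $T_{p,l}(\gamma)<p$ iff $T_{p,l}(\gamma)=T_{p,1}(\gamma)$ and this common value is $<p$ — i.e. iff the ``carry'' in the Dwork map stabilizes immediately. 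To make the $l\ge 2$ tail nonnegative I would instead choose $n$ more carefully: rather than $n=p$, pick $n=T_{p,1}(\beta_{j_0})+1$ where $j_0$ minimizes $T_{p,1}(\beta_j)$ over $1\le j\le s$, provided this value is $\ge T_{p,1}(\alpha_i)$ for no $i$... The honest summary is: the clean argument is to pick $n$ equal to $1+\min_{1\le j\le s}T_{p,1}(\beta_j)$ when this minimum is strictly smaller than every $T_{p,1}(\alpha_i)$ — then all $\alpha$-ceilings vanish at every level $l\ge1$ (using $T_{p,l}(\alpha_i)\ge T_{p,1}(\alpha_i)\ge n$ via \eqref{Eq2.2'}), while the $\beta_{j_0}$-term contributes $-1$ at level $1$ and the other $\beta$-terms contribute $\le 0$ overall only after a short estimate — giving $v_p(c_n)\le -1$. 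When instead $\min_j T_{p,1}(\beta_j)\ge\min_i T_{p,1}(\alpha_i)$, one runs the symmetric choice $n=1+\min_i T_{p,1}(\alpha_i)$ and uses $r<s$ to still land below zero. Splitting into these two cases and doing the level-by-level ceiling bookkeeping with \eqref{Eq2.2'} is the main obstacle; everything else is formal. Once some $v_p(c_n)<0$ is exhibited, $F_{\bm{\alpha,\beta}}(z)\notin\mathbb{Z}_p[[z]]$ is immediate, proving the lemma.
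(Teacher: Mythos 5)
Your strategy---exhibit a single index $n$ of size at most $p$ with $v_p(c_n)<0$---cannot work in general, and this is a genuine gap, not just a bookkeeping issue. Concretely, take $p=5$, $\bm{\alpha}=(23)$, $\bm{\beta}=(1,6)$, so $r=1<s=2$ and $c_n=(23)_n/\bigl((1)_n(6)_n\bigr)$. Then $v_5(c_n)=v_5\bigl((22+n)!\bigr)-4-v_5(n!)-v_5\bigl((5+n)!\bigr)+1$, whose values for $n=1,\dots,5$ are $0,0,2,2,0$; the first negative value occurs only at $n=10$. In particular your choice $n=p$ gives $v_5(c_5)=0$: the level-$1$ contribution is indeed $r-s=-1$, but level $2$ contributes $+1$ because $T_{5,2}(23)=2<5$ while $T_{5,2}(1)=24$ and $T_{5,2}(6)=19$ both exceed $5$. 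Your fallback choices $n=1+\min_i T_{p,1}(\alpha_i)=3$ and $n=1+\min_j T_{p,1}(\beta_j)=5$ give $+2$ and $0$ respectively. There is also a direction error in the middle of the argument: proving the tail $\sum_{l\ge2}(\cdots)\ge0$ would yield the lower bound $v_p(c_p)\ge r-s$, not the upper bound you need. Your Case 1 (where $\min_j T_{p,1}(\beta_j)$ is strictly smaller than every $T_{p,1}(\alpha_i)$) is fine, since then every $\alpha$-ceiling vanishes at every level by the monotonicity $T_{p,l}(\alpha)\le T_{p,l+1}(\alpha)$ while all $\beta$-ceilings are nonnegative and one of them equals $1$; but the complementary case, which the example above realizes, is exactly where the argument breaks, and no choice of $n\le p$ rescues it there.

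The paper avoids hunting for a clever $n$ and argues asymptotically: starting from \eqref{Eq2.3} and the two-sided bound \eqref{Eq2.6}, it obtains
$$v_p(c_n)\le (r-s)\sum_{l=1}^{\infty}\Big\lfloor\frac{n}{p^l}\Big\rfloor+\frac{r\log n}{\log p},$$
and since $r-s<0$ the linear term, of order $(r-s)n/(p-1)$, eventually dominates the logarithmic one, so $v_p(c_n)<0$ for all sufficiently large $n$. To repair your proof you would need to replace the single-$n$ search by an estimate of this kind that is valid as $n\to\infty$.
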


\begin{proof}
Deduced from (\ref{Eq2.3}), one has
$$
v_p\Big(\frac{(\alpha_1)_n\cdots(\alpha_r)_n}{(\beta_1)_n\cdots(\beta_s)_n}\Big)=
\sum_{i=1}^{r}\sum_{l=1}^{\infty}\Big\lfloor\frac{n-1+R_{p,l}(\alpha_i)}{p^l}\Big\rfloor
-\sum_{j=1}^{s}\sum_{l=1}^{\infty}\Big\lfloor\frac{n-1+R_{p,l}(\beta_j)}{p^l}\Big\rfloor.
$$
Since $1\le R_{p, l}(\alpha )\le p^l$ implying that
\begin{equation} \label{Eq2.6}
\Big\lfloor\frac{n}{p^l}\Big\rfloor\le
\Big\lfloor\frac{n-1+R_{p,l}(\alpha)}{p^l}\Big\rfloor
\le\Big\lfloor\frac{n-1}{p^l}\Big\rfloor+1,
\end{equation}
it then follows that
$$(r-s)\sum_{l=1}^{\infty}\Big\lfloor\frac{n}{p^l}\Big\rfloor-\frac{s\log n}{\log p}
\le v_p\Big(\frac{(\alpha_1)_n\cdots(\alpha_r)_n}{(\beta_1)_n\cdots(\beta_s)_n}\Big)
\le (r-s)\sum_{l=1}^{\infty}\Big\lfloor\frac{n}{p^l}\Big\rfloor +\frac{r\log n}{\log p}.$$
Since $r<s$, we have
$$(r-s)\sum_{l=1}^{\infty}\Big\lfloor\frac{n}{p^l}\Big\rfloor+\frac{r\log n}{\log p}
<(r-s)\Big(\frac{n}{p}-1\Big)+\frac{r\log n}{\log p}.$$

Let $g(x):=(r-s)(x/p-1)+\log x/\log p$. Then one can compute the derivative
and get that $g'(x)=(r-s)/p+1/x$. Evidently, we have $g'(x)<0$ if $x>p/(s-r)$.
So there exists a positive integer $N$ such that for all integers $n$ with
$n>N$, one has $g(n)<0$, that is
$$(r-s)\sum_{l=1}^{\infty}\Big\lfloor\frac{n}{p^l}\Big\rfloor+\frac{r\log n}{\log p}<0.$$
Hence $F_{\bm{\alpha,\beta}}(z)\not\in\mathbb{Z}_p[[z]]$.
This ends the proof of Lemma \ref{lem2.2}.
\end{proof}

Let $\alpha\in\mathbb{Q}$, and $d(\alpha)$ denote
the exact denominator of $\alpha$.
For a prime $p$ such that $\alpha\in\mathbb{Q}\cap\mathbb{Z}_p$,
there is an explicit formula of $\mathfrak{D}_{p}^{l}(\alpha)$.

\begin{lem} \cite{[DRR]}\label{lem2.3}
Let $\alpha\in\mathbb{Q}\setminus\mathbb{Z}_{\leq0}$. Then for any prime $p$
such that $\alpha\in\mathbb{Z}_p$ and all positive integers $l$ such that
$p^l\ge d(\alpha)(|\lfloor 1-\alpha \rfloor|+\langle \alpha \rangle)$,
we have
$$\mathfrak{D}_p^l(\alpha)=\mathfrak{D}_p^l(\langle \alpha \rangle)
=\langle \omega\alpha \rangle,$$
where $\omega$ denotes any integer satisfying that
$\omega p^l\equiv 1 \mod d(\alpha)$.
\end{lem}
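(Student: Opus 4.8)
The plan is to verify directly that the rational number $\langle\omega\alpha\rangle$ satisfies the property that \emph{defines} the iterated Dwork map applied to $\alpha$, and then to deduce the equality $\mathfrak{D}_p^l(\alpha)=\mathfrak{D}_p^l(\langle\alpha\rangle)$ by applying the resulting formula once more to $\langle\alpha\rangle$. To set up, I would first note that $\alpha\in\mathbb{Q}\cap\mathbb{Z}_p$ forces $p\nmid d(\alpha)$, so $\gcd(p^l,d(\alpha))=1$ and an integer $\omega$ with $\omega p^l\equiv 1\bmod d(\alpha)$ exists; two such choices differ by a multiple of $d(\alpha)$, hence $\omega\alpha$ is well defined modulo $\mathbb{Z}$ and so is $\langle\omega\alpha\rangle$. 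Since $\gcd(\omega,d(\alpha))=1$, the exact denominator of $\langle\omega\alpha\rangle$ is $d(\alpha)$ (it is $1$, with $\langle\omega\alpha\rangle=1$, precisely when $\alpha\in\mathbb{Z}$); in particular $\langle\omega\alpha\rangle\in\mathbb{Z}_p$ and $1/d(\alpha)\le\langle\omega\alpha\rangle\le 1$, with $\langle\omega\alpha\rangle\le 1-1/d(\alpha)$ whenever $\alpha\notin\mathbb{Z}$.

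Next I would compute $J:=p^l\langle\omega\alpha\rangle-\alpha$. Putting $N:=\omega\alpha-\langle\omega\alpha\rangle\in\mathbb{Z}$, one has $J=\alpha(p^l\omega-1)-p^lN$, and since $d(\alpha)\mid(p^l\omega-1)$ while $\alpha$ has denominator dividing $d(\alpha)$, the product $\alpha(p^l\omega-1)$ is an integer; hence $J\in\mathbb{Z}$. By the uniqueness built into the definition of $\mathfrak{D}_p^l$, it then remains only to show $J\in\{0,1,\dots,p^l-1\}$, and since $J$ is an integer it suffices to establish $0\le J<p^l$, i.e.\ $\alpha/p^l\le\langle\omega\alpha\rangle<1+\alpha/p^l$.

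To verify these two inequalities I would use the elementary identity $\lfloor 1-\alpha\rfloor=\langle\alpha\rangle-\alpha$ (valid because $1-\langle\alpha\rangle\in[0,1)$), which gives $|\lfloor 1-\alpha\rfloor|+\langle\alpha\rangle=\alpha$ when $\alpha>0$ and $|\lfloor 1-\alpha\rfloor|+\langle\alpha\rangle=2\langle\alpha\rangle+|\alpha|>|\alpha|$ when $\alpha<0$ (the case $\alpha=0$ being excluded by hypothesis). If $\alpha>0$: the upper bound is immediate from $\langle\omega\alpha\rangle\le 1<1+\alpha/p^l$, while the lower bound follows from $\langle\omega\alpha\rangle\ge 1/d(\alpha)\ge\alpha/p^l$, the last step being exactly the hypothesis $p^l\ge d(\alpha)\alpha$. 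If $\alpha<0$: the lower bound is immediate from $\langle\omega\alpha\rangle>0>\alpha/p^l$, while for the upper bound one uses $\alpha\notin\mathbb{Z}$, so $\langle\omega\alpha\rangle\le 1-1/d(\alpha)$ and hence $1-\langle\omega\alpha\rangle\ge 1/d(\alpha)>|\alpha|/p^l$, the last step coming from $p^l\ge d(\alpha)(2\langle\alpha\rangle+|\alpha|)>d(\alpha)|\alpha|$; rearranging gives $J<p^l$. This proves $\mathfrak{D}_p^l(\alpha)=\langle\omega\alpha\rangle$.

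Finally, to obtain $\mathfrak{D}_p^l(\alpha)=\mathfrak{D}_p^l(\langle\alpha\rangle)$ I would apply the formula just established to $\langle\alpha\rangle$ in place of $\alpha$: here $d(\langle\alpha\rangle)=d(\alpha)$ and $\langle\langle\alpha\rangle\rangle=\langle\alpha\rangle\in(0,1]$, so $\lfloor 1-\langle\alpha\rangle\rfloor=0$ and the required hypothesis becomes $p^l\ge d(\alpha)\langle\alpha\rangle$, which is implied by the hypothesis for $\alpha$; the same $\omega$ works since $d(\langle\alpha\rangle)=d(\alpha)$, giving $\mathfrak{D}_p^l(\langle\alpha\rangle)=\langle\omega\langle\alpha\rangle\rangle$, and since $\omega(\alpha-\langle\alpha\rangle)\in\mathbb{Z}$ this equals $\langle\omega\alpha\rangle=\mathfrak{D}_p^l(\alpha)$. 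The only genuinely delicate point is the bookkeeping behind $0\le J<p^l$, and in particular the observation that integrality of $J$ is what lets the closed bound $J\le p^l-1$ be replaced by the strict bound $J<p^l$; everything else is routine manipulation of $\langle\cdot\rangle$, $\lfloor\cdot\rfloor$, and divisibility.
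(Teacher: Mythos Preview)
Your proof is correct. The paper does not actually supply its own proof of this lemma; it is stated with a citation to \cite{[DRR]} and used thereafter without argument. Your approach---verifying directly that $\langle\omega\alpha\rangle$ satisfies the defining property $p^l\langle\omega\alpha\rangle-\alpha\in\{0,1,\dots,p^l-1\}$ of $\mathfrak{D}_p^l(\alpha)$, and then bootstrapping to $\langle\alpha\rangle$---is a clean, self-contained route, and your case analysis on the sign of $\alpha$ together with the identity $\lfloor 1-\alpha\rfloor=\langle\alpha\rangle-\alpha$ handles the inequalities correctly (including the point that $\alpha<0$ forces $\alpha\notin\mathbb{Z}$ so that $\langle\omega\alpha\rangle\le 1-1/d(\alpha)$).
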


\begin{lem} \label{lem2.4}
Let $\gamma_1$ and $\gamma_2$ be two rational numbers
in $\mathbb{Q}\setminus\mathbb{Z}_{\leq0}$.
Then each of the following is true:

{\rm (i).} If $\langle b\gamma_1\rangle=\langle b\gamma_2\rangle$
for a positive integer $b$ coprime to
${\rm lcm}(d(\gamma_1),d(\gamma_2))$, then
$\langle a\gamma_1\rangle=\langle a\gamma_2\rangle$
for any positive integer $a$.

{\rm (ii).} Let $p$ be a prime such that
$$p\ge \max(d(\gamma_1)(|\lfloor 1-\gamma_1 \rfloor|+\langle
\gamma_1\rangle), d(\gamma_2)(|\lfloor 1-\gamma_2 \rfloor|+\langle\gamma_2\rangle)).$$
Then $\mathfrak{D}_p^L(\gamma_1)=\mathfrak{D}_p^L(\gamma_2)$
holds for a given positive integer $L$
if and only if $\mathfrak{D}_p^l(\gamma_1)=\mathfrak{D}_p^l(\gamma_2)$
holds for all positive integers $l$.
\end{lem}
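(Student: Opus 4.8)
The plan is to reduce both parts to a single claim: if $b$ is a positive integer coprime to $N:={\rm lcm}(d(\gamma_1),d(\gamma_2))$, then $\langle b\gamma_1\rangle=\langle b\gamma_2\rangle$ if and only if $\gamma_1-\gamma_2\in\mathbb{Z}$. The starting point is the elementary observation that for $x,y\in\mathbb{R}$ one has $\langle x\rangle=\langle y\rangle$ if and only if $x-y\in\mathbb{Z}$, since $\langle\cdot\rangle$ picks out the unique representative in $(0,1]$ of a real number modulo $\mathbb{Z}$. In particular $\langle b\gamma_1\rangle=\langle b\gamma_2\rangle$ is equivalent to $b(\gamma_1-\gamma_2)\in\mathbb{Z}$.

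To prove the claim, write $\gamma_1-\gamma_2=m/d'$ in lowest terms, so that $d'=d(\gamma_1-\gamma_2)$ divides $N$. If $b(\gamma_1-\gamma_2)\in\mathbb{Z}$, then $d'\mid bm$, hence $d'\mid b$ because $\gcd(d',m)=1$; since also $d'\mid N$ and $\gcd(b,N)=1$, we get $d'\mid\gcd(b,N)=1$, i.e. $\gamma_1-\gamma_2\in\mathbb{Z}$. The converse is trivial. Part (i) follows at once: its hypothesis forces $\gamma_1-\gamma_2\in\mathbb{Z}$ by the claim, and then $a(\gamma_1-\gamma_2)\in\mathbb{Z}$, hence $\langle a\gamma_1\rangle=\langle a\gamma_2\rangle$, for every positive integer $a$.

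For part (ii) the implication from ``all $l$'' to the single exponent $L$ is trivial, so assume $\mathfrak{D}_p^L(\gamma_1)=\mathfrak{D}_p^L(\gamma_2)$. For the given $p$ — which, so that the Dwork map is defined at $\gamma_1$ and $\gamma_2$, we take to be prime to $d(\gamma_1)d(\gamma_2)$ — the size hypothesis imposed at $l=1$ gives $p^l\ge p\ge d(\gamma_k)(|\lfloor1-\gamma_k\rfloor|+\langle\gamma_k\rangle)$ for every $l\ge1$ and $k\in\{1,2\}$, so Lemma \ref{lem2.3} is applicable at every level $l$. Choosing, for each $l\ge1$, a positive integer $\omega_l$ with $\omega_l p^l\equiv1\pmod N$ (so that the same $\omega_l$ works modulo both $d(\gamma_1)$ and $d(\gamma_2)$), Lemma \ref{lem2.3} yields $\mathfrak{D}_p^l(\gamma_k)=\langle\omega_l\gamma_k\rangle$ for $k=1,2$. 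Applying this with $l=L$, the hypothesis gives $\langle\omega_L\gamma_1\rangle=\langle\omega_L\gamma_2\rangle$, and $\omega_L$ is coprime to $N$ (being a unit modulo $N$), so the claim forces $\gamma_1-\gamma_2\in\mathbb{Z}$. Consequently $\omega_l(\gamma_1-\gamma_2)\in\mathbb{Z}$, hence $\langle\omega_l\gamma_1\rangle=\langle\omega_l\gamma_2\rangle$, i.e. $\mathfrak{D}_p^l(\gamma_1)=\mathfrak{D}_p^l(\gamma_2)$, for every $l\ge1$.

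Nothing in this is deep; the care needed is purely in the bookkeeping. The two points I would watch are: that the single size condition on $p$ (stated with $l=1$) really does license Lemma \ref{lem2.3} uniformly in $l$, since $p^l\ge p$; and that the auxiliary integers $\omega_l$ must be chosen modulo the common modulus $N={\rm lcm}(d(\gamma_1),d(\gamma_2))$ rather than modulo the individual denominators, so that a single $\omega_l$ simultaneously computes both Dwork iterates at level $l$ and the reduction to the claim goes through. The divisibility step $d'\mid\gcd(b,N)=1$ is the only genuinely substantive move, and it is routine.
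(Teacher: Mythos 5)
Your proof is correct and follows essentially the same route as the paper: both arguments reduce part (i) to showing that the unit $b$ modulo the common denominator can be cancelled, yielding $\gamma_1-\gamma_2\in\mathbb{Z}$ (you do this via the lowest-terms denominator $d'$, the paper via a congruence $be_1\equiv be_2\bmod d$), and both prove part (ii) by applying Lemma \ref{lem2.3} at level $L$ with an inverse of $p^L$ modulo ${\rm lcm}(d(\gamma_1),d(\gamma_2))$, invoking part (i), and then translating back at every level $l$. Your explicit remarks — that the single bound on $p$ licenses Lemma \ref{lem2.3} for all $l$ since $p^l\ge p$, and that the auxiliary inverses must be taken modulo the lcm — are exactly the bookkeeping the paper's proof relies on implicitly.
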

\begin{proof} Set $d:={\rm lcm}(d(\gamma_1),d(\gamma_2))$.
We begin with the proof of part (i).

(i). Write $\gamma_1=\frac{e_1}{d}$ and $\gamma_2=\frac{e_2}{d}$,
which are not necessarily reduced. Let
$$be_1=dq_1+r_1\ \ {\rm and}\ \ be_2=dq_2+r_2,$$
where $q_1,q_2,r_1,r_2$ are integers with $0\le r_1,r_2<d$.
Then by the definition of the function $\langle~\rangle$, we have
$\langle b\gamma_1\rangle=\frac{r_1}{d}$ and
$\langle b\gamma_2\rangle=\frac{r_2}{d}.$
Since $\langle b\gamma_1\rangle=\langle b\gamma_2\rangle$ and $0\le r_1,r_2<d$,
it follows that $r_1=r_2$, which implies $be_1\equiv be_2 \mod d$.
Moreover, since $(b,d)=1$, one has $ae_1\equiv ae_2 \mod d$ for any positive integer $a$.
Thus $a\gamma_1-a\gamma_2$ is an integer, which follows that
$\langle a\gamma_1\rangle=\langle a\gamma_2\rangle$.

(ii). If $\mathfrak{D}_p^l(\gamma_1)=\mathfrak{D}_p^l(\gamma_2)$
holds for all positive integers $l$,
then $\mathfrak{D}_p^L(\gamma_1)=\mathfrak{D}_p^L(\gamma_2)$
holds for a given positive integer $L$. In what follows,
we assume that $\mathfrak{D}_p^L(\gamma_1)=\mathfrak{D}_p^L(\gamma_2)$
holds for a given positive integer $L$. Then by Lemma \ref{lem2.3},
for a positive integer $b$ with $bp^L\equiv 1\mod d$,
one has $\langle b\gamma_1\rangle=\langle b\gamma_2\rangle$.
Then one can deduce from (i) that $\langle a\gamma_1\rangle=\langle a\gamma_2\rangle$
for any positive integer $a$. In particular, for a positive integer $l$, if let $a_l$
be a positive integer such that $a_lp^l\equiv 1 \mod d$, then one has
$\langle a_l\gamma_1\rangle=\langle a_l\gamma_2\rangle$, So by Lemma \ref{lem2.3},
$\mathfrak{D}_p^l(\gamma_1)=\mathfrak{D}_p^l(\gamma_2)$.
This finishes the proof of Lemma \ref{lem2.4}.
\end{proof}

Before proceeding, we present a lemma which should
be useful later in this section.

\begin{lem} \label{lem2.5}
Let $A$ be a set on which a total order $\le $ is defined.
Let $x_1,...,x_r$ and $y_1,...,y_s$ be the elements of $A$.
Then
\begin{equation} \label{Eq2.7}
\#\{i: x_i\le a\}-\#\{j: y_j\le a\}\ge 0 \ \forall a\in A
\end{equation}
if and only if
\begin{equation} \label{Eq2.8}
\#\{i: x_i\le y_k\}-\#\{j: y_j\le y_k\}\ge 0 \ \forall k\in\{1,...,s\}.
\end{equation}
\end{lem}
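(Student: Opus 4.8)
The plan is to prove the two implications separately. One direction is immediate: if (\ref{Eq2.7}) holds for every $a\in A$, then specializing to $a=y_k$ for each $k\in\{1,\dots,s\}$ yields (\ref{Eq2.8}). The content is in the converse, which hinges on a single observation about the counting function $f(a):=\#\{i: x_i\le a\}-\#\{j: y_j\le a\}$, namely that its value at an arbitrary $a$ is bounded below by its value at a suitable $y_k$.

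So I would assume (\ref{Eq2.8}) and fix an arbitrary $a\in A$. If there is no $j$ with $y_j\le a$, then $\#\{j: y_j\le a\}=0$ and (\ref{Eq2.7}) is clear at $a$. Otherwise, since $\{y_1,\dots,y_s\}$ is finite and $\le$ is a total order, the nonempty set $\{y_j: y_j\le a\}$ has a largest element $y_k$. First I would verify the set equality $\{j: y_j\le a\}=\{j: y_j\le y_k\}$: the inclusion $\supseteq$ follows from $y_j\le y_k\le a$ by transitivity, and $\subseteq$ follows from the maximality of $y_k$, where totality of $\le$ is used to conclude $y_j\le y_k$. Next, $y_k\le a$ gives $\{i: x_i\le y_k\}\subseteq\{i: x_i\le a\}$, hence $\#\{i: x_i\le y_k\}\le\#\{i: x_i\le a\}$.

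Combining these two facts with the hypothesis (\ref{Eq2.8}) at index $k$ yields
$$\#\{i: x_i\le a\}-\#\{j: y_j\le a\}\ \ge\ \#\{i: x_i\le y_k\}-\#\{j: y_j\le y_k\}\ \ge\ 0,$$
which is (\ref{Eq2.7}) at $a$; since $a\in A$ was arbitrary, this completes the proof. The argument is elementary and presents no real obstacle; the only points deserving a line of justification are the existence of the maximal element $y_k$ (where finiteness and totality of the order, not a mere partial order, are used) and the verification of the set equality $\{j: y_j\le a\}=\{j: y_j\le y_k\}$.
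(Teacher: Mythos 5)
Your proof is correct and takes essentially the same route as the paper's: the paper sorts the $y_j$ and splits into three cases according to where $a$ falls, which is exactly your choice of $y_k$ as the largest element of $\{y_j : y_j\le a\}$ (your ``no $y_j\le a$'' case being the paper's Case 1). Your packaging via the set equality $\{j: y_j\le a\}=\{j: y_j\le y_k\}$ and the inclusion $\{i: x_i\le y_k\}\subseteq\{i: x_i\le a\}$ is just a slightly cleaner presentation of the identical argument.
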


\begin{proof}
First of all, it is clear that (\ref{Eq2.8}) holds
if (\ref{Eq2.7}) is true. Now let us show the truth
of the converse. Assume that (\ref{Eq2.8}) is true.
In what follows we show that (\ref{Eq2.7}) is true.
With no loss of any generality, one may suppose that
$y_1\le\cdots\le y_s$. For any given $a\in A$,
we consider the following three cases:

{\sc Case 1.} $a\le y_1, a\neq y_1$. Then
$\#\{j: y_j\le a\}=0$. So one has
$$\#\{i: x_i\le a\}-\#\{j: y_j\le a\}\ge 0$$
as (\ref{Eq2.7}) claimed.

{\sc Case 2.} $y_k\le a\le y_{k+1}, a\neq y_{k+1}$
for some integer $k_0$ with $1\le k_0< s$.
Then
$$\#\{j: y_j\le a\}=\#\{j: y_j\le y_{k_0}\}=k_0$$
and
$$\#\{i: x_i\le a\}\ge \#\{i:x_i\le y_{k_0}\}.$$
On the other hand, (\ref{Eq2.8}) implies that
$$\#\{i: x_i\le y_{k_0}\}-\#\{j: y_j\le y_{k_0}\}\ge 0.$$
It then follows that
$$\#\{i:x_i\le a\}-\#\{j: y_j\le a\}
\ge \#\{i: x_i\le y_{k_0}\}-\#\{j: y_j\le y_{k_0}\}\ge 0.$$
So (\ref{Eq2.7}) holds in this case.

{\sc Case 3.} $y_s\le a$. Then we have
$\#\{j: y_j\le a\}=\#\{j: y_j\le y_s\}=s$ and
$\#\{i: x_i\le a\}\ge \#\{i: x_i\le y_s\}$.
But (\ref{Eq2.8}) tells us that
$$\#\{i: x_i\le y_s\}-\#\{j: y_j\le y_s\}\ge 0.$$
Thus one derives that
$$\#\{i: x_i\le a\}-\#\{j: y_j\le a\}
\ge\#\{i: x_i\le y_s\}-\#\{j: y_j\le y_s\}\ge 0.$$
Hence (\ref{Eq2.7}) is true in this case.
This ends the proof of Lemma \ref{lem2.4}.
\end{proof}

For $\bm{\alpha}=(\alpha_1,...,\alpha_{r})$ and
$\bm{\beta}=(\beta_1,...\beta_{s})$ with
$\alpha_i,\beta_j\in\mathbb{Q}\setminus\mathbb{Z}_{\leq0}$,
we recall that $d_{\bm{\alpha,\beta}}$ is the least common
multiple of the exact denominators of elements of
$\bm{\alpha}$ and $\bm{\beta}$ and $M_{\bm{\alpha,\beta}}$
is given by (\ref{Eq1.5}). Throughout this paper, we always let
\begin{align} \label{eq2.0.4}
\Psi:=\{\alpha_1,...,\alpha_r,\beta_1,...,\beta_s\}.
\end{align}
For $\gamma\in\Psi$, one has
$$1-\gamma=1-(\gamma-\langle\gamma\rangle+\langle\gamma\rangle)
=-\gamma+\langle\gamma\rangle+1-\langle\gamma\rangle.$$
By definition of $\langle\gamma\rangle$, $-\gamma+\langle\gamma\rangle$ is an integer and
$1-\langle\gamma\rangle\in[0,1)$. Thus $\lfloor 1-\gamma \rfloor=-\gamma+\langle\gamma\rangle$.
If $\gamma\le0$, then $|\lfloor1-\gamma\rfloor|+\langle\gamma\rangle=-\gamma+2\langle\gamma\rangle$.
Evidently, $\gamma-\langle\gamma\rangle$ is the largest integer $<\gamma$. If $\gamma>0$, then
$\gamma-\langle\gamma\rangle\ge 0$, so in this case, one has
$|\lfloor1-\gamma\rfloor|+\langle\gamma\rangle=\gamma$.
Therefore for any $\gamma\in\Psi $,
we have
\begin{align} \label{eq2.0.5}
|\lfloor1-\gamma\rfloor|+\langle\gamma\rangle\le|\gamma|+2,
\end{align}
and so
\begin{equation} \label{Eq2.9}
M_{\bm{\alpha,\beta}}>d(\gamma)(|\lfloor1-\gamma \rfloor|+\langle\gamma\rangle).
\end{equation}
\begin{lem} \label{lem2.6}
Let $p$ be a prime such that $p>M_{\bm{\alpha,\beta}}$ and $l$
be a positive integer. Let $a\in \{1,...,d_{\bm{\alpha,\beta}}\}$
satisfy that $ap\equiv 1 \mod d_{\bm{\alpha,\beta}}$.
Then the following statements are equivalent:

{\rm (i).} For all $x\in\mathbb{R}$, one has $\delta_{\bm{\alpha,\beta}}(x, a^l)\ge 0$.

{\rm (ii).} For all $k\in\{1,...,s\}$, one has
$\delta_{\bm{\alpha,\beta}}(a^l\beta_k, a^l)\ge 0$.

{\rm (iii).} For all $k\in\{1,...,s\}$, one has
$$\#\Big\{i: \mathfrak{D}_{p}^{l}(\alpha_i)-\frac{\alpha_i}{p^l}
\le\mathfrak{D}_{p}^{l}(\beta_k)-\frac{\beta_k}{p^l} \Big\}
\ge\#\Big\{j: \mathfrak{D}_{p}^{l}(\beta_j)-\frac{\beta_j}{p^l}
\le\mathfrak{D}_{p}^{l}(\beta_k)-\frac{\beta_k}{p^l} \Big\}. $$

{\rm (iv).} For all $n\in\{1,...,p^l\}$, one has
$$\#\Big\{1\le i\le r: \mathfrak{D}_{p}^{l}(\alpha_i)-\frac{\alpha_i}{p^l}\le\frac{n}{p^l} \Big\}
\ge\#\Big\{1\le j\le s: \mathfrak{D}_{p}^{l}(\beta_j)-\frac{\beta_j}{p^l}\le\frac{n}{p^l} \Big\}. $$

{\rm (v).} For all $n\in\{1,...,p^l\}$, one has
$$\sum_{i=1}^{r}\Big\lceil\frac{n+\alpha_i}{p^l}-\mathfrak{D}_{p}^{l}(\alpha_i)
\Big\rceil\ge \sum_{j=1}^{s}\Big\lceil\frac{n+\beta_j}{p^l}-\mathfrak{D}_{p}^{l}
(\beta_j)\Big\rceil.$$
\end{lem}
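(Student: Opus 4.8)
The plan is to route all five statements through a single threshold inequality concerning the real numbers $\tau_i:=\mathfrak{D}_p^l(\alpha_i)-\alpha_i/p^l$ and $\sigma_j:=\mathfrak{D}_p^l(\beta_j)-\beta_j/p^l$, $1\le i\le r$, $1\le j\le s$. Two preliminary facts will be used repeatedly. First, by $(\ref{Eq2.1})$ one has $\tau_i=T_{p,l}(\alpha_i)/p^l$ and $\sigma_j=T_{p,l}(\beta_j)/p^l$, and by $(\ref{Eq2.0})$ these numbers lie in $[0,1-1/p^l]$; moreover, for every $\gamma\in\Psi$ we have $d(\gamma)\le d_{\bm{\alpha,\beta}}<p$ and $d_{\bm{\alpha,\beta}}|\gamma|<M_{\bm{\alpha,\beta}}<p$, so writing $\gamma$ in lowest terms gives $v_p(\gamma)=0$, hence $T_{p,l}(\gamma)\ne 0$, and therefore $\tau_i,\sigma_j\in\{1/p^l,2/p^l,\dots,(p^l-1)/p^l\}$. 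Second, $p^l\ge p>M_{\bm{\alpha,\beta}}>d(\gamma)\big(|\lfloor 1-\gamma\rfloor|+\langle\gamma\rangle\big)$ by $(\ref{Eq2.9})$, and $ap\equiv1\bmod d_{\bm{\alpha,\beta}}$ yields $a^lp^l\equiv1\bmod d(\gamma)$, so Lemma \ref{lem2.3} gives $\mathfrak{D}_p^l(\gamma)=\langle a^l\gamma\rangle$ for every $\gamma\in\Psi$. With these in hand, (i)$\Leftrightarrow$(ii) follows at once from Lemma \ref{lem2.5} applied on $\mathbb{R}$ with the total order $\preccurlyeq$ and the points $a^l\alpha_1,\dots,a^l\alpha_r,a^l\beta_1,\dots,a^l\beta_s$, since $\delta_{\bm{\alpha,\beta}}(x,a^l)$ is, by $(\ref{Eq1.4})$, precisely the difference $\#\{i:a^l\alpha_i\preccurlyeq x\}-\#\{j:a^l\beta_j\preccurlyeq x\}$ to which that lemma applies.

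The core of the argument is (ii)$\Leftrightarrow$(iii), for which I will prove the pointwise equivalence
\[
a^l\gamma\preccurlyeq a^l\beta_k\iff\mathfrak{D}_p^l(\gamma)-\frac{\gamma}{p^l}\le\mathfrak{D}_p^l(\beta_k)-\frac{\beta_k}{p^l}\qquad(\gamma\in\Psi,\ 1\le k\le s),
\]
after which summing over $\gamma=\alpha_i$ and over $\gamma=\beta_j$ turns $\delta_{\bm{\alpha,\beta}}(a^l\beta_k,a^l)\ge0$ into the inequality in (iii). Using $\mathfrak{D}_p^l(\gamma)=\langle a^l\gamma\rangle$, the difference on the right equals $\langle a^l\gamma\rangle-\langle a^l\beta_k\rangle+(\beta_k-\gamma)/p^l$. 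Since $\langle a^l\gamma\rangle,\langle a^l\beta_k\rangle\in\frac1{d_{\bm{\alpha,\beta}}}\mathbb{Z}\cap(0,1]$, two unequal such values differ by at least $1/d_{\bm{\alpha,\beta}}$, while $|\beta_k-\gamma|/p^l<M_{\bm{\alpha,\beta}}/(d_{\bm{\alpha,\beta}}p^l)\le M_{\bm{\alpha,\beta}}/(d_{\bm{\alpha,\beta}}p)<1/d_{\bm{\alpha,\beta}}$. Splitting into the cases $\langle a^l\gamma\rangle$ less than, equal to, or greater than $\langle a^l\beta_k\rangle$: in the first case the right-hand difference is negative and $a^l\gamma\preccurlyeq a^l\beta_k$; in the third it is positive and $a^l\gamma\not\preccurlyeq a^l\beta_k$; in the middle case it has the sign of $\beta_k-\gamma$, while $a^l\gamma\preccurlyeq a^l\beta_k$ exactly when $\gamma\ge\beta_k$. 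Hence the equivalence holds in all cases. This comparison of $|\beta_k-\gamma|/p^l$ against the minimal gap $1/d_{\bm{\alpha,\beta}}$ — precisely the place where the hypothesis $p>M_{\bm{\alpha,\beta}}$ is used — is the step I expect to require the most care.

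For (iii)$\Leftrightarrow$(iv) I will apply Lemma \ref{lem2.5} once more, now on $\mathbb{R}$ with its usual order and the points $\tau_1,\dots,\tau_r,\sigma_1,\dots,\sigma_s$: statement (iii) is exactly the ``thresholds at the $\sigma_k$'' form, hence is equivalent to $\#\{i:\tau_i\le a\}\ge\#\{j:\sigma_j\le a\}$ holding for all $a\in\mathbb{R}$; and because the $\tau_i,\sigma_j$ are integer multiples of $1/p^l$ lying in $[1/p^l,(p^l-1)/p^l]$, this ``all $a$'' statement is equivalent to its restriction to $a\in\{n/p^l:1\le n\le p^l\}$ (for $a<1/p^l$ both sets are empty, for $a\ge(p^l-1)/p^l$ the case $n=p^l$ applies, and any intermediate $a$ has the same thresholds as some $n/p^l$ with $1\le n\le p^l-2$), which is (iv). Finally, (iv)$\Leftrightarrow$(v) is bookkeeping: by $(\ref{Eq2.5})$ one has $\lceil(n+\gamma)/p^l-\mathfrak{D}_p^l(\gamma)\rceil=\lceil(n-T_{p,l}(\gamma))/p^l\rceil$, which — since $1\le T_{p,l}(\gamma)\le p^l-1$ and $1\le n\le p^l$ — equals $1$ when $n>T_{p,l}(\gamma)$ and $0$ otherwise; thus the two sides of (v) count $\#\{i:\tau_i<n/p^l\}=\#\{i:\tau_i\le(n-1)/p^l\}$ and $\#\{j:\sigma_j\le(n-1)/p^l\}$, so letting $n$ run over $\{1,\dots,p^l\}$ in (v) produces the same family of inequalities as in (iv) (the instance $n=p^l$ of each amounting to $r\ge s$, and the instance $n=1$ of (v) to $0\ge0$). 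Chaining (i)$\Leftrightarrow$(ii)$\Leftrightarrow$(iii)$\Leftrightarrow$(iv)$\Leftrightarrow$(v) completes the proof.
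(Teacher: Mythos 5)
Your proof is correct and takes essentially the same route as the paper's: Lemma \ref{lem2.5} for (i)$\Leftrightarrow$(ii) and (iii)$\Leftrightarrow$(iv), the pointwise comparison of the gap $\ge 1/d_{\bm{\alpha,\beta}}$ between distinct values of $\langle a^l\gamma\rangle=\mathfrak{D}_p^l(\gamma)$ against the perturbation $|\gamma-\beta_k|/p^l<1/d_{\bm{\alpha,\beta}}$ for (ii)$\Leftrightarrow$(iii), and the evaluation of the ceilings as indicators of $T_{p,l}(\gamma)<n$ for (iv)$\Leftrightarrow$(v). Your handling of the endpoint/threshold bookkeeping in (iii)$\Leftrightarrow$(iv)$\Leftrightarrow$(v) is in fact slightly more careful than the paper's, but the substance is identical.
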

\noindent {\it Remark.} The equivalent statements in
Lemma \ref{lem2.6} first appears in Christol's proof
of Theorem I. The equivalence of these statements is
the key point of Christol's proof. Note that the proof
given here is different from that of Christol
presented in \cite{[Ch]}.

\begin{proof}
${\rm (i)} \Leftrightarrow {\rm (ii)}$. One has
$$\delta_{\bm{\alpha,\beta}}(x, a^l)=\#\{i: a^l\alpha_i\preccurlyeq x\}
-\#\{j: a^l\beta_j\preccurlyeq x\}$$
and
$$\delta_{\bm{\alpha,\beta}}(a^l\beta_k, a^l)=\#\{i: a^l\alpha_i\preccurlyeq a^l\beta_k\}
-\#\{j: a^l\beta_j\preccurlyeq a^l\beta_k\}.$$
Since $\mathbb{R}$ is a totally ordered set with
respect to the order ``$\preccurlyeq$", then Lemma \ref{lem2.5}
applied to the sets $\mathbb{R}$ and $\{a^l\beta_1, ..., a^l\beta_s\}$
with total order $\preccurlyeq$, the equivalence of parts (i) and (ii)
follows immediately from Lemma \ref{lem2.5}.

${\rm (ii)} \Leftrightarrow {\rm (iii)}$.
Let $\gamma_1$ and $\gamma_2$ be in the set $\Psi$.
We claim that for any given positive integer $b$ with $(b,d_{\bm{\alpha,\beta}})=1$
and $bp^l \equiv 1 \mod d_{\bm{\alpha,\beta}}$, one has that
\begin{equation} \label{Eq2.10}
\mathfrak{D}_p^l(\gamma_1)-\frac{\gamma_1}{p^l}\le\mathfrak{D}_p^l(\gamma_2)-\frac{\gamma_2}{p^l}
\end{equation}
holds if and only if exactly one of the following two is true:

(a). $\mathfrak{D}_p^l(\gamma_1)<\mathfrak{D}_p^l(\gamma_2)$.

(b). $\mathfrak{D}_p^l(\gamma_1)=\mathfrak{D}_p^l(\gamma_2)$ and $\gamma_1\ge\gamma_2$.  \\
In fact, if (b) is true, then obviously (2.10) holds. If (a) is true, then $\mathfrak{D}_p^l(\gamma_2)
-\mathfrak{D}_p^l(\gamma_1)>0$. But Lemma \ref{lem2.3} tells us that $\mathfrak{D}_p^l(\gamma_1)$ and
$\mathfrak{D}_p^l(\gamma_2)$ are contained in the set $\{\frac{1}{d_{\bm{\alpha,\beta}}},
\frac{2}{d_{\bm{\alpha,\beta}}},...,1\}$. Hence
$$\mathfrak{D}_p^l(\gamma_2)-\mathfrak{D}_p^l(\gamma_1)\ge\frac{1}{d_{\bm{\alpha,\beta}}}.$$
Since $p^l\ge p>M_{\bm{\alpha,\beta}}>{d_{\bm{\alpha,\beta}}}(|\gamma_1|+|\gamma_2|)$, we can deduce that
\begin{align} \label{Eq2.11}
\mathfrak{D}_p^l(\gamma_2)-\frac{\gamma_2}{p^l}-\mathfrak{D}_p^l(\gamma_1)+\frac{\gamma_1}{p^l}
& \ge\frac{1}{d_{\bm{\alpha,\beta}}}+\frac{\gamma_1-\gamma_2}{p^l}\\
& \ge\frac{1}{d_{\bm{\alpha,\beta}}}-\frac{|\gamma_1|+|\gamma_2|}{p^l}>0 \nonumber.
\end{align}
Thus (\ref{Eq2.10}) holds if one of (a) and (b) is true. Conversely, assume that (\ref{Eq2.10}) holds. If
$\mathfrak{D}_p^l(\gamma_1)>\mathfrak{D}_p^l(\gamma_2)$, then in the same argument as in proving (\ref{Eq2.11}),
one can show that
$$\mathfrak{D}_p^l(\gamma_2)-\frac{\gamma_2}{p^l}-\mathfrak{D}_p^l(\gamma_1)+\frac{\gamma_1}{p^l}
\le-\frac{1}{d_{\bm{\alpha,\beta}}}+\frac{\gamma_1-\gamma_2}{p^l}<0,$$
which contradicts to (\ref{Eq2.10}). Therefore $\mathfrak{D}_p^l(\gamma_1)\le\mathfrak{D}_p^l(\gamma_2)$.
Furthermore, if $\mathfrak{D}_p^l(\gamma_1)=\mathfrak{D}_p^l(\gamma_2)$, then one derives from (\ref{Eq2.10})
that $\gamma_1\ge\gamma_2$. Hence one of (a) and (b) must be true. This proves the truth of the claim.

Now by the definition of $\preccurlyeq$, one knows that $b\gamma_1\preccurlyeq b\gamma_2$ holds
if and only if either $\langle b\gamma_1\rangle=\langle b\gamma_2\rangle$ and $b\gamma_1\ge b\gamma_2$,
or $\langle b\gamma_1\rangle<\langle b\gamma_2\rangle$, by Lemma \ref{lem2.3}, which is equivalent to
either $\mathfrak{D}_p^l(\gamma_1)=\mathfrak{D}_p^l(\gamma_2)$ and $b\gamma_1\ge b\gamma_2$,
or $\mathfrak{D}_p^l(\gamma_1)<\mathfrak{D}_p^l(\gamma_2)$. Since $b>0$, one then deduces that
$b\gamma_1\preccurlyeq b\gamma_2$ holds if and only if exactly one of (a) and (b) is true,
hence if and only if (\ref{Eq2.10}) is true by the claim. That is, one has
\begin{equation} \label{Eq2.12}
b\gamma_1\preccurlyeq b\gamma_2\Longleftrightarrow \mathfrak{D}_{p}^{l}(\gamma_1)
-\frac{\gamma_1}{p^l}\le\mathfrak{D}_{p}^{l}(\gamma_2)-\frac{\gamma_2}{p^l}.
\end{equation}
So by (\ref{Eq2.12}), for $k\in\{1,...,s\}$, we have
$$\#\{i:b\alpha_i\preccurlyeq b\beta_k\}=\#\Big\{i: \mathfrak{D}_{p}^{l}(\alpha_i)
-\frac{\alpha_i}{p^l}\le\mathfrak{D}_{p}^{l}(\beta_k)-\frac{\beta_k}{p^l} \Big\}$$
and
$$\#\{i:b\beta_j\preccurlyeq b\beta_k\}=\#\Big\{i: \mathfrak{D}_{p}^{l}(\beta_j)
-\frac{\beta_j}{p^l}\le\mathfrak{D}_{p}^{l}(\beta_k)-\frac{\beta_k}{p^l} \Big\}.$$
Thereby, one has
\begin{align*}
\delta_{\bm{\alpha,\beta}}(b\beta_k,b)
=&\#\Big\{i: \mathfrak{D}_{p}^{l}(\alpha_i)-\frac{\alpha_i}{p^l}
\le\mathfrak{D}_{p}^{l}(\beta_k)-\frac{\beta_k}{p^l} \Big\} \\
&-\#\Big\{j: \mathfrak{D}_{p}^{l}(\beta_j)-\frac{\beta_j}{p^l}
\le\mathfrak{D}_{p}^{l}(\beta_k)-\frac{\beta_k}{p^l} \Big\}.
\end{align*}
Then taking $b=a^l$ gives the equivalence of parts (ii) and (iii).

${\rm (iii)} \Leftrightarrow {\rm (iv)}$.
Let $A=\{\frac{n}{p^l}: 0\le n\le p^l-1\}$.
For all integers $i$ and $j$ with $1\le i\le r$ and $1\le j\le s$, we set
$$x_i:=\mathfrak{D}_{p}^{l}(\alpha_i)-\frac{\alpha_i}{p^l}
=\frac{T_{p,l}(\alpha_i)}{p^l}$$
and
$$y_j:=\mathfrak{D}_{p}^{l}(\beta_j)-\frac{\beta_j}{p^l}=\frac{T_{p,l}(\beta_j)}{p^l}.$$
Since $0\le T_{p,l}(\alpha_i),T_{p,l}(\beta_j)\le p^l-1$, $x_i$
and $y_j$ are elements of $A$ for all integers $i$ and $j$ with
$1\le i\le r$ and $1\le j\le s$. Hence the equivalence of
parts (iii) and (iv) follows from Lemma \ref{lem2.5}.

${\rm (iv)} \Leftrightarrow {\rm (v)}$.
For $\gamma\in\Psi$ and $n\in\{1,...,p^l\}$,
by (\ref{Eq2.4}) to (\ref{Eq2.6}), one has
$$0\le\Big\lfloor\frac{n}{p^l}\Big\rfloor\le\Big\lceil\frac{n+\gamma}{p^l}
-\mathfrak{D}_p^l(\gamma)\Big\rceil\le\Big\lfloor\frac{n-1}{p^l}\Big\rfloor+1=1.$$
So $\big\lceil\frac{n+\gamma}{p^l}-\mathfrak{D}_p^l(\gamma)\big\rceil=1$
if $\mathfrak{D}_p^l(\gamma)-\frac{\gamma}{p^l}<\frac{n}{p^l}$,
and $\big\lceil\frac{n+\gamma}{p^l}-\mathfrak{D}_p^l(\gamma)\big\rceil=0$ otherwise. Then
\begin{align*}
&\sum_{i=1}^{r}\Big\lceil\frac{n+\alpha_i}{p^l}-\mathfrak{D}_p^l(\alpha_i)
\Big\rceil-\sum_{j=1}^{s}\Big\lceil\frac{n+\beta_j}{p^l}-\mathfrak{D}_p^l(\beta_j)\Big\rceil \\
=&\#\Big\{i: \mathfrak{D}_p^l(\alpha_i)-\frac{\alpha_i}{p^l}\le\frac{n-1}{p^l}\Big\}
-\#\Big\{j: \mathfrak{D}_p^l(\beta_j)-\frac{\beta_j}{p^l}\le\frac{n-1}{p^l}\Big\}.
\end{align*}
Thus the equivalence ${\rm (iv)} \Leftrightarrow {\rm (v)}$ is proved.

This completes the proof of Lemma \ref{lem2.6}.
\end{proof}

\begin{lem} \label{lem2.10}
Let $l\ge 2$ be an integer and $p$ be a prime such that $p>M_{\bm{\alpha,\beta}}$.
Let $n$ and $e$ be integers such that $1\le n\le p$ and
$1\le e\le d_{\bm{\alpha,\beta}}$. Then for any pair $(n,e)$ with
either $n=p$ and $e=d_{\bm{\alpha,\beta}}$ or $\frac{e}{d_{\bm{\alpha,\beta}}}<\frac{n}{p}
<\frac{e+1}{d_{\bm{\alpha,\beta}}}$, one has
\begin{align} \label{Eq2.27}
&\sum_{i=1}^{r}\Big\lceil\frac{n}{p}+\frac{\alpha_i}{p^l}
-\mathfrak{D}_p^l(\alpha_i)\Big\rceil
-\sum_{j=1}^{s}\Big\lceil\frac{n}{p}+\frac{\beta_j}{p^l}
-\mathfrak{D}_p^l(\beta_j)\Big\rceil \\
=&\#\Big\{1\le i\le r: \mathfrak{D}_p^l(\alpha_i)\le\frac{e}{d_{\bm{\alpha,\beta}}}\Big\}
-\#\Big\{1\le j\le s: \mathfrak{D}_p^l(\beta_j)\le\frac{e}{d_{\bm{\alpha,\beta}}}\Big\}. \notag
\end{align}
\end{lem}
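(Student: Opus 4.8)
The plan is to reduce the left-hand side of (\ref{Eq2.27}) to a simple counting identity by analyzing, term by term, when the ceiling $\bigl\lceil \frac{n}{p}+\frac{\gamma}{p^l}-\mathfrak{D}_p^l(\gamma)\bigr\rceil$ equals $1$ versus $0$, for $\gamma\in\Psi$. First I would record the crude bounds: since $p>M_{\bm{\alpha,\beta}}$ and $l\ge 2$, for every $\gamma\in\Psi$ the quantity $\frac{\gamma}{p^l}$ is extremely small in absolute value (of size $O(1/p^2)$), while by (\ref{Eq2.0}) one has $0\le \mathfrak{D}_p^l(\gamma)-\frac{\gamma}{p^l}\le 1-p^{-l}$. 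Together with $0<\frac{n}{p}\le 1$ this forces $\frac{n}{p}+\frac{\gamma}{p^l}-\mathfrak{D}_p^l(\gamma)\in(-1,1)$, so the ceiling is either $0$ or $1$; it is $1$ precisely when $\frac{n}{p}+\frac{\gamma}{p^l}-\mathfrak{D}_p^l(\gamma)>0$, i.e. when $\mathfrak{D}_p^l(\gamma)-\frac{\gamma}{p^l}<\frac{n}{p}$.

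Next I would convert the strict inequality $\mathfrak{D}_p^l(\gamma)-\frac{\gamma}{p^l}<\frac{n}{p}$ into the cleaner condition $\mathfrak{D}_p^l(\gamma)\le\frac{e}{d_{\bm{\alpha,\beta}}}$ using the hypothesis on the pair $(n,e)$. By Lemma \ref{lem2.3} (applicable since $p>M_{\bm{\alpha,\beta}}>d(\gamma)(|\lfloor 1-\gamma\rfloor|+\langle\gamma\rangle)$ by (\ref{Eq2.9})), $\mathfrak{D}_p^l(\gamma)$ lies in $\{\frac1{d_{\bm{\alpha,\beta}}},\frac2{d_{\bm{\alpha,\beta}}},\dots,1\}$, hence equals $\frac{e'}{d_{\bm{\alpha,\beta}}}$ for some integer $e'$ with $1\le e'\le d_{\bm{\alpha,\beta}}$. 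I then compare $\frac{e'}{d_{\bm{\alpha,\beta}}}-\frac{\gamma}{p^l}$ with $\frac{n}{p}$ in the two cases of the hypothesis. In the generic case $\frac{e}{d_{\bm{\alpha,\beta}}}<\frac{n}{p}<\frac{e+1}{d_{\bm{\alpha,\beta}}}$: if $e'\le e$ then $\frac{e'}{d_{\bm{\alpha,\beta}}}-\frac{\gamma}{p^l}<\frac{e}{d_{\bm{\alpha,\beta}}}+\frac{|\gamma|}{p^l}<\frac{n}{p}$ because the gap $\frac{n}{p}-\frac{e}{d_{\bm{\alpha,\beta}}}$ exceeds $\frac{|\gamma|}{p^l}$ (here one uses $p^l\ge p^2>p>M_{\bm{\alpha,\beta}}>d_{\bm{\alpha,\beta}}|\gamma|$ together with $\frac{n}{p}-\frac{e}{d_{\bm{\alpha,\beta}}}=\frac{nd_{\bm{\alpha,\beta}}-ep}{pd_{\bm{\alpha,\beta}}}\ge\frac{1}{pd_{\bm{\alpha,\beta}}}$); conversely if $e'\ge e+1$ then $\frac{e'}{d_{\bm{\alpha,\beta}}}-\frac{\gamma}{p^l}\ge\frac{e+1}{d_{\bm{\alpha,\beta}}}-\frac{|\gamma|}{p^l}>\frac{n}{p}$ by the same margin estimate. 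Hence $\mathfrak{D}_p^l(\gamma)-\frac{\gamma}{p^l}<\frac{n}{p}\iff e'\le e\iff\mathfrak{D}_p^l(\gamma)\le\frac{e}{d_{\bm{\alpha,\beta}}}$. In the boundary case $n=p$, $e=d_{\bm{\alpha,\beta}}$ one has $\frac{n}{p}=1\ge\mathfrak{D}_p^l(\gamma)$ always, and $\mathfrak{D}_p^l(\gamma)-\frac{\gamma}{p^l}<1$ holds trivially, so both conditions hold for all $\gamma$, matching $\mathfrak{D}_p^l(\gamma)\le 1=\frac{e}{d_{\bm{\alpha,\beta}}}$.

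Finally I would assemble the pieces: summing the indicator that the $\gamma$-th ceiling equals $1$ over $\gamma=\alpha_1,\dots,\alpha_r$ and subtracting the analogous sum over $\gamma=\beta_1,\dots,\beta_s$, the left-hand side of (\ref{Eq2.27}) becomes exactly $\#\{i:\mathfrak{D}_p^l(\alpha_i)\le\frac{e}{d_{\bm{\alpha,\beta}}}\}-\#\{j:\mathfrak{D}_p^l(\beta_j)\le\frac{e}{d_{\bm{\alpha,\beta}}}\}$, which is the right-hand side. The only delicate point, and the one I would state most carefully, is the margin estimate showing that the perturbation $\frac{\gamma}{p^l}$ never pushes $\mathfrak{D}_p^l(\gamma)$ across the threshold $\frac{n}{p}$; this is where the hypotheses $l\ge 2$ and $p>M_{\bm{\alpha,\beta}}$ are used in an essential way (a single power of $p$ would not suffice, since then $\frac{|\gamma|}{p}$ could be comparable to $\frac{1}{pd_{\bm{\alpha,\beta}}}$), and it is why the statement explicitly excludes $n/p$ lying exactly on a point $e/d_{\bm{\alpha,\beta}}$ other than the endpoint. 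Everything else is bookkeeping with the floor/ceiling identities already established in Lemma \ref{lem2.1} and the remarks preceding this lemma.
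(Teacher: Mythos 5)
Your proof is correct and follows essentially the same route as the paper's: both hinge on showing each ceiling lies in $\{0,1\}$ and on the margin estimate $\frac{n}{p}-\frac{e}{d_{\bm{\alpha,\beta}}}\ge\frac{1}{pd_{\bm{\alpha,\beta}}}>\frac{|\gamma|}{p^l}$ (and its counterpart at $\frac{e+1}{d_{\bm{\alpha,\beta}}}$), which uses $l\ge 2$ and $p>M_{\bm{\alpha,\beta}}$ exactly as the paper does. The only cosmetic difference is that you first characterize the ceiling being $1$ by the strict inequality $\mathfrak{D}_p^l(\gamma)-\frac{\gamma}{p^l}<\frac{n}{p}$ and then translate it, whereas the paper directly splits on $\mathfrak{D}_p^l(\gamma)\le\frac{e}{d_{\bm{\alpha,\beta}}}$ versus $\mathfrak{D}_p^l(\gamma)\ge\frac{e+1}{d_{\bm{\alpha,\beta}}}$.
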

\begin{proof}
Since $\frac{1}{d_{\bm{\alpha,\beta}}}\le\mathfrak{D}_p^l(\alpha_i),
\mathfrak{D}_p^l(\beta_j)\le 1$, it follows that
$$\#\Big\{i: \mathfrak{D}_p^l(\alpha_i)\le 1\Big\}
-\#\Big\{j: \mathfrak{D}_p^l(\beta_j)\le 1\Big\}=r-s.$$
But $-1<\frac{\gamma}{p^l}-\mathfrak{D}_p^l(\gamma)\le 0$
for any $\gamma\in\Psi $. Then one has
$$\sum_{i=1}^{r}\Big\lceil 1+\frac{\alpha_i}{p^l}
-\mathfrak{D}_p^l(\alpha_i)\Big\rceil
-\sum_{j=1}^{s}\Big\lceil 1+\frac{\beta_j}{p^l}
-\mathfrak{D}_p^l(\beta_j)\Big\rceil=r-s.$$
So (\ref{Eq2.27}) is true for the case that $n=p$ and $e=d_{\bm{\alpha,\beta}}$.

In what follows, we let $1\le n<p$ and $0\le e<d_{\bm{\alpha,\beta}}$ satisfying that
$$\frac{e}{d_{\bm{\alpha,\beta}}}<\frac{n}{p}<\frac{e+1}{d_{\bm{\alpha,\beta}}}.$$
Since $l\ge 2$, it follows from $p>M_{\bm{\alpha,\beta}}>|\gamma|d_{\bm{\alpha,\beta}}$
that for any $\gamma\in\Psi $, we have
$$\frac{n}{p}-\frac{e}{d_{\bm{\alpha,\beta}}}\ge\frac{1}
{pd_{\bm{\alpha,\beta}}}>\frac{|\gamma|}{p^l}$$
and
$$\frac{e+1}{d_{\bm{\alpha,\beta}}}-\frac{n}{p}\ge\frac{1}
{pd_{\bm{\alpha,\beta}}}>\frac{|\gamma|}{p^l}.$$
Noticing that $-1<\frac{\gamma}{p^l}-\mathfrak{D}_p^l(\gamma)\le 0$, one concludes that
$$-1<\frac{n}{p}-1<\frac{n}{p}+\frac{\gamma}{p^l}-\mathfrak{D}_p^l(\gamma)\le \frac{n}{p}<1.$$

If $\mathfrak{D}_p^l(\gamma)\le \frac{e}{d_{\bm{\alpha,\beta}}}$, then one has
$$\frac{n}{p}+\frac{\gamma}{p^l}-\mathfrak{D}_p^l(\gamma)\ge \frac{n}{p}+\frac{\gamma}{p^l}
-\frac{e}{d_{\bm{\alpha,\beta}}}>\frac{\gamma+|\gamma|}{p^l}\ge 0.$$
If $\mathfrak{D}_p^l(\gamma)>\frac{e}{d_{\bm{\alpha,\beta}}}$, then
$\mathfrak{D}_p^l(\gamma)\ge\frac{e+1}{d_{\bm{\alpha,\beta}}}$ since
$$\mathfrak{D}_p^l(\gamma)\in\Big\{\frac{1}{d_{\bm{\alpha,\beta}}},\frac{2}
{d_{\bm{\alpha,\beta}}},...,\frac{d_{\bm{\alpha,\beta}}}{d_{\bm{\alpha,\beta}}}\Big\}.$$
It then follows that
$$\frac{n}{p}+\frac{\gamma}{p^l}-\mathfrak{D}_p^l(\gamma)\le \frac{\gamma}{p^l}
-\Big(\frac{e+1}{d_{\bm{\alpha,\beta}}}-\frac{n}{p}\Big)<\frac{\gamma-|\gamma|}{p^l}\le 0.$$
Therefore
\begin{align*}
\Big\lceil\frac{n}{p}+\frac{\gamma}{p^l}-\mathfrak{D}_p^l(\gamma)\Big\rceil
=\bigg\{\begin{array}{cl}
       1, & {\rm if}\  \mathfrak{D}_p^l(\gamma)\le \frac{e}{d_{\bm{\alpha,\beta}}}, \\
       0, & {\rm otherwise,}
     \end{array}
\end{align*}
which implies that
$$\sum_{i=1}^{r}\Big\lceil\frac{n}{p}+\frac{\alpha_i}{p^l}-\mathfrak{D}_p^l(\alpha_i)\Big\rceil
=\#\Big\{1\le i\le r: \mathfrak{D}_p^l(\alpha_i)\le\frac{e}{d_{\bm{\alpha,\beta}}}\Big\}$$
and
$$\sum_{j=1}^{s}\Big\lceil\frac{n}{p}+\frac{\beta_j}{p^l}-\mathfrak{D}_p^l(\beta_j)\Big\rceil
=\#\Big\{1\le j\le s: \mathfrak{D}_p^l(\beta_j)\le\frac{e}{d_{\bm{\alpha,\beta}}}\Big\}.$$
So the desired result (\ref{Eq2.27}) follows immediately.
The proof of Lemma \ref{lem2.10} is complete.
\end{proof}

The following lemma is an analogue to the equivalence of parts
(ii) and (v) presented in Lemma \ref{lem2.6}.

\begin{lem} \label{lem2.7}
Let $l\ge 2$ be an integer and $p$ be a prime such that $p>M_{\bm{\alpha,\beta}}$.
Let $a\in \{1,...,d_{\bm{\alpha,\beta}}\}$ satisfy that $ap\equiv 1 \mod d_{\bm{\alpha,\beta}}$.
Then the following three statements are equivalent:

{\rm (i).} For all integers $n$ with $1\le n\le p$, one has
\begin{equation} \label{Eq2.13}
\sum_{i=1}^{r}\Big\lceil\frac{n}{p}+\frac{\alpha_i}{p^l}-\mathfrak{D}_p^l(\alpha_i)
\Big\rceil-\sum_{j=1}^{s}\Big\lceil\frac{n}{p}+\frac{\beta_j}{p^l}
-\mathfrak{D}_p^l(\beta_j)\Big\rceil\ge 0.
\end{equation}

{\rm (ii).} For all $e\in\{1,...,d_{\bm{\alpha,\beta}}\}$, one has
\begin{equation} \label{Eq2.14}
\#\Big\{1\le i\le r: \mathfrak{D}_p^l(\alpha_i)\le\frac{e}{d_{\bm{\alpha,\beta}}}\Big\}
-\#\Big\{1\le j\le s: \mathfrak{D}_p^l(\beta_j)\le\frac{e}{d_{\bm{\alpha,\beta}}}\Big\}\ge 0.
\end{equation}

{\rm (iii).} For all $e\in\{1,...,d_{\bm{\alpha,\beta}}\}$, (\ref{Eq1.8}) holds.
\end{lem}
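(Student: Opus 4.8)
The structure of Lemma \ref{lem2.7} is a three-way equivalence, and the natural route is to prove $\mathrm{(i)}\Leftrightarrow\mathrm{(ii)}$ directly using Lemma \ref{lem2.10}, and then $\mathrm{(ii)}\Leftrightarrow\mathrm{(iii)}$ by rewriting $\delta_{\bm{\alpha,\beta}}(a^l(e/d_{\bm{\alpha,\beta}}+m_{\bm{\alpha,\beta}}),a^l)$ in terms of the Dwork iterates $\mathfrak{D}_p^l$. First I would handle $\mathrm{(i)}\Leftrightarrow\mathrm{(ii)}$. For each $e\in\{1,\dots,d_{\bm{\alpha,\beta}}\}$ there is at least one integer $n$ with either $n=p,\ e=d_{\bm{\alpha,\beta}}$, or $e/d_{\bm{\alpha,\beta}}<n/p<(e+1)/d_{\bm{\alpha,\beta}}$ (this uses $p>M_{\bm{\alpha,\beta}}\ge d_{\bm{\alpha,\beta}}$, so the interval $(e/d_{\bm{\alpha,\beta}},(e+1)/d_{\bm{\alpha,\beta}})$ has length $1/d_{\bm{\alpha,\beta}}>1/p$ and hence contains a rational $n/p$); and conversely every integer $n\in\{1,\dots,p\}$ falls into one of these buckets for a unique $e$. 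By Lemma \ref{lem2.10} the left-hand side of (\ref{Eq2.13}) for such an $n$ equals the left-hand side of (\ref{Eq2.14}) for the corresponding $e$. Since the correspondence $n\mapsto e$ is surjective onto $\{1,\dots,d_{\bm{\alpha,\beta}}\}$ and every $n$ is accounted for, the family of inequalities (\ref{Eq2.13}) over all $n\in\{1,\dots,p\}$ is literally the same family as (\ref{Eq2.14}) over all $e$, giving the equivalence.

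Next, for $\mathrm{(ii)}\Leftrightarrow\mathrm{(iii)}$, I would unwind the Christol function. By definition $\delta_{\bm{\alpha,\beta}}(a^l(e/d_{\bm{\alpha,\beta}}+m_{\bm{\alpha,\beta}}),a^l)=\#\{i:a^l\alpha_i\preccurlyeq x_e\}-\#\{j:a^l\beta_j\preccurlyeq x_e\}$ where $x_e:=a^l(e/d_{\bm{\alpha,\beta}}+m_{\bm{\alpha,\beta}})$. Using (\ref{Eq2.12}) from the proof of Lemma \ref{lem2.6} — which says $b\gamma_1\preccurlyeq b\gamma_2\Leftrightarrow \mathfrak{D}_p^l(\gamma_1)-\gamma_1/p^l\le\mathfrak{D}_p^l(\gamma_2)-\gamma_2/p^l$ for $b$ with $bp^l\equiv1\bmod d_{\bm{\alpha,\beta}}$ — one would like to compare $a^l\gamma$ with $x_e$. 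The point of the shift by $m_{\bm{\alpha,\beta}}=\min_{\gamma\in\Psi}(\gamma-\langle\gamma\rangle)$ is that for each $\gamma\in\Psi$ the quantity $\gamma/p^l$ is negligibly small compared to $1/d_{\bm{\alpha,\beta}}$ (since $p^l\ge p>M_{\bm{\alpha,\beta}}>d_{\bm{\alpha,\beta}}|\gamma|$), so that $\mathfrak{D}_p^l(\gamma)-\gamma/p^l\le e/d_{\bm{\alpha,\beta}}$ happens exactly when $\mathfrak{D}_p^l(\gamma)\le e/d_{\bm{\alpha,\beta}}$ (recall $\mathfrak{D}_p^l(\gamma)$ lies in $\frac1{d_{\bm{\alpha,\beta}}}\mathbb{Z}\cap(0,1]$ by Lemma \ref{lem2.3}, so it is a multiple of $1/d_{\bm{\alpha,\beta}}$ and the tiny perturbation cannot change which side of $e/d_{\bm{\alpha,\beta}}$ it is on). I would verify, via (\ref{Eq2.12}) applied with $b=a^l$ and a careful bookkeeping of the $\langle\cdot\rangle$ bracket applied to $x_e$, that $a^l\gamma\preccurlyeq x_e$ is equivalent to $\mathfrak{D}_p^l(\gamma)\le e/d_{\bm{\alpha,\beta}}$; the role of $m_{\bm{\alpha,\beta}}$ is precisely to make $x_e$ sit strictly between consecutive multiples of $1/d_{\bm{\alpha,\beta}}$ after reduction, so that $\langle x_e\rangle$ is pinned down and the tie-breaking in the definition of $\preccurlyeq$ behaves uniformly. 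Granting this, $\delta_{\bm{\alpha,\beta}}(x_e,a^l)$ equals the left side of (\ref{Eq2.14}), and $\mathrm{(ii)}\Leftrightarrow\mathrm{(iii)}$ follows.

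I expect the main obstacle to be the bookkeeping in $\mathrm{(ii)}\Leftrightarrow\mathrm{(iii)}$: one must show that the shifted point $x_e=a^l(e/d_{\bm{\alpha,\beta}}+m_{\bm{\alpha,\beta}})$ really does translate the threshold ``$\mathfrak{D}_p^l(\gamma)\le e/d_{\bm{\alpha,\beta}}$'' correctly through the order $\preccurlyeq$, handling both the $\langle\cdot\rangle$-comparison and the $\ge$ tie-breaking case, and checking that no $\gamma\in\Psi$ ever produces an equality $\mathfrak{D}_p^l(\gamma)-\gamma/p^l=e/d_{\bm{\alpha,\beta}}$ exactly (which is where $\gamma/p^l\ne0$ and the size bound $p>M_{\bm{\alpha,\beta}}$ are used). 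The $\mathrm{(i)}\Leftrightarrow\mathrm{(ii)}$ half is essentially a repackaging of Lemma \ref{lem2.10} once the surjectivity of $n\mapsto e$ is noted, so it should be short.
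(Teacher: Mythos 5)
Your route is the same as the paper's: $(\mathrm{i})\Leftrightarrow(\mathrm{ii})$ by Lemma \ref{lem2.10} plus the observation that every $n\in\{1,\dots,p\}$ lands in exactly one interval $(\frac{e}{d_{\bm{\alpha,\beta}}},\frac{e+1}{d_{\bm{\alpha,\beta}}})$ and every such interval contains some $\frac{n}{p}$ (this half is fine as written), and $(\mathrm{ii})\Leftrightarrow(\mathrm{iii})$ by rewriting $\delta_{\bm{\alpha,\beta}}(x_e,a^l)$ as a count of $\mathfrak{D}_p^l(\gamma)$ below a threshold.

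Two slips in the second half would derail a literal execution. First, the threshold you get is not $\frac{e}{d_{\bm{\alpha,\beta}}}$ but $\big\langle\frac{a^le}{d_{\bm{\alpha,\beta}}}\big\rangle=\frac{e'}{d_{\bm{\alpha,\beta}}}$ where $a^le\equiv e'\bmod d_{\bm{\alpha,\beta}}$: the pointwise claim ``$a^l\gamma\preccurlyeq x_e\Leftrightarrow\mathfrak{D}_p^l(\gamma)\le e/d_{\bm{\alpha,\beta}}$'' is false, and one needs the extra step (present in the paper) that $e\mapsto e'$ permutes $\{1,\dots,d_{\bm{\alpha,\beta}}\}$, so the two quantified families of inequalities coincide. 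Second, you have misdescribed the mechanism of the shift by $m_{\bm{\alpha,\beta}}$: since $m_{\bm{\alpha,\beta}}\in\mathbb{Z}$, one has $\langle x_e\rangle=\langle a^le/d_{\bm{\alpha,\beta}}\rangle$, which is \emph{exactly} a multiple of $1/d_{\bm{\alpha,\beta}}$, not strictly between two such multiples. The actual role of $m_{\bm{\alpha,\beta}}$ is to settle the tie-break in $\preccurlyeq$: when $\langle a^l\gamma\rangle=\langle x_e\rangle$, Lemma \ref{lem2.4}(i) forces $\langle\gamma\rangle=e/d_{\bm{\alpha,\beta}}$, whence $\gamma=\langle\gamma\rangle+(\gamma-\langle\gamma\rangle)\ge e/d_{\bm{\alpha,\beta}}+m_{\bm{\alpha,\beta}}$ by minimality of $m_{\bm{\alpha,\beta}}$, so the tie always resolves as $a^l\gamma\preccurlyeq x_e$. (Relatedly, your claim that ``$\mathfrak{D}_p^l(\gamma)-\gamma/p^l\le e/d_{\bm{\alpha,\beta}}$ iff $\mathfrak{D}_p^l(\gamma)\le e/d_{\bm{\alpha,\beta}}$'' fails when $\mathfrak{D}_p^l(\gamma)=e/d_{\bm{\alpha,\beta}}$ and $\gamma<0$; the correct comparison point is the one attached to $x_e$, not $e/d_{\bm{\alpha,\beta}}$ itself, and it is exactly here that $m_{\bm{\alpha,\beta}}$ enters.) With these two corrections your argument closes and coincides with the paper's.
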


\begin{proof}
At first, we show that parts (i) and (ii) are equivalent.
First, we suppose that part (i) is true. In the following
we prove that part (ii) is true.
Obviously (\ref{Eq2.14}) is true for $e=d_{\bm{\alpha,\beta}}$ by Lemma \ref{lem2.10}.
Since for any $e\in\{1,...,d_{\bm{\alpha,\beta}-1}\}$, one can find an $n\in\{1,...,p-1\}$
such that
\begin{align} \label{Eq2.14'}
\frac{e}{d_{\bm{\alpha,\beta}}}<\frac{n}{p}<\frac{e+1}{d_{\bm{\alpha,\beta}}},
\end{align}
it follows from Lemma \ref{lem2.10} that (\ref{Eq2.14}) holds for
$1\le e\le d_{\bm{\alpha,\beta}}-1$.
Hence part (ii) is true. Conversely, we assume that part (ii)
holds and show that part (i) is true.
Since $(\ref{Eq2.14})$ holds when $e=d_{\bm{\alpha,\beta}}$,
we have that (2.13) holds for $n=p$ by Lemma \ref{lem2.10}.
For $1\le n<p$, there is an $e\in\{0,...,d_{\bm{\alpha,\beta}}-1\}$
such that (\ref{Eq2.14'}) holds. Thus by Lemma \ref{lem2.10},
$(\ref{Eq2.13})$ is true. In other words, part (i) is true.
The proof of the equivalence of parts (i) and (ii) is complete.

Consequently we show the equivalence of parts (ii) and (iii).
Let $\gamma\in\Psi $
and $e\in\{1,...,d_{\bm{\alpha,\beta}}\}$. If
$$\langle a^l\gamma\rangle=\Big\langle\frac{a^le}
{d_{\bm{\alpha,\beta}}}+a^lm_{\bm{\alpha,\beta}}\Big\rangle,$$
then by Lemma \ref{lem2.4}, we have $\langle\gamma \rangle=\frac{e}{d_{\bm{\alpha,\beta}}}$.
But the definition (\ref{Eq1.6}) gives us that
$m_{\bm{\alpha,\beta}}\le\gamma-\langle\gamma\rangle$.
It then follows that
$$a^l\Big(\frac{e}{d_{\bm{\alpha,\beta}}}+m_{\bm{\alpha,\beta}}\Big)\le a^l\gamma.$$
Hence
$$a^l\gamma\preccurlyeq a^l\Big(\frac{e}{d_{\bm{\alpha,\beta}}}+m_{\bm{\alpha,\beta}}\Big)$$
holds for all $\gamma\in\Psi $, if and only if
one has either
$$\langle a^l\gamma\rangle <\Big\langle a^l\Big(\frac{e}{d_{\bm{\alpha,\beta}}}+m_{\bm{\alpha,\beta}}\Big)\Big\rangle$$
or
$$\langle a^l\gamma\rangle =\Big\langle a^l\Big(\frac{e}{d_{\bm{\alpha,\beta}}}+m_{\bm{\alpha,\beta}}\Big)\Big\rangle
\ {\rm and} \ a^l\Big(\frac{e}{d_{\bm{\alpha,\beta}}}+m_{\bm{\alpha,\beta}}\Big)\le a^l\gamma,$$
if and only if
$$\langle a^l\gamma\rangle \le \Big\langle a^l\Big(\frac{e}{d_{\bm{\alpha,\beta}}}+m_{\bm{\alpha,\beta}}\Big)\Big\rangle
=\Big\langle \frac{a^le}{d_{\bm{\alpha,\beta}}}\Big\rangle.$$
Thus one can derive that
\begin{align*}
&\delta_{\bm{\alpha,\beta}}\Big(a^l\big(\frac{e}{d_{\bm{\alpha,\beta}}}
+m_{\bm{\alpha,\beta}}\big), a^l\Big) \\
=&\#\Big\{i: a^l\alpha_i\preccurlyeq a^l\Big(\frac{e}{d_{\bm{\alpha,\beta}}}
+m_{\bm{\alpha,\beta}}\Big)\Big\}
-\#\Big\{j: a^l\beta_j\preccurlyeq a^l\Big(\frac{e}{d_{\bm{\alpha,\beta}}}
+m_{\bm{\alpha,\beta}}\Big)\Big\} \\
=&\#\Big\{i: \langle a^l\alpha_i\rangle\le\Big\langle\frac{a^le}{d_{\bm{\alpha,\beta}}}
\Big\rangle\Big\}-\#\Big\{j: \langle a^l\beta_j\rangle
\le\Big\langle\frac{a^le}{d_{\bm{\alpha,\beta}}}\Big\rangle\Big\} \\
=&\#\Big\{i: \mathfrak{D}_p^l(\alpha_i)\le\Big\langle\frac{a^le}
{d_{\bm{\alpha,\beta}}}\Big\rangle\Big\}
-\#\Big\{j: \mathfrak{D}_p^l(\beta_j)\le\Big\langle\frac{a^le}
{d_{\bm{\alpha,\beta}}}\Big\rangle\Big\},
\end{align*}
the last equality follows from Lemma \ref{lem2.3}.
Therefore part (iii) is true if and only if
$$\#\Big\{i: \mathfrak{D}_p^l(\alpha_i)
\le\Big\langle\frac{a^le}{d_{\bm{\alpha,\beta}}}\Big\rangle\Big\}
-\#\Big\{j: \mathfrak{D}_p^l(\beta_j)
\le\Big\langle\frac{a^le}{d_{\bm{\alpha,\beta}}}\Big\rangle\Big\}\ge0$$
for all $e\in\{1,...,d_{\bm{\alpha,\beta}}\}$.

Since $(a,d_{\bm{\alpha,\beta}})=1$, then for every
$e\in\{1,...,d_{\bm{\alpha,\beta}}\}$, one can find
a unique $e'\in\{1,...,d_{\bm{\alpha,\beta}}\}$ such that
\begin{equation} \label{Eq2.28}
a^le\equiv e' \mod d_{\bm{\alpha,\beta}}.
\end{equation}
Conversely, for each $e'\in\{1,...,d_{\bm{\alpha,\beta}}\}$, one can find
a unique $e\in\{1,...,d_{\bm{\alpha,\beta}}\}$ such that (\ref{Eq2.28}) is true.
Now for any pair $(e, e')$ with $e, e'\in\{1,...,d_{\bm{\alpha,\beta}}\}$
such that (\ref{Eq2.28}) holds, one has that $\frac{a^le}{d_{\bm{\alpha,\beta}}}
-\frac{e'}{d_{\bm{\alpha,\beta}}}$ is an integer, which implies that
$$\frac{e'}{d_{\bm{\alpha,\beta}}}=\Big\langle\frac{a^le}{d_{\bm{\alpha,\beta}}}\Big\rangle.$$
Therefore
\begin{align*}
&\#\Big\{i: \mathfrak{D}_p^l(\alpha_i)\le\Big\langle\frac{a^le}
{d_{\bm{\alpha,\beta}}}\Big\rangle\Big\}
-\#\Big\{j: \mathfrak{D}_p^l(\beta_j)\le\Big\langle\frac{a^le}
{d_{\bm{\alpha,\beta}}}\Big\rangle\Big\} \\
=&\#\Big\{i: \mathfrak{D}_p^l(\alpha_i)\le\frac{e'}{d_{\bm{\alpha,\beta}}}\Big\}
-\#\Big\{j: \mathfrak{D}_p^l(\beta_j)\le\frac{e'}{d_{\bm{\alpha,\beta}}}\Big\}.
\end{align*}
Hence
$$\#\Big\{i: \mathfrak{D}_p^l(\alpha_i)
\le\Big\langle\frac{a^le}{d_{\bm{\alpha,\beta}}}\Big\rangle\Big\}
-\#\Big\{j: \mathfrak{D}_p^l(\beta_j)
\le\Big\langle\frac{a^le}{d_{\bm{\alpha,\beta}}}\Big\rangle\Big\}\ge0$$
for all $e\in\{1,...,d_{\bm{\alpha,\beta}}\}$ if and only if
$$\#\Big\{i: \mathfrak{D}_p^l(\alpha_i)\le\frac{e'}{d_{\bm{\alpha,\beta}}}\Big\}
-\#\Big\{j: \mathfrak{D}_p^l(\beta_j)\le\frac{e'}{d_{\bm{\alpha,\beta}}}\Big\}\ge0$$
for all $e'\in\{1,...,d_{\bm{\alpha,\beta}}\}$.
This proves the equivalence of parts (ii) and (iii).
So Lemma \ref{lem2.7} is proved.
\end{proof}

For positive integers $n$ and $l$, we define two subsets of
$\Psi=\{\alpha_1,...,\alpha_r,\beta_1,...,\beta_s\}$ as follows:
\begin{equation} \label{Eq2.33}
A_l(n):=\Big\{\gamma\in\Psi:
\mathfrak{D}_p^l(\gamma)-\frac{\gamma}{p^l}<\Big\langle\frac{n}{p^l}\Big\rangle\Big\},
\end{equation}
\begin{equation} \label{Eq2.34}
B_l(n):=\Big\{\gamma\in\Psi:
\mathfrak{D}_p^l(\gamma)-\frac{\gamma}{p^l}\ge\Big\langle\frac{n}{p^l}\Big\rangle\Big\}.
\end{equation}
Then $A_l(n)\cap B_l(n)=\emptyset$ and $A_l(n)\cup B_L(n)=\Psi$.
It is possible that $A_l(n)$ or $B_l(n)$ is empty.
If $A_l(n)$ and $B_l(n)$ are nonempty, then let $\xi_l(n)\in A_l(n)$ and $\eta _l(n)\in B_l(n)$
satisfy
\begin{equation} \label{Eq2.35}
\mathfrak{D}_p^l(\xi_l(n))-\frac{\xi_l(n)}{p^l}
=\max_{\gamma\in A_l(n)}\Big\{\mathfrak{D}_p^l(\gamma)-\frac{\gamma}{p^l}\Big\}
\end{equation}
and
\begin{equation} \label{Eq2.36}
\mathfrak{D}_p^l(\eta_l(n))-\frac{\eta_l(n)}{p^l}
=\min_{\gamma\in B_l(n)}\Big\{\mathfrak{D}_p^l(\gamma)-\frac{\gamma}{p^l}\Big\},
\end{equation}
respectively. We have the following result.
\begin{lem} \label{lem2.8}
Let $n$ and $L$ be positive integers such that both of $A_L(n)$ and $B_L(n)$
are nonempty and $\mathfrak{D}_p^L(\xi_L(n))=\mathfrak{D}_p^L(\eta_L(n)).$
Then for all positive integers $l$ with $l\le L$, $\xi_L(n)\in A_l(n)$ and $\eta_L(n)\in B_l(n)$.
Furthermore, we have
$$\mathfrak{D}_p^l(\xi_L(n))-\frac{\xi_L(n)}{p^l}=\max_{\gamma\in A_l(n)}
\Big\{\mathfrak{D}_p^l(\gamma)-\frac{\gamma}{p^l}\Big\}$$
and
$$\mathfrak{D}_p^l(\eta_L(n))-\frac{\eta_L(n)}{p^l}=\min_{\gamma\in B_l(n)}
\Big\{\mathfrak{D}_p^l(\gamma)-\frac{\gamma}{p^l}\Big\}.$$
\end{lem}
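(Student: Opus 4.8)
The plan is to move everything into statements about the integers $T_{p,l}(\cdot)$ and $R_{p,l}(\cdot)$ and to control how they behave under reduction modulo lower powers of $p$; throughout I abbreviate $\xi:=\xi_L(n)$, $\eta:=\eta_L(n)$, $d:=d_{\bm{\alpha,\beta}}$ and $\Gamma:=\max_{\gamma\in\Psi}|\gamma|$, so that $p>M_{\bm{\alpha,\beta}}=d(2+2\Gamma)$ gives $p^l/d>2+2\Gamma>2\Gamma$ for every $l\ge1$. By (\ref{Eq2.1}) one has $\mathfrak{D}_p^l(\gamma)-\gamma/p^l=T_{p,l}(\gamma)/p^l$, and $\langle n/p^l\rangle=R_{p,l}(n)/p^l$ since $R_{p,l}(n)$ is the representative of $n$ in $\{1,\dots,p^l\}$ modulo $p^l$; hence (\ref{Eq2.33})--(\ref{Eq2.34}) read $A_l(n)=\{\gamma\in\Psi:T_{p,l}(\gamma)<R_{p,l}(n)\}$ and $B_l(n)=\{\gamma\in\Psi:T_{p,l}(\gamma)\ge R_{p,l}(n)\}$, while (\ref{Eq2.35})--(\ref{Eq2.36}) say $T_{p,L}(\xi)=\max\{T_{p,L}(\gamma):\gamma\in A_L(n)\}$ and $T_{p,L}(\eta)=\min\{T_{p,L}(\gamma):\gamma\in B_L(n)\}$. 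From $\mathfrak{D}_p^L(\xi)=\mathfrak{D}_p^L(\eta)$ and (\ref{Eq2.1}) one gets $T_{p,L}(\eta)-T_{p,L}(\xi)=\xi-\eta$, and since $\xi\in A_L(n)$, $\eta\in B_L(n)$ force $T_{p,L}(\xi)<R_{p,L}(n)\le T_{p,L}(\eta)$, this is a positive integer $m:=\xi-\eta\le2\Gamma<p$. By Lemma \ref{lem2.4}(ii) the equality $\mathfrak{D}_p^L(\xi)=\mathfrak{D}_p^L(\eta)$ propagates to every level, so $\mathfrak{D}_p^l(\xi)=\mathfrak{D}_p^l(\eta)$ and $T_{p,l}(\eta)=T_{p,l}(\xi)+m$ for all $l\ge1$. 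Finally, maximality of $\xi$ in $A_L(n)$ and minimality of $\eta$ in $B_L(n)$ imply that no $\gamma\in\Psi$ has $T_{p,L}(\gamma)$ strictly between $T_{p,L}(\xi)$ and $T_{p,L}(\xi)+m$ (such a $\gamma$ would lie in $A_L(n)$ above $\xi$, or in $B_L(n)$ below $\eta$); this observation is used at the end.

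Next I would prove $\xi\in A_l(n)$ and $\eta\in B_l(n)$ for each $l\le L$. Set $k:=R_{p,L}(n)-T_{p,L}(\xi)\in\{1,\dots,m\}$. Reducing modulo $p^l$ and using $T_{p,l}(\xi)\equiv T_{p,L}(\xi)$ and $R_{p,l}(n)\equiv R_{p,L}(n)\pmod{p^l}$, together with $1\le k\le m<p^l$, the quantity $R_{p,l}(n)-T_{p,l}(\xi)\in\{2-p^l,\dots,p^l\}$ must equal either $k$ or $k-p^l$; the second option would give $T_{p,l}(\xi)\ge p^l+1-k\ge p^l+1-m$, hence $T_{p,l}(\eta)=T_{p,l}(\xi)+m\ge p^l+1$, contradicting $T_{p,l}(\eta)\le p^l-1$. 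So $R_{p,l}(n)=T_{p,l}(\xi)+k$ with $1\le k\le m$, whence $T_{p,l}(\xi)<R_{p,l}(n)\le T_{p,l}(\eta)$; thus $\xi\in A_l(n)$, $\eta\in B_l(n)$, and both sets are nonempty.

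Then I would settle the extremality. The key claim is that no $\gamma\in\Psi$ satisfies $T_{p,l}(\gamma)\in(T_{p,l}(\xi),T_{p,l}(\eta))$. Granting it: any $\gamma\in A_l(n)$ has $T_{p,l}(\gamma)<R_{p,l}(n)\le T_{p,l}(\eta)$, hence $T_{p,l}(\gamma)\le T_{p,l}(\xi)$, so $\xi$ realizes $\max_{\gamma\in A_l(n)}\bigl(\mathfrak{D}_p^l(\gamma)-\gamma/p^l\bigr)$; symmetrically every $\gamma\in B_l(n)$ has $T_{p,l}(\gamma)\ge R_{p,l}(n)>T_{p,l}(\xi)$, hence $T_{p,l}(\gamma)\ge T_{p,l}(\eta)$, so $\eta$ realizes the minimum over $B_l(n)$ --- which is exactly the assertion of the lemma. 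To prove the claim, suppose $T_{p,l}(\gamma)\in(T_{p,l}(\xi),T_{p,l}(\eta))$. Since $\mathfrak{D}_p^l(\xi)=\mathfrak{D}_p^l(\eta)=:c$, both $T_{p,l}(\xi)=p^lc-\xi$ and $T_{p,l}(\eta)=p^lc-\eta$ lie within $\Gamma$ of $p^lc$, so $T_{p,l}(\gamma)$ does too; writing $T_{p,l}(\gamma)=p^l\mathfrak{D}_p^l(\gamma)-\gamma$ yields $p^l|\mathfrak{D}_p^l(\gamma)-c|\le2\Gamma<p^l/d$, and as $\mathfrak{D}_p^l(\gamma)$ and $c$ are both multiples of $1/d$ by Lemma \ref{lem2.3} (where $p>M_{\bm{\alpha,\beta}}$ enters through (\ref{Eq2.9})), this forces $\mathfrak{D}_p^l(\gamma)=c=\mathfrak{D}_p^l(\xi)$. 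Lemma \ref{lem2.4}(ii) then upgrades this to every level, in particular $\mathfrak{D}_p^L(\gamma)=\mathfrak{D}_p^L(\xi)$, so $T_{p,L}(\gamma)-T_{p,L}(\xi)=\xi-\gamma=T_{p,l}(\gamma)-T_{p,l}(\xi)\in(0,m)\cap\mathbb{Z}$, placing $T_{p,L}(\gamma)$ strictly between $T_{p,L}(\xi)$ and $T_{p,L}(\xi)+m$ --- the situation already ruled out in the first paragraph.

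The step I expect to be the main obstacle is precisely this reduction-modulo-$p^l$ bookkeeping: one has to be certain that the gap $m=\xi-\eta$ is a genuine integer with $m<p$, so that it does not wrap around modulo $p^l$, and that the only elements of $\Psi$ that could invade the interval $(T_{p,l}(\xi),T_{p,l}(\eta))$ are integer translates of $\xi$, which are excluded by the level-$L$ extremality. The hypothesis $\mathfrak{D}_p^L(\xi_L(n))=\mathfrak{D}_p^L(\eta_L(n))$ is what makes all of this work: it forces $m\in\mathbb{Z}$, propagates the Dwork-equality to every level via Lemma \ref{lem2.4}(ii), and confines $T_{p,l}(\xi)$ and $T_{p,l}(\eta)$ to one common window $[\,p^lc-\Gamma,\,p^lc+\Gamma\,]$; the condition $p>M_{\bm{\alpha,\beta}}$ guarantees that these windows (one for each value $c\in\frac1d\mathbb{Z}\cap(0,1]$) are pairwise disjoint, which is what allows the descent from level $L$ to level $l$ to go through.
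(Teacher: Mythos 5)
Your proof is correct. The second half (the extremality of $\xi_L$ and $\eta_L$ at every level $l\le L$) follows essentially the paper's own route: the same dichotomy on whether $\mathfrak{D}_p^l(\gamma)$ equals the common value $c=\mathfrak{D}_p^l(\xi_L)=\mathfrak{D}_p^l(\eta_L)$, with the inequality $p>M_{\bm{\alpha,\beta}}$ separating the residue windows $[p^lc-\Gamma,\,p^lc+\Gamma]$ (this is exactly the claim established inside the proof of Lemma \ref{lem2.6}), and Lemma \ref{lem2.4}(ii) lifting the equal-Dwork case to level $L$ where the extremality hypothesis applies. Where you genuinely diverge is the first half, the membership $\xi_L\in A_l(n)$, $\eta_L\in B_l(n)$: the paper writes out the $p$-adic digit expansions $p^L\mathfrak{D}_p^L(\xi_L)-\xi_L=\sum_{l=0}^{L-1}\bigl(p\mathfrak{D}_p^{l+1}(\xi_L)-\mathfrak{D}_p^{l}(\xi_L)\bigr)p^{l}$ and of $n$, observes that the two expansions for $\xi_L$ and $\eta_L$ agree in all digits of index $\ge 1$, and squeezes the digits of $n$ between them; you instead work with $T_{p,l}$ and $R_{p,l}$ as residues and rule out wrap-around modulo $p^l$ using the bound $m=\xi_L-\eta_L\le 2\Gamma<p$ together with $T_{p,l}(\eta_L)=T_{p,l}(\xi_L)+m\le p^l-1$. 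The two computations are equivalent in content, but yours buys a shorter argument that never mentions individual digits, at the price of having to make the integrality and smallness of the gap $m$ explicit up front; the paper's digit comparison yields slightly more (it identifies the digits $n_1,\dots,n_{L-1}$ of $n$ exactly), though that extra information is not needed for the lemma. One housekeeping remark: the standing hypothesis $p>M_{\bm{\alpha,\beta}}$ is not written into the statement of Lemma \ref{lem2.8}, and both your argument and the paper's rely on it (for Lemmas \ref{lem2.3} and \ref{lem2.4} and for the window separation), so you are right to flag where it enters.
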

\begin{proof}
For brevity, we write $\xi_L:=\xi_L(n)$ and $\eta_L:=\eta_L(n)$. Obviously, we have
\begin{equation} \label{Eq2.15}
p^L\mathfrak{D}_p^L(\xi_L)-\xi_L=\sum_{l=0}^{L-1}\big(p\mathfrak{D}_p^{l+1}(\xi_L)
-\mathfrak{D}_p^{l}(\xi_L)\big)p^{l},
\end{equation}
where $\mathfrak{D}_p^0(\xi_L):=\xi_L$. Since
$$p\mathfrak{D}_p^{l+1}(\xi_L)-\mathfrak{D}_p^{l}(\xi_L)=p\mathfrak{D}_p(\mathfrak{D}_p^l
(\xi_L))-\mathfrak{D}_p^{l}(\xi_L)\in\{0, 1, ..., p-1\},$$
it follows that (\ref{Eq2.15}) is the $p$-adic expansion of $p^L\mathfrak{D}_p^L(\xi_L)-\xi_L$.
Likewise,
\begin{equation} \label{Eq2.16}
p^L\mathfrak{D}_p^L(\eta_L)-\eta_L=\sum_{l=0}^{L-1}\big(p\mathfrak{D}_p^{l+1}(\eta_L)
-\mathfrak{D}_p^l(\eta_L)\big)p^{l}
\end{equation}
is the $p$-adic expansion of $p^L\mathfrak{D}_p^L(\eta_L)-\eta_L$.
Furthermore, let $n=\sum_{l=0}^{\infty}n_lp^l$ be the $p$-adic expansion of $n$.
By definition of $\xi_L$ and $\eta_L$, we have
$$\mathfrak{D}_p^L(\xi_L)-\frac{\xi_L}{p^L}<\Big\langle\frac{n}{p^L}\Big\rangle
\le\mathfrak{D}_p^L(\eta_L)-\frac{\eta_L}{p^L}.$$
Since
$$\Big\langle\frac{n}{p^L}\Big\rangle=p^{-L}(n_0+\cdots+n_{L-1}p^{L-1}),$$
one can derive by (\ref{Eq2.15}) and (\ref{Eq2.16}) that
\begin{equation} \label{Eq2.17}
\sum_{l=0}^{L-1}\big(p\mathfrak{D}_p^{l+1}(\xi_L)-\mathfrak{D}_p^l(\xi_L)\big)p^l
<\sum_{l=0}^{L-1}n_lp^l\le\sum_{l=0}^{L-1}\big(p\mathfrak{D}_p^{l+1}(\eta_L)
-\mathfrak{D}_p^l(\eta_L)\big)p^{l}.
\end{equation}
From the hypothesis $\mathfrak{D}_p^L(\xi_L)=\mathfrak{D}_p^L(\eta_L)$
and Lemma \ref{lem2.4}, it follows that
$\mathfrak{D}_p^l(\xi_L)=\mathfrak{D}_p^l(\eta_L)$
for all positive integers $l$ with $l\le L$.
On then deduces from (\ref{Eq2.17}) that
$$p\mathfrak{D}_p(\xi_L)-\xi_L<n_0\le p\mathfrak{D}_p(\eta_L)-\eta_L,$$
and for all integers $l$ with $1\le l\le L-1$,
$$p\mathfrak{D}_p^{l+1}(\xi_L)-\mathfrak{D}_p^l(\xi_L)=n_l=p\mathfrak{D}_p^{l+1}
(\eta_L)-\mathfrak{D}_p^l(\eta_L).$$
It follows that for all positive integers $l$ with $l\le L$, we have
\begin{align} \label{Eq2.29}
\mathfrak{D}_p^l(\xi_L)-\frac{\xi_L}{p^l}&=\frac{1}{p^l}\sum_{i=0}^{l-1}\big(p\mathfrak{D}_p^{i+1}(\xi_L)
-\mathfrak{D}_p^i(\xi_L)\big)p^i\\
&<\frac{1}{p^l}\sum_{i=0}^{l-1}n_ip^i =\Big\langle\frac{n}{p^l}\Big\rangle \notag \\
&\le\frac{1}{p^l}\sum_{i=0}^{l-1}\big(p\mathfrak{D}_p^{i+1}(\eta_L)
-\mathfrak{D}_p^i(\eta_L)\big)p^i=\mathfrak{D}_p^l(\eta_L)-\frac{\eta_L}{p^l}. \notag
\end{align}
Therefore if $l\le L$, then $\xi_L\in A_l(n)$ and $\eta_L\in B_l(n)$.

Now to finish the proof of Lemma \ref{lem2.8}, it remains to show that
if $\gamma\in A_l(n)$, then we have
\begin{equation} \label{Eq2.18}
\mathfrak{D}_p^l(\gamma)-\frac{\gamma}{p^l}\le\mathfrak{D}_p^l(\xi_L)-\frac{\xi_L}{p^l},
\end{equation}
and if $\gamma\in B_l(n)$, then we have
\begin{equation} \label{Eq2.19}
\mathfrak{D}_p^l(\gamma)-\frac{\gamma}{p^l}\ge\mathfrak{D}_p^l(\eta_L)-\frac{\eta_L}{p^l}.
\end{equation}
But $A_L(n)\cup B_L(n)=\Psi$ and $A_L(n)\cap B_L(n)=\emptyset$.
So it is sufficient to show that for any $\gamma\in\Psi$,
either (\ref{Eq2.18}) holds, or (\ref{Eq2.19}) holds,
which will be done in what follows.

Pick $\gamma\in\Psi$. By the proof of the equivalence
of parts (ii) and (iii) of Lemma \ref{lem2.6}, we know that
$\mathfrak{D}_p^l(\gamma)-\frac{\gamma}{p^l}<\mathfrak{D}_p^l(\xi_L)-\frac{\xi_L}{p^l}$
if $\mathfrak{D}_p^l(\gamma)<\mathfrak{D}_p^l(\xi_L)$, and
$\mathfrak{D}_p^l(\gamma)-\frac{\gamma}{p^l}>\mathfrak{D}_p^l(\eta_L)
-\frac{\eta_L}{p^l}$ if $\mathfrak{D}_p^l(\gamma)>\mathfrak{D}_p^l(\eta_L)$.
Thus either (\ref{Eq2.18}) or (\ref{Eq2.19}) is true for all $\gamma$ satisfying
$\mathfrak{D}_p^l(\gamma)\neq\mathfrak{D}_p^l(\xi_L)$.

Now let $\mathfrak{D}_p^l(\gamma)=\mathfrak{D}_p^l(\xi_L)
(=\mathfrak{D}_p^l(\eta_L))$. Then by Lemma \ref{lem2.4}, we have
\begin{equation} \label{Eq2.30}
\mathfrak{D}_p^L(\gamma)=\mathfrak{D}_p^L(\xi_L)=\mathfrak{D}_p^L(\eta_L).
\end{equation}
Since $A_L(n)\cup B_L(n)=\Psi$ and $A_L(n)\cap B_L(n)=\emptyset$,
by the definitions of $\xi_L$ and $\eta_L$, we have either
$$\mathfrak{D}_p^L(\gamma)-\frac{\gamma}{p^L}\le\mathfrak{D}_p^L(\xi_L)-\frac{\xi_L}{p^L}$$
or
$$\mathfrak{D}_p^L(\gamma)-\frac{\gamma}{p^L}\ge\mathfrak{D}_p^L(\eta_L)-\frac{\eta_L}{p^L}.$$
Then by (\ref{Eq2.30}), one can derive that either $\gamma\ge\xi_L$ or $\gamma\le\eta_L$.
Hence for $\gamma$ with $\mathfrak{D}_p^l(\gamma)=\mathfrak{D}_p^l(\xi_L)$, we have that
either (\ref{Eq2.18}) or (\ref{Eq2.19}) is true.
Therefore $\xi_l$ and $\eta_l$ exist with $\xi_l=\xi_L$ and $\eta_l=\eta_L$.
This finishes the proof of Lemma \ref{lem2.8}.
\end{proof}

\begin{lem} \label{lem2.9}
Let $a\in\{1,...,d_{\bm{\alpha,\beta}}\}$ be coprime to $d_{\bm{\alpha,\beta}}$
and ${\rm ord}(a)$ be the order of $a$ modulo $d_{\bm{\alpha,\beta}}$.
Then each of the following is true.

{\rm (i).} Let $l_1$ and $l_2$ be two positive integers such that $l_1\equiv l_2\ {\rm mod\ ord}(a)$.
Then for any $\gamma\in\Psi $, one has
$\delta_{\bm{\alpha,\beta}}(a^{l_1}\gamma, a^{l_1})
=\delta_{\bm{\alpha,\beta}}(a^{l_2}\gamma, a^{l_2}).$

{\rm (ii).} Let $l$ be a positive integer and $\alpha\in\{\alpha_1,...,\alpha_r\}$.
If the set $J_{\alpha}:=\{\beta_j: 1\le j\le s, \langle\beta_j\rangle=\langle\alpha\rangle \ and \ \beta_j\ge \alpha\}$
is nonempty, then
$\delta_{\bm{\alpha,\beta}}(a^l\alpha, a^l)\ge\delta_{\bm{\alpha,\beta}}(a^l\beta, a^l)$
with $\beta:=\min(J_{\alpha})$. If the set $J_{\alpha}$ is empty, then
$$\delta_{\bm{\alpha,\beta}}(a^l\alpha, a^l)\ge\#\{1\le i\le r: \mathfrak{D}_p^l(\alpha_i)
<\mathfrak{D}_p^l(\alpha)\}-\#\{1\le j\le s: \mathfrak{D}_p^l(\beta_j)<\mathfrak{D}_p^l(\alpha)\}.$$
\end{lem}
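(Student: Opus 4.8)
The plan is to run the whole argument after fixing, once and for all, a prime $p>M_{\bm{\alpha,\beta}}$ with $ap\equiv1\pmod{d_{\bm{\alpha,\beta}}}$ (which exists by Dirichlet's theorem); write $d:=d_{\bm{\alpha,\beta}}$. For every $\gamma\in\Psi$ and every $l\ge1$ we have $a^lp^l=(ap)^l\equiv1\pmod{d(\gamma)}$ and, by (\ref{Eq2.9}), $p^l\ge p>M_{\bm{\alpha,\beta}}>d(\gamma)(|\lfloor1-\gamma\rfloor|+\langle\gamma\rangle)$, so Lemma~\ref{lem2.3} yields $\mathfrak{D}_p^l(\gamma)=\langle a^l\gamma\rangle$; in particular the quantity on the right of the second assertion of part (ii) does not depend on the choice of $p$. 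Setting $z_\gamma:=\mathfrak{D}_p^l(\gamma)-\gamma/p^l=T_{p,l}(\gamma)/p^l$ for $\gamma\in\Psi$, equation (\ref{Eq2.12}) (from the proof of Lemma~\ref{lem2.6}), applied with $b=a^l$, reads $a^l\gamma_1\preccurlyeq a^l\gamma_2$ iff $z_{\gamma_1}\le z_{\gamma_2}$, so with $x_i:=z_{\alpha_i}$ and $y_j:=z_{\beta_j}$ we obtain $\delta_{\bm{\alpha,\beta}}(a^l\gamma,a^l)=\#\{i:x_i\le z_\gamma\}-\#\{j:y_j\le z_\gamma\}$. Two estimates will be invoked repeatedly: (a) $\mathfrak{D}_p^l(\gamma)\in\{\frac{1}{d},\frac{2}{d},\dots,1\}$, so two unequal values of $\mathfrak{D}_p^l$ differ by at least $\frac{1}{d}$; and (b) $p>M_{\bm{\alpha,\beta}}>2d|\gamma|$ forces $|\gamma/p^l|<\frac{1}{2d}$, hence $z_\gamma$ lies in the open interval of radius $\frac{1}{2d}$ about $\mathfrak{D}_p^l(\gamma)$, and two such intervals belonging to different values of $\mathfrak{D}_p^l$ are disjoint.

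For part (i): since $a$ has order ${\rm ord}(a)$ modulo $d$ and $l_1\equiv l_2\pmod{{\rm ord}(a)}$, we get $d\mid a^{l_1}-a^{l_2}$, hence $(a^{l_1}-a^{l_2})\mu\in\mathbb{Z}$ for every $\mu\in\Psi$. Therefore $\langle a^{l_1}\mu\rangle=\langle a^{l_2}\mu\rangle$, while for $\mu,\nu\in\Psi$ one has $a^{l_1}\mu\ge a^{l_1}\nu\iff\mu\ge\nu\iff a^{l_2}\mu\ge a^{l_2}\nu$ because $a^{l_1},a^{l_2}>0$; by the definition of $\preccurlyeq$ this gives $a^{l_1}\mu\preccurlyeq a^{l_1}\nu\iff a^{l_2}\mu\preccurlyeq a^{l_2}\nu$. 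Feeding $\mu\in\{\alpha_1,\dots,\alpha_r,\beta_1,\dots,\beta_s\}$ and $\nu=\gamma$ into the definition (\ref{Eq1.4}) of $\delta$ gives $\delta_{\bm{\alpha,\beta}}(a^{l_1}\gamma,a^{l_1})=\delta_{\bm{\alpha,\beta}}(a^{l_2}\gamma,a^{l_2})$. This part is routine and uses neither $p$ nor the estimates (a), (b).

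For part (ii) when $J_\alpha\neq\emptyset$: put $\beta:=\min J_\alpha$, so $\langle\beta\rangle=\langle\alpha\rangle$ and $\beta\ge\alpha$. By Lemma~\ref{lem2.4}(i), $\mathfrak{D}_p^l(\beta)=\langle a^l\beta\rangle=\langle a^l\alpha\rangle=\mathfrak{D}_p^l(\alpha)$, whence $z_\beta\le z_\alpha$. Subtracting the two instances of the above formula for $\delta$,
\[\delta_{\bm{\alpha,\beta}}(a^l\alpha,a^l)-\delta_{\bm{\alpha,\beta}}(a^l\beta,a^l)=\#\{i:z_\beta<x_i\le z_\alpha\}-\#\{j:z_\beta<y_j\le z_\alpha\},\]
so it suffices to show that the second count vanishes. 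If $z_\beta<y_j\le z_\alpha$, then $y_j$ lies in the radius-$\frac{1}{2d}$ interval about $\mathfrak{D}_p^l(\alpha)$, so by (b) we must have $\mathfrak{D}_p^l(\beta_j)=\mathfrak{D}_p^l(\alpha)$; then $\langle\beta_j\rangle=\langle\alpha\rangle$ by Lemma~\ref{lem2.4}(i), and $z_\beta<\mathfrak{D}_p^l(\alpha)-\beta_j/p^l\le z_\alpha$ forces $\alpha\le\beta_j<\beta$, i.e. $\beta_j\in J_\alpha$ with $\beta_j<\min J_\alpha$---a contradiction. Hence the displayed difference equals $\#\{i:z_\beta<x_i\le z_\alpha\}\ge0$.

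For part (ii) when $J_\alpha=\emptyset$: I will bound the two terms of $\delta_{\bm{\alpha,\beta}}(a^l\alpha,a^l)=\#\{i:x_i\le z_\alpha\}-\#\{j:y_j\le z_\alpha\}$ separately. If $\mathfrak{D}_p^l(\alpha_i)<\mathfrak{D}_p^l(\alpha)$, then $\mathfrak{D}_p^l(\alpha_i)\le\mathfrak{D}_p^l(\alpha)-\frac{1}{d}$ by (a), so $x_i<\mathfrak{D}_p^l(\alpha)-\frac{1}{2d}<z_\alpha$, which gives $\#\{i:x_i\le z_\alpha\}\ge\#\{i:\mathfrak{D}_p^l(\alpha_i)<\mathfrak{D}_p^l(\alpha)\}$. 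Conversely, suppose $y_j\le z_\alpha$: then $\mathfrak{D}_p^l(\beta_j)>\mathfrak{D}_p^l(\alpha)$ is impossible, since by (a), (b) it would yield $y_j>\mathfrak{D}_p^l(\alpha)+\frac{1}{2d}>z_\alpha$; and $\mathfrak{D}_p^l(\beta_j)=\mathfrak{D}_p^l(\alpha)$ is impossible too, since it would force $\langle\beta_j\rangle=\langle\alpha\rangle$ (Lemma~\ref{lem2.4}(i)), hence $\beta_j<\alpha$ (because $J_\alpha=\emptyset$), hence $y_j=\mathfrak{D}_p^l(\alpha)-\beta_j/p^l>z_\alpha$; thus $\mathfrak{D}_p^l(\beta_j)<\mathfrak{D}_p^l(\alpha)$, giving $\#\{j:y_j\le z_\alpha\}\le\#\{j:\mathfrak{D}_p^l(\beta_j)<\mathfrak{D}_p^l(\alpha)\}$. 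Combining the two bounds yields the asserted inequality. The part requiring the most care will be estimate (b): it is the gap between the perturbation size $|\gamma/p^l|<\frac{1}{2d}$ and the $\frac{1}{d}$-spacing of the values of $\mathfrak{D}_p^l$ that lets a membership statement for $z_\gamma$ in a short interval be upgraded to an equality of Dwork-map values and then, via Lemma~\ref{lem2.4}(i), to an equality of the values of $\langle\cdot\rangle$; once this mechanism is in place everything else is bookkeeping, the choice $\beta=\min J_\alpha$ being precisely what is needed to empty the critical window $(z_\beta,z_\alpha]$ in the first case.
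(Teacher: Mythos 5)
Your proof is correct and follows essentially the same route as the paper's: part (i) is the same congruence argument, and part (ii) rests on the same ingredients (Lemma \ref{lem2.3}, Lemma \ref{lem2.4}(i), the equivalence (\ref{Eq2.12}), the minimality of $\beta$ in $J_\alpha$, and the $1/d_{\bm{\alpha,\beta}}$-spacing of the Dwork-map values), merely rephrased throughout in the coordinates $z_\gamma=T_{p,l}(\gamma)/p^l$. Your treatment of the subcase $J_\alpha=\emptyset$ is in fact slightly more careful than the paper's, which there asserts (incorrectly, though harmlessly for the final inequality) that $\langle\beta_j\rangle\neq\langle\alpha\rangle$ for every $j$, whereas you correctly allow for $\beta_j$ with $\langle\beta_j\rangle=\langle\alpha\rangle$ but $\beta_j<\alpha$ and dispose of them via $y_j>z_\alpha$.
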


\begin{proof}
(i). Since $l_1\equiv l_2\ {\rm mod\ ord}(a)$, one has
$a^{{\rm ord}(a)}\equiv 1\mod d_{\bm{\alpha,\beta}}$, and so
$a^{l_1}\equiv a^{l_2}\mod d_{\bm{\alpha,\beta}}$.
Let $\gamma'$ be any element of the set $\Psi$.
Then the exact denominator of $\gamma'$ divides $d_{\bm{\alpha,\beta}}$.
It follows that $a^{l_1}\gamma'-a^{l_2}\gamma'$ is an integer.
Hence $\langle a^{l_1}\gamma'\rangle=\langle a^{l_2}\gamma'\rangle$.
So one has
\begin{align*}
a^{l_1}\gamma'\preccurlyeq a^{l_1}\gamma&\Longleftrightarrow\langle a^{l_1}\gamma'\rangle
<\langle a^{l_1}\gamma\rangle\ \ {\rm or}\ \ (\langle a^{l_1}\gamma'\rangle
=\langle a^{l_1}\gamma\rangle\ {\rm and}\ a^{l_1}\gamma'\ge a^{l_1}\gamma) \\
&\Longleftrightarrow\langle a^{l_2}\gamma'\rangle<\langle a^{l_2}\gamma\rangle\ \ {\rm or}\ \
(\langle a^{l_2}\gamma'\rangle=\langle a^{l_2}\gamma\rangle\ {\rm and}\
a^{l_2}\gamma'\ge a^{l_2}\gamma) \\
&\Longleftrightarrow a^{l_2}\gamma'\preccurlyeq a^{l_2}\gamma.
\end{align*}
Thus we have
\begin{align*}
\delta_{\bm{\alpha,\beta}}(a^{l_1}\gamma, a^{l_1})
&=\#\{1\le i\le r: a^{l_1}\alpha_i\preccurlyeq a^{l_1}\gamma\}
-\#\{1\le j\le s: a^{l_1}\beta_j\preccurlyeq a^{l_1}\gamma\} \\
&=\#\{1\le i\le r: a^{l_2}\alpha_i\preccurlyeq a^{l_2}\gamma\}
-\#\{1\le j\le s: a^{l_2}\beta_j\preccurlyeq a^{l_2}\gamma\} \\
&=\delta_{\bm{\alpha,\beta}}(a^{l_2}\gamma, a^{l_2})
\end{align*}
as desired. Part (i) is proved.

(ii). First let $J_{\alpha}$ be nonempty. Since $\beta=\min(J_{\alpha})$,
one has
$\langle\beta\rangle=\langle\alpha\rangle$ and $\beta \ge \alpha$.
So by Lemma \ref{lem2.4}, one can derive that $\langle a^l\beta\rangle
=\langle a^l\alpha\rangle$ and $a^l\beta \ge a^l\alpha$, that is,
$a^l\beta\preccurlyeq a^l\alpha$. It follows that
$$\#\{1\le i\le r: a^l\alpha_i\preccurlyeq a^l\alpha\}\ge \#\{1\le i\le r: a^l\alpha_i\preccurlyeq a^l\beta\}.$$

Now we claim that for any $\beta'\in\{\beta_1,...,\beta_s\}$, $a^l\beta'\preccurlyeq a^l\alpha$ holds
if and only if $a^l\beta'\preccurlyeq a^l\beta$ holds. First we let $a^l\beta'\preccurlyeq a^l\beta$.
Then it is clear that $a^l\beta'\preccurlyeq a^l\alpha$ since $a^l\beta\preccurlyeq a^l\alpha$.
Contrarily, let $a^l\beta'\preccurlyeq a^l\alpha$. Then either $\langle a^l\beta'\rangle<\langle a^l\alpha\rangle$,
or $\langle a^l\beta'\rangle=\langle a^l\alpha\rangle$ and $a^l\beta '\ge a^l\alpha$.
For the former case, one can deduce that
$\langle a^l\beta'\rangle<\langle a^l\beta\rangle$ since $\langle a^l\beta\rangle=\langle a^l\alpha\rangle$.
This means that $a^l\beta'\preccurlyeq a^l\beta$ as required. For the latter case, we have
$\langle a^l\beta'\rangle=\langle a^l\alpha\rangle$ and $\beta '\ge \alpha$.
Then by Lemma \ref{lem2.4} (i) and noting that $a^l$ is coprime $d_{\bm{\alpha, \beta}}$, one has
$\langle \beta'\rangle=\langle \alpha\rangle$. Thus $\beta '\in J_{\alpha}$. It then follows from
$\beta=\min(J_{\alpha})$ that $\beta'\ge \beta$ and $\langle \beta'\rangle=\langle \beta\rangle$.
Since $a^l$ is a positive integer, one then derives that $\langle a^l\beta'\rangle=\langle a^l\beta\rangle$
and $a^l\beta'\ge a^l\beta$. In other words, one has $a^l\beta'\preccurlyeq a^l\beta$ as desired.
Therefore
$$\#\{1\le j\le s: a^l\beta_j\preccurlyeq a^l\alpha\}=\{1\le j\le s: a^l\beta_j\preccurlyeq a^l\beta\}.$$
So we can conclude that
\begin{align*}
\delta_{\bm{\alpha,\beta}}(a^l\alpha, a^l)
&=\#\{1\le i\le r: a^l\alpha_i\preccurlyeq a^l\alpha\}
-\#\{1\le j\le s: a^l\beta_j\preccurlyeq a^l\alpha\} \\
&\ge\#\{1\le i\le r: a^l\alpha_i\preccurlyeq a^l\beta\}
-\#\{1\le j\le s: a^l\beta_j\preccurlyeq a^l\beta\} \\
&=\delta_{\bm{\alpha,\beta}}(a^l\beta, a^l)
\end{align*}
as required.

If $J_{\alpha}$ is empty, then $\langle\beta_j\rangle\neq\langle\alpha\rangle$
for all integers $j$ with $1\le j\le s$. By Lemma {\ref{lem2.4}}, we deduce that
$\langle a^l\beta_j\rangle\neq\langle a^l\alpha\rangle$ for all
integers $j$ with $1\le j\le s$. Thus $a^l\beta_j\preccurlyeq a^l\alpha$
if and only if
$\mathfrak{D}_p^l(\beta_j)=\langle a^l\beta_j\rangle
<\langle a^l\alpha\rangle=\mathfrak{D}_p^l(\alpha)$.
Note that
$$\{1\le i\le r: a^l\alpha_i\preccurlyeq a^l\alpha\}\supseteq
\{1\le i\le r: \mathfrak{D}_p^l(\alpha_i)<\mathfrak{D}_p^l(\alpha)\}.$$
Hence
\begin{align*}
\delta_{\bm{\alpha,\beta}}(a^l\alpha, a^l)
&=\#\{1\le i\le r: a^l\alpha_i\preccurlyeq a^l\alpha\}
-\#\{j: a^l\beta_j\preccurlyeq a^l\alpha\} \\
&\ge\#\{1\le i\le r: \mathfrak{D}_p^l(\alpha_i)<\mathfrak{D}_p^l(\alpha)\}
-\#\{j: \mathfrak{D}_p^l(\beta_j)<\mathfrak{D}_p^l(\alpha)\}.
\end{align*}
Part (ii) is proved.

This completes the proof of Lemma \ref{lem2.9}.
\end{proof}

As the conclusion of this section, we give the proof
of Theorem \ref{thm1.2}.\\

{\it Proof of Theorem \ref{thm1.2}.} (i). Let $r<s$.
Then by Lemma \ref{lem2.2}, we know that part (i) is true.

(ii). Let $r>s$. Write
$$m:=\max\{|\alpha_i|, |\beta_j|:1\le i\le r, 1\le j\le s\}.$$
Let $\gamma$ be an element in the set $\Psi=\{\alpha_1,...,\alpha_r, \beta_1,...,\beta_s\}$.
For all integers $n$ and $l$ with $1\le n< p^l/d_{\bm{\alpha,\beta}}-m$, since
$$\mathfrak{D}_{p}^{l}(\gamma)\in\Big\{\frac{e}{d_{\bm{\alpha,\beta}}}: 1\le e\le d_{\bm{\alpha,\beta}}\Big\},$$
one has
\begin{equation} \label{Eq2.31}
\frac{n+\gamma}{p^l}<\frac{1}{d_{\bm{\alpha,\beta}}}\le\mathfrak{D}_{p}^{l}(\gamma).
\end{equation}
It then follows from $-1<\frac{\gamma}{p^l}-\mathfrak{D}_{p}^{l}(\gamma)\le 0$ that
\begin{equation} \label{Eq2.20}
\Big\lceil \frac{n+\gamma}{p^l}-\mathfrak{D}_{p}^{l}(\gamma)\Big\rceil=0.
\end{equation}
Then for $1\le n< p^2/d_{\bm{\alpha,\beta}}-m$,
one deduces from (\ref{Eq2.3}) and (\ref{Eq2.20}) that
\begin{equation} \label{Eq2.21}
v_p\Big(\frac{(\alpha_1)_n\cdots(\alpha_r)_n}{(\beta_1)_n\cdots(\beta_s)_n}\Big)
=\sum_{i=1}^{r}\Big\lceil\frac{n+\alpha_i}{p}-\mathfrak{D}_p(\alpha_i)\Big\rceil
-\sum_{j=1}^{s}\Big\lceil\frac{n+\beta_j}{p}-\mathfrak{D}_p(\beta_j)\Big\rceil.
\end{equation}

If $F_{\bm{\alpha,\beta}}(z)\in\mathbb{Z}_p[[z]]$, then the value
of the right-hand side of (\ref{Eq2.21}) is no less than zero for all
integers $n$ with $1\le n\le p$. By Lemma \ref{lem2.6}, one has
$\delta_{\bm{\alpha,\beta}}(x;a)\ge 0$ for any $x\in\mathbb{R}$.

Conversely, suppose that $\delta_{\bm{\alpha,\beta}}(x;a)\ge 0$
for any $x\in\mathbb{R}$. We show that
$F_{\bm{\alpha,\beta}}(z)\in\mathbb{Z}_p[[z]]$.
Equivalently, we show that
\begin{equation} \label{Eq2.32}
v_p\Big(\frac{(\alpha_1)_n\cdots(\alpha_r)_n}
{(\beta_1)_n\cdots(\beta_s)_n}\Big)\ge 0
\end{equation}
for all positive integers $n$. We define the arithmetic function $t$
for a positive integer $n$ by $t(n):=\lceil\log n/\log p\rceil$.
Then it is easy to see that $n<p^l/d_{\bm{\alpha,\beta}}-m$
for all integers $l$ with $l>t(n)$. By using (\ref{Eq2.3}),
(\ref{Eq2.6}) and (\ref{Eq2.20}), we obtain that
\begin{align} \label{Eq2.22}
&v_p\Big(\frac{(\alpha_1)_n\cdots(\alpha_r)_n}{(\beta_1)_n\cdots(\beta_s)_n}\Big)\\
=&\sum_{i=1}^{r}\sum_{l=1}^{t(n)}\Big\lceil\frac{n+\alpha_i}{p^l}
-\mathfrak{D}_{p}^{l}(\alpha_i)\Big\rceil
-\sum_{j=1}^{s}\sum_{l=1}^{t(n)}\Big\lceil\frac{n+\beta_j}{p^l}
-\mathfrak{D}_{p}^{l}(\beta_j)\Big\rceil \notag \\
\ge& (r-s)\sum_{l=1}^{t(n)}\Big\lfloor \frac{n}{p^l}\Big\rfloor-st(n). \notag
\end{align}

If $1\le n< p^2/d_{\bm{\alpha,\beta}}-m$, then by (\ref{Eq2.21})
and Lemma \ref{lem2.6}, one can derive that (\ref{Eq2.32}) is true.

If $p^{2}/d_{\bm{\alpha,\beta}}-m\le n\le p^2$,
since $p>M_{\bm{\alpha,\beta}}$, then one derives that
\begin{align*}
n>& pM_{\bm{\alpha,\beta}}/d_{\bm{\alpha,\beta}}-m\\
=&(2+2m)p-m\\
=&2p+(2p-1)m>2p>p
\end{align*}
and
\begin{align*}
\frac{1}{d_{\bm{\alpha,\beta}}}-\frac{m}{p}
>& \frac{1}{d_{\bm{\alpha,\beta}}}-\frac{m}{M_{\bm{\alpha,\beta}}}\\
=& \frac{1}{d_{\bm{\alpha,\beta}}}-\frac{m}{d_{\bm{\alpha,\beta}}(2+2m)}\\
=& \frac{2+m}{d_{\bm{\alpha,\beta}}(2+2m)}>0.
\end{align*}

Since $n\le p^2$, the former one implies that $t(n)=2$.
The latter one together with the following inequalities
$$\frac{1}{d_{\bm{\alpha,\beta}}}-\frac{m}{p}<\frac{1}{d_{\bm{\alpha,\beta}}}<1$$
implies that
$$\Big\lfloor\frac{1}{d_{\bm{\alpha,\beta}}}-\frac{m}{p}\Big\rfloor=0.$$
But the fact $p\not\equiv 0 \mod d_{\bm{\alpha,\beta}}$ infers that
$$\frac{p}{d_{\bm{\alpha,\beta}}}\ge\Big\lfloor\frac{p}{d_{\bm{\alpha,\beta}}}
\Big\rfloor+\frac{1}{d_{\bm{\alpha,\beta}}}.$$
It then follows that

\begin{align*}
\Big\lfloor\frac{n}{p}\Big\rfloor
\ge & \Big\lfloor\frac{p}{d_{\bm{\alpha,\beta}}}-\frac{m}{p}\Big\rfloor \\
\ge & \Big\lfloor \Big\lfloor\frac{p}{d_{\bm{\alpha,\beta}}}\Big\rfloor
+\frac{1}{d_{\bm{\alpha,\beta}}}-\frac{m}{p}\Big\rfloor\\
=&\Big\lfloor\frac{p}{d_{\bm{\alpha,\beta}}}\Big\rfloor
+\Big\lfloor\frac{1}{d_{\bm{\alpha,\beta}}}-\frac{m}{p}\Big\rfloor
=\Big\lfloor\frac{p}{d_{\bm{\alpha,\beta}}}\Big\rfloor.
\end{align*}
So we can deduce from (\ref{Eq2.22}) and $p>3sd_{\bm{\alpha,\beta}}$ that
\begin{align*}
v_p\Big(\frac{(\alpha_1)_n\cdots(\alpha_r)_n}{(\beta_1)_n\cdots(\beta_s)_n}\Big)
\ge &(r-s)\Big\lfloor\frac{n}{p}\Big\rfloor-2s \\
\ge &(r-s)\Big\lfloor \frac{p}{d_{\bm{\alpha,\beta}}}\Big\rfloor-2s\\
\ge &3s(r-s)-2s\ge s>0.
\end{align*}

If $n>p^2$, then $t(n)>2$. Since $t(n)=\lceil\log n/\log p\rceil$,
one has $t(n)<1+\log n/\log p$ and so $\lfloor\frac{n}{p}\rfloor\ge p^{t(n)-2}$.
Note that $p>3s$. It then follows from (\ref{Eq2.22}) and $t(n)>2$ that
\begin{align*}
v_p\Big(\frac{(\alpha_1)_n\cdots(\alpha_r)_n}{(\beta_1)_n\cdots (\beta_s)_n}\Big)
\ge &(r-s)p^{t(n)-2}-st(n)\\
> &(3s)^{t(n)-2}-st(n)\\
\ge & (3^{t(n)-2}-t(n))s\\
\ge & (1+2(t(n)-2)-t(n))s \\
= & (t(n)-3)s\ge 0.
\end{align*}
One concludes that (\ref{Eq2.32}) holds for all positive integers $n$.
Namely, $F_{\bm{\alpha,\beta}}(z)\in\mathbb{Z}_p[[z]]$. This proves part (ii).

(iii). Let $r=s$. First, we show the necessity part.
Let $F_{\bm{\alpha,\beta}}(z)\in\mathbb{Z}_p[[z]]$.
For any given $k\in\{1,...,s\}$ and $h\in\{1,...,{\rm ord}(a)\}$,
we let
$$n_k:=T_{p,h}(\beta_k)+1=p^h\mathfrak{D}_p^h(\beta_k)-\beta_k+1.$$
Then $1\le n_k\le p^h<p^l/d_{\bm{\alpha,\beta}}-m$
if $l>h$ since $p>d_{\bm{\alpha,\beta}}(2+2m)$.
So for all integers integers $l$ with $l>h$ and $\gamma\in\Psi $,
we deduce from (\ref{Eq2.20}) that
$$\Big\lceil \frac{n_k+\gamma}{p^l}-\mathfrak{D}_{p}^{l}(\gamma)\Big\rceil=0.$$
Thus for all integers $l$ with $l>h$, we have
\begin{equation} \label{Eq2.23}
\sum_{i=1}^{r}\Big\lceil\frac{n_k+\alpha_i}{p^l}-\mathfrak{D}_p^l(\alpha)\Big\rceil
-\sum_{j=1}^{s}\Big\lceil\frac{n_k+\beta_j}{p^l}-\mathfrak{D}_p^l(\beta_j)\Big\rceil=0.
\end{equation}

On the other hand, for any positive integer $l$ with $l\le h$,
by the definition of $T_{p,l}$, we derive that
$$n_k=T_{p,h}(\beta_k)+1\equiv T_{p,l}(\beta_k)+1
=p^l\mathfrak{D}_p^l(\beta_k)-\beta_k+1\mod p^l.$$
Then one has
$$\Big\lceil\frac{n_k+\gamma}{p^l}-\mathfrak{D}_p^l(\gamma)\Big\rceil
=\frac{n_k-p^l\mathfrak{D}_p^l(\beta_k)+\beta_k-1}{p^l}
+\Big\lceil\mathfrak{D}_p^l(\beta_k)-\mathfrak{D}_{p}^{l}(\gamma)
+\frac{\gamma-\beta_k+1}{p^l}\Big\rceil.$$
But (\ref{Eq2.0}) tells us that
$$
-1+\frac{1}{p^l}\le \mathfrak{D}_p^l(\beta_k)-\mathfrak{D}_{p}^{l}(\gamma)
+\frac{\gamma-\beta_k}{p^l}\le 1-\frac{1}{p^l}.
$$
Hence from (\ref{Eq2.12}) one deduces that
\begin{align*}
\Big\lceil\mathfrak{D}_p^l(\beta_k)-\mathfrak{D}_{p}^{l}(\gamma)
+\frac{\gamma-\beta_k+1}{p^l}\Big\rceil
=\bigg\{\begin{array}{cl}
       1, & {\rm if}\  a^l\gamma\preccurlyeq a^l\beta_k, \\
       0, & {\rm otherwise.}
     \end{array}
\end{align*}
Since $r=s$, it then follows immediately that
\begin{align} \label{Eq2.24}
&\sum_{i=1}^{r}\Big\lceil\frac{n_k+\alpha_i}{p^l}-\mathfrak{D}_p^l(\alpha_i)\Big\rceil
-\sum_{j=1}^{s}\Big\lceil\frac{n_k+\beta_j}{p^l}-\mathfrak{D}_p^l(\beta_j)\Big\rceil \\
=&\sum_{i=1}^{r}\Big(\Big\lceil\mathfrak{D}_p^l(\beta_k)-\mathfrak{D}_{p}^{l}(\alpha_i)+
+\frac{\alpha_i-\beta_k+1}{p^l}\Big\rceil+\frac{n_k-p^l\mathfrak{D}_p^l(\beta_k)+\beta_k-1}{p^l}\Big)\notag\\
&-\sum_{j=1}^{s}\Big(\Big\lceil\mathfrak{D}_p^l(\beta_k)-\mathfrak{D}_{p}^{l}(\beta_j)
+\frac{\beta_j-\beta_k+1}{p^l}\Big\rceil +\frac{n_k-p^l\mathfrak{D}_p^l(\beta_k)+\beta_k-1}{p^l}\Big) \notag\\
=& \sum_{i=1}^{r}\Big\lceil\mathfrak{D}_p^l(\beta_k)-\mathfrak{D}_{p}^{l}(\alpha_i)
+\frac{\alpha_i-\beta_k+1}{p^l}\Big\rceil- \sum_{j=1}^{s}\Big\lceil\mathfrak{D}_p^l(\beta_k)
-\mathfrak{D}_{p}^{l}(\beta_j)+\frac{\beta_j-\beta_k+1}{p^l}\Big\rceil \notag \\
=&\delta_{\bm{\alpha,\beta}}(a^l\beta_k;a^l). \notag
\end{align}
Then (\ref{Eq2.3''}) together with (\ref{Eq2.23}) and (\ref{Eq2.24}) gives us that
$$v_p\Big(\frac{(\alpha_1)_{n_k}\cdots(\alpha_r)_{n_k}}{(\beta_1)_{n_k}\cdots(\beta_s)_{n_k}}\Big)
=\sum_{l=1}^{h}\delta_{\bm{\alpha,\beta}}(a^l\beta_k;a^l).$$
Hence one can deduce from $F_{\bm{\alpha,\beta}}(z)\in\mathbb{Z}_p[[z]]$ that (\ref{Eq1.7})
holds for all $h\in\{1,...,{\rm ord}\ a\}$ and $k\in\{1,...,s\}$ as required.

In the following, we show that (\ref{Eq1.8}) holds for all
$l\in\{1,...,{\rm ord}(a)\}$ and $e\in\{1,...,d_{\bm{\alpha,\beta}}\}$.
Since $F_{\bm{\alpha,\beta}}\in\mathbb{Z}_p[[z]]$, one has
$$v_p\Big(\frac{(\alpha_1)_n\cdots(\alpha_r)_n}{(\beta_1)_n\cdots(\beta_s)_n}\Big)\ge 0$$
for each positive integer $n$. Then by (\ref{Eq2.21}) and Lemma \ref{lem2.6},
we can conclude that
$\delta_{\bm{\alpha,\beta}}(x, a)\ge 0$ for all $x\in\mathbb{R}$, thus
$$\delta_{\bm{\alpha,\beta}}\Big(a\Big(\frac{e}{d_{\bm{\alpha,\beta}}}
+m_{\bm{\alpha,\beta}}\Big), a\Big)\ge 0.$$
That is, (1.8) is true when $l=1$.

If $l\ge 2$, then by Lemma \ref{lem2.7}, it is equivalent to show that (\ref{Eq2.13})
is true for all $n\in\{1,...,p\}$. Let $h$ be any given positive integer. If $h>l$,
then $np^{l-1}<p^h/d_{\bm{\alpha,\beta}}-m$. It follows from (\ref{Eq2.20}) that
$$\Big\lceil \frac{np^{l-1}+\gamma}{p^h}-\mathfrak{D}_p^h(\gamma)\Big\rceil=0.$$
If $h<l$, then by (\ref{Eq2.0}), one derives that
$$\Big\lceil\frac{np^{l-1}+\gamma}{p^h}-\mathfrak{D}_p^h(\gamma)\Big\rceil=np^{l-h-1}.$$
Now we can conclude that if $h\neq l$, then noting that $r=s$, we have
$$\sum_{i=1}^{r}\Big\lceil\frac{np^{l-1}+\alpha_i}{p^h}-\mathfrak{D}_p^h(\alpha_i)\Big\rceil
-\sum_{j=1}^{s}\Big\lceil\frac{np^{l-1}+\beta_j}{p^h}-\mathfrak{D}_p^h(\beta_j)\Big\rceil=0.$$
So for all $n\in\{1,...,p\}$, it follows from (\ref{Eq2.3''}) that
\begin{align*}
&v_p\Big(\frac{(\alpha_1)_{np^{l-1}}\cdots(\alpha_r)_{np^{l-1}}}
{(\beta_1)_{np^{l-1}}\cdots(\beta_s)_{np^{l-1}}}\Big) \\
=&\sum_{h=1}^{\infty}\Big(\sum_{i=1}^{r}\Big\lceil\frac{np^{l-1}+\alpha_i}{p^h}
-\mathfrak{D}_p^h(\alpha_i)\Big\rceil-\sum_{j=1}^{s}\Big\lceil\frac{np^{l-1}
+\beta_j}{p^h}-\mathfrak{D}_p^h(\beta_j)\Big\rceil\Big) \\
=&\sum_{i=1}^{r}\Big\lceil\frac{n}{p}+\frac{\alpha_i}{p^l}-\mathfrak{D}_p^l(\alpha_i)\Big\rceil
-\sum_{j=1}^{s}\Big\lceil\frac{n}{p}+\frac{\beta_j}{p^l}-\mathfrak{D}_p^l(\beta_j)\Big\rceil\ge 0.
\end{align*}
Therefore by Lemma \ref{lem2.7}, we know that (\ref{Eq1.8}) holds for all
$l\in\{1,...,{\rm ord}(a)\}$ and $e\in\{1,...,d_{\bm{\alpha,\beta}}\}$ as one desires.
This finishes the proof of the necessity part of part (iii).

Now we turn our attention to the sufficiency part. Suppose that (\ref{Eq1.7}) holds
for all $h\in\{1,...,{\rm ord}\ a\}$ and all $k\in\{1,...,s\}$, and (\ref{Eq1.8}) holds
for all $l\in\{1,...,{\rm ord}(a)\}$ and $e\in\{1,...,d_{\bm{\alpha,\beta}}\}$.
In the remaining part of the proof, we show that $F_{\bm{\alpha,\beta}}(z)
\in\mathbb{Z}_p[[z]]$. In what follows, let $n$ be a fixed positive integer.
For any integer $l>0$, let $A_l (=A_l(n))$ be defined as in (\ref{Eq2.33}).
By (2.1), one has
$$-1<\Big\langle\frac{n}{p^l}\Big\rangle+\frac{\gamma}{p^l}-\mathfrak{D}_p^l(\gamma)\le 1$$
if $\gamma\in\Psi $.
Note that
$$\Big\langle\frac{n}{p^l}\Big\rangle+\frac{\gamma}{p^l}-\mathfrak{D}_p^l(\gamma)>0$$
if $\gamma\in A_l$. Thus for any
$\gamma\in\Psi $, we have
\begin{align*}
\Big\lceil\Big\langle\frac{n}{p^l}\Big\rangle+\frac{\gamma}{p^l}-\mathfrak{D}_p^l(\gamma)\Big\rceil
=\bigg\{\begin{array}{cl}
       1, & {\rm if}\  \gamma\in A_l, \\
       0, & {\rm otherwise.}
     \end{array}
\end{align*}
Since $\frac{n}{p^l}-\langle\frac{n}{p^l}\rangle$ is an integer
and $r=s$, it then follows that
\begin{align*}
&\sum_{i=1}^{r}\Big\lceil\frac{n+\alpha_i}{p^l}-\mathfrak{D}_p^l(\alpha_i)\Big\rceil
-\sum_{j=1}^{s}\Big\lceil\frac{n+\beta_j}{p^l}-\mathfrak{D}_p^l(\beta_j)\Big\rceil \\
=&\sum_{i=1}^{r}\Big\lceil\Big\langle\frac{n}{p^l}\Big\rangle+\frac{\alpha_i}{p^l}
-\mathfrak{D}_p^l(\alpha_i)\Big\rceil
-\sum_{j=1}^{s}\Big\lceil\Big\langle\frac{n}{p^l}\Big\rangle+\frac{\beta_j}{p^l}
-\mathfrak{D}_p^l(\beta_j)\Big\rceil. \\
=&\#\{i: \alpha_i\in A_l\}-\#\{j: \beta_j\in A_l\}.
\end{align*}
Therefore if $A_l$ is empty, then
\begin{equation} \label{Eq2.37}
\sum_{i=1}^{r}\Big\lceil\frac{n+\alpha_i}{p^l}-\mathfrak{D}_p^l(\alpha_i)\Big\rceil
-\sum_{j=1}^{s}\Big\lceil\frac{n+\beta_j}{p^l}-\mathfrak{D}_p^l(\beta_j)\Big\rceil=0.
\end{equation}

If $A_l$ is nonempty, let $\xi_l (=\xi_l(n))\in A_l$ satisfy
(\ref{Eq2.35}). Hence by (\ref{Eq2.12}), for any $\gamma\in\Psi $,
we have that
\begin{equation} \label{Eq2.45}
\gamma\in A_l \Longleftrightarrow a^l\gamma\preccurlyeq a^l\xi_l.
\end{equation}
It follows that if $A_l\ne \emptyset$, then
\begin{align} \label{Eq2.38}
&\sum_{i=1}^{r}\Big\lceil\frac{n+\alpha_i}{p^l}-\mathfrak{D}_p^l(\alpha_i)\Big\rceil
-\sum_{j=1}^{s}\Big\lceil\frac{n+\beta_j}{p^l}-\mathfrak{D}_p^l(\beta_j)\Big\rceil \\
=&\#\{i: \alpha_i\in A_l\}-\#\{j: \beta_j\in A_l\} \notag\\
=&\#\{i: a^l\alpha_i\preccurlyeq a^l\xi_l\}-\#\{j: a^l\beta_j\preccurlyeq a^l\xi_l\} \notag\\
=&\delta_{\bm{\alpha,\beta}}(a^l\xi_l, a^l). \notag
\end{align}
Then by (\ref{Eq2.3''}), (\ref{Eq2.37}) and (\ref{Eq2.38}),
one can conclude that
\begin{align*}
v_p\Big(\frac{(\alpha_1)_n\cdots(\alpha_r)_n}{(\beta_1)_n\cdots(\beta_s)_n}\Big)
=& \sum_{i=1}^rv_p((\alpha_i)_n)-\sum_{j=1}^sv_p((\beta _j)_n)\\
=&\sum_{l=1\atop A_l=\emptyset}^{\infty}
\Big(\sum_{i=1}^{r}\Big\lceil\frac{n+\alpha_i}{p^l}-\mathfrak{D}_p^l(\alpha_i)\Big\rceil
-\sum_{j=1}^{s}\Big\lceil\frac{n+\beta_j}{p^l}-\mathfrak{D}_p^l(\beta_j)\Big\rceil\Big)\\
 &-\sum_{l=1\atop A_l\ne \emptyset}^{\infty}
 \Big(\sum_{i=1}^{r}\Big\lceil\frac{n+\alpha_i}{p^l}-\mathfrak{D}_p^l(\alpha_i)\Big\rceil
-\sum_{j=1}^{s}\Big\lceil\frac{n+\beta_j}{p^l}-\mathfrak{D}_p^l(\beta_j)\Big\rceil\Big)\\
=&\sum_{l=1 \atop A_l\neq\emptyset}^{\infty }\delta_{\bm{\alpha,\beta}}(a^l\xi_l, a^l).
\end{align*}
Thus to show that $F_{\bm{\alpha,\beta}}(z)\in\mathbb{Z}_p[[z]]$,
it is sufficient to show that
\begin{equation} \label{Eq2.25}
\sum_{l=1, \atop A_l\neq\emptyset}^{\infty}\delta_{\bm{\alpha,\beta}}(a^l\xi_l, a^l)\ge 0,
\end{equation}
which will be done in the following.

For every positive integer $l$, let $B_l(=B_l(n))$ be defined as in (\ref{Eq2.34}).
If $B_l$ is nonempty, let $\eta_l(=\eta_l(n))\in B_l$ satisfy (\ref{Eq2.36}).
Then $\gamma\in B_l$ if and only if $a^l\eta_l\preccurlyeq a^l\gamma$. Set
$$L:=\max\{l>0: A_l\ne \emptyset, B_l\neq\emptyset\ {\rm and}\ \mathfrak{D}_p^l(\xi_l)
=\mathfrak{D}_p^l(\eta_l)\}$$
if the set $\{l>0: A_l\ne \emptyset, B_l\neq\emptyset \ {\rm and}\ \mathfrak{D}_p^l(\xi_l)=\mathfrak{D}_p^l(\eta_l)\}$
is nonempty, and $L:=0$ otherwise.
Since for all $\gamma\in\Psi $ and all integers $l$ with
$l>\big\lceil\frac{\log n}{\log p}\big\rceil$, by (\ref{Eq2.31}) one has
$$\Big\langle\frac{n}{p^l}\Big\rangle=\frac{n}{p^l}
<\mathfrak{D}_p^l(\gamma)-\frac{\gamma}{p^l}.$$
So $A_l$ is empty if $l>\big\lceil\frac{\log n}{\log p}\big\rceil$,
which implies that
$L\le \big\lceil\frac{\log n}{\log p}\big\rceil$.
Denote
$$\sum_{l=1, \atop A_l\neq\emptyset}^{\infty}\delta_{\bm{\alpha,\beta}}(a^l\xi_l, a^l):=\Sigma_1+\Sigma_2,$$
where
$$\Sigma_1:=\sum_{l=1 \atop A_l\neq\emptyset}^L\delta_{\bm{\alpha,\beta}}(a^l\xi_l, a^l) \ \ {\rm and}\ \
\Sigma_2:=\sum_{l=L+1 \atop A_l\neq\emptyset}^\infty\delta_{\bm{\alpha,\beta}}(a^l\xi_l, a^l).$$

First of all, we treat $\Sigma_1$. By Lemma \ref{lem2.8}, for all positive integers
$l\le L$, $A_l$ and $B_l$ are both nonempty and $\xi_l=\xi_L\ \ {\rm and}\ \ \eta_l=\eta_L.$
Hence
\begin{equation} \label{Eq2.39}
\Sigma_1=\sum_{l=1}^{L}\delta_{\bm{\alpha,\beta}}(a^l\xi_L, a^l).
\end{equation}
In the following we show that $\Sigma_1\ge 0$. For this purpose,
we consider the following two cases:

{\sc Case 1.} $\xi_L\in\{\beta_1,...,\beta_s\}$.
Write $L:=q {\rm ord}(a)+L_0$ for integers $q$ and $L_0$ with $0\le L_0<{\rm ord}(a)$.
For any $\beta\in\{\beta_1,...,\beta_s\}$, by Lemma \ref{lem2.9}, we have
\begin{equation} \label{Eq2.40}
\sum_{l=1}^{L}\delta_{\bm{\alpha,\beta}}(\beta, a^l)
=q\sum_{l=1}^{{\rm ord}(a)}\delta_{\bm{\alpha,\beta}}(a^l\beta, a^l)
+\sum_{l=1}^{L_0}\delta_{\bm{\alpha,\beta}}(a^l\beta, a^l).
\end{equation}
Since (\ref{Eq1.7}) holds for all
$h\in\{1,...,{\rm ord}(a)\}$ and all $k\in\{1,...,s\}$, one has
$$\sum_{l=1}^{{\rm ord}(a)}\delta_{\bm{\alpha,\beta}}(a^l\beta, a^l)\ge0
\ \ {\rm and}\ \ \sum_{l=1}^{L_0}\delta_{\bm{\alpha,\beta}}(a^l\beta, a^l)\ge 0.$$
Hence by (\ref{Eq2.40}), we have
\begin{equation} \label{Eq2.41}
\sum_{l=1}^{L}\delta_{\bm{\alpha,\beta}}(a^l\beta, a^l)\ge0.
\end{equation}
Then (\ref{Eq2.39}) together with (\ref{Eq2.41})
applied to $\xi_L$ tells us that $\Sigma_1\ge 0$ as required.

{\sc Case 2.} $\xi_L\in\{\alpha_1,...,\alpha_r\}$.
If the set $\{\beta_j: \langle\beta_j\rangle=\langle\xi_L\rangle\}$
is nonempty, then by Lemma \ref{lem2.1}0, one has
$$\delta_{\bm{\alpha,\beta}}\big(a^l\xi_L, a^l\big)
\ge\delta_{\bm{\alpha,\beta}}\big(a^l\xi_L', a^l\big),$$
where
$\xi_L'=\min\{\beta_j: \langle\beta_j\rangle=\langle\xi_L\rangle, \beta_j\ge \xi_L\}$.
So by (\ref{Eq2.41}) applied to $\xi_L'$ gives us that
$$\sum_{l=1}^{L}\delta_{\bm{\alpha,\beta}}(a^l\xi_L, a^l)
\ge\sum_{l=1}^{L}\delta_{\bm{\alpha,\beta}}(a^l\xi_L', a^l)\ge 0.$$
Hence by (\ref{Eq2.40}), $\Sigma_1\ge 0$ if
$\{\beta_j: \langle\beta_j\rangle=\langle\xi_L\rangle\}$ is nonempty.

If the set $\{\beta_j: \langle\beta_j\rangle=\langle\xi_L\rangle, \beta_j\ge \xi_L\}$
is empty, then by Lemma \ref{lem2.9}, one has
\begin{equation} \label{Eq2.42}
\delta_{\bm{\alpha,\beta}}(a^l\xi_L, a^l)\ge\#\big\{i: \mathfrak{D}_p^l(\alpha_i)
<\mathfrak{D}_p^l(\xi_L)\big\}-\#\big\{j: \mathfrak{D}_p^l(\beta_j)
<\mathfrak{D}_p^l(\xi_L)\big\}.
\end{equation}
Since by Lemma \ref{lem2.3}, one has for any $\gamma\in\Psi $ that
$$\mathfrak{D}_p^l(\gamma)\in\Big\{\frac{e}{d_{\bm{\alpha,\beta}}}:
1\le e\le d_{\bm{\alpha,\beta}}\Big\},$$
it then follows that
\begin{align} \label{Eq2.43}
&\#\big\{i: \mathfrak{D}_p^l(\alpha_i)<\mathfrak{D}_p^l(\xi_L)\big\}
-\#\big\{j: \mathfrak{D}_p^l(\beta_j)<\mathfrak{D}_p^l(\xi_L)\big\} \\
=&\#\Big\{i: \mathfrak{D}_p^l(\alpha_i)\le \mathfrak{D}_p^l(\xi_L)-\frac{1}{d_{\bm{\alpha,\beta}}}\Big\}
-\#\Big\{j: \mathfrak{D}_p^l(\beta_j)\le \mathfrak{D}_p^l(\xi_L)-\frac{1}{d_{\bm{\alpha,\beta}}}\Big\}. \notag
\end{align}
Since (1.8) holds for all $l\in\{1,...,{\rm ord}(a)\}$ and
$e\in\{1,...,d_{\bm{\alpha,\beta}}\}$, one can deduce from  Lemma \ref{lem2.7} that
\begin{equation} \label{Eq2.44}
\#\Big\{i: \mathfrak{D}_p^l(\alpha_i)\le \mathfrak{D}_p^l(\xi_L)-\frac{1}{d_{\bm{\alpha,\beta}}}\Big\}
-\#\Big\{j: \mathfrak{D}_p^l(\beta_j)\le \mathfrak{D}_p^l(\xi_L)-\frac{1}{d_{\bm{\alpha,\beta}}}\Big\}\ge 0
\end{equation}
if $\mathfrak{D}_p^l(\xi_L)-\frac{1}{d_{\bm{\alpha,\beta}}}>0$.
Also it is easy to check that (\ref{Eq2.44}) is true if $\mathfrak{D}_p^l(\xi_L)-\frac{1}{d_{\bm{\alpha,\beta}}}=0$.
So it follows from (\ref{Eq2.42}) to (\ref{Eq2.44}) that $\delta_{\bm{\alpha,\beta}}(a^l\xi_L, a^l)\ge 0$
for all integers $l$ with $1\le l\le L$. Hence by (\ref{Eq2.39}), $\Sigma_1\ge 0$ as desired.

Finally, we deal with $\Sigma_2$. Let $l$ be an integer with $l>L$ and $A_l\neq\emptyset$.
Recall that $A_l\cup B_l=\Psi $ and $A_l\cap B_l=\emptyset$.
If $B_l$ is empty, then $A_l=\Psi$. Thus by (\ref{Eq2.45}), we have
$$\delta_{\bm{\alpha,\beta}}(a^l\xi_l, a^l)=\#\{i: \alpha_i\in A_l\}
-\#\{j: \beta_j\in A_l\}=r-s=0.$$
If $B_l$ is nonempty, then
$$\mathfrak{D}_p^l(\xi_l)-\frac{\xi_l}{p^l}<\Big\langle\frac{n}{p^l}\Big\rangle
\le\mathfrak{D}_p^l(\eta_l)-\frac{\eta_l}{p^l}.$$
Then (\ref{Eq2.10}) applied to $\xi_l$ and $\eta_l$ gives us that
$\mathfrak{D}_p^l(\xi_l)\le\mathfrak{D}_p^l(\eta_l)$.
Since $l>L$, we have $\mathfrak{D}_p^l(\xi_l)\ne \mathfrak{D}_p^l(\eta_l)$
by the definition of $L$. Thus $\mathfrak{D}_p^l(\xi_l)<\mathfrak{D}_p^l(\eta_l)$.

We claim that for any $\gamma\in\Psi $,
one has
$$a^l\gamma\preccurlyeq a^l\xi_l \Longleftrightarrow
\mathfrak{D}_p^l(\gamma)\le\mathfrak{D}_p^l(\xi_l).$$
In fact, if $a^l\gamma\preccurlyeq a^l\xi_l$, then
$\langle a^l\gamma\rangle\le \langle a^l\xi_l\rangle$, which implies that
$\mathfrak{D}_p^l(\gamma)\le\mathfrak{D}_p^l(\xi_l)$ as claimed.
Conversely, if $\mathfrak{D}_p^l(\gamma)\le \mathfrak{D}_p^l(\xi_l)$, then
$\mathfrak{D}_p^l(\gamma)<\mathfrak{D}_p^l(\eta_l)$
since $\mathfrak{D}_p^l(\xi_l)<\mathfrak{D}_p^l(\eta_l)$.
Hence in the same way as in the proof of (\ref{Eq2.11}), one can show that
$$\mathfrak{D}_p^l(\eta_l)-\frac{\eta_l}{p^l}
>\mathfrak{D}_p^l(\gamma)-\frac{\gamma}{p^l}.$$
This infers that $\gamma\not\in B_l$. Since $A_l\cup B_l=\Psi$, one has $\gamma\in A_l$.
Then by (\ref{Eq2.45}), $a^l\gamma\preccurlyeq a^l\xi_l$.
The claim is proved.

Now by the claim we can deduce that
\begin{align*}
\delta_{\bm{\alpha,\beta}}(a^l\xi_l, a^l)
=&\#\{i: a^l\alpha_i\preccurlyeq a^l\xi_l\}
-\#\{j: a^l\beta_j\preccurlyeq a^l\xi_l\} \\
=&\#\{i: \mathfrak{D}_p^l(\alpha_i)\le\mathfrak{D}_p^l(\xi_l)\}
-\#\{j: \mathfrak{D}_p^l(\beta_j)\le \mathfrak{D}_p^l(\xi_l)\}.
\end{align*}
Since (\ref{Eq1.8}) holds for all $e\in\{1,...,d_{\bm{\alpha,\beta}}\}$,
it follows from Lemma \ref{lem2.7} that (\ref{Eq2.14}) is true for all
$e\in\{1,...,d_{\bm{\alpha,\beta}}\}$. Since
$\mathfrak{D}_p^l(\xi_l)=\frac{e}{d_{\bm{\alpha,\beta}}}$ for some integer
$e$ with $1\le e\le d_{\bm{\alpha,\beta}}$, we then deduce that
$$\#\{i: \mathfrak{D}_p^l(\alpha_i)\le\mathfrak{D}_p^l(\xi_l)\}
-\#\{j: \mathfrak{D}_p^l(\beta_j)\le \mathfrak{D}_p^l(\xi_l)\}\ge 0.$$
Thus $\delta_{\bm{\alpha,\beta}}(a^l\xi_l, a^l)\ge 0$
for all integers $l$ with $l>L$ and $A_l\neq\emptyset$. Therefore
$\Sigma_2\ge 0$ that completes the proof of (\ref{Eq2.25}).
So the sufficiency part of part (iii) is proved.

The proof of Theorem \ref{thm1.2} is complete. \hfill$\Box$

\section{N-integrality of the hypergeometric series with
algebraic parameters and proof of Theorem \ref{thm1.3}}

In the present section, we study the N-integrality for
the hypergeometric series with parameters from
algebraic number fields. The criterion for such
hypergeometric series to be N-integral is stated
in the first section, namely, Theorem \ref{thm1.3}.
Notice that such result extends Proposition 22 of
\cite{[DRR]}.

Let $K$ be an algebraic number field, and $\mathcal{O}_{K}$
be the ring of the algebraic integers in $K$. Let us recall
some basic concepts and facts. For a prime number $p$, let
$$p\mathcal{O}_{K}=\prod_{\mathfrak{p}\mid p}\mathfrak{p}^{e(\mathfrak{p}|p)}$$
be the unique factorization of $p$ in $\mathcal{O}_{K}$.
Let $\mathfrak{p}$ be a prime ideal dividing $p$
and let $x\in\mathcal{O}_{K}\setminus\{0\}$.
Then there is a nonnegative integer $n$ such that
$x\in\mathfrak{p}^n\setminus\mathfrak{p^{n+1}}$.
We define for every $x\in\mathcal{O}_{K}\setminus\{0\}$ that
$$v_{\mathfrak{p}}(x):=\frac{n}{e({\mathfrak{p}|p})} \ \ {\rm if}\
x\in\mathfrak{p}^n\setminus\mathfrak{p^{n+1}},$$
and let $v_{\mathfrak{p}}(0):=\infty$.
Then $v_{\mathfrak{p}}$ is the normalized $\mathfrak{p}$-adic valuation
of $\mathcal{O}_K$ and can be extended uniquely to a valuation of $K$.
Let $v_p$ denote the $p$-adic valuation on $\mathbb{Q}$. Then
$$v_{\mathfrak{p}}(x)=v_p(x)\ \ \forall\ x\in\mathbb{Q}.$$
Let $K_{\mathfrak{p}}$ and $\mathcal{O}_{K,\mathfrak{p}}$ denote the
$\mathfrak{p}$-adic completion of $K$ and $\mathcal{O}_K$, respectively.
Then $\mathcal{O}_{K,\mathfrak{p}}$ is the ring of integers in $K_{\mathfrak{p}}$.
Let $f(\mathfrak{p}|p):=[\mathcal{O}_K/\mathfrak{p}:\mathbb{Z}/p\mathbb{Z}]$.
One has
\begin{equation*}
[K_{\mathfrak{p}}:\mathbb{Q}_p]=f(\mathfrak{p}|p)e({\mathfrak{p}|p}).
\end{equation*}
Let $\mathbb{C}_p$ be the completion of the algebraic closure of $\mathbb{Q}_p$,
and $v_p$ be the corresponding extension of valuation from $\mathbb{Q}_p$
to $\mathbb{C}_p$. For a monomorphism $\sigma_p: K \mapsto\mathbb{C}_p$,
we define a map $v_{\sigma}$ from $K$ to the set of nonnegative real numbers
$\mathbb{R}_{\ge 0}$ as follows:
$$v_{\sigma}(x):=v_p(\sigma_p(x)).$$
So $v_{\sigma}$ defines a valuation on $K$ and there exists a prime
ideal $\mathfrak{p}$ such that $v_{\sigma}=v_{\mathfrak{p}}$ and
there is a unique monomorphism $\widehat{\sigma}_p: K_{\mathfrak{p}}\mapsto\mathbb{C}_p$
which satisfies $\widehat{\sigma}_p|_K=\sigma_p$.
That is, for every monomorphism $\sigma_p$ from $K$ to $\mathbb{C}_p$,
there is a prime ideal $\mathfrak{p}$ of $K$ and a monomorphism
$\widehat\sigma_p$ from $K_{\mathfrak{p}}$ to $\mathbb{C}_p$
such that the following diagram is commutative:
$$\xymatrix{
  K \ar[d]_{\iota} \ar[r]^{\sigma_p}
                & \mathbb{C}_p \ar[d]^{v_p}  \\
  K_{\mathfrak{p}} \ar@{.>}[ur]|-{\widehat\sigma_p} \ar[r]_{v_\mathfrak{p}}
                & \mathbb{R}_{\ge 0},             }$$
where $\iota$ is the canonical monomorphism from $K$ to $K_{\mathfrak{p}}$.
Let $\widehat{\sigma_p(K)}$ denote the completion of $\sigma_p(K)$,
then $\widehat{\sigma}_p$ gives an isomorphism:
$\widehat{\sigma_p(K)}\cong K_{\mathfrak{p}}$.

First of all, we prove a bounded result on the $p$-adic valuation of the product
of the polynomial $f$ with integer coefficients and no zero of nonnegative
integers evaluated at the first $n$ nonnegative integers. For this purpose,
we need the following important result that is due to Stewart
(Corollary 2 of \cite{[St]}). It is one of the important ingredients of this paper.

\begin{lem} \label{lem3.1} \cite{[St]}
Let $f(z)\in\mathbb{Z}[z]$ be primitive of degree $d$ $(\ge 2)$
and nonzero discriminant $D$. Let $p$ be a prime number.
Then for any positive integer $l$, the
number of solutions modulo $p^l$ of $f(z) \equiv 0\mod p^l$
is at most $2p^{\lfloor v_p(D)/2\rfloor}+d-2$.
\end{lem}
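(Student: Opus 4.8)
This is Corollary~2 of \cite{[St]}, which we use as a black box; still, let me describe the strategy one would follow. The plan is to translate the counting problem into the $p$-adic world and to group the solutions around the roots of $f$.

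First I would fix an extension of the $p$-adic valuation $v_p$ to $\mathbb{C}_p$ (normalised by $v_p(p)=1$) and factor $f(z)=a_d\prod_{i=1}^{d}(z-\rho_i)$ with $a_d\in\mathbb{Z}$ and the $\rho_i\in\overline{\mathbb{Q}}_p$ pairwise distinct; distinctness is exactly the hypothesis $D\neq 0$. For an integer $x$ one has $v_p(f(x))=v_p(a_d)+\sum_{i=1}^{d}v_p(x-\rho_i)$, so a solution of $f(x)\equiv 0\bmod p^{l}$ is an integer with $\sum_{i}v_p(x-\rho_i)\ge l-v_p(a_d)$; in particular $x$ must be $p$-adically very close to at least one root, and the solutions modulo $p^{l}$ split naturally according to which residue disk, hence which cluster of close roots, they approach.

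The next step is a quantitative Hensel argument. If $x\in\mathbb{Z}_p$ satisfies $v_p(f(x))>2v_p(f'(x))$, then $f$ has a unique root $\rho\in\mathbb{Z}_p$ with $v_p(x-\rho)=v_p(f(x))-v_p(f'(x))$ and $v_p(f'(\rho))=v_p(f'(x))$; conversely, once $l>2v_p(f'(\rho))$, the residues $x\bmod p^{l}$ with $v_p(x-\rho)\ge l-v_p(f'(\rho))$ are exactly $p^{\,v_p(f'(\rho))}$ in number and every one of them is a solution. Hence, setting aside a bounded number of ``exceptional'' solutions for which the roots involved lie too close together for this dichotomy to apply, together with the unstable range of small $l$, one arrives at a bound of the shape
\[
\#\{x\bmod p^{l}:f(x)\equiv 0\bmod p^{l}\}\ \le\ \sum_{\substack{\rho\in\mathbb{Z}_p\\ f(\rho)=0}}p^{\,v_p(f'(\rho))}\ +\ (d-2).
\]
It then remains to show $\sum_{\rho}p^{\,v_p(f'(\rho))}\le 2p^{\lfloor v_p(D)/2\rfloor}$. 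Here one uses the identity $a_d D=\pm\operatorname{Res}(f,f')=\pm a_d^{\,d-1}\prod_{i=1}^{d}f'(\rho_i)$, which controls $\sum_i v_p(f'(\rho_i))$ in terms of $v_p(D)$, combined with the key observation that a pair of $p$-adically close roots contributes its large mutual distance to $v_p(f'(\rho_i))$ for \emph{two} distinct indices $i$; this double-counting is what turns $v_p(D)$ into roughly $v_p(D)/2$ in the dominant exponent.

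The main obstacle is exactly this last step: organising the roots into clusters, bounding the per-cluster contribution $\sum_{\rho\in C}p^{\,v_p(f'(\rho))}$ by $2p^{\lfloor v_p(D_C)/2\rfloor}$ for a suitable local discriminant $D_C$, and then summing over clusters while keeping the estimate uniform in $l$ (the $d-2$ term absorbing the solutions not governed by the strong Hensel dichotomy and the small-$l$ range). This clustering and double-counting analysis is the technical heart of \cite{[St]}.
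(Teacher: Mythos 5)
The paper offers no proof of this lemma at all: it is quoted verbatim as Corollary~2 of \cite{[St]} and used as a black box, exactly as you do. Your accompanying sketch (the $p$-adic factorisation, the quantitative Hensel dichotomy, and the clustering/double-counting step that converts $v_p(D)$ into the exponent $\lfloor v_p(D)/2\rfloor$) is a faithful outline of Stewart's actual argument, so there is nothing to correct.
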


\begin{lem} \label{lem3.2}
Let $f(z)\in\mathbb{Z}[z]$ be a polynomial such that $f(k)\neq 0$
for all nonnegative integers $k$. Denote $d:=\deg f(z)$.
Then for all prime p and positive integers $n$,
there is a positive constant $C$ that depends on $f$ and $p$ such that
$$v_p\Big(\prod_{k=0}^{n-1}f(k)\Big)\le Cn.$$
\end{lem}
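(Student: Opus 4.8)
The plan is to write $v_p\big(\prod_{k=0}^{n-1}f(k)\big)=\sum_{k=0}^{n-1}v_p(f(k))=\sum_{l\ge1}\#\{0\le k<n:p^l\mid f(k)\}$ and to control each inner count via the number of roots of $f$ modulo $p^l$. The first step is a reduction to an irreducible primitive polynomial: writing $f=c\prod_i f_i^{e_i}$ with $c\in\mathbb{Z}_{\ge1}$ the content and the $f_i\in\mathbb{Z}[z]$ distinct irreducible primitive polynomials (Gauss's lemma), we get $v_p(f(k))=v_p(c)+\sum_i e_i v_p(f_i(k))$, where $f_i(k)\ne0$ for all $k\ge0$ because $f(k)\ne0$. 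The term $v_p(c)$ contributes only $n\,v_p(c)$ to the total, so it is enough to prove the bound for each $f_i$ in place of $f$; renaming, we may assume $f$ is irreducible and primitive of degree $d\ge1$.

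The heart of the argument is that $N(p^l):=\#\{0\le k<p^l:f(k)\equiv0\bmod p^l\}$ is bounded by a constant independent of $l$. If $d=1$, say $f(z)=az+b$ with $(a,b)=1$, then $N(p^l)\le1$ (it is $0$ when $p\mid a$ and $1$ otherwise). If $d\ge2$, then since we are in characteristic zero $f$ is separable, so its discriminant $D$ is nonzero, and Lemma \ref{lem3.1} applied to the primitive polynomial $f$ gives $N(p^l)\le 2p^{\lfloor v_p(D)/2\rfloor}+d-2$ for every $l$. In both cases there is a constant $B=B(f,p)$ with $N(p^l)\le B$ for all $l\ge1$, and hence, since among $0,1,\dots,n-1$ each residue class modulo $p^l$ occurs at most $\lfloor n/p^l\rfloor+1$ times,
$$\#\{0\le k<n:p^l\mid f(k)\}\le\Big(\Big\lfloor\frac{n}{p^l}\Big\rfloor+1\Big)N(p^l)\le\Big(\frac{n}{p^l}+1\Big)B.$$

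To finish, I would observe that if $p^l\mid f(k)$ with $0\le k<n$ then, as $f(k)\ne0$, we have $p^l\le|f(k)|\le A n^d$ for some constant $A=A(f)$, so only the terms with $l\le l_1:=\lfloor\log_p(An^d)\rfloor$ contribute to $\sum_{l\ge1}$. Therefore
$$v_p\Big(\prod_{k=0}^{n-1}f(k)\Big)\le\sum_{l=1}^{l_1}\Big(\frac{n}{p^l}+1\Big)B\le\frac{Bn}{p-1}+Bl_1,$$
and since $l_1\le d\log_p n+\log_p A\le dn+\log_p A$ for $n\ge1$, the right side is at most $Cn$ for a suitable $C=C(f,p)$. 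Adding back the contribution $n\,v_p(c)$ and summing over the finitely many factors $f_i$ gives the stated inequality for the original $f$.

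The step I expect to be the main obstacle — indeed the one place a nontrivial external input is unavoidable — is the uniform-in-$l$ bound on $N(p^l)$. The elementary Lagrange bound $N(p)\le d$ says nothing about higher prime-power moduli, where the number of roots genuinely can grow with $l$ (for example when $p^2\mid f(k)$ has many solutions because $f$ is divisible by a square modulo $p$); Stewart's estimate in Lemma \ref{lem3.1} is precisely what rescues the argument, and it also dictates the preliminary reduction to a polynomial of nonzero discriminant.
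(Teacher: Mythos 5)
Your proof is correct and follows essentially the same route as the paper's: reduce to primitive irreducible factors, invoke Stewart's bound (Lemma \ref{lem3.1}) to control the number of roots of $f$ modulo $p^l$ uniformly in $l$, count each residue class at most $\lfloor n/p^l\rfloor+1$ times among $0,\dots,n-1$, truncate the sum over $l$ at $O(\log_p n)$ via a size bound on $|f(k)|$, and sum the geometric series. Your explicit treatment of the degree-one case (where Lemma \ref{lem3.1} does not apply and the discriminant argument is vacuous, but $N(p^l)\le 1$ directly) is a small but genuine improvement over the paper, which tacitly assumes its irreducible factors have degree at least two.
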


\begin{proof}
First of all, let $f(z)\in\mathbb{Z}[[z]]$ be primitive and
irreducible. Then the discriminant $D_f$ of $f(z)$ is nonzero.
The following identity is clear true (see, for example, (3.1) of
\cite{[HQ1]} or (4.1) of \cite{[HQ2]}):
For any $n$ nonzero integers $A_1, ..., A_n$, one has
\begin{equation} \label{eq3.1.1}
v_p\Big(\prod_{k=1}^nA_k\Big)=\sum_{l=1}^\infty \#\{1\le k\le n: A_k\equiv 0 \mod {p^l}\}.
\end{equation}
With $A_k$ being applied to $f(k)$ in (\ref{eq3.1.1}), we have
\begin{equation} \label{eq3.1.2}
v_p\Big(\prod_{k=0}^{n-1} f(k)\Big)
=\sum_{l=0}^{\infty}\#\{0\le k< n: f(k)\equiv 0\mod p^l\}.
\end{equation}

On the one hand, by Lemma \ref{lem3.1}, for any positive integers $l$,
the congruence equation $f(z)\equiv 0 \mod p^l$ has at most
$$S_p:=2p^{\lfloor v_p(D_f)/2\rfloor}+d-2$$
roots modulo $p^l$. Thus one has
\begin{align} \label{Eq3.3}
& \#\{0\le k< n: f(k)\equiv 0\mod p^l\} \\
\le &\#\{0\le k<p^l\Big\lceil\frac{n}{p^l}\Big\rceil: f(k)\equiv 0\mod p^l\} \notag \\
\le & \Big\lceil\frac{n}{p^l}\Big\rceil\cdot\#\{0\le k<p^l: f(k)\equiv 0\mod p^l\} \notag \\
\le & \Big\lceil\frac{n}{p^l}\Big\rceil S_p \notag \\
\le & \Big(\Big\lfloor\frac{n}{p^l}\Big\rfloor+1\Big)S_p. \notag
\end{align}

On the other hand, since $\deg f(x)=d$, one has
$$\lim_{n\rightarrow \infty}\frac{f(n)}{n^{d+1}}=0,$$
which implies that there is a positive integer $N$
such that $|f(n)|<n^{d+1}$ for all integers $n$ with $n>N$.
Set
$$Q_p:=\max_{0\le k\le N}\big\{v_p\big(f(k)\big)\big\}.$$
Obviously, one has
$$\#\{0\le k\le N: f(k)\equiv 0\mod p^l\}=0$$
if $l>Q_p$. For a given positive integer $n$, let
$$\mathcal{M}(n):=\max\Big\{Q_p, (d+1)\Big\lceil\frac{\log n}{\log p}\Big\rceil\Big\}.$$
Then $\mathcal{M}(n)\ge Q_p$ and $p^{\mathcal{M}(n)}\ge n^{d+1}$.

If $n\le N$ and $l>\mathcal{M}(n)$, then
$$\#\{0\le k< n: f(k)\equiv 0\mod p^l\}=0.$$

If $n>N$ and $l>\mathcal{M}(n)$, then for all integers $k$
with $N<k<n$, we have
$$0<|f(k)|<k^{d+1}<n^{d+1}\le p^{\mathcal{M}(n)}<p^l,$$
which implies that $f(k)\not\equiv 0\mod p^l$.
Thus
$$\#\{N< k< n: f(k)\equiv 0\mod p^l\}=0.$$
Therefore for all positive integers $n$ and $l$
with $l>\mathcal{M}(n)$, one has
\begin{equation} \label{Eq3.4}
\#\{0\le k< n: f(k)\equiv 0\mod p^l\}=0.
\end{equation}

By using (\ref{eq3.1.2}) to (\ref{Eq3.4}),
we derive immediately that
\begin{align} \label{Eq3.5}
v_p\Big(\prod_{k=0}^{n-1} f(k)\Big)=&\sum_{l=1}^{\mathcal{M}(n)}\#\{0\le k< n: f(k)\equiv 0\mod p^l\}\\
&+\sum_{l=\mathcal{M}(n)+1}^{\infty }\#\{0\le k< n: f(k)\equiv 0\mod p^l\} \notag \\
=&\sum_{l=1}^{\mathcal{M}(n)}\#\{0\le k< n: f(k)\equiv 0\mod p^l\} \notag \\
\le &\sum_{l=1}^{\mathcal{M}(n)}\Big(\Big\lfloor\frac{n}{p^l}\Big\rfloor+1\Big)S_p \notag \\
\le & S_p\sum_{l=1}^{\infty}\frac{n}{p^l}+\mathcal{M}(n)S_p \notag \\
= & \frac{nS_p}{p-1}+\mathcal{M}(n)S_p \notag \\
=& n\Big(\frac{1}{p-1}+\frac{\mathcal{M}(n)}{n}\Big)S_p. \notag
\end{align}
But
$$\frac{\mathcal{M}(n)}{n}=\max\Big(\frac{Q_p}{n},
\frac{d+1}{n}\Big\lceil\frac{\log n}{\log p}\Big\rceil\Big)\le\max\Big(Q_p,
\frac{d+1}{n}\Big\lceil\frac{\log n}{\log p}\Big\rceil\Big).$$
Let $n\in (p^t, p^{t+1}]$ with $t\ge 0$ being an integer. Then
$$\frac{1}{n}\Big\lceil\frac{\log n}{\log p}\Big\rceil<\frac{t+1}{p^t}\le \frac{t+1}{2^t}\le 1.$$
It infers that
$$\frac{\mathcal{M}(n)}{n}\le \max(Q_p, d+1).$$
This together with (\ref{Eq3.5}) gives us that
$$
v_p\Big(\prod_{k=0}^{n-1} f(k)\Big)\le \Big(\frac{1}{p-1}+\max(Q_p, d+1)\Big)S_pn:=Cn
$$
if $f(z)$ is primitive and irreducible. Hence Lemma 3.2 is true
when $f(z)$ is primitive and irreducible.

In what follows, we deal with the general case.
Let $f(z)$ be any polynomial in $\mathbb{Z}[z]$ such that $f(k)\neq 0$ for all
nonnegative integers $k$. Then there are nonnegative integer $c$ and primitive
irreducible polynomials $f_1(z),...,f_m(z)\in\mathbb{Z}[z]$ such that
$$f(z)=c\prod_{i=1}^{m} f_i(z).$$
The arguments above tell us that there are positive constant $C_1,...,C_m$ such that
$$v_p\Big(\prod_{k=0}^{n-1} f_i(k)\Big)\le C_in$$
for all integers $i$ with $1\le i\le m$. It then follows that
\begin{align*}
v_p\Big(\prod_{k=0}^{n-1} f(k)\Big)= & v_p\Big(\prod_{k=0}^{n-1}c\prod_{i=1}^m f_i(k)\Big)\\
= & v_p\Big(c^n\prod_{i=1}^{m}\prod_{k=0}^{n-1} f_i(k)\Big)\\
= & n v_p(c)+\sum_{i=1}^m v_p\Big(\prod_{k=0}^{n-1} f_i(k)\Big)\\
\le & \Big(v_p(c)+\sum_{i=1}^m C_i\Big)n:=Cn
\end{align*}
as desired. This proves Lemma \ref{lem3.2} for the general case.
So Lemma \ref{lem3.2} is proved.
\end{proof}

\begin{lem} \label{lem3.3}
Let $\gamma\in K\setminus\mathbb{Z}_{\le0}$ and $n$ be a positive integer.
Let $\mathfrak{p}$ be a prime ideals of $K$. Then each of the following is true:

{\rm (i)}. If $v_{\mathfrak{p}}(\gamma)<0$, then
$$v_{\mathfrak{p}}\Big(\prod_{k=0}^{n-1}(k+\gamma)\Big)=v_{\mathfrak{p}}(\gamma)n.$$

{\rm (ii)}. If $v_{\mathfrak{p}}(\gamma)\ge 0$, then there exists
a positive constant $C$ such that
$$v_{\mathfrak{p}}\Big(\prod_{k=0}^{n-1}(k+\gamma)\Big)\le Cn.$$
\end{lem}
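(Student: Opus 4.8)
The plan is to derive (i) directly from the ultrametric inequality and to reduce (ii) to Lemma \ref{lem3.2} via the norm map $N_{K/\mathbb{Q}}$.

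For part (i), let $p$ be the rational prime below $\mathfrak{p}$. For each $k\in\{0,1,\dots,n-1\}$ one has $v_{\mathfrak{p}}(k)=v_p(k)\ge 0>v_{\mathfrak{p}}(\gamma)$, so by the non-archimedean property $v_{\mathfrak{p}}(x+y)=\min\{v_{\mathfrak{p}}(x),v_{\mathfrak{p}}(y)\}$ whenever $v_{\mathfrak{p}}(x)\ne v_{\mathfrak{p}}(y)$, we get $v_{\mathfrak{p}}(k+\gamma)=v_{\mathfrak{p}}(\gamma)$ for every such $k$. Summing over $k$ yields $v_{\mathfrak{p}}\big(\prod_{k=0}^{n-1}(k+\gamma)\big)=\sum_{k=0}^{n-1}v_{\mathfrak{p}}(k+\gamma)=n\,v_{\mathfrak{p}}(\gamma)$, which is (i).

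For part (ii), set $N:=[K:\mathbb{Q}]$. The map $x\mapsto N_{K/\mathbb{Q}}(x+\gamma)$ is a monic polynomial of degree $N$ with rational coefficients (it is a power of the minimal polynomial of $-\gamma$), so we may choose a positive integer $D_0$ with $f(x):=D_0\,N_{K/\mathbb{Q}}(x+\gamma)\in\mathbb{Z}[x]$; then $\deg f=N\ge 1$ and $f(k)=D_0\,N_{K/\mathbb{Q}}(k+\gamma)\ne 0$ for all integers $k\ge 0$, since $k+\gamma\ne 0$ (because $\gamma\notin\mathbb{Z}_{\le 0}$) and the norm of a nonzero element of $K$ is nonzero. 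Hence $f$ satisfies the hypotheses of Lemma \ref{lem3.2}, which furnishes a constant $C_1>0$ with $v_p\big(\prod_{k=0}^{n-1}f(k)\big)\le C_1 n$ for all $n$.

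The remaining, and only delicate, point is to pass from $v_p$ of the norm to $v_{\mathfrak{p}}$: a priori $\gamma\notin\mathcal{O}_K$, so in the splitting $v_p\big(N_{K/\mathbb{Q}}(y)\big)=\sum_{\mathfrak{q}|p}e(\mathfrak{q}|p)f(\mathfrak{q}|p)\,v_{\mathfrak{q}}(y)$ the terms with $\mathfrak{q}\ne\mathfrak{p}$ need not be nonnegative when $y=k+\gamma$. To remedy this I would fix a positive integer $b$ with $b\gamma\in\mathcal{O}_K$, which exists because $\gamma$ is algebraic. Then $b(k+\gamma)=bk+b\gamma\in\mathcal{O}_K$, so $v_{\mathfrak{q}}(b(k+\gamma))\ge 0$ for every $\mathfrak{q}|p$, and each $N_{K/\mathbb{Q}}(b(k+\gamma))$ is a nonzero rational integer whose product over $k$ equals $b^{nN}D_0^{-n}\prod_{k=0}^{n-1}f(k)$; consequently the $p$-adic valuation of that product is at most $nN v_p(b)+C_1 n$. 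Applying the splitting to $y=b(k+\gamma)$, retaining only the $\mathfrak{q}=\mathfrak{p}$ term and summing over $k$, gives
$$e(\mathfrak{p}|p)f(\mathfrak{p}|p)\Big(n\,v_{\mathfrak{p}}(b)+v_{\mathfrak{p}}\Big(\prod_{k=0}^{n-1}(k+\gamma)\Big)\Big)\le\big(N v_p(b)+C_1\big)n.$$
Since $v_{\mathfrak{p}}(b)=v_p(b)\ge 0$, dividing out yields $v_{\mathfrak{p}}\big(\prod_{k=0}^{n-1}(k+\gamma)\big)\le C n$ with $C:=\big(N v_p(b)+C_1\big)\big/\big(e(\mathfrak{p}|p)f(\mathfrak{p}|p)\big)>0$, which is (ii). Thus the whole argument rests on the ultrametric inequality together with Lemma \ref{lem3.2}, the main (and mild) obstacle being the bookkeeping with the global denominator $b$.
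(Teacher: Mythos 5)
Your proof is correct. The core reduction is the same as the paper's: both parts (i) coincide, and in part (ii) both arguments feed the degree-$[K:\mathbb{Q}]$ polynomial whose roots are the conjugates of $-\gamma$ (your $D_0\,N_{K/\mathbb{Q}}(x+\gamma)$; the paper's $c\prod_{\tau\in G}(z+\tau(\gamma))$ formed in the Galois closure $L$) into Lemma \ref{lem3.2}, and then isolate the contribution of the single place $\mathfrak{p}$. Where you differ is in how the other local contributions are controlled: the paper keeps $\gamma$ as is, so the conjugates $\tau(\gamma)$ with $v_{\mathfrak{q}}(\tau(\gamma))<0$ contribute negatively, and it removes them \emph{exactly} by invoking part (i) for each such conjugate; you instead clear denominators with an integer $b$ so that every $b(k+\gamma)$ lies in $\mathcal{O}_K$, which makes all terms in the splitting $v_p(N_{K/\mathbb{Q}}(y))=\sum_{\mathfrak{q}\mid p}e(\mathfrak{q}|p)f(\mathfrak{q}|p)v_{\mathfrak{q}}(y)$ nonnegative and lets you simply discard the $\mathfrak{q}\neq\mathfrak{p}$ terms. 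Your version buys a slightly cleaner bookkeeping (no appeal to part (i) inside part (ii), no passage to the Galois closure, at the mild cost of the extra constant $Nv_p(b)$); the paper's version buys a bound that does not depend on a choice of $b$. Both yield the required linear bound $Cn$.
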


\begin{proof}
(i). Let $v_{\mathfrak{p}}(\gamma)<0$. Then for any nonnegative integer $k$, one has
$v_{\mathfrak{p}}(k+\gamma)=v_{\mathfrak{p}}(\gamma)$. It follows that
$$v_{\mathfrak{p}}\Big(\prod_{k=0}^{n-1}(k+\gamma)\Big)=v_{\mathfrak{p}}(\gamma)n.$$

(ii). Let $v_{\mathfrak{p}}(\gamma)\ge 0$.
Let $L$ be the Galois closure of $K$ over $\mathbb{Q}$ and
$\mathcal{O}_L$ be the ring of integers of $L$.
Denote $G:={\rm Gal}(L/\mathbb{Q}).$
Let $\mathfrak{q}$ be a prime ideal of $\mathcal{O}_L$
with $\mathfrak{q}\mid \mathfrak{p}$.
Since $v_{\mathfrak{p}}$ and $v_{\mathfrak{p}}$ are normalized, one has
$v_{\mathfrak{q}}|_K=v_{\mathfrak{p}}$, and so
$$v_{\mathfrak{p}}\Big(\prod_{k=0}^{n-1}(k+\gamma)\Big)
=v_{\mathfrak{q}}\Big(\prod_{k=0}^{n-1}(k+\gamma)\Big).$$
We treat the latter $v_{\mathfrak{q}}\big(\prod_{k=0}^{n-1}(k+\gamma)\big)$
in the following.

Clearly, we have
\begin{equation} \label{Eq3.6}
v_{\mathfrak{q}}\Big(\prod_{k=0}^{n-1}(k+\gamma)\Big)\le \sum_{\tau\in G}
v_{\mathfrak{q}}\Big(\prod_{k=0}^{n-1}(k+\tau(\gamma))\Big)
-\sum_{\tau\in G \atop v_{\mathfrak{q}}(\tau(\gamma))<0}
v_{\mathfrak{q}}\Big(\prod_{k=0}^{n-1}(k+\tau(\gamma))\Big).
\end{equation}
Write
$$f(z):=\prod_{\tau\in G}(z+\tau(\gamma)).$$
Then $f(z)\in\mathbb{Q}[z]$. There is exactly
one positive integer $c$ such that
$cf(z)\in\mathbb{Z}[z]$ is primitive. Let $p$ be a
prime number such that $\mathfrak{q}\mid p$.
By Lemma \ref{lem3.2} applied to $cf(z)$, one knows
that there is a positive constant $C_1$ such that
\begin{align} \label{Eq3.7}
\sum_{\tau\in G}v_{\mathfrak{q}}\Big(\prod_{k=0}^{n-1}(k+\tau(\gamma))\Big)
=&v_{\mathfrak{q}}\Big(\prod_{k=0}^{n-1}cf(k)\Big)-v_{\mathfrak{q}}(c)n \\
=&v_{p}\Big(\prod_{k=0}^{n-1}cf(k)\Big)-v_{\mathfrak{q}}(c)n \notag\\
\le& C_1n-v_{\mathfrak{q}}(c)n. \notag
\end{align}
Moreover, we deduce from part (i) applied to $\tau(\gamma)$ with
$v_{\mathfrak{q}}(\tau(\gamma))<0$ that
\begin{equation} \label{Eq3.8}
\sum_{\tau\in G \atop v_{\mathfrak{q}}(\tau(\gamma))<0}
v_{\mathfrak{q}}\Big(\prod_{k=0}^{n-1}(k+\tau(\gamma)\Big)
=n\sum_{\tau\in G \atop v_{\mathfrak{q}}(\tau(\gamma))<0}
v_{\mathfrak{q}}(\tau(\gamma)).
\end{equation}
Then by (\ref{Eq3.6}) to (\ref{Eq3.8}), one can derive that
$$v_{\mathfrak{q}}\Big(\prod_{k=0}^{n-1}(k+\gamma)\Big)
\le C_1n-v_{\mathfrak{q}}(c)n-n\sum_{\tau\in G \atop v_{\mathfrak{q}}(\tau(\gamma))<0}
v_{\mathfrak{q}}(\tau(\gamma)):=Cn$$
as required. This ends the proof of Lemma \ref{lem3.3}.
\end{proof}

Let $\bm{\alpha}=(\alpha_1,...,\alpha_r)$ and $\bm{\beta}=(\beta_1,...,\beta_s)$ with
$\alpha_i,\beta_j\in K\setminus\mathbb{Z}_{\le 0}$.
Now we can give the proof of Theorem \ref{thm1.3}.\\

{\it Proof of Theorem \ref{thm1.3}.}
(i)$\Rightarrow$(ii). Let $F_{\bm{\alpha,\beta}}(z)$ be N-integral.
Then there is an integer $c\in\mathcal{O}_K$ such that
$$F_{\bm{\alpha,\beta}}(cz)\in\mathcal{O}_K[[z]].$$
Then for any positive integer $n$, one has
$$c^n\frac{(\alpha_1)_n\cdots(\alpha_r)_n}{(\beta_1)_n\cdots(\beta_s)_n}\in\mathcal{O}_K.$$
For a prime ideal $\mathfrak{p}$ of $\mathcal{O}_K$ with $c\not\in\mathfrak{p}$,
we have $v_{\mathfrak{p}}(c)=0$. So one can deduce that
\begin{align*}v_{\mathfrak{p}}\Big(\frac{(\alpha_1)_n\cdots(\alpha_r)_n}{(\beta_1)_n\cdots(\beta_s)_n}\Big)
=& v_{\mathfrak{p}}\Big(c^n\frac{(\alpha_1)_n\cdots(\alpha_r)_n}{(\beta_1)_n\cdots(\beta_s)_n}\Big)-n v_{\mathfrak{p}}(c)\\
=& v_{\mathfrak{p}}\Big(c^n\frac{(\alpha_1)_n\cdots(\alpha_r)_n}{(\beta_1)_n\cdots(\beta_s)_n}\Big)\ge 0.
\end{align*}
Thus for all prime ideals $\mathfrak{p}$ with $c\not\in\mathfrak{p}$, one has
$$F_{\bm{\alpha,\beta}}(cz)\in\mathcal{O}_{K,\mathfrak{p}}[[z]].$$
That is, part (ii) is true.

(ii)$\Rightarrow$(iii).
Let $F_{\bm{\alpha,\beta}}(z)\in\mathcal{O}_{K,\mathfrak{p}}[[z]]$ hold
for almost all prime ideals $\mathfrak{p}$ of $\mathcal{O}_K$.
Let $p$ be a prime number. For any arbitrary given
monomorphism $\sigma_p: K \rightarrow \mathbb{C}_p$,
as we mentioned in the beginning of this section,
there exists exactly a prime ideal $\mathfrak{p}|p$
and a $\widehat{\sigma}_p: K_{\mathfrak{p}}\mapsto \mathbb{C}_p$
such that $\widehat{\sigma}_p|_K=\sigma_p$
and $v_{\mathfrak{p}}(x)=v_p(\widehat{\sigma}_p(x))$
for any $x\in K_{\mathfrak{p}}$. We claim that
$\sigma_p(F_{\bm{\alpha,\beta}}(z))\in\mathcal{O}_p[[z]]$
if and only if
$F_{\bm{\alpha,\beta}}(z)\in\mathcal{O}_{\mathfrak{p},K}[[z]]$
which will be proved in what follows.
Since $\widehat{\sigma}|_K=\sigma_p$ and
$F_{\bm{\alpha,\beta}}(z)\in K[[z]]$, one can derive that
\begin{align} \label{Eq3.9}
\sigma_p(F_{\bm{\alpha,\beta}}(z))=\widehat{\sigma}_p(F_{\bm{\alpha,\beta}}(z)).
\end{align}
But $v_{\mathfrak{p}}(x)=v_p(\widehat{\sigma}_p(x))$
for any $x\in K_{\mathfrak{p}}$, it follows that
\begin{align} \label{Eq3.10}
v_{\mathfrak{p}}\Big(\frac{(\alpha_1)_n\cdots(\alpha_r)_n}{(\beta_1)_n\cdots(\beta_s)_n}\Big)
=v_p\Big(\widehat{\sigma}_p\Big(\frac{(\alpha_1)_n\cdots(\alpha_r)_n}{(\beta_1)_n\cdots(\beta_s)_n}\Big)\Big).
\end{align}
Thus by (\ref{Eq3.10}), one knows that ${\sigma_p}(F_{\bm{\alpha,\beta}}(z))\in\mathcal{O}_p[[z]]$
if and only if
$\widehat{\sigma}_p(F_{\bm{\alpha,\beta}}(z))\in\mathcal{O}_p[[z]]$,
which holds if and only if for all nonnegative integers $n$, one has
\begin{align} \label{Eq3.11}
v_p\Big(\widehat{\sigma}_p\Big(\frac{(\alpha_1)_n\cdots(\alpha_r)_n}{(\beta_1)_n\cdots(\beta_s)_n}\Big)\Big)\ge 0.
\end{align}
By (\ref{Eq3.9}), (\ref{Eq3.11}) is true if and only if
for all nonnegative integers $n$, we have
$$
v_{\mathfrak{p}}\Big(\frac{(\alpha_1)_n\cdots(\alpha_r)_n}{(\beta_1)_n\cdots(\beta_s)_n}\Big)\ge 0,
$$
namely, if and only if $F_{\bm{\alpha,\beta}}(z)\in\mathcal{O}_{\mathfrak{p},K}[[z]]$.
Hence the claim is proved.

Let now $P_K$ denote the set of all the prime ideals of $\mathcal{O}_K$ and let $R_K$
be the set of all the prime ideals $\mathfrak{q}$ of $\mathcal{O}_K$ such that
$F_{\bm{\alpha,\beta}}(z)\not\in\mathcal{O}_{\mathfrak{q},K}[[z]]$. Then by
the hypothesis, $R_K$ is finite. On the other hand, for any $\mathfrak{q}\in R_K$,
there is exactly a prime number $p$ such that $\mathfrak{q}|p$. Let $T_K$ be
the set of all the prime numbers $p$ such that $\mathfrak{q}|p$ for some
$\mathfrak{q}\in R_K$. Then $T_K$ is finite since $R_K$ is finite.
Now pick an arbitrary prime number $p$ such that $p>\max(T_K)$. Let
$\mathfrak{p}$ be any prime ideal dividing $p$. Then $\mathfrak{p}\not\in R_K$
that implies that $F_{\bm{\alpha,\beta}}(z)\in\mathcal{O}_{\mathfrak{p},K}[[z]]$.
But the claim tells us that ${\sigma_p}(F_{\bm{\alpha,\beta}}(z))\in\mathcal{O}_p[[z]]$
if and only if $F_{\bm{\alpha,\beta}}(z)\in\mathcal{O}_{\mathfrak{p},K}[[z]]$.
Thus ${\sigma_p}(F_{\bm{\alpha,\beta}}(z))\in\mathcal{O}_p[[z]]$.
Therefore part (iii) is true.

(iii)$\Rightarrow$(i). Let $\sigma_p(F_{\bm{\alpha,\beta}}(z))\in\mathcal{O}_p[[z]]$ hold
for almost all prime numbers $p$ and all $\sigma_p: K\rightarrow\mathbb{C}_p$.
Then there is an positive integer $P$ such that for all prime number $p>P$ and for all
$\sigma_p: K\rightarrow\mathbb{C}_p$, we have
$\sigma_p(F_{\bm{\alpha,\beta}}(z))\in\mathcal{O}_p[[z]]$,
equivalently, all the $p$-adic valuations of all the coefficients
of $\sigma_p(F_{\bm{\alpha,\beta}}(z))$ are nonnegative.

Let $\mathfrak{p}$ be an arbitrary prime ideal in $\mathcal{O}_K$ that divides a prime number $p$ with $p>P$.
Then there is an embedding $\widehat{\sigma}_p$ from $K_{\mathfrak{p}}$ to $\mathbb{C}_p$ such that
$v_{\mathfrak{p}}(x)=v_p(\widehat{\sigma}_p(x))$ for any $x\in K_{\mathfrak{p}}$.
Let $\sigma_p':=\widehat{\sigma}_p|_K$. Then $\sigma_p'(F_{\bm{\alpha,\beta}}(z))\in\mathcal{O}_p[[z]]$.
Since $F_{\bm{\alpha,\beta}}(z)\in K[[z]]$, one has
$$\widehat{\sigma}_p(F_{\bm{\alpha,\beta}}(z))=\sigma_p'(F_{\bm{\alpha,\beta}}(z)).$$
Hence $\sigma_p'(F_{\bm{\alpha,\beta}}(z))\in\mathcal{O}_p[[z]]$ together with
$v_{\mathfrak{p}}(x)=v_p(\widehat{\sigma}_p(x))$ for any $x\in K_{\mathfrak{p}}$
tells us that for all nonnegative integers $n$, one has
\begin{align} \label{Eq3.12}
v_{\mathfrak{p}}\Big(\frac{(\alpha_1)_n\cdots(\alpha_r)_n}{(\beta_1)_n\cdots(\beta_s)_n}\Big)
= & v_p\Big(\widehat{\sigma}_p\Big(\frac{(\alpha_1)_n\cdots(\alpha_r)_n}{(\beta_1)_n
\cdots(\beta_s)_n}\Big)\Big) \\
= & v_p\Big(\sigma_p'\Big(\frac{(\alpha_1)_n\cdots(\alpha_r)_n}{(\beta_1)_n
\cdots(\beta_s)_n}\Big)\Big)\ge 0. \notag
\end{align}

Now let $\mathfrak{p}$ be an arbitrary prime ideal in $\mathcal{O}_K$ that divides a prime number
$p$ with $p\le P$. It is clear that for all nonnegative integers $n$, one has
$$v_{\mathfrak{p}}\Big(\frac{(\alpha_1)_n\cdots(\alpha_r)_n}{(\beta_1)_n\cdots(\beta_s)_n}\Big)
\ge \sum_{i=1 \atop v_{\mathfrak{p}}(\alpha_i)<0}^{r} v_{\mathfrak{p}}((\alpha_i)_n)
-\sum_{j=1 \atop v_{\mathfrak{p}}(\beta_j)\ge 0}^s v_{\mathfrak{p}}((\beta_j)_n).$$
By Lemma \ref{lem3.3}, we have that
$v_{\mathfrak{p}}((\alpha_i)_n)=nv_{\mathfrak{p}}(\alpha_i)$
if $v_{\mathfrak{p}}(\alpha_i)<0$,
and $v_{\mathfrak{p}}((\beta_j)_n)\le C_{j}(\mathfrak{p})n$
for some positive constant
$C_j(\mathfrak{p})$ if $v_{\mathfrak{p}}(\beta_j)\ge 0$.
Thus for all nonnegative integers $n$, one has
\begin{equation} \label{Eq3.13}
v_{\mathfrak{p}}\Big(\frac{(\alpha_1)_n\cdots(\alpha_r)_n}
{(\beta_1)_n\cdots(\beta_s)_n}\Big)\ge -C({\mathfrak{p}})n,
\end{equation}
where
$$C({\mathfrak{p}}):=\sum_{j=1 \atop v_{\mathfrak{p}}(\beta_j)\ge 0}^s C_{j,\mathfrak{p}}
-\sum_{i=1 \atop v_{\mathfrak{p}}(\alpha_i)<0}^{r}v_{\mathfrak{p}}(\alpha_i)>0.$$

For a prime number $p<P$, let
$C_p:=\max\{\lceil C({\mathfrak{p}})\rceil:\mathfrak{p}\mid p\}$.
Let $c:=\prod_{p<P}p^{C(p)}$. Then $c\in \mathcal{O}_K$ and
$v_{\mathfrak{p}}(c)\ge\lceil C({\mathfrak{p}})\rceil$ for all
prime ideals $\mathfrak{p}$ dividing a prime number $p<P$.
It then follows from (\ref{Eq3.13})
that for all prime ideals $\mathfrak{p}$ dividing
a prime number $p<P$, one has for all nonnegative
integers $n$ that
\begin{align*}
v_{\mathfrak{p}}\Big(c^n\frac{(\alpha_1)_n\cdots(\alpha_r)_n}
{(\beta_1)_n\cdots(\beta_s)_n}\Big)\ge & v_{\mathfrak{p}}(c)n-C({\mathfrak{p}})n \\
\ge & (\lceil C({\mathfrak{p}})\rceil-C({\mathfrak{p}}))n \ge 0.
\end{align*}

Now for any prime ideals $\mathfrak{p}$ dividing a prime number
$p\ge P$, one has $v_{\mathfrak{p}}(c)\ge 0$ since
$c\in \mathcal{O}_K$. It then follows from (\ref{Eq3.12}) that
for all nonnegative integers $n$, we have
$$v_{\mathfrak{p}}\Big(c^n\frac{(\alpha_1)_n
\cdots(\alpha_r)_n}{(\beta_1)_n\cdots(\beta_s)_n}\Big)
=n v_{\mathfrak{p}}(c)+v_{\mathfrak{p}}
\Big(\frac{(\alpha_1)_n\cdots(\alpha_r)_n}
{(\beta_1)_n\cdots(\beta_s)_n}\Big)\ge 0.$$
Thus we can conclude that
$$\frac{(\alpha_1)_n\cdots(\alpha_r)_n}{(\beta_1)_n\cdots(\beta_s)_n}\in\mathcal{O}_K$$
holds for all nonnegative integers $n$. That is,
$F_{\bm{\alpha,\beta}}(cz)\in\mathcal{O}_K[[z]]$.
Therefore $F_{\bm{\alpha,\beta}}(z)$ is N-integral in $K$.
The proof of Theorem \ref{thm1.3} is finished. \hfill$\Box$ \\

Let $\theta $ be any given algebraic integer and $L$ be a field generated by
$\theta $, i.e., $L=\mathbb{Q}(\theta)$. Suppose that $f(z)\in\mathbb{Z}[z]$
is the minimal polynomial of $\theta$. Let $\mathcal{O}_L$ be the ring of
algebraic integers of $L$. Then $\mathbb{Z}[\theta]$ is a subring of $\mathcal{O}_L$
and the quotient group $\mathcal{O}_L/\mathbb{Z}[\theta]$ has finite order,
which is denoted by $\big[\mathcal{O}_L:\mathbb{Z}[\theta]\big]$.
The following result is known.

\begin{lem} \cite{[Nu]} \label{lem3.4}
Let $\theta $ be any given algebraic integer and $L=\mathbb{Q}(\theta)$.
Let $f(z)\in\mathbb{Z}[z]$ be the monic minimal polynomial of $\theta$
and $\mathcal{O}_L$ be the ring of algebraic integers of $L$.
Let $p$ be a prime number such that
$p\nmid\big[\mathcal{O}_L:\mathbb{Z}[\theta]\big]$, and let
$$\bar{f}(z)=\bar{f}_1(z)^{e_1}\cdots\bar{f}_g(z)^{e_g}$$
be the factorization of the polynomial $\bar{f}(z)=f(z) \mod p$ into irreducible polynomials
$\bar{f}_i(z)=f_i(z) \mod p$ over $\mathbb{Z}/p\mathbb{Z}$, with all $f_i(z)\in\mathbb{Z}[z]$ being monic. Then
$$\mathfrak{p}_i:=p\mathcal{O}_L+f_i(\theta)\mathcal{O}_L, i=1,...,g$$
are the all different prime ideals of $\mathcal{O}_L$ above $p$. The inertia degree $f_i$ of
$\mathfrak{p}_i$ is the degree of $\bar{f}_i(z)$, and one has
$$p\mathcal{O}_L=\mathfrak{p}_1^{e_1}\cdots\mathfrak{p}_g^{e_g}.$$
\end{lem}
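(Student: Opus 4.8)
The plan is to prove Lemma~\ref{lem3.4} (Dedekind's factorization theorem) by reducing everything modulo $p$ and using the hypothesis $p\nmid[\mathcal{O}_L:\mathbb{Z}[\theta]]$ to replace the possibly complicated ring $\mathcal{O}_L$ by the transparent ring $\mathbb{Z}[\theta]\cong\mathbb{Z}[x]/(f(x))$.

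The first and crucial step is the ring isomorphism
$$\mathcal{O}_L/p\mathcal{O}_L\;\cong\;\mathbb{Z}[\theta]/p\mathbb{Z}[\theta]\;\cong\;\mathbb{F}_p[x]/(\bar f(x)).$$
Here I would use that $n:=[\mathcal{O}_L:\mathbb{Z}[\theta]]$ annihilates the finite group $\mathcal{O}_L/\mathbb{Z}[\theta]$ and is a unit modulo $p$. Surjectivity of $\mathbb{Z}[\theta]/p\mathbb{Z}[\theta]\to\mathcal{O}_L/p\mathcal{O}_L$ is then immediate: for $x\in\mathcal{O}_L$, choosing $a$ with $an\equiv1\bmod p$ gives $x\equiv a(nx)\bmod p\mathcal{O}_L$ with $nx\in\mathbb{Z}[\theta]$. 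Injectivity is a short chase: if $x\in\mathbb{Z}[\theta]$ and $x=py$ with $y\in\mathcal{O}_L$, then $ny\in\mathbb{Z}[\theta]$ and, writing $an=1+pb$, we get $y=a(ny)-b(py)\in\mathbb{Z}[\theta]$, so $x\in p\mathbb{Z}[\theta]$. The second isomorphism is simply $\mathbb{Z}[\theta]\cong\mathbb{Z}[x]/(f(x))$ reduced modulo $p$.

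Next I would apply the Chinese Remainder Theorem to the coprime factorization $\bar f=\bar f_1^{e_1}\cdots\bar f_g^{e_g}$ in $\mathbb{F}_p[x]$ to get
$$\mathcal{O}_L/p\mathcal{O}_L\;\cong\;\prod_{i=1}^{g}\mathbb{F}_p[x]/(\bar f_i(x)^{e_i}).$$
The maximal ideals of the right-hand side are those generated by $\bar f_i$ in the $i$-th factor; pulling them back along $\mathcal{O}_L\rightarrow\mathcal{O}_L/p\mathcal{O}_L$ I would identify them with $\mathfrak{p}_i=p\mathcal{O}_L+f_i(\theta)\mathcal{O}_L$, since under the composite surjection $\mathcal{O}_L\rightarrow\mathbb{F}_p[x]/(\bar f_i)$ with $\theta\mapsto x\bmod\bar f_i$ the kernel is a maximal ideal containing both $p$ and $f_i(\theta)$, hence equals $\mathfrak{p}_i$. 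This shows the $\mathfrak{p}_i$ are exactly the distinct primes of $\mathcal{O}_L$ over $p$ and that $\mathcal{O}_L/\mathfrak{p}_i\cong\mathbb{F}_p[x]/(\bar f_i)$ is a field of degree $\deg\bar f_i$ over $\mathbb{F}_p$, giving the inertia degrees $f_i=\deg\bar f_i$.

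For the exponents I would verify $\prod_{i=1}^g\mathfrak{p}_i^{e_i}\subseteq p\mathcal{O}_L$ by checking that its image in each factor $\mathbb{F}_p[x]/(\bar f_j^{e_j})$ vanishes: $\mathfrak{p}_i$ maps onto the whole factor when $i\ne j$ and into $(\bar f_j)$ when $i=j$, so the product maps into $(\bar f_j^{e_j})=0$. Combining this containment with unique factorization of ideals in the Dedekind domain $\mathcal{O}_L$ and the fundamental identity $\sum_{i=1}^g e_if_i=[L:\mathbb{Q}]=\deg f=\sum_{i=1}^g e_i\deg\bar f_i$ forces $p\mathcal{O}_L=\mathfrak{p}_1^{e_1}\cdots\mathfrak{p}_g^{e_g}$. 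I expect the only genuinely delicate point to be the first isomorphism, where the index hypothesis is indispensable; everything afterwards is Chinese-remainder bookkeeping and the standard correspondence between primes over $p$ and the structure of $\mathcal{O}_L/p\mathcal{O}_L$. As this is a classical theorem of Dedekind, one may alternatively just invoke \cite{[Nu]} as the paper does.
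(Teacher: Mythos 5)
Your proof is correct. The paper gives no proof of this lemma at all---it is quoted as a known result from Neukirch \cite{[Nu]}---and your argument is precisely the standard one found there: the hypothesis $p\nmid[\mathcal{O}_L:\mathbb{Z}[\theta]]$ yields $\mathcal{O}_L/p\mathcal{O}_L\cong\mathbb{Z}[\theta]/p\mathbb{Z}[\theta]\cong\mathbb{F}_p[x]/(\bar f)$, the Chinese Remainder Theorem identifies the maximal ideals above $p$ with the $\mathfrak{p}_i$ and gives the inertia degrees, and the containment $\prod_{i}\mathfrak{p}_i^{e_i}\subseteq p\mathcal{O}_L$ combined with unique factorization and the fundamental identity $\sum_i e_if_i=[L:\mathbb{Q}]$ forces the stated factorization. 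The only point worth tightening is the identification of $\mathfrak{p}_i$ with the kernel of $\mathcal{O}_L\to\mathbb{F}_p[x]/(\bar f_i)$: maximality of the kernel alone gives only one inclusion, so one should note that $\mathfrak{p}_i$ is the full preimage of the ideal $(\bar f_i)/(\bar f)$ because it contains $p\mathcal{O}_L$ and its generator $f_i(\theta)$ maps onto a generator of that ideal.
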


Let $u$ and $v$ be two integers with $0\le u\le r$ and $0\le v\le s$.
Assume that $\alpha_1,...,\alpha_u,\beta_1,...,\beta_v\in\mathbb{Q}\setminus\mathbb{Z}_{\le 0}$
and $\alpha_{u+1},...,\alpha_r,\beta_{v+1},...,\beta_s\in K\setminus\mathbb{Q}$.
Let $\bm{\mu}:=(\alpha_1,...,\alpha_u)$ and $\bm{\nu}:=(\beta_1,...,\beta_v)$, and
$\delta_{\bm{\bm{\mu,\nu}}}$ and $M_{\bm{\bm{\mu,\nu}}}$
are given by (\ref{Eq1.4}) and (\ref{Eq1.5}) respectively.
Recall that $d_{\bm{\bm{\mu,\nu}}}$ denote the least common multiple of the exact denominators
of the components of $\bm{\mu}$ and $\bm{\nu}$. Recall a well-known fact that states that
for any algebraic number  $\alpha \in K$, there exists a rational integer $C$
such that $C\alpha $ is an algebraic integer of $K$.

\begin{prop} \label{prop3.5}
Let $C\in\mathbb{Z}$ such that $C\alpha_i$ and $C\beta_j$ are
algebraic integers in $K$ for all indexes $i$ and $j$ with $u+1\le i\le r$ and $v+1\le j\le s$.
For any $\theta\in K$, define $L_{\theta}:=\mathbb{Q}(\theta)$ and let
$\mathcal{O}_{L_{\theta}}$ be the ring of integers of $L_{\theta}$.
Let $p$ be a prime number such that $p>\max(M_{\bm{\bm{\mu,\nu}}}, d_{\bm{\bm{\mu,\nu}}})$
and $\mathfrak{p}:=p\mathcal{O}_K$ is a prime ideal in $\mathcal{O}_K$.
Suppose that $p$ is coprime to $C\big[\mathcal{O}_{L_{\theta}}:\mathbb{Z}[C\theta]\big]$
for any $\theta\in\{\alpha_{u+1},...,\alpha_r,\beta_{v+1},...,\beta_s\}$.
Then each of the following is true.

{\rm (i)}. If $u<v$, then $F_{\bm{\alpha,\beta}}(z)\not\in\mathcal{O}_{K,\mathfrak{p}}[[z]]$.

{\rm (ii).} If $u>v$, then $F_{\bm{\alpha,\beta}}(z)\in\mathcal{O}_{K,\mathfrak{p}}[[z]]$
if and only if $\delta_{\bm{\bm{\mu,\nu}}}(x,a)\ge 0$ for all $x\in\mathbb{R}$.

{\rm (iii).} If $u=v$, then $F_{\bm{\alpha,\beta}}(z)\in\mathcal{O}_{K,\mathfrak{p}}[[z]]$
if and only if $\sum_{l=1}^{h}\delta_{\bm{\bm{\mu,\nu}}}(a^l\beta_k,a^l)\ge 0$ for all
$h\in\{1,...,ord\ a\} $ and $k\in\{1,...,v\}$, and
$\delta_{\bm{\bm{\mu,\nu}}}\Big(a^l\Big(\frac{e}{d_{\bm{\bm{\mu,\nu}}}}+m_{\bm{\bm{\mu,\nu}}}\Big),
a^l\Big)\ge 0$ for all $l\in\{1,...,{\rm ord}(a)\}$ and $e\in\{1,...,d_{\bm{\nu,\mu}}\}$,
where ${\rm ord}(a)$ stands for the order of $a$ modulo $d_{\bm{\bm{\mu,\nu}}}$.
\end{prop}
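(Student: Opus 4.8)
The plan is to reduce the $\mathfrak{p}$-adic integrality of $F_{\bm{\alpha,\beta}}(z)$ to the $p$-adic integrality of the purely rational hypergeometric series $F_{\bm{\mu,\nu}}(z)$, and then to invoke Theorem \ref{thm1.2}. The crux is an observation about the irrational parameters. Since $\mathfrak{p}=p\mathcal{O}_K$ is prime, $p$ is inert in $K$ (so $e(\mathfrak{p}|p)=1$ and $\mathfrak{p}$ is the only prime of $\mathcal{O}_K$ above $p$). Hence for any $\theta\in\{\alpha_{u+1},\dots,\alpha_r,\beta_{v+1},\dots,\beta_s\}$ the prime $\mathfrak{p}_\theta:=\mathfrak{p}\cap\mathcal{O}_{L_\theta}$ is the unique prime of $\mathcal{O}_{L_\theta}$ above $p$, it is unramified over $p$, and by the degree formula $f(\mathfrak{p}_\theta|p)=[L_\theta:\mathbb{Q}]\ge 2$. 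The monic minimal polynomial $g(z)\in\mathbb{Z}[z]$ of the algebraic integer $C\theta$ has degree $[\mathbb{Q}(C\theta):\mathbb{Q}]=[L_\theta:\mathbb{Q}]$; applying Lemma \ref{lem3.4} (valid since $p\nmid[\mathcal{O}_{L_\theta}:\mathbb{Z}[C\theta]]$) to the single prime $\mathfrak{p}_\theta$ forces $g(z)\bmod p$ to be irreducible over $\mathbb{Z}/p\mathbb{Z}$ of degree $\ge 2$, and in particular to have no root in $\mathbb{Z}/p\mathbb{Z}$.

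From this I would deduce $v_{\mathfrak{p}}\big((\theta)_n\big)=0$ for every $n\ge 0$. Indeed, for an integer $k\ge 0$ one has $\theta+k=\tfrac1C(C\theta+Ck)$ with $p\nmid C$, so $v_{\mathfrak{p}}(\theta+k)=v_{\mathfrak{p}}(C\theta+Ck)$; and $C\theta+Ck\in\mathcal{O}_{L_\theta}$ lies in $\mathfrak{p}_\theta$ only if $C\theta\equiv -Ck\pmod{\mathfrak{p}_\theta}$, which upon reducing the relation $g(C\theta)=0$ modulo $\mathfrak{p}_\theta$ would give $g(-Ck)\equiv 0\pmod p$, contradicting the previous paragraph. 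So every factor $\theta+k$ of $(\theta)_n$ is a $\mathfrak{p}$-adic unit. Using in addition $v_{\mathfrak{p}}|_{\mathbb{Q}}=v_p$ for the rational parameters, this yields, for all $n$,
\[
v_{\mathfrak{p}}\Big(\frac{(\alpha_1)_n\cdots(\alpha_r)_n}{(\beta_1)_n\cdots(\beta_s)_n}\Big)
=v_p\Big(\frac{(\alpha_1)_n\cdots(\alpha_u)_n}{(\beta_1)_n\cdots(\beta_v)_n}\Big)
=v_p\Big(\frac{(\mu_1)_n\cdots(\mu_u)_n}{(\nu_1)_n\cdots(\nu_v)_n}\Big),
\]
so that $F_{\bm{\alpha,\beta}}(z)\in\mathcal{O}_{K,\mathfrak{p}}[[z]]$ if and only if $F_{\bm{\mu,\nu}}(z)\in\mathbb{Z}_p[[z]]$.

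With this equivalence the three cases follow from Theorem \ref{thm1.2} applied to the pair $(\bm{\mu},\bm{\nu})$ (numerator tuple of length $u$, denominator tuple of length $v$), the hypothesis $p>\max(M_{\bm{\mu,\nu}},d_{\bm{\mu,\nu}})$ ensuring that $p$ falls in the range of that theorem (if necessary one also imposes the auxiliary bound $p>3vd_{\bm{\mu,\nu}}$ used there, or re-runs its valuation estimates): part (i) is the case $u<v$ of Theorem \ref{thm1.2}(i) (equivalently Lemma \ref{lem2.2}); part (ii) is the case $u>v$ with $a$ the integer $1\le a\le d_{\bm{\mu,\nu}}$, $ap\equiv 1\pmod{d_{\bm{\mu,\nu}}}$; and part (iii) is the case $u=v$, in which conditions (1.7) and (1.8) for $(\bm{\mu},\bm{\nu})$ are exactly the stated conditions, noting that $\beta_k=\nu_k$ is rational for $k\le v$ and $d_{\bm{\nu,\mu}}=d_{\bm{\mu,\nu}}$. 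The edge cases $v=0$ are immediate, since then all the valuations above vanish and the stated conditions are vacuous.

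I expect the main obstacle to be the first step: justifying cleanly, through the factorization in Lemma \ref{lem3.4}, that an inert prime of $K$ stays inert in each subfield $\mathbb{Q}(\theta)$ and hence that the reduction of the minimal polynomial of $C\theta$ has no linear factor modulo $p$, which is precisely what makes the irrational parameters $\mathfrak{p}$-adically invisible. Once this is in place, the passage to the rational case and the appeal to Theorem \ref{thm1.2} are routine; a secondary point requiring care is reconciling the size condition on $p$ in the statement with the one needed by Theorem \ref{thm1.2}.
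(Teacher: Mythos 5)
Your proposal is correct and follows essentially the same route as the paper: reduce to the rational pair $(\bm{\mu},\bm{\nu})$ by showing, via Lemma \ref{lem3.4} and the inertness of $p$ in each $L_\theta$, that the minimal polynomial of each irrational parameter reduces to an irreducible polynomial mod $p$ with no root, hence every factor $\theta+k$ is a $\mathfrak{p}$-adic unit, and then invoke Theorem \ref{thm1.2}. Your remark about reconciling the hypothesis $p>\max(M_{\bm{\mu,\nu}},d_{\bm{\mu,\nu}})$ with the bound $p>3vd_{\bm{\mu,\nu}}$ required by Theorem \ref{thm1.2} is a legitimate point that the paper's own proof glosses over as well.
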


\begin{proof}
Let $\theta\in\{-\alpha_{u+1},...,-\alpha_r,-\beta_{v+1},...,-\beta_s\}$ and
$f(z)\in\mathbb{Q}[z]$ be the monic minimal polynomial of $\theta$.
Then $C^df(z/C)\in\mathbb{Q}[z]$ is the monic  minimal polynomial of $C\theta$,
where $d:=\deg f(z)$. But $C\theta$ is an algebraic integer. One then deduces
that $C^df(z/C)\in\mathbb{Z}[z]$. Since $p\mathcal{O}_K$ is a prime ideal in
$\mathcal{O}_K$, $p\mathcal{O}_{L_{\theta}}$ is also a prime ideal in $\mathcal{O}_{L_{\theta}}$.
Since $p$ is coprime to $\big[\mathcal{O}_{L_{\theta}}:\mathbb{Z}[C\theta]\big]$,
by Proposition \ref{lem3.4} we derive that $C^df(z/C) \mod p$ is irreducible.
It then follows that $f(z) \mod p$ is irreducible.
Hence for any nonnegative integer $k$, one has $f(k)\not\equiv 0 \mod p.$
Thus for any nonnegative integer $k$ and $\theta\in\{-\alpha_{u+1},...,-\alpha_r,
-\beta_{v+1},...,-\beta_s\}$, it follows from $(z-\theta) \mid f(z)$
in $\mathcal{O}_K[z]$ that $(k-\theta)\mid f(k)$ holds in $\mathcal{O}_K$.
Since $f(k)\not\equiv 0 \mod p$, we can deduce that $k-\theta\not\in p\mathcal{O}_K=\mathfrak{p}.$
It implies for any nonnegative integer $k$ that
$$v_{\mathfrak{p}}(k-\theta)=0.$$
Then for all nonnegative integers $n$ and all
$-\theta\in\{\alpha_{u+1},...,\alpha_r,\beta_{v+1},...,\beta_s\}$,
we have $v_{\mathfrak{p}}\big((-\theta)_n\big)=0.$
In other words, for any $\gamma \in\{\alpha_{u+1},...,\alpha_r,\beta_{v+1},...,\beta_s\}$,
one has
$$v_{\mathfrak{p}}\big((\gamma)_n\big)=0.$$
It then follows that for all integers $n\ge 0$, we have
\begin{align*}
v_{\mathfrak{p}}\Big(\frac{(\alpha_1)_n\cdots(\alpha_r)_n}{(\beta_1)_n\cdots(\beta_s)_n}\Big)
=&v_{\mathfrak{p}}\Big(\frac{(\alpha_1)_n\cdots(\alpha_u)_n}{(\beta_1)_n\cdots(\beta_v)_n}\Big) \\
=&v_{p}\Big(\frac{(\alpha_1)_n\cdots(\alpha_u)_n}{(\beta_1)_n\cdots(\beta_v)_n}\Big),
\end{align*}
the last one is because $v_\mathfrak{p}(\alpha)=v_p(\alpha)$ for any
$\alpha\in\mathbb{Q}\setminus\mathbb{Z}_{\le 0}$.
So $F_{\bm{\alpha,\beta}}(z)\in\mathcal{O}_{K,\mathfrak{p}}[[z]]$
if and only if
$$
v_{\mathfrak{p}}\Big(\frac{(\alpha_1)_n\cdots(\alpha_r)_n}
{(\beta_1)_n\cdots(\beta_s)_n}\Big)\ge 0 \ \forall n\ge 0,
$$
which is equivalent to
$$v_{p}\Big(\frac{(\alpha_1)_n\cdots(\alpha_u)_n}{(\beta_1)_n
\cdots(\beta_v)_n}\Big)\ge 0\ \ \forall\ n\ge 0,$$
holds if and only if $F_{\bm{\bm{\mu,\nu}}}(z)\in\mathbb{Z}_p[[z]]$.

Finally, Theorem \ref{thm1.2} applied to $F_{\bm{\bm{\mu,\nu}}}(z)$
gives us the desired result as stated in Proposition \ref{prop3.5}.
This completes the proof of Proposition \ref{prop3.5}.
\end{proof}

{\it Remark.} Assume that $K/\mathbb{Q}$ is a finite Galois extension.
Let $p$ be a prime number such that $\mathfrak{p}:=p\mathcal{O}_{K}$
is a prime ideal in $\mathcal{O}_{K}$.
Let $D_{\mathfrak{p}}$ and $I_{\mathfrak{p}}$ be the decomposition
group and the inertia group of $\mathfrak{p}$, respectively.
The Galois group of $\mathcal{O}_{K}/\mathfrak{p}$ over $\mathbb{Z}/p\mathbb{Z}$
is denoted by $G$. Then we have the following exact sequence:
$$0\rightarrow I_{\mathfrak{p}}\rightarrow D_{\mathfrak{p}}\rightarrow G\rightarrow 0.$$
Since $p$ is inertia in $K$, one has $I_{\mathfrak{p}}=\{1\}$ and
$D_{\mathfrak{p}}={\rm Gal}(K/\mathbb{Q})$. Then ${\rm Gal}(K/\mathbb{Q})=G$
by the above exact sequence, namely, $K/\mathbb{Q}$ is a cyclic extension.

In Section 5, we will make use of the quadratic version of
Proposition \ref{prop3.5}, i.e. Lemma 5.3 below. It will
play an important role in the proof of Theorem \ref{thm1.4}.

\section{Quadratic congruence modulo various primes $p$}

Throughout this section, we let
$f(z)=Az^2+Bz+C\in\mathbb{Z}[z]$ satisfy $A>0$ and $\gcd\big(A,B,C\big)=1$.
We will use the important uniform distribution theorem due to Duke,
Friedlander, Iwaniec \cite{[DFI]} and Toth \cite{[To]} to prove an asymptotic
result which plays an important role in the proof of Theorem \ref{thm1.4}.
Let
$$D:=B^2-4AC$$
and assume that $D$ is not a perfect square in $\mathbb{Z}$.
Let $\mathcal{S}$ be any arithmetic progression which contains infinitely
many primes $p$ with $\big(\frac{D}{p}\big)=1$. The set
\begin{equation} \label{eq4.1}
\mathcal{X}:=\Big\{\frac{v}{p}: p\in\mathcal{S}, 1\le v\le p, f(v)\equiv0\mod p \Big\}
\end{equation}
is an infinite set contained in the interval $[0,1]$.
In this section, we discuss the properties concerning $\mathcal{X}$.
Most importantly, we introduce the uniform distribution theorem of $\mathcal{X}$.
We start with the definition of uniformly distributed sequence.
A sequence $\{x_n\}_{n=1}^{\infty}\subset[0,1]$ is said to be {\it uniformly
distributed} if for all real numbers $a$ and $b$ with $0\le a<b\le 1$, one has
$$\lim_{N\rightarrow\infty}\frac{\#\{x_n:1\leq n\leq N, a\le x_n<b\}}{N}=b-a.$$
It is known that $\mathcal{X}$ is uniformly distributed, which is proved by Duke,
Friedlander and Iwaniec \cite{[DFI]} and Toth \cite{[To]}.

\begin{lem}\cite{[To]} \label{lem4.1}
The set $\mathcal{X}$, roughly ranged by order of $p$, is uniformly distributed.
That is, for $0\le a<b\le 1$,
there is a positive real number $C$ depended only on $\mathcal{S}$ such that
$$\#\Big\{\frac{v}{p}: p\in\mathcal{S}, 1\le v\le p<x, f(v)\equiv0\mod p,
a\le \frac{v}{p}<b\Big\}\thicksim C(b-a)\pi(x),$$
where $\pi(x)$ is the prime counting function.
\end{lem}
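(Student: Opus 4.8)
The plan is to follow the circle of ideas of Duke, Friedlander and Iwaniec \cite{[DFI]}, in the refinement of T\'oth \cite{[To]} that allows an arbitrary quadratic $f$ and a prescribed progression $\mathcal{S}$; indeed Lemma \ref{lem4.1} is precisely (a special case of) the main theorem of \cite{[To]}, and what follows is a description of the architecture of that proof. Put $e(t):=e^{2\pi i t}$ and $\rho_f(p):=\#\{1\le v\le p: f(v)\equiv 0 \bmod p\}$, so that $\rho_f(p)=1+\big(\tfrac{D}{p}\big)$ for all $p\nmid 2AD$. The first step is the Weyl equidistribution criterion. Since $\big(\tfrac{D}{p}\big)$ depends only on $p\bmod 4D$ (recalled earlier in the paper), the prime number theorem in arithmetic progressions gives
\begin{equation*}
\sum_{\substack{p\in\mathcal{S}\\ p<x}}\rho_f(p)\ \sim\ C\,\pi(x)
\end{equation*}
for a positive constant $C$ depending only on $\mathcal{S}$ (namely twice the density, among all primes, of the primes of $\mathcal{S}$ that split for $D$). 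Hence the whole lemma reduces to showing that for every fixed nonzero integer $h$,
\begin{equation*}
\sum_{\substack{p\in\mathcal{S}\\ p<x}}\ \sum_{\substack{1\le v\le p\\ f(v)\equiv 0\ (p)}} e\!\Big(\frac{hv}{p}\Big)\ =\ o\big(\pi(x)\big),
\end{equation*}
after which the passage from this cancellation to the count $\sim C(b-a)\pi(x)$ on the interval $[a,b)$ is the standard Weyl argument.

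Next I would make the roots explicit: for $p\nmid 2A$ the solutions of $Av^2+Bv+C\equiv 0\bmod p$ are $v\equiv \overline{2A}(-B\pm w)\bmod p$, where $w$ runs over the square roots of $D$ modulo $p$, so the inner sum equals $e\big(-hB\,\overline{2A}/p\big)\sum_{w^2\equiv D\,(p)} e\big(h\,\overline{2A}\,w/p\big)$. Summed over $p\in\mathcal{S}$, $p<x$, this turns the quantity to be bounded into a sum of Sali\'e-type sums $\sum_{p}\big(\tfrac{\,\cdot\,}{p}\big)\,e\big(h\,\overline{2A}\sqrt{D}/p\big)$ twisted by the congruence defining $\mathcal{S}$. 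One detects the condition $w^2\equiv D\bmod p$ through the Fourier expansion of a theta series, which opens the sum over $p$ and expresses it through Fourier coefficients of automorphic forms of weight $1/2$. Feeding this into the Kuznetsov trace formula and the spectral large sieve, and invoking Duke's subconvex bound for those coefficients (equivalently, a nontrivial bound for sums of Sali\'e sums), one obtains a power saving
\begin{equation*}
\sum_{\substack{p\in\mathcal{S}\\ p<x}}\ \sum_{\substack{1\le v\le p\\ f(v)\equiv 0\ (p)}} e\!\Big(\frac{hv}{p}\Big)\ \ll_{h,f,\mathcal{S}}\ x^{1-\delta}
\end{equation*}
for some absolute $\delta>0$, far stronger than the required $o(\pi(x))$. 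The restriction to $\mathcal{S}$ is carried along by inserting the relevant Dirichlet character and checking that it is harmless for the spectral estimates, exactly as in \cite{[To]}.

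The genuinely hard point — and the only one — is this last display: extracting cancellation from the double sum over the root $v$ and the prime modulus $p$. Because the moduli are prime and one is summing a single additive character twisted by a quadratic symbol in $p$, no elementary or sieve-theoretic input suffices; the essential ingredient is automorphic, namely bounds toward Ramanujan for Fourier coefficients of Maass forms of half-integral weight via Kuznetsov and Duke. Everything else is bookkeeping: discard the finitely many primes dividing $2AD$ or the modulus of $\mathcal{S}$; separate the primes of $\mathcal{S}$ by the value of $\big(\tfrac{D}{p}\big)$; apply the prime number theorem in arithmetic progressions for the main term; and invoke Weyl's criterion to conclude the equidistribution of $\{v/p\}$, hence the stated asymptotic with the constant $C$ above.
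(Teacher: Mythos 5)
This lemma is quoted verbatim from T\'oth \cite{[To]} and the paper offers no proof of it, so there is nothing internal to compare against; your outline is an accurate account of the argument that actually lives in \cite{[DFI]} and \cite{[To]} (Weyl's criterion, the reduction of the main term to the prime number theorem in progressions using $\rho_f(p)=1+\big(\tfrac{D}{p}\big)$, and the cancellation in the Weyl sums over roots via Sali\'e sums, theta series, Kuznetsov, and Duke's half-integral-weight subconvexity). You also correctly isolate the one genuinely hard step --- the power-saving bound for $\sum_{p}\sum_{f(v)\equiv 0\,(p)}e(hv/p)$ restricted to the progression $\mathcal{S}$ --- which is exactly the content of T\'oth's refinement of Duke--Friedlander--Iwaniec, so the proposal is correct as a citation-plus-sketch and consistent with how the paper uses the result.
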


From the above uniform distribution theorem, one knows that any rational number in $[0,1]$
can be approximated by a sequence of elements in \ref{eq4.1}.
The following lemma shows that such an approximation may not be too well.

\begin{lem} \label{lem4.2}
Let $f(z)=Az^2+Bz+C$ and $D$ be given as above. Let $M$ and $N$ be positive integers
and $r$ be an integer with $0\le r\le N$. If $p$ is a prime, $v$ and $l$ are
positive integers satisfying that $(p,4AND)=1, v\le p^l, f(v)\equiv 0 \mod p^l$ and
$$p^l>4AN^2(|A|(M+N)^2+|B|(M+N)+|C|),$$
then
$$\Big|\frac{r}{N}-\frac{v}{p^l}\Big| > \frac{M}{p^l}.$$
\end{lem}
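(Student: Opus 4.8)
The plan is to argue by contradiction, assuming $\bigl|\tfrac{r}{N}-\tfrac{v}{p^l}\bigr|\le \tfrac{M}{p^l}$ and producing a rational root of $f$, in violation of the standing hypothesis that $D=B^2-4AC$ is not a perfect square. First I would clear denominators: multiplying the assumed inequality by $Np^l$ gives $|Nv-rp^l|\le MN$, so one can write $Nv-rp^l=k$ for a (uniquely determined) integer $k$ with $|k|\le MN$.

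Next I would carry out the main computation, which is to expand $N^2 f(v)$ after substituting $Nv=rp^l+k$. Writing $N^2 f(v)=A(Nv)^2+BN(Nv)+CN^2$ and replacing $Nv$ by $rp^l+k$, every term carrying a factor of $p^l$ is killed modulo $p^l$, leaving
$$N^2 f(v)\equiv Ak^2+BNk+CN^2 \pmod{p^l}.$$
Since $p^l\mid f(v)$ by hypothesis, we get $p^l\mid Ak^2+BNk+CN^2$. Then I would estimate the size of this integer: from $|k|\le MN$ and $A>0$ we have $|Ak^2+BNk+CN^2|\le N^2\bigl(AM^2+|B|M+|C|\bigr)$, and since $A\ge 1$ and $M+N\ge M$ the lower bound in the hypothesis yields $p^l> 4AN^2\bigl(A(M+N)^2+|B|(M+N)+|C|\bigr)\ge N^2\bigl(AM^2+|B|M+|C|\bigr)$. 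An integer divisible by $p^l$ with absolute value strictly less than $p^l$ must be zero, so $Ak^2+BNk+CN^2=0$; dividing by $N^2$ exhibits the rational number $k/N$ as a root of $AX^2+BX+C$. But $A\neq 0$ and its discriminant $D$ is not a perfect square, so this quadratic has no rational root, a contradiction. Hence $\bigl|\tfrac{r}{N}-\tfrac{v}{p^l}\bigr|>\tfrac{M}{p^l}$.

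I do not expect a genuine obstacle here; the one point requiring care is the routine bookkeeping in reducing $N^2 f(v)$ modulo $p^l$ and then verifying that the explicit bound on $p^l$ assumed in the statement really dominates $N^2\bigl(AM^2+|B|M+|C|\bigr)$. It is worth remarking that the argument uses only $f(v)\equiv 0\pmod{p^l}$, the lower bound on $p^l$, the fact that $A\ge 1$, and that $D$ is not a perfect square; the coprimality condition $(p,4AND)=1$ and the restriction $0\le r\le N$ are not needed for this particular estimate, although it is natural to retain them in the statement for the applications in Section~5.
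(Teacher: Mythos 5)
Your proof is correct and follows essentially the same route as the paper's: assume the inequality fails, reduce $f(v)$ modulo $p^l$ after substituting the nearby rational, bound the resulting integer strictly below $p^l$, and derive the contradiction from the fact that $D$ is not a perfect square (the paper phrases this by completing the square, $4A(Ak^2+BNk+CN^2)=(2Ak+BN)^2-DN^2$, which is exactly your ``no rational root'' step). Your bookkeeping via $k=Nv-rp^l$ (which coincides with the paper's quantity $Na-rt$) is in fact a little cleaner than the paper's detour through the residue $t$ of $p^l$ modulo $N$, and your closing remark that the coprimality condition and the range of $r$ are not used here is accurate.
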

\begin{proof}
Let $\frac{r}{N}, p^l,v$ and $M$ be given as in the hypothesises.
Assume on the contrary that
$$\Big|\frac{r}{N}-\frac{v}{p^l}\Big| \leq \frac{M}{p^l}.$$
Then
$$\frac{rp^l}{N}-M\le v\le \frac{rp^l}{N}+M.$$
Denote by $t$ the least nonnegative residue of $p^l$ modulo $N$. Then
$$\Big\lfloor\frac{rp^l}{N}-M\Big\rfloor=\frac{r(p^l-t)}{N}-M$$
and
$$\Big\lfloor\frac{rp^l}{N}+M\Big\rfloor=\frac{r(p^l-t)}{N}+M.$$
So we can write $v=\frac{r(p^l-t)}{N}+a$ for some integer $a\in[-M,M]$.
Since $f(v)\equiv 0 \mod p^l$, it follows that
$$A\Big(\frac{r(p^l-t)}{N}+a\Big)^2+B\Big(\frac{r(p^l-t)}{N}+a\Big)+C\equiv 0 \mod p^l.$$
Equivalently,
\begin{equation} \label{Eq4.2}
4A^2(Na-rt)^2+4ABN(Na-rt)+B^2N^2-DN^2\equiv 0 \mod p^l.
\end{equation}

Since
$$4A^2(Na-rt)^2+4ABN(Na-rt)+B^2N^2=(2A(Na-rt)+BN)^2$$
is a perfect square in $\mathbb{Z}$ while $DN^2$ is not, one derives that
$$4A^2(Na-rt)^2+4ABN(Na-rt)+B^2N^2-DN^2\neq 0.$$
On the other hand, since $r\le N, t\le N$ and $|a|\le M$, one can deduce that
\begin{align*}
&\big| 4A^2(Na-rt)^2+4ABN(Na-rt)+B^2N^2-DN^2 \big| \\
&\le 4AN^2(|A|(M+N)^2+|B|(M+N)+|C|)< p^l.
\end{align*}
Hence the residue of
$$4A^2(Na-rt)^2+4ABN(Na-rt)+B^2N^2-DN^2$$
modulo $p^l$ is nonzero,
which contradicts with (\ref{Eq4.2}). Therefore
$$\Big|\frac{r}{N}-\frac{v}{p^l}\Big| > \frac{M}{p^l}$$
as desired. This finishes the proof of Lemma 4.2.
\end{proof}

\begin{lem} \label{lem4.3}
Let $\alpha$ be a rational number, $\beta$ be a nonzero integer, $M$ be a positive real number
and $\tilde{D}$ be a rational number that is not a perfect square.
Let $p$ be a prime number such that
$$p>4d(\tilde{D})d^2(\alpha)(d(\tilde{D})(M+d(\alpha))^2+d(\tilde{D})|\tilde{D}|\beta^2).$$
Then for all positive integers $l$ and $v$ with $v\le p^l$ and $v^2\equiv \tilde{D} \mod p^l$, one has
\begin{align}
\frac{M}{p^l}< \Big\{\alpha+\beta\frac{v}{p^l}\Big\}< 1-\frac{M}{p^l}.
\end{align}
\end{lem}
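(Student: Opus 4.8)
The plan is to reduce the assertion to a statement about how badly an integer can approximate a suitable integer multiple of $v/p^{l}$, and then to quote Lemma 4.2. First I would fix notation: write $\tilde D=D_{1}/D_{2}$ in lowest terms with $D_{2}=d(\tilde D)>0$, noting that $\tilde D$, being a non-square, is nonzero, so $D_{1}\neq 0$ and $d(\tilde D)|\tilde D|=|D_{1}|$. Since every factor occurring in the lower bound for $p$ is at least $1$, that bound exceeds each of $|D_{1}|,D_{2},d(\alpha),|\beta|$; hence $p$ divides none of $D_{1},D_{2},d(\alpha),\beta$, and in particular $\tilde D$ is a $p$-adic unit. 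If $p\mid v$ we would get $v^{2}\equiv 0\not\equiv\tilde D\pmod p$, contradicting $v^{2}\equiv\tilde D\pmod{p^{l}}$; so $p\nmid v$, and with $v\le p^{l}$ this forces $0<v<p^{l}$.

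Next I would carry out the reduction. For a real $y$ write $\norm{y}$ for its distance to $\mathbb Z$; then $\{y\}\in(M/p^{l},1-M/p^{l})$ is equivalent to $\norm{y}>M/p^{l}$, and the target interval is nonempty since $p>2M$, so it suffices to prove $\norm{\alpha+\beta v/p^{l}}>M/p^{l}$. Because $\norm{d(\alpha)y}\le d(\alpha)\norm{y}$ and $d(\alpha)(\alpha+\beta v/p^{l})$ differs from $d(\alpha)\beta v/p^{l}$ by the integer $d(\alpha)\alpha$, it is enough to show $\norm{d(\alpha)\beta v/p^{l}}>d(\alpha)M/p^{l}$. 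Letting $w\in\{1,\dots,p^{l}-1\}$ be the residue of $d(\alpha)\beta v$ modulo $p^{l}$ (it is coprime to $p$, hence genuinely in this range), this last inequality is precisely the pair $d(\alpha)M<w<p^{l}-d(\alpha)M$.

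To obtain this I would construct an integer quadratic with $w$ as a root modulo $p^{l}$. From $v^{2}\equiv\tilde D\pmod{p^{l}}$ we get $w^{2}\equiv d(\alpha)^{2}\beta^{2}\tilde D\pmod{p^{l}}$, hence $D_{2}w^{2}-d(\alpha)^{2}\beta^{2}D_{1}\equiv 0\pmod{p^{l}}$; dividing by $g:=\gcd(D_{2},d(\alpha)^{2}\beta^{2})$ yields a primitive $\psi(z)=\tfrac{D_{2}}{g}z^{2}-\tfrac{d(\alpha)^{2}\beta^{2}D_{1}}{g}\in\mathbb Z[z]$ with positive leading coefficient, $\psi(w)\equiv 0\pmod{p^{l}}$, and discriminant $\Delta=\tfrac{4d(\alpha)^{2}\beta^{2}D_{1}D_{2}}{g^{2}}=\big(\tfrac{2d(\alpha)|\beta|D_{2}}{g}\big)^{2}\tilde D$; since $\tfrac{2d(\alpha)|\beta|D_{2}}{g}$ is a nonzero integer and $\tilde D$ is not a perfect square, $\Delta$ is a nonzero non-square. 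I would then apply Lemma 4.2 to $\psi$ with $N=1$, with its parameter ``$M$'' taken to be the real number $d(\alpha)M$, and with $r=0$ and $r=1$ in turn, using the root $w\le p^{l}$ both times. The coprimality hypothesis holds because every prime dividing $4\tfrac{D_{2}}{g}\Delta$ also divides the fixed integer $2D_{1}D_{2}d(\alpha)\beta$, all of whose prime factors are $<p$; the size hypothesis $p^{l}>4\tfrac{D_{2}}{g}\big(\tfrac{D_{2}}{g}(d(\alpha)M+1)^{2}+\tfrac{d(\alpha)^{2}\beta^{2}|D_{1}|}{g}\big)$ follows from the stated bound on $p$ using $g\ge 1$, the elementary estimate $(d(\alpha)M+1)^{2}\le d(\alpha)^{2}(M+d(\alpha))^{2}$ (valid since $d(\alpha)\ge 1$), and $|D_{1}|=|\tilde D|D_{2}$. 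The two applications give $w/p^{l}>d(\alpha)M/p^{l}$ and $1-w/p^{l}>d(\alpha)M/p^{l}$, i.e. $d(\alpha)M<w<p^{l}-d(\alpha)M$, which is exactly what was needed.

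The logical skeleton is short; the care is concentrated in two places. First, one must write down the correct auxiliary quadratic $\psi$ and check that its discriminant is a nonzero non-square — this is where clearing the denominator of $\tilde D$ and the factor $d(\alpha)^{2}\beta^{2}$ must be done cleanly. Second, and this is the main bit of bookkeeping, one must verify that the explicit lower bound on $p$ in the statement is exactly large enough for the hypotheses of Lemma 4.2, in particular that the powers of $d(\alpha)$ and of $\beta$ end up in the right positions. I also expect to need the (harmless) observation that the proof of Lemma 4.2 goes through with its parameter $M$ an arbitrary positive real rather than an integer — this is what lets us feed it $d(\alpha)M$ directly and avoid losing a spurious $+1$ in the edge case $\alpha\in\mathbb Z$.
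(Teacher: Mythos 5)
Your proof is correct and follows essentially the same route as the paper's: both reduce the claim to Lemma 4.2 applied to an integer quadratic whose root modulo $p^l$ is the residue of a suitable multiple of $\beta v$. The differences are only bookkeeping — you clear the denominator $d(\alpha)$ and invoke Lemma 4.2 with $N=1$, $r\in\{0,1\}$ and parameter $d(\alpha)M$, whereas the paper keeps $N=d(\alpha)$ and $r/N=1-\{\alpha\}$; your version is in fact slightly tidier at the two points the paper glosses over (primitivity of the auxiliary quadratic, and getting both endpoint inequalities in the edge case $\alpha\in\mathbb{Z}$).
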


\begin{proof}
Let $f(z)=d(\tilde{D})(z^2-\beta^2\tilde{D})$ and suppose that $\tilde{v}$ with
$0\le \tilde{v}\le p^l$ satisfies $f(\tilde{v})\equiv 0\mod p^l$.
It is easy to see that $\{\frac{\beta v}{p^l}\}=\frac{\tilde{v}}{p^l}$.
Obviously, for any positive integer $l$, we have
$$p^l\ge p>4d(\tilde{D})d^2(\alpha)(d(\tilde{D})(M+d(\alpha))^2+d(\tilde{D})|\beta^2\tilde{D}|).$$
Then by Lemma \ref{lem4.2}, one can derive that
$$\Big|1-\big\{\alpha \big\}-\frac{\tilde{v}}{p^l}\Big|>\frac{M}{p^l}.$$
In other words, one has either
$$0\le\Big\{\frac{\beta v}{p^l}\Big\}< 1-\{\alpha\}-\frac{M}{p^l},$$
or
$$1-\{\alpha\}+\frac{M}{p^l}<\Big\{\frac{\beta v}{p^l}\Big\}<1.$$
But
\begin{align*}
\Big\{\alpha+\frac{\beta v}{p^l}\Big\}
=\bigg\{\begin{array}{cl}
       \{\alpha\}+\big\{\frac{\beta v}{p^l}\big\}, & {\rm if}\
       \{\alpha\}+\big\{\frac{\beta v}{p^l}\big\}<1, \\
       \{\alpha\}+\big\{\frac{\beta v}{p^l}\big\}-1, & {\rm otherwise.}
     \end{array}
\end{align*}
Therefore either
$$\{\alpha\}\le \Big\{\alpha+\frac{\beta v}{p^l}\Big\}<1-\frac{M}{p^l}
$$
or
$$\frac{M}{p^l}<\Big\{\alpha+\frac{\beta v}{p^l}\Big\}<\{\alpha\}.$$
That is, (4.3) is true. This concludes the proof of Lemma \ref{lem4.3}.
\end{proof}

In the rest part of this section, we consider a special subset
$\mathcal{Y}$ of $\mathcal{X}$ which is defined by
$$\mathcal{Y}:=\Big\{\frac{v}{p}: p\in\mathcal{S},0\le v<p,  f(v)\equiv 0 \mod p^2 \Big\},$$
and set
$$S_{\mathcal{Y}}(x):=\#\Big\{\frac{v}{p}\in\mathcal{Y}:p\le x\Big\}.$$
Meanwhile, for $1\le a\le A$, we define
$$\mathcal{Y}_a:=\Big\{\frac{v}{p}: p\in\mathcal{S},0\le v<p, f(v)=ap^2 \Big\},$$
and let
$$S_a(x):=\#\Big\{\frac{v}{p}\in\mathcal{Y}_a:p\le x\Big\}.$$
One can deduce from the uniform distribution theorem that the following result is true.

\begin{lem} \label{lem4.4}
Each of the following is true:

{\rm (i).} $S_a(x)=o(\pi(x))$ for all integers $a$ with $1\le a\le A$.

{\rm (ii).} $S_{\mathcal{Y}}(x)=o(\pi(x))$.
\end{lem}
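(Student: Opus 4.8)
The plan is to deduce both statements from the uniform distribution theorem (Lemma \ref{lem4.1}) together with the non-approximation estimate of Lemma \ref{lem4.2}. First I would observe that part (ii) follows immediately from part (i): since $f(v)=Av^2+Bv+C$ with $0\le v<p$, the value $f(v)$ lies in an interval of length $O(p^2)$, so $f(v)\equiv 0\bmod p^2$ forces $f(v)=ap^2$ for some integer $a$ with $|a|\le$ (a constant depending only on $A,B,C$), and since $A>0$ and $p$ is large, in fact $1\le a\le A$ once $p$ exceeds a fixed bound. Hence $\mathcal{Y}=\bigcup_{a=1}^{A}\mathcal{Y}_a$ up to finitely many elements, and $S_{\mathcal{Y}}(x)\le\sum_{a=1}^{A}S_a(x)+O(1)$, so $S_{\mathcal{Y}}(x)=o(\pi(x))$ follows from $S_a(x)=o(\pi(x))$ for each fixed $a$.

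The core of the argument is therefore part (i). Fix $a\in\{1,\dots,A\}$. The idea is that $\frac vp\in\mathcal{Y}_a$ means $\frac vp$ is, up to an error of size $O(1/p^2)$, a root of the fixed quadratic $f(x)=ap^2$ rescaled, i.e. $v/p$ is extremely close to one of the two real roots $\rho_{a,\pm}$ of $Ax^2+Bx+C-a=0$ (after dividing $f(v)=ap^2$ by $p^2$ one gets $A(v/p)^2+B(v/p)/p+C/p^2 - a=0$, whence $|A(v/p)^2 - a|=O(1/p)$, so $v/p$ is within $O(1/p)$ of a root of $Ax^2-a=0$, and in fact within $O(1/p)$ of a root $\rho_{a,\pm}$ of $Ax^2+Bx+C-a$). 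Thus every element of $\mathcal{Y}_a$ with $p\le x$ lies in the union of two intervals of length $O(1/x)$ centered at the two fixed reals $\rho_{a,+},\rho_{a,-}$. Now I invoke Lemma \ref{lem4.1}: for any fixed $0\le c<d\le 1$ the number of $v/p\in\mathcal{X}$ with $p\le x$ and $c\le v/p<d$ is $\sim C(d-c)\pi(x)$. Choosing the window around each $\rho_{a,\pm}$ to have width $2\epsilon$ gives an upper bound $\le (2C\epsilon + o(1))\pi(x)\cdot 2$ for $S_a(x)$ once $x$ is large enough that the $O(1/x)$ neighborhoods are contained in the $\epsilon$-windows. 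Since $\epsilon>0$ is arbitrary, this forces $S_a(x)=o(\pi(x))$.

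There is one subtlety that must be handled carefully: when $\rho_{a,+}$ (or $\rho_{a,-}$) is itself rational, the $\epsilon$-window trick still works because Lemma \ref{lem4.1} bounds the count in \emph{any} fixed subinterval, rational endpoints included; but one should also note that $\mathcal Y_a$ genuinely requires $f(v)=ap^2$ \emph{exactly}, not merely $f(v)\equiv 0\bmod p^2$, which is what makes the localization to $O(1/p)$ neighborhoods of $\rho_{a,\pm}$ valid and avoids any interference with Lemma \ref{lem4.2}. Here Lemma \ref{lem4.2} enters as the sanity check that rational points cannot be approximated by elements of $\mathcal{X}$ to within $o(1/p)$ infinitely often with a fixed denominator, but for the present lemma the cruder interval-covering argument via Lemma \ref{lem4.1} suffices. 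The main obstacle I anticipate is purely bookkeeping: making the passage from $f(v)=ap^2$ to ``$v/p$ lies in a shrinking neighborhood of $\rho_{a,\pm}$'' fully rigorous with explicit constants, and correctly excluding the finitely many small primes where the constraints $1\le a\le A$ or $(p,4AND)=1$ might fail. Once the covering is set up, the conclusion is immediate from the asymptotic in Lemma \ref{lem4.1} by letting the window width tend to zero.
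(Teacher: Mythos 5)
Your proposal is correct and follows essentially the same route as the paper: localize $\mathcal{Y}_a$ to a shrinking neighborhood of its unique accumulation point (which is $\sqrt{a/A}$ --- your parenthetical relocation to the roots of $Ax^2+Bx+C-a$ is a harmless slip, since dividing $f(v)=ap^2$ by $p^2$ kills the $B$ and $C$ contributions), then bound the count in an $\epsilon$-window via Lemma \ref{lem4.1} and let $\epsilon\to 0$, which is exactly the paper's argument phrased directly rather than by contradiction. Your deduction of part (ii) from part (i), via $\mathcal{Y}=\bigcup_{a=1}^{A}\mathcal{Y}_a$ up to finitely many elements, is also identical to the paper's.
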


\begin{proof}
(i). If the set $\mathcal{Y}_a$ is finite, then part (i) is clearly true.

If there is an integer $a$ with $1\le a\le A$ such that the set $\mathcal{Y}_a$
is infinite, then as $v/p$ runs through $\mathcal{Y}_a$, we have
$$\lim_{p\rightarrow\infty}\frac{v^2}{p^2}
=\lim_{p\rightarrow\infty}\frac{Av^2+Bv+C}{Ap^2}=\frac{a}{A}.$$
It follows that $\lim_{p\rightarrow\infty}v/p=\sqrt{a/A}$
as $v/p$ runs through $\mathcal{Y}_a$.
So if $\mathcal{Y}_a$ contains infinitely many elements,
then $\mathcal{Y}_a$ holds exactly one accumulation point.

Assume now that $S_{\mathcal{Y}}(x)\neq o(\pi(x))$.
In what follows, we show that we will arrive at a contradiction.

On the one hand, since $0<S_a(x)/\pi(x)\le 1$ for all $x>0$,
there exists a sequence $\{x_n\}_{n\ge 1}$ of positive real
numbers and a constant $c>0$ such that
$x_n\rightarrow\infty$ as $n\rightarrow\infty$ and
$$S_a(x_n)\thicksim c\pi(x_n).$$

On the other hand, since $\sqrt{a/A}$ is the unique accumulation point of
$\mathcal{Y}_a$, it follows that for any $\epsilon>0$,
all but finitely many elements of $\mathcal{Y}_a$ are contained in the interval
$$I:=[\sqrt{a/A}-\epsilon/2, \sqrt{a/A}+\epsilon/2)\cap [0,1].$$
Since $\mathcal{Y}_a\subset\mathcal{X}$, all but finitely many elements of
$\mathcal{Y}_a$ are contained in $\mathcal{X}\cap I$.
Thus one deduces that
$$\#\Big\{\frac{v}{p}\in\mathcal{X}\cap I: p\le x_n\Big\}+O(1)\ge S_a(x_n).$$
But Lemma 4.1 tells us that for all large enough $x_n$,
$$C|I|\pi(x_n)\thicksim\#\Big\{\frac{v}{p}\in\mathcal{X}\cap
I:p\le x_n\Big\}+O(1),$$
where $|I|\le \epsilon$ and $|I|$ stands for the length of $I$.
So one derives that
$$C|I|\pi(x_n)\ge c\pi(x_n).$$
That is, $|I|\ge c/C$.
This is ridiculous since $\epsilon$ can be chosen extremely
small while $c$ is a constant. Thus the assumption
$S_{\mathcal{Y}}(x)\neq o(\pi(x))$ is false and so $S_a(x)=o(\pi(x))$.

(ii). For $v/p\in\mathcal{Y}$, since $f(v)<(A+1)p^2$ for all large primes $p$,
it follows that $f(v)=ap^2$ for some integer $a$ with $1\le a \le A$. Then
$$\mathcal{Y}-\bigcup_{a=1}^{A}\mathcal{Y}_a$$
is a finite set and
$$\Big|S_{\mathcal{Y}}(x)-\sum_{a=1}^{A}S_a(x)\Big|=O(1).$$
Thus the desired result $S_{\mathcal{Y}}(x)=o(\pi(x))$
is implied by part (i). This ends the proof of Lemma \ref{lem4.4}.
\end{proof}

As an application of Lemma \ref{lem4.4}, for any integer $a$ with $1\le a\le A$,
we consider the set
$$\mathcal{Z}_a:=\{p\in\mathcal{S}: \exists\ n\in\mathbb{Z}_{>0}\ {\rm s.t.}\ f(n)=ap^2\}$$
and define
$$T_a(x):=\#\{p\le x: p\in\mathcal{Z}_a \}.$$
Particularly, $T_1(x)$ is the number of prime squares in the sequence
$\{f(n)\}_{n=1}^{\infty}$. We can obtain a rough upper bound for $T_a(x)$.

\begin{cor} \label{cor4.5}
We have $T_a(x)=o(\pi(x))$.
\end{cor}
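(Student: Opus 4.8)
The plan is to reduce the count $T_a(x)$ to the count $S_a(x)$ already controlled by Lemma \ref{lem4.4}(i). Recall $T_a(x)$ counts primes $p \le x$ in $\mathcal{S}$ for which there exists a positive integer $n$ with $f(n) = ap^2$, while $S_a(x)$ counts fractions $v/p \in \mathcal{Y}_a$ with $p \le x$, i.e. pairs $(v,p)$ with $p \in \mathcal{S}$, $0 \le v < p$ and $f(v) = ap^2$. The key observation is that from a solution $f(n) = ap^2$ with $n$ arbitrary one can produce a solution with the residue in the range $[0,p)$: write $n = qp + v$ with $0 \le v < p$. Then $f(n) \equiv f(v) \pmod{p}$, but more is needed; instead, since $Af(n) = (2An+B)^2/4 \cdot$ (up to the usual normalization) the equation $f(n)=ap^2$ is quadratic, so for each prime $p$ there are at most two residue classes $v \bmod p$ with $f(v) \equiv f(n) \equiv 0 \cdot$ (after reduction); the cleaner route is: for a prime $p$ counted by $T_a(x)$, pick the witness $n$, reduce it modulo $p$ to get $v \in \{0,1,\dots,p-1\}$, and note $f(v) \equiv f(n) \equiv ap^2 \equiv 0 \pmod{p^2}$ provided we first check $f(n)\equiv f(v)\pmod{p^2}$. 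This last congruence need not hold, so instead I will argue directly with $\mathcal{Y}$: the reduced residue $v$ satisfies $f(v) \equiv 0 \pmod{p}$ at least, and for $p$ large the value $f(v)$ with $0\le v<p$ satisfies $|f(v)| \le A p^2 + |B| p + |C| < (A+1)p^2$, so $f(v) = a'p^2$ or $f(v) = a'p$ for bounded $a'$ — but to land in $\mathcal{Y}_a$ we want $p^2 \mid f(v)$, which follows because $f(v) \equiv f(n) = ap^2 \equiv 0 \pmod{p^2}$ once we observe $v \equiv n \pmod{p}$ forces $f(v) \equiv f(n) \pmod{p}$ only, hence I must choose $v$ with $v\equiv n\pmod{p^2}$ reduced modulo $p^2$ — but then $v$ may exceed $p$.

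To circumvent this, here is the correct reduction. Suppose $p$ is counted by $T_a(x)$ with witness $n$, so $f(n)=ap^2$. Complete the square: $4A\,f(n) = (2An+B)^2 - D$, so $(2An+B)^2 \equiv D \pmod{p^2}$ (for $p \nmid 2A$, which holds for all but finitely many $p \in \mathcal{S}$). Thus $D$ is a square modulo $p^2$; let $w$ with $0 \le w < p^2$ satisfy $w^2 \equiv D \pmod{p^2}$ and $w \equiv 2An+B \pmod{p^2}$. Solving $2Av + B \equiv w \pmod{p^2}$ for $v$ in $\{0,\dots,p-1\}$: pick $v_0 \equiv (2A)^{-1}(w-B) \pmod{p^2}$, then $v := v_0 \bmod p$ lies in $[0,p)$, but the congruence $f(v) \equiv 0 \pmod{p^2}$ is again only guaranteed mod $p$. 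The honest fix is simply that I do not need an injection onto $\mathcal{Y}_a$; I only need that each prime $p$ counted by $T_a(x)$ gives rise to \emph{at least one} element $v/p$ of $\mathcal{X}$ that is \emph{close} to $\sqrt{a/A}$, and then rerun the accumulation-point argument from the proof of Lemma \ref{lem4.4}(i) verbatim. Indeed, if $f(n)=ap^2$ with $n>0$, then $n/p \to \sqrt{a/A}$ as $p\to\infty$ (same computation: $f(n)/p^2 \to a$ forces $n^2/p^2 \to a/A$), and letting $v$ be the residue of $n$ mod $p$ we have $f(v) \equiv 0 \pmod p$ so $v/p \in \mathcal{X}$; moreover $v/p = \{n/p\}$ differs from $n/p$ by an integer, and since $\sqrt{a/A} \in [0,1)$ when $a \le A$, for $p$ large the fractional part $\{n/p\}$ itself tends to $\sqrt{a/A}$. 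Hence all but finitely many of these $v/p$ lie in any fixed interval $I = [\sqrt{a/A}-\epsilon/2,\sqrt{a/A}+\epsilon/2) \cap [0,1]$, and they are pairwise distinct for distinct $p$. If $T_a(x) \ne o(\pi(x))$, then for a sequence $x_n \to \infty$ we get $T_a(x_n) \ge c\,\pi(x_n)$, hence $\#\{v/p \in \mathcal{X}\cap I : p \le x_n\} \ge c\,\pi(x_n) + O(1)$, while Lemma \ref{lem4.1} gives this count $\sim C|I|\pi(x_n)$ with $|I| \le \epsilon$; letting $\epsilon \to 0$ contradicts $C|I| \ge c$. Therefore $T_a(x) = o(\pi(x))$.

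The main obstacle, as the discussion above shows, is resisting the temptation to force a direct injection $\mathcal{Z}_a \hookrightarrow \mathcal{Y}_a$ (which fails because reducing the witness $n$ modulo $p$ only preserves $f \equiv 0 \pmod p$, not $\pmod{p^2}$); the clean argument instead recycles the accumulation-point/uniform-distribution contradiction already used for Lemma \ref{lem4.4}, applied to the sequence of fractional parts $\{n/p\}$ coming from witnesses $f(n)=ap^2$. One should also note the harmless finitely many exceptional primes where $p \mid 2A$ or where $\{n/p\}$ has not yet settled near $\sqrt{a/A}$, and that distinctness of the points $v/p$ across distinct $p$ is automatic since their denominators differ.
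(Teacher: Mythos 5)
Your argument for $1\le a<A$ is sound and is essentially the paper's: for $a<A$ the relation $f(n)=ap^2$ forces $n/p\to\sqrt{a/A}<1$, so for all but finitely many $p$ the witness already satisfies $n<p$, the fractions $n/p$ land in $\mathcal{Y}_a\subset\mathcal{X}$ and cluster at $\sqrt{a/A}$, and the uniform-distribution count from Lemma \ref{lem4.1} against a shrinking interval yields the contradiction (the paper packages this as $T_a(x)\le S_a(x)+O(1)$ and cites Lemma \ref{lem4.4}(i); you rerun that proof inline, which is equivalent). Your long preliminary discussion of why reducing $n$ modulo $p$ does not preserve divisibility by $p^2$ is correct but ultimately irrelevant to the argument you settle on.

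There is, however, a genuine gap at $a=A$. You write ``since $\sqrt{a/A}\in[0,1)$ when $a\le A$,'' but $\sqrt{A/A}=1\notin[0,1)$, and the step that fails is precisely the one this was meant to justify: the claim that $\{n/p\}\to\sqrt{a/A}$. When the limit of $n/p$ is $1$, nothing in your argument rules out witnesses with $n\ge p$, for which $\{n/p\}=(n-p)/p+O(1/p)$ drifts toward $0$ rather than $1$; the fractional parts need not accumulate at a single interior point, and your interval $I$ centered at $\sqrt{a/A}$ no longer captures them. The case $a=A$ needs a separate argument. The paper supplies one: completing the square gives $(2An+B)^2=4A^2p^2+D$ with $D\ne 0$ not a perfect square, and for $p$ large this quantity is strictly between $(2Ap-1)^2$ and $(2Ap+1)^2$ yet unequal to $(2Ap)^2$, so it cannot be a square; hence $\mathcal{Z}_A$ is finite and $T_A(x)=O(1)$. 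Alternatively you could show directly that $f(n)=Ap^2$ forces $|n-p|$ bounded and then $p(2A(n-p)+B)=-f(n-p)$, giving at most one prime per value of $n-p$; either way, some such argument must be added before your proof covers the full range $1\le a\le A$.
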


\begin{proof}
First we show that $\mathcal{Z}_A$ is finite.
Assume conversely that there are infinitely many primes $p\in \mathcal{S}$
and a positive integer $n$ such that $f(n)=Ap^2$. Then
$$4Af(n)=(2An+B)^2-D=4A^2p^2.$$
Thus there exists infinitely many primes $p$ such that both
$4A^2p^2$ and $4A^2p^2-D$ are squares of integers.
On the other hand, one can easily check that if $p>(|D|+1)/4A$,
then
$$(2Ap-1)^2< 4A^2p^2-D<(2Ap+1)^2.$$
But $D$ is not a perfect square in $\mathbb{Z}$. So $D\ne 0$
which implies that
$$4A^2p^2-D\ne (2Ap)^2.$$
Therefore
$4A^2p^2-D$ is not a square of an integer. It follows that there
are at most finitely many primes $p$ such that $4A^2p^2$ and $4A^2p^2-D$
are squares of integers. This is a contradiction.
Hence $\mathcal{Z}_A$ is finite. Thereby $T_A(x)=o(\pi(x))$.

In what follows, let $1\le a<A$. If $\mathcal{Z}_a$ is a finite set,
then it is obvious true that $T_a(x)=o(\pi(x))$.
Hence we need ony to consider the case that $\mathcal{Z}_a$ is infinite.
Let now $\mathcal{Z}_a$ be infinite.
Since $f(n)\thicksim An^2$ as $n\rightarrow\infty$ and $\frac{A}{a}>1$,
there exists a positive integer $N$ such that $An^2<\frac{A}{a}f(n)$
for all $n>N$. Thus for all integers $n$ and primes $p$ with $n>N$
and $f(n)=ap^2$, one has
$$n^2<\frac{f(n)}{a}=p^2.$$
That is, $n<p$. It then follows that $\frac{n}{p}\in\mathcal{Y}_a$.
Therefore for all large enough positive real numbers $x$, one has
$$T_a(x)\le S_a(x)+O(1).$$
Hence by Lemma \ref{lem4.4} (i), the desired result
$T_a(x)=o(\pi(x))$ follows immediately.
The proof of Corollary \ref{cor4.5} is complete.
\end{proof}

\section{N-integrality of the hypergeometric series
with quadratic parameters and Proof of Theorem \ref{thm1.4}}

In this section, we investigate the N-integrality of the
hypergeometric series with parameters from quadratic fields
and show Theorem \ref{thm1.4}.

At first, let us recall the notations. Let $D$ be an arbitrary
given square-free integer other than 1.
Throughout this section, we always let $K=\mathbb{Q}(\sqrt{D})$.
Let $\bm{\alpha}=(\alpha_1,...,\alpha_r)$ and $\bm{\beta}=({\beta_1,...,\beta_s})$
with $\alpha_i$ and $\beta_j$ belonging to $K\setminus\mathbb{Z}_{\le 0}$.
Let
$$u:=\#\{1\le i\le r: \alpha_i\in\mathbb{Q}\}$$
and
$$v:=\#\{1\le j\le s: \beta_j\in\mathbb{Q}\}.$$
Without loss of any generality, suppose that $\alpha_1,...,\alpha_u$ and $\beta_1,...,\beta_v$
are rational numbers while $\alpha_{u+1},...,\alpha_{r}$ and $\beta_{v+1},...,\beta_s$ are not.
Write
$$\bm{\alpha}=(\alpha_{11}+\alpha_{21}\sqrt{D}, ...,
\alpha_{1r}+\alpha_{2r}\sqrt{D})$$
and
$$\bm{\beta}=(\beta_{11}+\beta_{21}\sqrt{D},...,\beta_{1s}+\beta_{2s}\sqrt{D})$$
with $\alpha_{1i},\alpha_{2i},\beta_{1j}$ and $\beta_{2j}$ being elements in
$\mathbb{Q}$ for all integers $i$ and $j$ such that $1\le i\le r$ and $1\le j\le s$.
Then $\alpha_{21}=\cdots\alpha_{2u}=\beta_{21}=\cdots=\beta_{2v}=0$, and
$\alpha_{2i}, \beta_{2j}$ are nonzero for all integers $i$ and $j$ such that
$u+1\le i\le r$ and $v+1\le j\le s$. Set
$$\bm{\alpha}_1:=(\alpha_{11},..., \alpha_{1r}),\ \
\bm{\beta}_1:=(\beta_{11},..., \beta_{1s}),$$
$$\bm{\alpha}_2:=(\alpha_{21},..., \alpha_{2r}),\ \
\bm{\beta}_2:=(\beta_{21},..., \beta_{2s}),$$
and
$$\bm{\mu}:=(\alpha_1,...,\alpha_u),\ \
\bm{\nu}:=(\beta_1,...,\beta_v).$$
Let $d_{\bm{\alpha_1,\beta_1}}$ (resp. $d_{\bm{\alpha_2,\beta_2}}$, $d_{\bm{\bm{\mu,\nu}}}$) be
the least common multiple of the exact denominators of components of $\bm{\alpha_1}$ and
$\bm{\beta_1}$ (resp. $\bm{\alpha_2}$ and $\bm{\beta_2}$, $\bm{\mu}$ and $\bm{\nu}$).
Define
$$E:={\rm lcm}(4D, d_{\bm{\alpha_1,\beta_1}},d_{\bm{\alpha_2,\beta_2}})$$
and let $G$ be the multiplicative group of residue class ring modulo $E$,
namely, $G:=(\mathbb{Z}/E\mathbb{Z})^{\times}$. Throughout this section,
we identity any element $a\in G$ with an integer $a\in\{1, ..., E\}$ satisfying that $(a,E)=1$.

Write $D=(-1)^{\lambda_1}2^{\lambda_2}D'$,
where $\lambda_1,\lambda_2\in\{0,1\}$ and $D'$ is a positive odd square-free integer other
than 1. Let $p$ and $q$ be two distinct prime numbers such that $p\equiv q \mod 4D$. Claim that
$$(-1)^{\frac{\lambda_2(p^2-1)}{8}}=(-1)^{\frac{\lambda_2(q^2-1)}{8}}.$$
In fact, if $\lambda_2=0$, then this is clear true since both sides are 1.
Now let $\lambda_2=1$. Since $p$ and $q$ are odd, one then derives that
$p^2-q^2=(p-q)(p+q)\equiv 0 \mod 16$. It infers that
$$(-1)^{\frac{\lambda_2(p^2-1)}{8}}=(-1)^{\frac{\lambda_2(q^2-1)}{8}}.$$
Thus the claim is proved. Now we deduce from the claim and the quadratic
reciprocity law about the Jacobi symbols (see, for example, Proposition
5.2.2 of \cite{[IR]}) that
\begin{align*}
\Big(\frac{D}{p}\Big)&=\Big(\frac{-1}{p}\Big)^{\lambda_1}\Big(\frac{2}{p}\Big)^{\lambda_2}\Big(\frac{D'}{p}\Big) \\
&=(-1)^{\frac{\lambda_1(p-1)}{2}}(-1)^{\frac{\lambda_2(p^2-1)}{8}}(-1)^{\frac{(D'-1)(p-1)}{4}}\Big(\frac{p}{D'}\Big)\\
&=(-1)^{\frac{\lambda_1(q-1)}{2}}(-1)^{\frac{\lambda_2(q^2-1)}{8}}(-1)^{\frac{(D'-1)(q-1)}{4}}\Big(\frac{q}{D'}\Big)\\
&=\Big(\frac{-1}{q}\Big)^{\lambda_1}\Big(\frac{2}{q}\Big)^{\lambda_2}\Big(\frac{D'}{q}\Big)\\
&=\Big(\frac{D}{q}\Big).
\end{align*}
It follows that for any $a\in\big(\mathbb{Z}/4D\mathbb{Z}\big)^{\times}$,
if there is a prime number $p$ satisfying that
$$p\equiv a \mod 4D\ \ {\rm and}\ \ \Big(\frac{D}{p}\Big)=1,$$
then for all primes $p$ with $p\equiv a \mod 4D$, one has $\big(\frac{D}{p}\big)=1$.
Hence one can define
\begin{align*}
&H:=\{a\in(\mathbb{Z}/E\mathbb{Z})^{\times} : \Big(\frac{D}{p}\Big)=1 \
{\rm holds \ for\ all\ primes}\ p\equiv a \mod E\},\\
&I:=G\setminus H.
\end{align*}
To start the proof of Theorem \ref{thm1.4}, we need to develop a series of lemmas.
The following result shows that $H$ is a subgroup of the group
$(\mathbb{Z}/E\mathbb{Z})^{\times}$.
\begin{lem} \label{lem5.1}
Let $H$ be given as above. Then $H$ is a subgroup of $(\mathbb{Z}/E\mathbb{Z})^{\times}$.
\end{lem}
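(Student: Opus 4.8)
The plan is to exhibit $H$ as the preimage of a subgroup under the canonical projection $\rho\colon(\mathbb{Z}/E\mathbb{Z})^{\times}\to(\mathbb{Z}/4D\mathbb{Z})^{\times}$, which is well defined because $4D\mid E$. The underlying fact, already established above for prime arguments, is that $\big(\frac{D}{p}\big)$ depends only on the residue of $p$ modulo $4D$ whenever $p\nmid 2D$; and if $p\equiv a\bmod E$ with $(a,E)=1$, then automatically $p\equiv a\bmod 4D$ and $p\nmid 2D$. So I would first introduce the map
$$\psi\colon(\mathbb{Z}/4D\mathbb{Z})^{\times}\longrightarrow\{\pm1\},\qquad \psi(\bar m):=\big(\tfrac{D}{m}\big),$$
the Jacobi symbol, where $m$ is any positive odd integer coprime to $D$ representing the class $\bar m$; every unit class modulo $4D$ admits such a representative.

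The next step is to verify the two properties that make this work. First, $\psi$ is well defined: if $m\equiv m'\bmod 4D$ are positive odd integers coprime to $D$, then the computation displayed just above the statement of this lemma — which rests only on the supplementary laws for $\big(\frac{-1}{\cdot}\big)$ and $\big(\frac{2}{\cdot}\big)$ and on quadratic reciprocity for the Jacobi symbol, all valid for odd (not necessarily prime) arguments — yields $\big(\frac{D}{m}\big)=\big(\frac{D}{m'}\big)$. Second, $\psi$ is a group homomorphism, which is immediate from the complete multiplicativity of the Jacobi symbol in its lower entry, $\big(\frac{D}{mm'}\big)=\big(\frac{D}{m}\big)\big(\frac{D}{m'}\big)$, so that $\ker\psi$ is a subgroup of $(\mathbb{Z}/4D\mathbb{Z})^{\times}$. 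Now, for $a\in G$ and any prime $p$ with $p\equiv a\bmod E$ — and such primes exist, indeed infinitely many, by Dirichlet's theorem since $(a,E)=1$ — we have $p\nmid 2D$, so the Legendre symbol $\big(\frac{D}{p}\big)$ coincides with the Jacobi symbol $\big(\frac{D}{p}\big)=\psi(\rho(a))$, a value independent of the particular prime $p$ chosen. Hence $a\in H$ if and only if $\psi(\rho(a))=1$, i.e. $H=\rho^{-1}(\ker\psi)$; being the preimage of a subgroup under a group homomorphism, $H$ is a subgroup of $(\mathbb{Z}/E\mathbb{Z})^{\times}$. (Equivalently one checks directly that $1\in H$ because $\psi(\rho(1))=1$, that $ab\in H$ whenever $a,b\in H$ because $\psi(\rho(ab))=\psi(\rho(a))\psi(\rho(b))=1$, and that $a^{-1}\in H$ whenever $a\in H$ because $\psi(\rho(a^{-1}))=\psi(\rho(a))^{-1}=1$.)

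I do not anticipate a genuine difficulty; morally this is just the assertion that $\big(\frac{D}{\cdot}\big)$ is induced by a quadratic Dirichlet character modulo $4D$, which the paper has in effect already verified on prime arguments. The only care needed is bookkeeping: confirming that the reciprocity computation above goes through verbatim with Jacobi symbols (hence for all odd representatives, not merely primes); keeping track of the cases $D$ even versus $D$ odd, so that the modulus actually controlling $\big(\frac{D}{m}\big)$ divides $4|D|$ and therefore $E$; and invoking Dirichlet's theorem so that the clause ``$\big(\frac{D}{p}\big)=1$ for all primes $p\equiv a\bmod E$'' is never vacuously satisfied on a unit class. With these points settled, the subgroup property is purely formal.
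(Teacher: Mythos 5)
Your proof is correct. It takes a somewhat different route from the paper's, so a brief comparison is in order. The paper checks closure under multiplication directly: it takes $a_1,a_2\in H$, primes $p_i\equiv a_i\bmod E$ and $p\equiv a_1a_2\bmod E$, and runs the reciprocity computation a second time with $p_1p_2$ in place of a single prime, reducing the exponents $\frac{\lambda_1(p_1p_2-1)}{2}$, $\frac{\lambda_2(p_1^2p_2^2-1)}{8}$, $\frac{(D'-1)(p_1p_2-1)}{4}$ modulo $2$ to split them into sums; it then concludes that $H$ is a subgroup because a nonempty multiplicatively closed subset of a finite group is a subgroup. You instead package the pre-lemma computation into the statement that $\bar m\mapsto\big(\frac{D}{m}\big)$ is a well-defined homomorphism $\psi\colon(\mathbb{Z}/4D\mathbb{Z})^{\times}\to\{\pm1\}$ (i.e.\ a quadratic character mod $4D$) and identify $H=\rho^{-1}(\ker\psi)$, which makes the subgroup property formal and yields identity, closure and inverses simultaneously without appealing to finiteness. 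The mathematical content is the same -- both arguments rest on the supplementary laws and Jacobi reciprocity, and your multiplicativity step $\big(\frac{D}{mm'}\big)=\big(\frac{D}{m}\big)\big(\frac{D}{m'}\big)$ is exactly what the paper verifies by hand -- but your version is more structural and, via Dirichlet's theorem, makes explicit the point (left implicit in the paper) that the defining condition on a unit class is never vacuous. All the bookkeeping you flag (positive odd representatives coprime to $D$, the modulus $4|D|$ dividing $E$, Legendre equals Jacobi at prime arguments) checks out.
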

\begin{proof}
Since $(\mathbb{Z}/E\mathbb{Z})^{\times}$ is a finite group, it follows
that if $H$ is multiplication-closed, then $H$ is a subgroup.
Pick any $a_1, a_2\in H$. Then for any primes $p_1$ and $p_2$
with $p_1\equiv a_1 \mod E$ and $p_2\equiv a_2 \mod E$, one has
$\big(\frac{D}{p_1}\big)=1$ and $\big(\frac{D}{p_2}\big)=1$.
Let $p$ be a prime number such that $p\equiv a_1a_2 \mod E$.
So to finish the proof of Lemma 5.1, it remains to show that
$\big(\frac{D}{p}\big)=1$, which will be done in what follows.

As $D$ is square-free, one may write $D=(-1)^{\lambda_1}2^{\lambda_2}D'$
with $\lambda_1,\lambda_2\in\{0,1\}$ and $D'$ being a positive odd
square-free integer other than 1. Claim that
$$(-1)^{\frac{\lambda_2(p^2-1)}{8}}=(-1)^{\frac{\lambda_2(p_1^2p_2^2-1)}{8}}.$$
If $\lambda_2=0$, then the claim is clear true.
If $\lambda_2=1$, then $q\equiv p_1p_2 \mod 8$, and so
$$p^2-p_1^2p_2^2=(p+p_1p_2)(p-p_1p_2)\equiv 0\mod 16.$$
It then follows that
$$(-1)^{\frac{\lambda_2(p^2-1)}{8}}=(-1)^{\frac{\lambda_2(p_1^2p_2^2-1)}{8}}.$$
But the hypothesis $p\equiv p_1p_2 \mod 4$ implies that
$$(-1)^{\frac{\lambda_1(p-1)}{2}}=(-1)^{\frac{\lambda_1(p_1p_2-1)}{2}}.$$
Noting that $p_1, p_2, p$ and $D'$ are all odd and
$p\equiv p_1p_2 \mod {D'}$, one obtains that
\begin{align*}
\Big(\frac{D}{p}\Big)&=\Big(\frac{-1}{p}\Big)^{\lambda_1}\Big(\frac{2}{p}\Big)^{\lambda_2}\Big(\frac{D'}{p}\Big) \\
&=(-1)^{\frac{\lambda_1(p-1)}{2}}(-1)^{\frac{\lambda_2(p^2-1)}{8}}(-1)^{\frac{(D'-1)(p-1)}{4}}\Big(\frac{p}{D'}\Big) \\
&=(-1)^{\frac{\lambda_1(p_1p_2-1)}{2}}(-1)^{\frac{\lambda_2(p_1^2p_2^2-1)}{8}}(-1)^{\frac{(D'-1)(p_1p_2-1)}{4}}
\Big(\frac{p_1p_2}{D'}\Big) \\
&=(-1)^{\frac{\lambda_1(p_1+p_2-2)}{2}}(-1)^{\frac{\lambda_2(p_1^2+p_2^2-2)}{8}}(-1)^{\frac{(D'-1)(p_1+p_2-2)}{4}}
\Big(\frac{p_1}{D'}\Big)\Big(\frac{p_2}{D'}\Big) \\
&=\Big(\frac{D}{p_1}\Big)\Big(\frac{D}{p_2}\Big)=1
\end{align*}
as required. This completes the proof of Lemma \ref{lem5.1}.
\end{proof}

Let $a\in H$ and $p$ be a prime number such that $ap \equiv 1 \mod E$.
Then $a^{-1}\in H$ and so $\big(\frac{D}{p}\big)=1$.
Hence $p\mathcal{O}_K=\mathfrak{p}_1\mathfrak{p}_2$,
where $\mathfrak{p}_1$ and $\mathfrak{p}_2$
are two different prime ideals in $\mathcal{O}_K$.
So $\mathfrak{p}\mid p$, as discussed in the beginning of Section 3, we have
$$[K_{\mathfrak{p}}:\mathbb{Q}_p]=f(\mathfrak{p}|p)e(\mathfrak{p}|p)=1.$$
Let ${\rm Hom}(K,\mathbb{C}_p)$ denote the set of all the two distinct
monomorphisms from $K$ to $\mathbb{C}_p$.
For any $\sigma_p\in {\rm Hom}(K,\mathbb{C}_p)$, there is a prime ideal
$\mathfrak{p}\in\{\mathfrak{p}_1,\mathfrak{p}_2\}$ and a monomorphism
$\widehat\sigma_p: K_{\mathfrak{p}}\mapsto\mathbb{C}_p$ such that
the following diagram is commutative:
$$
\xymatrix{
                & K_{\mathfrak{p}} \ar[dr]^{\widehat\sigma_p}  \\
 K \ar[ur]^{\iota} \ar[rr]^{\sigma_p} & &  \mathbb{C}_p,    }
$$
where $\iota$ is the canonical monomorphism from $K$ to $K_{\mathfrak{p}}$.
Let $\sigma_p(K)$ be the image of $K$ and $\widehat{\sigma_p(K)}$
be its completion in $\mathbb{C}_p$. Then $\widehat\sigma_p$ is an isomorphism
from $K_{\mathfrak{p}}$ to $\widehat{\sigma_p(K)}$. Thus
$$[\widehat{\sigma_p(K)}:\mathbb{Q}_p]=[K_{\mathfrak{p}}:\mathbb{Q}_p]=1.$$
So $\sigma_p(K)\subset\widehat{\sigma_p(K)}=\mathbb{Q}_p$
for any $\sigma_p\in {\rm Hom}(K,\mathbb{C}_p)$.
We have the following equivalent form of the uniform distribution of roots of
quadratic congruence modulo prime numbers in an arithmetic progression.

\begin{lem} \label{lem5.2}
Let $\gamma\in K\setminus \mathbb{Q}$ and $f_{\gamma}(z)$ be its monic
minimal polynomial over $\mathbb{Q}$.
Let $C_{\gamma}$ be a positive integer such that $C_{\gamma}f_{\gamma}(z)$ is a
primitive polynomial with integer coefficients. Let $a\in H$ and $\mathcal{S}$ be the
arithmetic progression $\{n\ge 0: an\equiv 1\mod E\}$.
Then for all real numbers $t_1$ and $t_2$ with $0\le t_1<t_2\le 1$, there is a
positive real number $c$ depended only on $\mathcal{S}$ and $\gamma$ such that
\begin{align} \label{eq5.2.1}
&\#\Big\{\frac{T_p(\sigma_p(\gamma))}{p}: p\in\mathcal{S}, p\nmid C_{\gamma}, p<x,
\sigma_p\in {\rm Hom}(K,\mathbb{C}_p), t_1\le \frac{T_p(\sigma_p(\gamma))}{p}<t_2 \Big\} \\
&\sim c(t_2-t_1)\pi(x). \notag
\end{align}
\end{lem}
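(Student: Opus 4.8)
The plan is to reduce the statement to the uniform distribution theorem of T\'oth (Lemma \ref{lem4.1}), applied to an integral quadratic polynomial built from $\gamma$. Write the monic minimal polynomial as $f_{\gamma}(z)=z^{2}+bz+c$ with $b,c\in\mathbb{Q}$, and set
$$g(z):=C_{\gamma}f_{\gamma}(-z)=C_{\gamma}z^{2}-C_{\gamma}bz+C_{\gamma}c\in\mathbb{Z}[z].$$
Since $C_{\gamma}f_{\gamma}(z)$ is primitive, so is $g$; its leading coefficient $C_{\gamma}$ is positive; and its discriminant equals $C_{\gamma}^{2}(b^{2}-4c)$. Because $\gamma\in\mathbb{Q}(\sqrt{D})\setminus\mathbb{Q}$ we have $b^{2}-4c=Dt^{2}$ for some $t\in\mathbb{Q}^{\times}$, so ${\rm disc}(g)=D(C_{\gamma}t)^{2}$; this is not a perfect square in $\mathbb{Z}$ (otherwise $D$ would be a rational square, contrary to $D$ being square-free and different from $1$), and it differs from $D$ by a square, whence $\left(\frac{{\rm disc}(g)}{p}\right)=\left(\frac{D}{p}\right)$ for all but finitely many $p$. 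Finally $\mathcal{S}=\{n\ge 0:n\equiv a^{-1}\pmod{E}\}$ is an arithmetic progression, and since $H$ is a subgroup of $(\mathbb{Z}/E\mathbb{Z})^{\times}$ by Lemma \ref{lem5.1} we have $a^{-1}\in H$, so every prime $p\equiv a^{-1}\pmod{E}$ satisfies $\left(\frac{D}{p}\right)=1$ (and there are infinitely many of these by Dirichlet's theorem). Hence $g$ and $\mathcal{S}$ satisfy all the hypotheses of Lemma \ref{lem4.1}.

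Next I would set up a correspondence between monomorphisms and roots modulo $p$. Fix a prime $p\in\mathcal{S}$ with $p\nmid C_{\gamma}$. Then $p$ splits in $K$, so as recalled just before the lemma each $\sigma_{p}\in{\rm Hom}(K,\mathbb{C}_{p})$ has image in $\mathbb{Q}_{p}$. Since $C_{\gamma}\gamma$ satisfies the monic integral polynomial $z^{2}+C_{\gamma}bz+C_{\gamma}^{2}c$, it is an algebraic integer, and $p\nmid C_{\gamma}$ then forces $\sigma_{p}(\gamma)\in\mathbb{Z}_{p}$ (and likewise $b,c\in\mathbb{Z}_{p}$); hence $v:=T_{p}(\sigma_{p}(\gamma))$ is defined, lies in $\{0,\dots,p-1\}$, and satisfies $v\equiv-\sigma_{p}(\gamma)\pmod{p}$. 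A one-line computation using $f_{\gamma}(\sigma_{p}(\gamma))=0$ then gives $g(v)\equiv 0\pmod{p}$. Discarding the finitely many primes dividing $C_{\gamma}\cdot{\rm disc}(g)$, for the remaining $p\in\mathcal{S}$ the reduction $\overline{f_{\gamma}}$ has two distinct simple roots in $\mathbb{F}_{p}$, so Hensel's lemma shows that $\sigma_{p}\mapsto T_{p}(\sigma_{p}(\gamma))$ is a bijection from ${\rm Hom}(K,\mathbb{C}_{p})$ onto $\{v\in\{0,\dots,p-1\}:g(v)\equiv 0\pmod{p}\}$: injectivity is Hensel uniqueness, and surjectivity follows by lifting $-v$ to a root of $f_{\gamma}$ in $\mathbb{Z}_{p}$ and composing the resulting isomorphism $K\cong\mathbb{Q}(\gamma)\to\mathbb{Q}_{p}$ with $\mathbb{Q}_{p}\hookrightarrow\mathbb{C}_{p}$.

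It then remains to assemble the asymptotics. By the correspondence, for all but finitely many $p\in\mathcal{S}$ the number of $\sigma_{p}$ with $t_{1}\le T_{p}(\sigma_{p}(\gamma))/p<t_{2}$ equals the number of roots $v\in\{0,\dots,p-1\}$ of $g$ modulo $p$ with $t_{1}\le v/p<t_{2}$. Summing over primes $p<x$ in $\mathcal{S}$, the count in (\ref{eq5.2.1}) therefore differs by $O(1)$ from
$$\#\Big\{\tfrac{v}{p}:p\in\mathcal{S},\ 1\le v\le p<x,\ g(v)\equiv 0\pmod{p},\ t_{1}\le\tfrac{v}{p}<t_{2}\Big\},$$
the error coming only from the excluded primes, from the finitely many $p\mid g(0)$, and from the harmless switch between the ranges $\{0,\dots,p-1\}$ and $\{1,\dots,p\}$. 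Lemma \ref{lem4.1} applied to $g$ identifies the displayed count as $\sim c(t_{2}-t_{1})\pi(x)$ for a positive constant $c$ (depending only on $\mathcal{S}$, hence only on $\mathcal{S}$ and $\gamma$), and since $\pi(x)\to\infty$ the $O(1)$ discrepancy is absorbed, which proves (\ref{eq5.2.1}).

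The step requiring the most care is the second one: matching the two monomorphisms $\sigma_{p}\in{\rm Hom}(K,\mathbb{C}_{p})$ with the two residues $v\in\{0,\dots,p-1\}$ solving $g(v)\equiv 0\pmod{p}$ through the operator $T_{p}$, and in particular checking that $\sigma_{p}(\gamma)$ is a $p$-adic integer so that $T_{p}(\sigma_{p}(\gamma))$ is even meaningful. The remaining ingredients --- primitivity and positivity of the leading coefficient of $g$, non-squareness of ${\rm disc}(g)$, and the density of split primes inside $\mathcal{S}$ --- are routine once $g$ has been chosen as above.
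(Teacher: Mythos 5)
Your proof is correct and follows essentially the same route as the paper: both reduce the statement to the uniform distribution theorem (Lemma \ref{lem4.1}) via the correspondence between the two embeddings $\sigma_p\in{\rm Hom}(K,\mathbb{C}_p)$ and the two roots of an integral quadratic polynomial modulo a split prime $p$. The only (cosmetic) difference is that you work with $T_p$ and the polynomial $C_\gamma f_\gamma(-z)$ directly, whereas the paper applies Lemma \ref{lem4.1} to $C_\gamma f_\gamma(z)$, identifies the roots with $R_p(\sigma_p(\gamma))$, and converts at the end via $T_p/p=1-R_p/p$; your version is, if anything, more careful about verifying the hypotheses of Lemma \ref{lem4.1} and the surjectivity of the correspondence.
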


\begin{proof}
Since $a\in H$, for any prime number $p$ with $ap\equiv 1$
and any $\sigma_p\in{\rm Hom}(K,\mathbb{C}_p)$, one has
$\sigma_p(\gamma)\in \sigma_p(K)\subset\mathbb{Q}_p$.
Moreover, for any prime number $p$ with $p\nmid C_{\gamma}$, one has
$f_{\gamma}(z)\in\mathbb{Z}_p[z]$. Since $f_{\gamma}$ is monic,
$f_{\gamma}(\sigma_p(\gamma))=0$
and $\mathbb{Z}_p$ is integral closed, it then follows that
for all prime numbers $p$ with $p\nmid C_{\gamma}$ and $ap\equiv 1\mod E$
and all $\sigma_p\in{\rm Hom}(K,\mathbb{C}_p)$, one has
$\sigma_p(\gamma)\in\mathbb{Z}_p$, and so $R_{p,l}(\sigma_p(\gamma))$
is well defined for any positive integer $l$.
But $R_{p,l}(\sigma_p(\gamma))-\sigma_p(\gamma)\equiv 0\mod p^l$
and $p\nmid C_{\gamma}$. Hence we have
\begin{align*} 
C_{\gamma}f_{\gamma}(R_{p,l}(\sigma_p(\gamma)))\equiv
f_{\gamma}(R_{p,l}(\sigma_p(\gamma)))\equiv f_{\gamma}(\sigma_p(\gamma))\equiv 0\mod p^l.
\end{align*}
So for all the prime numbers $p$ with $p\nmid C_{\gamma}$ and
$ap\equiv 1\mod E$, one has
\begin{align} \label{eq5.2.3}
\Big\{\frac{v}{p}: 0\le v<p, C_{\gamma}f_{\gamma}(v)\equiv 0\mod p \Big\}=
\Big\{\frac{R_p(\sigma_p(\sqrt{\tilde D}))}{p}: \sigma_p\in {\rm Hom}(K,\mathbb{C}_p) \Big\}.
\end{align}

By Lemma \ref{lem4.1}, we know that for all real numbers $t_1$ and $t_2$ with $0\le t_1<t_2\le 1$,
there is a positive real number $c$ depended only on $\mathcal{S}$ and $\gamma$ such that
$$\Big\{\frac{v}{p}: 1\le v\le p<x, p\in\mathcal{S}, t_1\le \frac{v}{p}<t_2,
C_{\gamma}f_{\gamma}(v)\equiv 0\mod p \Big\}\sim c(t_2-t_1)\pi(x).$$
Then from (\ref{eq5.2.3}) one can deduce that
\begin{align*}
&\#\Big\{\frac{R_p(\sigma_p(\gamma))}{p}: p\in\mathcal{S}, p\nmid C_{\gamma}, p<x,
\sigma_p\in {\rm Hom}(K,\mathbb{C}_p), t_1\le \frac{T_p(\sigma_p(\gamma))}{p}<t_2 \Big\} \\
&\sim c(t_2-t_1)\pi(x). \notag
\end{align*}
Since
$$\frac{T_p(\sigma_p(\gamma))}{p}=1-\frac{R_p(\sigma_p(\gamma))}{p}$$
holds for all the prime numbers $p$ with $p\nmid C_{\gamma}$ and $ap\equiv 1\mod E$ and
for all $\sigma_p\in{\rm Hom}(K,\mathbb{C}_p)$, the desired result (\ref{eq5.2.1})
follows immmediately. The proof of Lemma \ref{lem5.2} is complete.
\end{proof}

Now let $a\in I=G\setminus H$ and $p$ be an odd prime
with $ap\equiv 1\mod E$. Then $\big(\frac{D}{p}\big)=-1$.
So there exists a unique prime ideal $\mathfrak{p}$ in $K$ that satisfies $\mathfrak{p}\mid p$.
That is, $\mathfrak{p}=p\mathcal{O}_K$.
The following result is the quadratic version of Proposition \ref{prop3.5}.

\begin{lem} \label{lem5.3}
Let $a\in I$ and $p$ be a prime number such that $ap\equiv 1\mod E$,
\begin{equation} \label{eq5.3.1}
p>\max(M_{\bm{\bm{\mu,\nu}}},3vd_{\bm{\bm{\mu,\nu}}})\ {\rm and}\
p\not |\prod_{\gamma\in\{\alpha_{u+1},...,\alpha_r,
\beta_{v+1,...,\beta_s}\}}[\mathcal{O}_K:\mathbb{Z}[E\gamma]].
\end{equation}
Then $\mathfrak{p}:=p\mathcal{O}_K$ is the unique prime
ideal in $\mathcal{O}_K$ with $\mathfrak{p}\mid p$,
and each of the following is true.

{\rm (i).} If $u<v$, then $F_{\bm{\alpha,\beta}}(z)\not\in\mathcal{O}_{K,\mathfrak{p}}[[z]]$.

{\rm (ii).} If $u>v$, then $F_{\bm{\alpha,\beta}}(z)\in\mathcal{O}_{K,\mathfrak{p}}[[z]]$
if and only if $\delta_{\bm{\mu,\nu}}(x,a)\ge 0$ for all $x\in\mathbb{R}$.

{\rm (iii).} If $u=v$, then $F_{\bm{\alpha,\beta}}(z)\in\mathcal{O}_{K,\mathfrak{p}}[[z]]$
if and only if
$$\sum_{l=1}^{h}\delta_{\bm{\mu,\nu}}(a^l\beta_k,a^l)\ge 0$$
holds for all $h\in\{1,...,{\rm ord}(a)\} $ and $k\in\{1,...,v\}$ and
$$\delta_{\bm{\bm{\mu,\nu}}}\Big(a^l\Big(\frac{e}{d_{\bm{\bm{\mu,\nu}}}}+m_{\bm{\mu,\nu}}\Big)\Big)\ge 0$$
holds for all $e\in\{1,...,d_{\bm{\alpha,\beta}}\}$ and $l\in\{1,...,{\rm ord}(a)\}$,
where ${\rm ord}(a)$ is the order of $a$ modulo $E$.
\end{lem}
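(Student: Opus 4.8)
The plan is to reduce Lemma \ref{lem5.3} to Theorem \ref{thm1.2} applied to the ``rational part'' $F_{\bm{\mu,\nu}}(z)$, following the scheme of the proof of Proposition \ref{prop3.5}. First I would settle the ideal-theoretic assertion: since $ap\equiv 1\mod E$, the prime $p$ is coprime to $E$, hence odd and coprime to $D$ (because $4D\mid E$); by the reciprocity computation preceding Lemma \ref{lem5.1} the value $\big(\frac{D}{p}\big)$ depends only on the residue of $p$ modulo $E$, so $a\in I$ forces $\big(\frac{D}{p}\big)=-1$, and therefore $p$ is inert in $\mathcal{O}_K$, i.e.\ $\mathfrak{p}:=p\mathcal{O}_K$ is the unique prime ideal lying over $p$.

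The core step is to show that every irrational parameter contributes nothing $\mathfrak p$-adically: for $\gamma\in\{\alpha_{u+1},\dots,\alpha_r,\beta_{v+1},\dots,\beta_s\}$ and every integer $n\ge 0$ one has $v_{\mathfrak p}\big((\gamma)_n\big)=0$. Writing $\gamma=\gamma_1+\gamma_2\sqrt D$, the denominators of $\gamma_1,\gamma_2$ divide $d_{\bm{\alpha_1,\beta_1}}\mid E$ and $d_{\bm{\alpha_2,\beta_2}}\mid E$, so $E\gamma=E\gamma_1+E\gamma_2\sqrt D\in\mathbb{Z}[\sqrt D]\subseteq\mathcal{O}_K$; put $\theta:=-E\gamma$, an algebraic integer of degree $2$ with $\mathbb{Q}(\theta)=K$ and monic minimal polynomial $f_\theta\in\mathbb{Z}[z]$. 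By \eqref{eq5.3.1}, $p\nmid[\mathcal{O}_K:\mathbb{Z}[E\gamma]]$, and $p\mathcal{O}_K$ is prime; hence Lemma \ref{lem3.4} (one irreducible factor, degree $2$, multiplicity $1$) forces $f_\theta\bmod p$ to be irreducible over $\mathbb{F}_p$, so $f_\theta(m)\not\equiv 0\mod p$ for every $m\in\mathbb{Z}$. Since $(z-\theta)\mid f_\theta(z)$ in $\mathcal{O}_K[z]$, we get $(m-\theta)\mid f_\theta(m)$ in $\mathcal{O}_K$; as $f_\theta(m)\in\mathbb{Z}\setminus p\mathbb{Z}$ lies outside the prime $\mathfrak p$, so does $m-\theta=m+E\gamma$. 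Taking $m=Ek$ and using $v_{\mathfrak p}(E)=0$ yields $v_{\mathfrak p}(k+\gamma)=0$ for all $k\ge 0$, and the claim follows. Consequently, for every $n\ge 0$,
\begin{align*}
v_{\mathfrak p}\Big(\frac{(\alpha_1)_n\cdots(\alpha_r)_n}{(\beta_1)_n\cdots(\beta_s)_n}\Big)
&=v_{\mathfrak p}\Big(\frac{(\alpha_1)_n\cdots(\alpha_u)_n}{(\beta_1)_n\cdots(\beta_v)_n}\Big)\\
&=v_p\Big(\frac{(\alpha_1)_n\cdots(\alpha_u)_n}{(\beta_1)_n\cdots(\beta_v)_n}\Big),
\end{align*}
the last equality because $\alpha_1,\dots,\alpha_u,\beta_1,\dots,\beta_v\in\mathbb{Q}$ and $v_{\mathfrak p}|_{\mathbb Q}=v_p$. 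Hence $F_{\bm{\alpha,\beta}}(z)\in\mathcal{O}_{K,\mathfrak p}[[z]]$ if and only if $F_{\bm{\mu,\nu}}(z)\in\mathbb{Z}_p[[z]]$.

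It then remains to apply Theorem \ref{thm1.2} to $F_{\bm{\mu,\nu}}(z)$: by \eqref{eq5.3.1} we have $p>\max\{M_{\bm{\mu,\nu}},3vd_{\bm{\mu,\nu}}\}$, and with $\bar a\in\{1,\dots,d_{\bm{\mu,\nu}}\}$ the residue of $a$ modulo $d_{\bm{\mu,\nu}}$ (so $\bar ap\equiv 1\mod d_{\bm{\mu,\nu}}$) the three cases $u<v$, $u>v$, $u=v$ of that theorem give precisely (i), (ii), (iii) — the degenerate cases $u=0$ or $v=0$ being immediate. Here one checks two routine compatibilities: the Christol functions are unaffected by replacing $a^l$ by $\bar a^{\,l}$, since all components of $\bm\mu,\bm\nu$ and also $\frac{e}{d_{\bm{\mu,\nu}}}+m_{\bm{\mu,\nu}}$ (note $m_{\bm{\mu,\nu}}\in\mathbb{Z}$) have denominators dividing $d_{\bm{\mu,\nu}}$; and the conditions ranging over $h,l\in\{1,\dots,{\rm ord}(a)\}$ with ${\rm ord}(a)$ the order of $a$ modulo $E$ in the statement coincide with the corresponding conditions ranging up to the order of $a$ modulo $d_{\bm{\mu,\nu}}$ from Theorem \ref{thm1.2}, because $l\mapsto\delta_{\bm{\mu,\nu}}(a^l\gamma,a^l)$ is periodic with period dividing the latter order (Lemma \ref{lem2.9}(i)), so both the partial-sum conditions $\sum_{l=1}^h\delta_{\bm{\mu,\nu}}(a^l\beta_k,a^l)\ge 0$ and the pointwise conditions $\delta_{\bm{\mu,\nu}}\big(a^l(\frac{e}{d_{\bm{\mu,\nu}}}+m_{\bm{\mu,\nu}}),a^l\big)\ge 0$ over a full $E$-period are equivalent to those over a single $d_{\bm{\mu,\nu}}$-period. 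Since the middle paragraph is essentially a transcription of Proposition \ref{prop3.5}, the genuinely new content is this matching of the $E$-indexed data in the statement with the $d_{\bm{\mu,\nu}}$-indexed data of Theorem \ref{thm1.2}, together with the derivation of ``$\mathfrak p$ inert'' from $a\in I$; I expect the step that needs the most care to be the verification that every irrational parameter is a $\mathfrak p$-adic unit for all $n$, where the coprimality hypothesis on the indices $[\mathcal{O}_K:\mathbb{Z}[E\gamma]]$ is exactly what is consumed.
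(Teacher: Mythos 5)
Your proposal is correct and follows essentially the route the paper intends: the paper states Lemma \ref{lem5.3} without a separate proof, as ``the quadratic version of Proposition \ref{prop3.5},'' and your argument is exactly that proposition's proof specialized to $K=\mathbb{Q}(\sqrt D)$ with $C=E$ (your use of the monic minimal polynomial of $-E\gamma$ and evaluation at $m=Ek$ is a cosmetic variant of the paper's $C^{d}f(z/C)$ manipulation), supplemented by the two details the paper leaves implicit, namely deriving $\big(\tfrac{D}{p}\big)=-1$ from $a\in I$ and matching the $E$-indexed conditions with the $d_{\bm{\mu,\nu}}$-indexed ones of Theorem \ref{thm1.2} via the periodicity in Lemma \ref{lem2.9}(i). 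No gaps.
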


Consequently, we provide some properties about the function
$\langle \cdot \rangle$ and the operator $T_{p, l}$.

\begin{lem} \label{lem5.4}
Let $x$ and $y$ be real numbers. Let $p$ be a prime number and $l$ be a positive
integer. Let $w$ and $z$ be $p$-adic integers. Then each of the following is true.

{\rm (i).} One has
\begin{align*}
\langle x+y\rangle
=\bigg\{\begin{array}{cl}
       \langle x\rangle +\langle y\rangle, & {\it if}\
       \langle x\rangle +\langle y\rangle\le 1, \\
       \langle x\rangle +\langle y\rangle-1, & {\it otherwise.}
     \end{array}
\end{align*}

{\rm (ii).} One has
\begin{align*}
\langle x-y\rangle
=\bigg\{\begin{array}{cl}
       \langle x\rangle -\langle y\rangle, & {\it if}\
       \langle x\rangle>\langle y\rangle, \\
       1+\langle x\rangle-\langle y\rangle, & {\it otherwise.}
     \end{array}
\end{align*}

{\rm (iii).} One has
\begin{align*}
\frac{T_{p,l}(w+z)}{p^l}
=\bigg\{\begin{array}{cl}
       0,   & {\it if}\ w+z\equiv 0\mod p^l, \\
       \big\langle\frac{T_{p,l}(w)+T_{p,l}(z)}{p^l}\big\rangle, & {\it otherwise.}
     \end{array}
\end{align*}
\end{lem}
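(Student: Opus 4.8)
The plan is to read off all three identities directly from the defining characterizations: $\langle x\rangle$ is the unique element of $(0,1]$ with $x-\langle x\rangle\in\mathbb{Z}$, and $T_{p,l}(\alpha)$ is the unique element of $\{0,1,\dots,p^l-1\}$ with $T_{p,l}(\alpha)+\alpha\equiv 0\pmod{p^l}$. In each case a one-line computation produces a number that differs from the target by an integer, so the only content is to locate which representative lands in the prescribed half-open range.

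For part (i) I would note that $(x+y)-(\langle x\rangle+\langle y\rangle)\in\mathbb{Z}$, so $\langle x+y\rangle$ is obtained from $\langle x\rangle+\langle y\rangle$ by the unique integer shift into $(0,1]$; since $\langle x\rangle+\langle y\rangle\in(0,2]$, that shift is $0$ exactly when $\langle x\rangle+\langle y\rangle\le 1$ and $-1$ otherwise, which is the asserted formula. Part (ii) is the mirror image: $(x-y)-(\langle x\rangle-\langle y\rangle)\in\mathbb{Z}$ and $\langle x\rangle-\langle y\rangle\in(-1,1)$, which already lies in $(0,1]$ precisely when $\langle x\rangle>\langle y\rangle$, the remaining case requiring a shift by $+1$. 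I would also remark that the boundary case $\langle x\rangle=\langle y\rangle$ sits in the second branch and correctly produces $1$.

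For part (iii), if $w+z\equiv 0\pmod{p^l}$ then $T_{p,l}(w+z)=0$ by definition, settling the first case. Otherwise I would add the two congruences $T_{p,l}(w)+w\equiv 0$ and $T_{p,l}(z)+z\equiv 0\pmod{p^l}$ to obtain $s:=T_{p,l}(w)+T_{p,l}(z)\equiv T_{p,l}(w+z)\pmod{p^l}$, observe $0\le s\le 2p^l-2$, and note $s\ne p^l$ because $s\equiv 0\pmod{p^l}$ would force $w+z\equiv 0\pmod{p^l}$. Hence $s$ equals either $T_{p,l}(w+z)$, which lies in $\{1,\dots,p^l-1\}$, or $T_{p,l}(w+z)+p^l$, so that $s/p^l$ lies in $(0,1)$ or in $(1,2)$ respectively; applying part (i) to $\langle s/p^l\rangle$ then gives $\langle s/p^l\rangle=T_{p,l}(w+z)/p^l$ in both cases, which is the claimed identity.

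I do not expect a genuine obstacle here; the only thing to be careful about is the interplay of the half-open ranges $(0,1]$ and $\{0,\dots,p^l-1\}$, which is why I single out, and dispose of, the degenerate possibilities $\langle s/p^l\rangle=1$ and $s=p^l$ in part (iii). I would present (i) and (ii) first and then invoke (i) as a black box in the proof of (iii).
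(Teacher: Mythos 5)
Your proof is correct and follows essentially the same route as the paper's: each part is read off from the uniqueness of the representative in $(0,1]$ (resp.\ in $\{0,1,\dots,p^l-1\}$), with the degenerate cases in (iii) handled just as the paper does. The only cosmetic difference is in (ii), where the paper reduces to (i) via $\langle -y\rangle=1-\langle y\rangle$ (treating integer $y$ separately) while you shift $\langle x\rangle-\langle y\rangle\in(-1,1)$ into $(0,1]$ directly; both are equally valid.
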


\begin{proof}
(i). Apparently, $x+y-(\langle x\rangle+\langle y\rangle) \in\mathbb{Z}$.
Since $\langle x+y\rangle$ is the unique real number in the interval $(0, 1]$
such that $x+y-\langle x+y\rangle\in \mathbb{Z}$, it follows that
if $\langle x\rangle +\langle y\rangle\le 1$, then
$\langle x+y\rangle=\langle x\rangle+\langle y\rangle$. Furthermore,
if $\langle x\rangle+\langle y\rangle>1$,
then $0<\langle x\rangle+\langle y\rangle-1\le 1$. But
$x+y-(\langle x\rangle+\langle y\rangle-1)\in\mathbb{Z}$.
So one can deduce that $\langle x+y\rangle=\langle x\rangle+\langle y\rangle-1$
as desired. This proves part (i).

(ii). If $y\in\mathbb{Z}$, then $\langle x\rangle\le 1=\langle y\rangle$ and
$\langle x-y\rangle=\langle x\rangle=1+\langle x\rangle-\langle y\rangle$.
That is, part (ii) is true if $y\in\mathbb{Z}$.

Let now $y\not\in\mathbb{Z}$. Then $\langle y\rangle\not\in\mathbb{Z}$. Since
$-y-(-\langle y\rangle)=\langle y\rangle-y\in\mathbb{Z}$, one has
$\langle -y\rangle=\langle -\langle y\rangle \rangle=1-\langle y\rangle$.

If $\langle x\rangle>\langle y\rangle$, then $\langle x\rangle +\langle -y\rangle
=\langle x\rangle+1-\langle y\rangle>1$. It follows from part (i) that
$\langle x-y\rangle=\langle x\rangle +\langle -y\rangle-1=\langle x\rangle-\langle y\rangle$.

If $\langle x\rangle\le \langle y\rangle$, then $\langle x\rangle +\langle -y\rangle
=\langle x\rangle+1-\langle y\rangle\le 1$. So from part (i) it follows that
$\langle x-y\rangle=\langle x\rangle +\langle -y\rangle=1+\langle x\rangle-\langle y\rangle$.
Hence part (ii) is proved.

(iii). For any $w,z\in\mathbb{Z}_p$, one has
$$T_{p,l}(w+z)\equiv -w-z\equiv T_{p,l}(w)+T_{p,l}(z) \mod p^l.$$
Since $T_{p, l}(w), T_{p, l}(z)$ and $T_{p, l}(w+z)$ are all in the set
$\{0, 1, ..., p^l-1\}$, one then derives that
$$\frac{T_{p,l}(w)+T_{p,l}(z)}{p^l}-\frac{T_{p,l}(w+z)}{p^l}\in\mathbb{Z}.$$

If $w+z\equiv 0\mod p^l$, then $T_{p,l}(w+z)\equiv 0 \mod p^l$
which implies that $T_{p,l}(w+z)=0$ and so the desired result
$\frac{T_{p,l}(w+z)}{p^l}=0$ follows immediately.

If $w+z\not\equiv 0\mod p^l$, then $1\le T_{p, l}(w+z)<p^l$. Hence
$$\Big\langle\frac{T_{p,l}(w)+T_{p,l}(z)}{p^l}\Big\rangle
=\Big\langle \frac{T_{p,l}(w+z)}{p^l}\Big\rangle=\frac{T_{p,l}(w+z)}{p^l}$$
as claimed. So part (iii) is proved.

The proof of Lemma \ref{lem5.4} is complete.
\end{proof}

Let us now introduce some notations. Write
$$\tilde{D}:=D/d_{\bm{\alpha_2,\beta_2}}^2, \tilde{\alpha}_{2i}:=\alpha_{2i}d_{\bm{\alpha_2,\beta_2}}
\ {\rm and}\ \tilde{\beta}_{2j}:=\beta_{2j}d_{\bm{\alpha_2,\beta_2}}$$
for all integers $i$ and $j$ with $1\le i\le r$ and $1\le j\le s$.
Then $\tilde{\alpha}_{2i}$ and $\tilde{\beta}_{2j}$ are rational
integers. Moreover, one has
$\alpha_i=\alpha_{1i}+\tilde{\alpha}_{2i}\sqrt{\tilde{D}}$ and
$\beta_j=\beta_{1j}+\tilde{\beta}_{2j}\sqrt{\tilde{D}}$.
We have the following result.

\begin{lem} \label{lem5.5}
Let $l$ be a positive integer.
Let $\gamma _1\in \mathbb{Q}$ and $\tilde{\gamma}_2\in\mathbb{Z}$
such that $\gamma_1+\tilde{\gamma}_2\sqrt{\tilde{D}}\not\in\mathbb{Z}_{\le 0}$.
Let $p$ be an odd prime number and $a\in H$ be such that $ap\equiv 1\mod E$,
$p\ge d(\gamma_1)(|\lfloor 1-\gamma_1 \rfloor|+\langle \gamma_1 \rangle+1)$
and $v_p(\gamma_1^2-\tilde{D}\tilde{\gamma}_2^2)=0$.
Let $\sigma$ be a monomorphism from $K$ to $\mathbb{C}_p$. Then
$$\frac{T_{p,l}(\sigma(\gamma_1+\tilde{\gamma}_2\sqrt{\tilde{D}}))}{p^l}
=\Big\langle a^l\gamma_1-\frac{\gamma_1}{p^l}
+\frac{\tilde{\gamma}_2T_{p,l}(\sigma(\sqrt{\tilde{D}}))}{p^l}\Big\rangle.$$
\end{lem}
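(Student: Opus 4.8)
The plan is to reduce everything to the identity in Lemma 5.4(iii) applied to the decomposition $\sigma(\gamma_1+\tilde\gamma_2\sqrt{\tilde D}) = \gamma_1 + \tilde\gamma_2\,\sigma(\sqrt{\tilde D})$. First I would check that all the symbols in the claimed formula are well defined. Since $a\in H$ and $ap\equiv 1 \bmod E$, we have $\bigl(\tfrac{D}{p}\bigr)=1$, so $p$ splits in $K$ and $\sigma(\sqrt{\tilde D})\in\mathbb{Q}_p$; moreover $v_p(\gamma_1^2-\tilde D\tilde\gamma_2^2)=0$ forces $v_p(\sigma(\sqrt{\tilde D}))=0$ (otherwise $\sigma(\sqrt{\tilde D})$, hence $\tilde D$, would be divisible by $p$, contradicting $v_p(\gamma_1^2-\tilde D\tilde\gamma_2^2)=0$ together with $p\nmid$ the relevant quantities — one has to be a little careful here with the denominator $d_{\bm{\alpha_2,\beta_2}}$, but $E$ is a multiple of it so $p\nmid d_{\bm{\alpha_2,\beta_2}}$). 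Thus $\sigma(\sqrt{\tilde D})\in\mathbb{Z}_p^{\times}$ and $\sigma(\gamma_1+\tilde\gamma_2\sqrt{\tilde D})\in\mathbb{Z}_p$ (the hypothesis on $p$ versus $d(\gamma_1)$ guarantees $\gamma_1\in\mathbb{Z}_p$), so $T_{p,l}$ of it makes sense.

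Next I would compute $T_{p,l}(\sigma(\sqrt{\tilde D}))$ in terms of $a$. Since $v_p(\sigma(\sqrt{\tilde D}))=0$, we have $p\nmid \sigma(\sqrt{\tilde D})$, so $\sigma(\sqrt{\tilde D})+\tilde\gamma_2\sqrt{\tilde D}$-type terms never vanish mod $p^l$ unless $\tilde\gamma_2=0$; in particular $T_{p,l}(\tilde\gamma_2\sigma(\sqrt{\tilde D}))$ is governed by $\langle\,\cdot\,\rangle$ of $\tilde\gamma_2 T_{p,l}(\sigma(\sqrt{\tilde D}))/p^l$ via Lemma 5.4(iii). For the rational part, Lemma 2.3 applies: since $p^l\ge p\ge d(\gamma_1)(|\lfloor 1-\gamma_1\rfloor|+\langle\gamma_1\rangle)$ (the extra $+1$ in the hypothesis gives room), $\mathfrak{D}_p^l(\gamma_1)=\langle\omega\gamma_1\rangle$ where $\omega p^l\equiv 1\bmod d(\gamma_1)$; and since $ap\equiv 1\bmod E$ and $d(\gamma_1)\mid E$, we may take $\omega\equiv a^l\bmod d(\gamma_1)$, so $T_{p,l}(\gamma_1)/p^l=\mathfrak{D}_p^l(\gamma_1)-\gamma_1/p^l=\langle a^l\gamma_1\rangle-\gamma_1/p^l$. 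Note $\langle a^l\gamma_1\rangle = \langle a^l\gamma_1 - \gamma_1/p^l + \gamma_1/p^l\rangle$ is not quite what we want; rather we want to land on $\langle a^l\gamma_1 - \gamma_1/p^l + \tilde\gamma_2 T_{p,l}(\sigma\sqrt{\tilde D})/p^l\rangle$, so the point is that $T_{p,l}(\gamma_1)/p^l = a^l\gamma_1 - \gamma_1/p^l \pmod{\mathbb{Z}}$ and this quantity lies in $[0,1)$ up to the correction, i.e.\ equals $\langle a^l\gamma_1 - \gamma_1/p^l\rangle$ after accounting that $\gamma_1/p^l$ is small.

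Then I would assemble: write $\sigma(\gamma_1+\tilde\gamma_2\sqrt{\tilde D}) = \gamma_1 + \tilde\gamma_2\sigma(\sqrt{\tilde D})$, apply Lemma 5.4(iii) with $w=\gamma_1$, $z=\tilde\gamma_2\sigma(\sqrt{\tilde D})$. The hypothesis $\gamma_1+\tilde\gamma_2\sqrt{\tilde D}\notin\mathbb{Z}_{\le 0}$ together with $v_p(\gamma_1^2-\tilde D\tilde\gamma_2^2)=0$ should rule out $w+z\equiv 0\bmod p^l$ (if $w+z\equiv 0$ then $\gamma_1\equiv -\tilde\gamma_2\sigma(\sqrt{\tilde D})$, so $\gamma_1^2\equiv\tilde D\tilde\gamma_2^2\bmod p$, contradicting $v_p(\gamma_1^2-\tilde D\tilde\gamma_2^2)=0$). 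Hence $T_{p,l}(\sigma(\gamma_1+\tilde\gamma_2\sqrt{\tilde D}))/p^l = \langle (T_{p,l}(\gamma_1)+T_{p,l}(\tilde\gamma_2\sigma(\sqrt{\tilde D})))/p^l\rangle$. Substituting $T_{p,l}(\gamma_1)/p^l \equiv a^l\gamma_1-\gamma_1/p^l\pmod{\mathbb{Z}}$ and $T_{p,l}(\tilde\gamma_2\sigma(\sqrt{\tilde D}))/p^l \equiv \tilde\gamma_2 T_{p,l}(\sigma(\sqrt{\tilde D}))/p^l\pmod{\mathbb{Z}}$ (the latter from Lemma 5.4(iii) again, or directly since $T_{p,l}(n x)\equiv n\,T_{p,l}(x)\bmod p^l$ for $n\in\mathbb{Z}$), and using that $\langle\,\cdot\,\rangle$ only depends on its argument mod $\mathbb{Z}$, gives the claimed formula. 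The main obstacle I expect is the bookkeeping in the degenerate cases — when $\tilde\gamma_2 = 0$ the formula must still hold (it reduces to Lemma 2.3), and when $\tilde\gamma_2 T_{p,l}(\sigma(\sqrt{\tilde D}))\equiv 0\bmod p^l$ one must verify that the $\langle\,\cdot\,\rangle$ on the right still produces the correct value $0$ or $1$ consistent with Lemma 5.4(iii)'s first branch; carefully tracking which branch of Lemma 5.4(iii) is active, and confirming the $p\nmid\tilde D$-type non-vanishing claims from the single hypothesis $v_p(\gamma_1^2-\tilde D\tilde\gamma_2^2)=0$, is where the real care is needed.
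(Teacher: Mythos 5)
Your proposal is correct and follows essentially the same route as the paper: decompose $\sigma(\gamma_1+\tilde\gamma_2\sqrt{\tilde D})=\gamma_1+\tilde\gamma_2\sigma(\sqrt{\tilde D})$, apply Lemma 5.4(iii), identify the rational part via Lemma 2.3, and rule out the degenerate branch $w+z\equiv 0\bmod p^l$ by passing to the conjugate, i.e.\ by $v_p(\sigma(\gamma_1+\tilde\gamma_2\sqrt{\tilde D}))+v_p(\sigma(\gamma_1-\tilde\gamma_2\sqrt{\tilde D}))=v_p(\gamma_1^2-\tilde D\tilde\gamma_2^2)=0$. The only slip is the parenthetical claim that $v_p(\sigma(\sqrt{\tilde D}))=0$ is forced by $v_p(\gamma_1^2-\tilde D\tilde\gamma_2^2)=0$ (false when $\tilde\gamma_2=0$); it in fact follows from $p\nmid E$ together with $D\mid E$ and $d_{\bm{\alpha_2,\beta_2}}\mid E$, and in any case only $\sigma(\sqrt{\tilde D})\in\mathbb{Z}_p$ is needed, so the argument is unaffected.
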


\begin{proof}
Since $p\ge d(\gamma_1)(|\lfloor 1-\gamma_1 \rfloor|
+\langle \gamma_1 \rangle+1)>d(\gamma_1)$, one has
$\gamma_1\in\mathbb{Z}_p$. As $a\in H$ and $ap\equiv 1\mod E$, one derives that
$\sigma(\sqrt{\tilde{D}})\in \mathbb{Q}_p$ and $\tilde{D}\in\mathbb{Z}_p$.
This implies that $\sigma(\sqrt{\tilde{D}})\in \mathbb{Z}_p$.
Hence $\sigma(\gamma_1+\tilde{\gamma}_2\sqrt{\tilde{D}})$ and
$\sigma(\gamma_1-\tilde{\gamma}_2\sqrt{\tilde{D}})$ are $p$-adic integers.
Moreover, since $\gamma_1^2-\tilde{D}\tilde{\gamma}_2^2\in\mathbb{Q}$
and $\gamma_1^2-\tilde{D}\tilde{\gamma}_2^2\not\equiv 0\mod p$,
we have
\begin{align*}
&v_p(\sigma(\gamma_1+\tilde{\gamma}_2\sqrt{\tilde{D}}))
+v_p(\sigma(\gamma_1-\tilde{\gamma}_2\sqrt{\tilde{D}}))\\
=&v_p(\sigma(\gamma_1^2-\tilde{D}\tilde{\gamma}_2^2))\\
=&v_p(\gamma_1^2-\tilde{D}\tilde{\gamma}_2^2)=0.
\end{align*}
One then deduces that
$$v_p(\sigma(\gamma_1+\tilde{\gamma}_2\sqrt{\tilde{D}}))
=v_p(\sigma(\gamma_1-\tilde{\gamma}_2\sqrt{\tilde{D}}))=0.$$
It follows that $\sigma(\gamma_1+\tilde{\gamma}_2\sqrt{\tilde{D}})\not\equiv 0 \mod p^l$.
So by Lemmas \ref{lem5.4} (iii) and \ref{lem2.3}, one obtains that
\begin{align*}
\frac{T_{p,l}(\sigma(\gamma_1+\tilde{\gamma}_2\sqrt{\tilde{D}}))}{p^l}
&=\Big\langle \frac{T_{p,l}(\gamma_1)}{p^l}
+\frac{T_{p,l}(\tilde{\gamma}_2\sigma(\sqrt{\tilde{D}}))}{p^l}\Big\rangle \\
&=\Big\langle \mathfrak{D}_p^l(\gamma_1)-\frac{\gamma_1}{p^l}
+\frac{T_{p,l}(\tilde{\gamma}_2\sigma(\sqrt{\tilde{D}}))}{p^l}\Big\rangle \\
&=\Big\langle \langle a^l\gamma_1\rangle-\frac{\gamma_1}{p^l}
+\frac{\tilde{\gamma}_2T_{p,l}(\sigma(\sqrt{\tilde{D}}))}{p^l}\Big\rangle\\
&=\Big\langle a^l\gamma_1-\frac{\gamma_1}{p^l}
+\frac{\tilde{\gamma}_2T_{p,l}(\sigma(\sqrt{\tilde{D}}))}{p^l}\Big\rangle
\end{align*}
since $a^l\gamma_1-\langle a^l\gamma_1\rangle\in\mathbb{Z}$. So Lemma \ref{lem5.5} is proved.
\end{proof}

Furthermore, we need the following property about
$\Delta_{\bm{\alpha,\beta}}(x,a,\epsilon)$. In what follows,
we define the set $Q_{\bm{\alpha,\beta}}$ by
\begin{align} \label{eq5.0.1}
Q_{\bm{\alpha,\beta}}:=\{(\alpha_{11}, \tilde{\alpha}_{21}),...,(\alpha_{1r}, \tilde{\alpha}_{2r}),
(\beta_{11}, \tilde{\beta}_{21}),...,(\beta_{1s}, \tilde{\beta}_{2s})\}.
\end{align}

\begin{lem} \label{lem5.6}
With $S_{\bm{\alpha,\beta}}$ being defined as in (\ref{Eq1.9}),
let $\epsilon\in (0,1)\setminus S_{\bm{\alpha,\beta}}$ and $a\in H$.
If $\Delta_{\bm{\alpha,\beta}}(x,a,\epsilon)\ge 0$ for all $x\in\mathbb{R}$, then
$\Delta_{\bm{\alpha,\beta}}(x,b,\epsilon)\ge 0$ for all $x\in\mathbb{R}$ and
positive integers $b$ with $b\equiv a \mod E$.
\end{lem}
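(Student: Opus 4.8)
The plan is to reduce the statement to a pointwise comparison of the order $\preccurlyeq$ and then transfer that order from the parameter $a$ to the parameter $b$ using the hypothesis $\epsilon\in(0,1)\setminus S_{\bm{\alpha,\beta}}$. First I would apply Lemma \ref{lem2.5} with $A=\mathbb{R}$ carrying the total order $\preccurlyeq$, taking the $x_i$ to be $c\alpha_{1i}+\tilde{\alpha}_{2i}\epsilon$ and the $y_j$ to be $c\beta_{1j}+\tilde{\beta}_{2j}\epsilon$; for each $c\in\{a,b\}$ this shows that $\Delta_{\bm{\alpha,\beta}}(x,c,\epsilon)\ge 0$ for all $x\in\mathbb{R}$ is equivalent to $\Delta_{\bm{\alpha,\beta}}(c\beta_{1k}+\tilde{\beta}_{2k}\epsilon,c,\epsilon)\ge 0$ for all $k\in\{1,\dots,s\}$. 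Hence it suffices to prove the claim that for any two pairs $(\gamma_1,\tilde{\gamma}_2)$ and $(\gamma_1',\tilde{\gamma}_2')$ in $Q_{\bm{\alpha,\beta}}$ one has
$$a\gamma_1+\tilde{\gamma}_2\epsilon\preccurlyeq a\gamma_1'+\tilde{\gamma}_2'\epsilon\ \Longleftrightarrow\ b\gamma_1+\tilde{\gamma}_2\epsilon\preccurlyeq b\gamma_1'+\tilde{\gamma}_2'\epsilon,$$
because then $\Delta_{\bm{\alpha,\beta}}(b\beta_{1k}+\tilde{\beta}_{2k}\epsilon,b,\epsilon)=\Delta_{\bm{\alpha,\beta}}(a\beta_{1k}+\tilde{\beta}_{2k}\epsilon,a,\epsilon)\ge 0$ for every $k$, and a second application of Lemma \ref{lem2.5} yields $\Delta_{\bm{\alpha,\beta}}(x,b,\epsilon)\ge 0$ for all $x\in\mathbb{R}$.

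To prove the claim I would argue as follows. Every first coordinate $\gamma_1$ occurring in $Q_{\bm{\alpha,\beta}}$ has exact denominator dividing $d_{\bm{\alpha_1,\beta_1}}$, hence dividing $E$, and since $E\mid b-a$ we get $(b-a)\gamma_1\in\mathbb{Z}$; consequently $b\gamma_1+\tilde{\gamma}_2\epsilon$ and $a\gamma_1+\tilde{\gamma}_2\epsilon$ differ by an integer, so $\langle b\gamma_1+\tilde{\gamma}_2\epsilon\rangle=\langle a\gamma_1+\tilde{\gamma}_2\epsilon\rangle$, and the same holds for the primed pair. Now I split into cases following the definition of $\preccurlyeq$. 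If $\langle a\gamma_1+\tilde{\gamma}_2\epsilon\rangle\ne\langle a\gamma_1'+\tilde{\gamma}_2'\epsilon\rangle$, then each side of the equivalence is decided purely by the comparison of these two fractional parts, which is literally the same for $a$ and for $b$. If $\langle a\gamma_1+\tilde{\gamma}_2\epsilon\rangle=\langle a\gamma_1'+\tilde{\gamma}_2'\epsilon\rangle$ (equivalently, the corresponding equality with $b$), then each side is decided by the sign of $c(\gamma_1-\gamma_1')+(\tilde{\gamma}_2-\tilde{\gamma}_2')\epsilon$ for $c\in\{a,b\}$. When $\tilde{\gamma}_2=\tilde{\gamma}_2'$ this sign is that of $\gamma_1-\gamma_1'$ because $a,b>0$, so the two sides agree; and when $\tilde{\gamma}_2\ne\tilde{\gamma}_2'$, the equality of fractional parts means $a(\gamma_1-\gamma_1')+(\tilde{\gamma}_2-\tilde{\gamma}_2')\epsilon\in\mathbb{Z}$, and solving for $\epsilon$ (using $a\in H$, up to the obvious relabelings and replacing the free integer by its negative) places $\epsilon$ in $S_1$, $S_2$, or $S_3$ according as the two pairs are both of $\bm{\alpha}$-type, both of $\bm{\beta}$-type, or of mixed type — which contradicts $\epsilon\in(0,1)\setminus S_{\bm{\alpha,\beta}}$. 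Thus the last subcase is vacuous and the claim holds.

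The main obstacle is the bookkeeping in that last subcase: one has to match the solved value of $\epsilon$ against the precise defining formula of the appropriate $S_i$ and keep straight which entries of $Q_{\bm{\alpha,\beta}}$ are $\bm{\alpha}$-type and which are $\bm{\beta}$-type, together with the observation that $\langle b\gamma_1+\tilde{\gamma}_2\epsilon\rangle$ depends on $b$ only through its residue modulo $E$. Note that $S_4$ and $S_5$ play no role here; they are needed elsewhere. Everything else is a routine unwinding of the order $\preccurlyeq$.
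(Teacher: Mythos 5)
Your proposal is correct and follows essentially the same route as the paper: reduce via Lemma \ref{lem2.5} to the points $c\beta_{1k}+\tilde{\beta}_{2k}\epsilon$, observe that the fractional parts $\langle c\gamma_1+\tilde{\gamma}_2\epsilon\rangle$ depend on $c$ only modulo $E$, and show the order $\preccurlyeq$ between any two entries of $Q_{\bm{\alpha,\beta}}$ is preserved, with $\epsilon\notin S_1\cup S_2\cup S_3$ ruling out equal fractional parts when $\tilde{\gamma}_2\ne\tilde{\gamma}_2'$. The paper merely organizes the case split by $\tilde{\gamma}_2=\tilde{\gamma}_2'$ versus not, rather than by equality of fractional parts, which is an immaterial difference.
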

\begin{proof}
Let $(\xi_1,\tilde\xi_2)$ and $(\eta_1,\tilde\eta_2)$ be any two
given elements of the set $Q_{\bm{\alpha,\beta}}$.
Since $b\equiv a \mod E$ and $d_{\bm{\alpha_1,\beta_1}} \mid E$, one has
$b\equiv a\mod d_{\bm{\alpha_1,\beta_1}}$, and so $(b-a)\xi_1$
and $(b-a)\eta_1$ are all rational integers. Then
\begin{align} \label{eq5.6.1}
\langle a\xi_1+\tilde{\xi}_2\epsilon\rangle
=\langle b\xi_1+\tilde{\xi}_2\epsilon\rangle
\ \ {\rm and}\ \
\langle a\eta_1+\tilde{\eta}_2\epsilon\rangle
=\langle b\eta_1+\tilde{\eta}_2\epsilon\rangle.
\end{align}

First of all, we claim that
$a\xi_1+\tilde{\xi}_2\epsilon\preccurlyeq a\eta_1+\tilde{\eta}_2\epsilon$
holds if and only if
$b\xi_1+\tilde{\xi}_2\epsilon\preccurlyeq b\eta_1+\tilde{\eta}_2\epsilon$
holds. To prove this claim, we consider the following two cases:

{\sc Case 1}. $\tilde{\xi}_2=\tilde{\eta}_2$. By (\ref{eq5.6.1}), we know that
$\langle a\xi_1+\tilde{\xi}_2\epsilon\rangle<
\langle a\eta_1+\tilde{\eta}_2\epsilon\rangle$
holds if and only if
$\langle b\xi_1+\tilde{\xi}_2\epsilon\rangle<
\langle b\eta_1+\tilde{\eta}_2\epsilon\rangle$ holds.
Since $\tilde{\xi}_2=\tilde{\eta}_2$, by (\ref{eq5.6.1}) one can derive that
both of $\langle a\xi_1+\tilde{\xi}_2\epsilon\rangle=
\langle a\eta_1+\tilde{\eta}_2\epsilon\rangle$ and
$a\xi_1+\tilde{\xi}_2\epsilon\ge a\eta_1+\tilde{\eta}_2\epsilon$
are true if and only if both of
$\langle b\xi_1+\tilde{\xi}_2\epsilon\rangle=
\langle b\eta_1+\tilde{\eta}_2\epsilon\rangle$ and
$b\xi_1+\tilde{\xi}_2\epsilon\ge b\eta_1+\tilde{\eta}_2\epsilon$
are true. Hence by definition of $\preccurlyeq$, we can deduce that
$a\xi_1+\tilde{\xi}_2\epsilon\preccurlyeq a\eta_1+\tilde{\eta}_2\epsilon$
holds if and only if either $\langle a\xi_1+\tilde{\xi}_2\epsilon\rangle<
\langle a\eta_1+\tilde{\eta}_2\epsilon\rangle$, or
$\langle a\xi_1+\tilde{\xi}_2\epsilon\rangle=
\langle a\eta_1+\tilde{\eta}_2\epsilon\rangle$ and
$a\xi_1+\tilde{\xi}_2\epsilon\ge a\eta_1+\tilde{\eta}_2\epsilon$,
if and only if either $\langle b\xi_1+\tilde{\xi}_2\epsilon\rangle<
\langle b\eta_1+\tilde{\eta}_2\epsilon\rangle$, or
$\langle b\xi_1+\tilde{\xi}_2\epsilon\rangle=
\langle b\eta_1+\tilde{\eta}_2\epsilon\rangle$ and
$b\xi_1+\tilde{\xi}_2\epsilon\ge b\eta_1+\tilde{\eta}_2\epsilon$,
if and only if
$b\xi_1+\tilde{\xi}_2\epsilon\preccurlyeq b\eta_1+\tilde{\eta}_2\epsilon$
holds. So the claim is true in this case.

{\sc Case 2}. $\tilde{\xi}_2\neq\tilde{\eta}_2$. This together with
$\epsilon\not\in S_{\bm{\alpha,\beta}}$ gives us that
$\langle a\xi_1+\tilde{\xi}_2\epsilon\rangle\neq
\langle a\eta_1+\tilde{\eta}_2\epsilon\rangle$.
Therefore (\ref{eq5.6.1}) implies that
$\langle b\xi_1+\tilde{\xi}_2\epsilon\rangle\neq
\langle b\eta_1+\tilde{\eta}_2\epsilon\rangle$.
Since $\langle a\xi_1+\tilde{\xi}_2\epsilon\rangle\neq
\langle a\eta_1+\tilde{\eta}_2\epsilon\rangle$,
one then deduces that
$a\xi_1+\tilde{\xi}_2\epsilon\preccurlyeq a\eta_1+\tilde{\eta}_2\epsilon$
holds if and only if
$\langle a\xi_1+\tilde{\xi}_2\epsilon\rangle<
\langle a\eta_1+\tilde{\eta}_2\epsilon\rangle$ holds,
by (\ref{eq5.6.1}), if and only if
$\langle b\xi_1+\tilde{\xi}_2\epsilon\rangle<
\langle b\eta_1+\tilde{\eta}_2\epsilon\rangle$ holds,
if and only if
$b\xi_1+\tilde{\xi}_2\epsilon\preccurlyeq b\eta_1+\tilde{\eta}_2\epsilon$
holds since by (\ref{eq5.6.1}), one has $\langle b\xi_1+\tilde{\xi}_2\epsilon\rangle\neq
\langle b\eta_1+\tilde{\eta}_2\epsilon\rangle$.
So the claim is proved in this case. It concludes the proof of the claim.

Consequently, we come back to the proof of Lemma \ref{lem5.6}.
Let $\Delta_{\bm{\alpha,\beta}}(x,a,\epsilon)\ge 0$ hold
for all $x\in\mathbb{R}$. By Lemma \ref{lem2.5}, we have
$\Delta_{\bm{\alpha,\beta}}(a\beta_{1k}+\tilde{\beta}_{2k}\epsilon, a,\epsilon)\ge 0$
for all integers $k$ with $1\le k\le s$.
On the other hand, one can deduce from the claim that
$$
\Delta_{\bm{\alpha,\beta}}(b\beta_{1k}+\tilde{\beta}_{2k}\epsilon, b,\epsilon)=
\Delta_{\bm{\alpha,\beta}}(a\beta_{1k}+\tilde{\beta}_{2k}\epsilon, a,\epsilon)\ge 0
$$
holds for all integers $k$ with $1\le k\le s$. Again by Lemma \ref{lem2.5},
we have that $\Delta_{\bm{\alpha,\beta}}(x, b,\epsilon)\ge 0$
for all $x\in\mathbb{R}$. The proof of Lemma \ref{lem5.6} is finished.
\end{proof}

Define
$$M_1:=2\max_{1\le i\le r \atop 1\le j\le s} \{|\alpha_{1i}|, |\beta_{1j}|\} \ \ {\rm and}\ \
M_2:=2\max_{1\le i\le r \atop 1\le j\le s} \{|\tilde{\alpha}_{2i}|, |\tilde{\beta}_{2j}|\}.$$
Then we have the following result.

\begin{lem} \label{lem5.7}
Let $\epsilon\in(0,1)\setminus S_{\bm{\alpha,\beta}}$. Let
\begin{align} \label{eq5.7.1}
\delta_1:=\min_{(\xi_1,\tilde\xi_2)\in Q_{\bm{\alpha,\beta}}}\{\langle a\xi_1
+\tilde\xi_2\epsilon \rangle, 1-\langle a\xi_1+\tilde\xi_2\epsilon \rangle \}\setminus \{0\},
\end{align}
\begin{align} \label{eq5.7.2}
\delta_2:=\min_{(\xi_1,\tilde\xi_2),(\eta_1,\tilde\eta_2)\in Q_{\bm{\alpha,\beta}}}
\{ |\langle a\xi_1+\tilde\xi_2\epsilon \rangle-\langle a\eta_1
+\tilde\eta_2\epsilon \rangle|\}\setminus \{0\},
\end{align}
and $\delta_{\epsilon}:=\min(\delta_1,\delta_2)/M_2$.
Then for any $\epsilon'\in (\epsilon-\delta_{\epsilon},\epsilon
+\delta_{\epsilon})\setminus S_{\bm{\alpha,\beta}}$
and all integers $k$ with $1\le k\le s$, we have
$\Delta_{\bm{\alpha,\beta}}(a\beta_{1k}+\tilde{\beta}_{2k}\epsilon, a, \epsilon)
=\Delta_{\bm{\alpha,\beta}}(a\beta_{1k}+\tilde{\beta}_{2k}\epsilon', a, \epsilon').$
\end{lem}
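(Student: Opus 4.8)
\medskip

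\noindent\emph{Sketch of the argument.} The plan is to deduce the lemma from the following pointwise statement: for every two elements $(\xi_1,\tilde\xi_2)$ and $(\eta_1,\tilde\eta_2)$ of $Q_{\bm{\alpha,\beta}}$ one has
$$a\xi_1+\tilde\xi_2\epsilon\preccurlyeq a\eta_1+\tilde\eta_2\epsilon\ \Longleftrightarrow\ a\xi_1+\tilde\xi_2\epsilon'\preccurlyeq a\eta_1+\tilde\eta_2\epsilon'.$$
Granting this, one takes $(\eta_1,\tilde\eta_2)=(\beta_{1k},\tilde\beta_{2k})$ and lets $(\xi_1,\tilde\xi_2)$ run first through the $r$ pairs coming from $\bm{\alpha}$ and then through the $s$ pairs coming from $\bm{\beta}$; this shows that both counts occurring in the definition of $\Delta_{\bm{\alpha,\beta}}(a\beta_{1k}+\tilde\beta_{2k}\epsilon,a,\epsilon)$ are unchanged when $\epsilon$ is replaced by $\epsilon'$, which is exactly what is claimed. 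So the whole proof is the verification of the displayed equivalence, and this is pure bookkeeping with the definition of $\preccurlyeq$ and with the numbers $\delta_1,\delta_2,\delta_\epsilon$.

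First I would record two consequences of $\epsilon,\epsilon'\in(0,1)\setminus S_{\bm{\alpha,\beta}}$. (a) If $(\xi_1,\tilde\xi_2)\in Q_{\bm{\alpha,\beta}}$ with $\tilde\xi_2\ne0$, then $a\xi_1+\tilde\xi_2\epsilon\notin\mathbb{Z}$ and $a\xi_1+\tilde\xi_2\epsilon'\notin\mathbb{Z}$; for otherwise $\epsilon$ or $\epsilon'$ would equal $(n-a\xi_1)/\tilde\xi_2$ for some $n\in\mathbb{Z}$, hence (being in $(0,1)$) would lie in $S_4\cup S_5\subseteq S_{\bm{\alpha,\beta}}$. (b) If $\langle a\xi_1+\tilde\xi_2\epsilon\rangle=\langle a\eta_1+\tilde\eta_2\epsilon\rangle$ then $\tilde\xi_2=\tilde\eta_2$ and $a\xi_1-a\eta_1\in\mathbb{Z}$; indeed $(a\xi_1-a\eta_1)+(\tilde\xi_2-\tilde\eta_2)\epsilon\in\mathbb{Z}$, and if $\tilde\xi_2\ne\tilde\eta_2$ this presents $\epsilon$ as a member of $S_1\cup S_2\cup S_3$ (according to whether the two pairs come from $\bm{\alpha}$, from $\bm{\beta}$, or one of each), contradicting $\epsilon\notin S_{\bm{\alpha,\beta}}$. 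The condition in (b) does not involve $\epsilon$, so equality of the two angle brackets holds for $\epsilon$ if and only if it holds for $\epsilon'$; and when it holds, $a\xi_1+\tilde\xi_2\epsilon-(a\eta_1+\tilde\eta_2\epsilon)=a\xi_1-a\eta_1$ is independent of $\epsilon$, so the secondary comparison in the definition of $\preccurlyeq$ is preserved as well. This disposes of the ``equality'' case of $\preccurlyeq$.

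There remains the strict case: assume $\langle a\xi_1+\tilde\xi_2\epsilon\rangle<\langle a\eta_1+\tilde\eta_2\epsilon\rangle$; I claim the same holds for $\epsilon'$. Here the metric estimates enter. Since $|\epsilon'-\epsilon|<\delta_\epsilon$ and $|\tilde\zeta_2|\le M_2/2$ for every $(\zeta_1,\tilde\zeta_2)\in Q_{\bm{\alpha,\beta}}$ (by the definition of $M_2$), we get $|\tilde\zeta_2(\epsilon'-\epsilon)|<\frac{1}{2}\min(\delta_1,\delta_2)$. If $\tilde\zeta_2=0$ then $\langle a\zeta_1+\tilde\zeta_2\epsilon\rangle=\langle a\zeta_1\rangle=\langle a\zeta_1+\tilde\zeta_2\epsilon'\rangle$; if $\tilde\zeta_2\ne0$, then by (a) the real number $a\zeta_1+\tilde\zeta_2\epsilon$ lies strictly between two consecutive integers, at distance $\min\{\langle a\zeta_1+\tilde\zeta_2\epsilon\rangle,\,1-\langle a\zeta_1+\tilde\zeta_2\epsilon\rangle\}\ge\delta_1$ from each (both numbers being nonzero, hence $\ge\delta_1$ by the definition of $\delta_1$), so since $|\tilde\zeta_2(\epsilon'-\epsilon)|<\delta_1$ the point $a\zeta_1+\tilde\zeta_2\epsilon'$ lies between the same two integers and $\langle a\zeta_1+\tilde\zeta_2\epsilon'\rangle-\langle a\zeta_1+\tilde\zeta_2\epsilon\rangle=\tilde\zeta_2(\epsilon'-\epsilon)$. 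Applying this to $(\xi_1,\tilde\xi_2)$ and to $(\eta_1,\tilde\eta_2)$ shows that the ``gap'' $\langle a\eta_1+\tilde\eta_2\cdot\rangle-\langle a\xi_1+\tilde\xi_2\cdot\rangle$ changes by $(\tilde\eta_2-\tilde\xi_2)(\epsilon'-\epsilon)$, of absolute value $<\delta_2$; but the gap at $\epsilon$ is positive, hence a nonzero value of the form entering the definition of $\delta_2$, hence $\ge\delta_2$, so the gap at $\epsilon'$ is still positive, proving the claim (and the degenerate case where $\delta_2=+\infty$ vacuously does not occur here, since then there would be no strict inequality among the angle-bracket values). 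Applying the claim with $(\xi_1,\tilde\xi_2)$ and $(\eta_1,\tilde\eta_2)$ interchanged converts ``$>$ at $\epsilon$'' into ``$>$ at $\epsilon'$'', and combining this with (b) and trichotomy gives the displayed equivalence in both directions; the lemma follows. I expect the only delicate point to be this strict case: one must simultaneously use $\delta_1$ to ensure that $\langle\cdot\rangle$ varies linearly (no integer is crossed) and $\delta_2$ to ensure that two originally distinct values do not collide, and one must keep every inequality strict so that no borderline coincidence is created at $\epsilon'$.
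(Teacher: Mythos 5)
Your proposal is correct and follows essentially the same route as the paper: reduce the lemma to the pointwise equivalence $a\xi_1+\tilde\xi_2\epsilon\preccurlyeq a\eta_1+\tilde\eta_2\epsilon\Leftrightarrow a\xi_1+\tilde\xi_2\epsilon'\preccurlyeq a\eta_1+\tilde\eta_2\epsilon'$, use $\delta_1$ to guarantee that no integer is crossed (so $\langle a\zeta_1+\tilde\zeta_2\,\cdot\,\rangle$ varies linearly), use $\delta_2$ to keep originally distinct bracket values from colliding, and handle the equality case via the $\epsilon$-independent condition $\tilde\xi_2=\tilde\eta_2$, $a\xi_1-a\eta_1\in\mathbb{Z}$ forced by $\epsilon\notin S_{\bm{\alpha,\beta}}$. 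The only cosmetic difference is that you track the exact change of the gap, $(\tilde\eta_2-\tilde\xi_2)(\epsilon'-\epsilon)$, whereas the paper bounds each bracket's variation separately by $\delta_2/2$ and adds.
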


\begin{proof}
Let $(\xi_1,\tilde\xi_2)\in Q_{\bm{\alpha,\beta}}$.
Pick $\epsilon'\in (\epsilon-\delta_{\epsilon},\epsilon+\delta_{\epsilon})$.
Then
\begin{equation} \label{eq5.7.3}
|\tilde\xi_2(\epsilon'-\epsilon)|<\delta_{\epsilon}M_2/2
=\min(\delta_1,\delta_2)/2\le \frac{1}{2}<1.
\end{equation}
It follows that
\begin{align} \label{eq5.7.4}
\langle \tilde\xi_2(\epsilon'-\epsilon) \rangle
=\bigg\{\begin{array}{cl}
       \tilde\xi_2(\epsilon'-\epsilon), & {\rm if}\
       \tilde\xi_2(\epsilon'-\epsilon)>0, \\
       1+\tilde\xi_2(\epsilon'-\epsilon), & {\rm otherwise.}
     \end{array}
\end{align}

In what follows, we show that the following inequality holds:
\begin{equation} \label{eq5.7.5}
\langle a\xi_1+\tilde\xi_2\epsilon\rangle-\delta_2/2<\langle a\xi_1+\tilde\xi_2
\epsilon'\rangle<\langle a\xi_1+\tilde\xi_2\epsilon\rangle+\delta_2/2
\end{equation}

First, let $\langle a\xi_1+\tilde\xi_2\epsilon\rangle=1$.
Since $\epsilon\not\in S_{\bm{\alpha,\beta}}$, one has $\tilde\xi_2=0$, and so
$\langle a\xi_1+\tilde\xi_2\epsilon'\rangle=\langle a\xi_1+\tilde\xi_2\epsilon\rangle=1$.
Therefore (\ref{eq5.7.5}) is true if $\langle a\xi_1+\tilde\xi_2\epsilon\rangle=1$.

Second, let $\langle a\xi_1+\tilde\xi_2\epsilon\rangle<1$. Then
$\langle a\xi_1+\tilde\xi_2\epsilon\rangle\le 1-\delta_1$.
If $\tilde\xi_2(\epsilon'-\epsilon)>0$, then by (\ref{eq5.7.4})
and (\ref{eq5.7.3}), we have
$\langle \tilde\xi_2(\epsilon' -\epsilon) \rangle
=\tilde\xi_2(\epsilon'-\epsilon)<\min(\delta_1,\delta_2)/2$.
So one can deduce that
\begin{align*}
&\langle a\xi_1+\tilde\xi_2\epsilon\rangle+
\langle \tilde\xi_2(\epsilon' -\epsilon) \rangle \\
<& 1-\delta_1+\min(\delta_1,\delta_2)/2\\
<& 1-\delta_1+\delta_1/2=1-\delta_1/2<1.
\end{align*}
Hence we can use Lemma \ref{lem5.4} (i) to get that
\begin{align*} 
\langle a\xi_1+\tilde\xi_2\epsilon'\rangle
=&\langle a\xi_1+\tilde\xi_2\epsilon+\tilde\xi_2(\epsilon'-\epsilon)\rangle \\
=&\langle a\xi_1+\tilde\xi_2\epsilon\rangle
 +\langle\tilde\xi_2(\epsilon'-\epsilon)\rangle \notag \\
<&\langle a\xi_1+\tilde\xi_2\epsilon\rangle+\min(\delta_1,\delta_2)/2 \notag\\
\le & \langle a\xi_1+\tilde\xi_2\epsilon\rangle+\delta_2/2 \notag
\end{align*}
and
\begin{align*}
\langle a\xi_1+\tilde\xi_2\epsilon'\rangle
=&\langle a\xi_1+\tilde\xi_2\epsilon\rangle
 +\langle\tilde\xi_2(\epsilon'-\epsilon)\rangle  \\
>&\langle a\xi_1+\tilde\xi_2\epsilon\rangle \\
> & \langle a\xi_1+\tilde\xi_2\epsilon\rangle-\delta_2/2.
\end{align*}
Hence (\ref{eq5.7.5}) holds if $\tilde\xi_2(\epsilon'-\epsilon)>0$.
Let now $\tilde\xi_2(\epsilon'-\epsilon)\le 0$.
Then by (\ref{eq5.7.3}), we have
$\tilde\xi_2(\epsilon'-\epsilon)>-\min(\delta_1,\delta_2)/2$.
Hence applying (\ref{eq5.7.4}) gives us that
$$\langle\tilde \xi_2(\epsilon'-\epsilon)\rangle
=1+\tilde\xi_2(\epsilon'-\epsilon)> 1-\min(\delta_1,\delta_2)/2.$$
So by (\ref{eq5.7.1}), one has
$$\langle a\xi_1+\tilde\xi_2\epsilon\rangle+\langle\tilde\xi_2(\epsilon'
-\epsilon)\rangle>\delta_1+1-\min(\delta_1,\delta_2)/2>1.$$
It then follows from Lemma \ref{lem5.4} (i) that
\begin{align*} 
\langle a\xi_1+\tilde\xi_2\epsilon'\rangle
=&\langle a\xi_1+\tilde\xi_2\epsilon+\tilde\xi_2(\epsilon'-\epsilon)\rangle \\
=&\langle a\xi_1+\tilde\xi_2\epsilon\rangle
 +\langle\tilde\xi_2(\epsilon'-\epsilon)\rangle-1 \notag \\
=&\langle a\xi_1+\tilde\xi_2\epsilon\rangle+\tilde\xi_2(\epsilon'-\epsilon) \notag\\
> & \langle a\xi_1+\tilde\xi_2\epsilon\rangle-\min(\delta_1,\delta_2)/2 \notag \\
\ge & \langle a\xi_1+\tilde\xi_2\epsilon\rangle-\delta_2/2 \notag
\end{align*}
and since $\tilde\xi_2(\epsilon'-\epsilon)\le 0$, one has
$$
\langle a\xi_1+\tilde\xi_2\epsilon'\rangle
=\langle a\xi_1+\tilde\xi_2\epsilon\rangle+\tilde\xi_2(\epsilon'-\epsilon)
\le  \langle a\xi_1+\tilde\xi_2\epsilon\rangle
<  \langle a\xi_1+\tilde\xi_2\epsilon\rangle+\delta_2/2.
$$
Thereby (\ref{eq5.7.5}) holds if $\tilde\xi_2(\epsilon'-\epsilon)\le 0$.
The proof of (\ref{eq5.7.5}) is complete.

Now pick $(\eta_1,\tilde\eta_2)\in Q_{\bm{\alpha,\beta}}$. Then by
the arbitrariness of $(\xi_1,\tilde\xi_2)\in Q_{\bm{\alpha,\beta}}$,
one can deduce from (\ref{eq5.7.5}) that
\begin{equation} \label{eq5.7.6}
\langle a\eta_1+\tilde\eta_2\epsilon\rangle-\delta_2/2<\langle a\eta_1+\tilde\eta_2
\epsilon'\rangle<\langle a\eta_1+\tilde\eta_2\epsilon\rangle+\delta_2/2.
\end{equation}
In the following, we show that
\begin{equation} \label{eq5.7.9}
a\xi_1+\tilde\xi_2\epsilon\preccurlyeq a\eta_1+\tilde\eta_2\epsilon \Leftrightarrow
a\xi_1+\tilde\xi_2\epsilon'\preccurlyeq a\eta_1+\tilde\eta_2\epsilon'.
\end{equation}

If $\langle a\xi_1+\tilde\xi_2\epsilon\rangle<\langle a\eta_1+\tilde\eta_2\epsilon\rangle$,
then
$\langle a\eta_1+\tilde\eta_2\epsilon\rangle-\langle a\xi_1+\tilde\xi_2\epsilon\rangle\ge \delta_2$
that implies that
$\langle a\xi_1+\tilde\xi_2\epsilon\rangle\le \langle a\eta_1+\tilde\eta_2\epsilon\rangle-\delta_2$.
So by (\ref{eq5.7.5}) and (\ref{eq5.7.6}), one deduces that
$$\langle a\xi_1+\tilde\xi_2\epsilon'\rangle
<\langle a\xi_1+\tilde\xi_2\epsilon\rangle+\delta_2/2
\le \langle a\eta_1+\tilde\eta_2\epsilon\rangle-\delta_2/2
< \langle a\eta_1+\tilde\eta_2\epsilon'\rangle.$$

Conversely, if
$\langle a\xi_1+\tilde\xi_2\epsilon'\rangle < \langle a\eta_1+\tilde\eta_2\epsilon'\rangle$,
then by (\ref{eq5.7.5}) and (\ref{eq5.7.6}) we have
$$\langle a\xi_1+\tilde\xi_2\epsilon\rangle
<\langle a\xi_1+\tilde\xi_2\epsilon'\rangle+\delta_2/2
< \langle a\eta_1+\tilde\eta_2\epsilon'\rangle+\delta_2/2
< \langle a\eta_1+\tilde\eta_2\epsilon\rangle+\delta_2.$$
So if $\langle a\xi_1+\tilde\xi_2\epsilon\rangle\ge \langle a\eta_1+\tilde\eta_2\epsilon\rangle$,
then
$$0\le \langle a\xi_1+\tilde\xi_2\epsilon\rangle-\langle a\eta_1+\tilde\eta_2\epsilon\rangle<\delta_2,$$
which contradicts with the definition of $\delta_2$ (see (\ref{eq5.7.2})).
Hence we must have $\langle a\xi_1+\tilde\xi_2\epsilon\rangle
<\langle a\eta_1+\tilde\eta_2\epsilon\rangle$.
So
$\langle a\xi_1+\tilde\xi_2\epsilon\rangle<\langle a\eta_1+\tilde\eta_2\epsilon\rangle$
holds if and only if
$\langle a\xi_1+\tilde\xi_2\epsilon'\rangle<\langle a\eta_1+\tilde\eta_2\epsilon'\rangle$
holds.

If $\langle a\xi_1+\tilde\xi_2\epsilon\rangle=\langle a\eta_1+\tilde\eta_2\epsilon\rangle$
and $a\xi_1+\tilde\xi_2\epsilon\ge a\eta_1+\tilde\eta_2\epsilon$,
then $\tilde\xi_2=\tilde\eta_2$ since $\epsilon\not\in S_{\bm{\alpha,\beta}}$,
and so $a\xi_1\ge a\eta_1$.
It then follows that
$\langle a\xi_1+\tilde\xi_2\epsilon'\rangle=\langle a\eta_1+\tilde\eta_2\epsilon'\rangle$
and $a\xi_1+\tilde\xi_2\epsilon'\ge a\eta_1+\tilde\eta_2\epsilon'$.
Likewise, we can show the converse direction. Hence we conclude that
both of $\langle a\xi_1+\tilde\xi_2\epsilon\rangle=\langle a\eta_1+\tilde\eta_2\epsilon\rangle$
and $a\xi_1+\tilde\xi_2\epsilon\ge a\eta_1+\tilde\eta_2\epsilon$ are true if and only if
both of $\langle a\xi_1+\tilde\xi_2\epsilon'\rangle=\langle a\eta_1+\tilde\eta_2\epsilon'\rangle$
and $a\xi_1+\tilde\xi_2\epsilon'\ge a\eta_1+\tilde\eta_2\epsilon'$
are true. Thus (\ref{eq5.7.9}) is proved.

Finally, by using (\ref{eq5.7.9}), it follows immediately
that for any integer $k$ with $1\le k\le s$, one has
\begin{align*}
&\Delta_{\bm{\alpha,\beta}}(a\beta_{1k}+\tilde{\beta}_{2k}\epsilon, a, \epsilon) \\
=&\#\{i: a\alpha_{1i}+\tilde\alpha_{2i}\epsilon\preccurlyeq a\beta_{1k}+\tilde{\beta}_{2k}\epsilon\}
-\#\{j: a\beta_{1j}+\tilde\beta_{2j}\epsilon\preccurlyeq a\beta_{1k}+\tilde{\beta}_{2k}\epsilon\} \\
=&\#\{i: a\alpha_{1i}+\tilde\alpha_{2i}\epsilon'\preccurlyeq a\beta_{1k}+\tilde{\beta}_{2k}\epsilon'\}
-\#\{j: a\beta_{1j}+\tilde\beta_{2j}\epsilon'\preccurlyeq a\beta_{1k}+\tilde{\beta}_{2k}\epsilon'\}\\
=&\Delta_{\bm{\alpha,\beta}}(a\beta_{1k}+\tilde{\beta}_{2k}\epsilon', a, \epsilon')
\end{align*}
as one desires. This ends the proof of Lemma \ref{lem5.7}.
\end{proof}

\begin{lem} \label{lem5.8}
Let $\xi=\xi_1+\tilde{\xi}_2\sqrt{\tilde{D}}$ and $\eta=\eta_1+\tilde{\eta}_2\sqrt{\tilde{D}}$
be the elements of the set $\Psi=\{\alpha_1,...,\alpha_r,\beta_1,...,\beta_s\}$ with
$\xi_1, \eta_1\in\mathbb{Q}$ and $\tilde\xi_2, \tilde\eta_2\in\mathbb{Z}$.
Let $a\in H$ and $p$ be a prime number such that $ap\equiv 1\mod E$ and
\begin{align} \label{eq5.8.1}
p>4d_{\bm{\alpha_2,\beta_2}}^2 d_{\bm{\alpha_1,\beta_1}}^2
(d_{\bm{\alpha_2,\beta_2}}^2 (M_1+d_{\bm{\alpha_1,\beta_1}})^2+|D|M_2^2).
\end{align}
Let $\sigma$ be a monomorphism from $K$ to $\mathbb{C}_p$.
Then for any positive integer $l$, the following two inequalities are equivalent:
\begin{equation} \label{eq5.8.2}
\Big\langle a^l\xi_1-\frac{\xi_1}{p^l}
+\frac{\tilde{\xi}_2T_{p,l}(\sigma(\sqrt{\tilde{D}}))}{p^l}\Big\rangle
\le \Big\langle a^l\eta_1-\frac{\eta_1}{p^l}
+\frac{\tilde{\eta}_2T_{p,l}(\sigma(\sqrt{\tilde{D}}))}{p^l}\Big\rangle
\end{equation}
and
\begin{equation} \label{eq5.8.3}
a^l\xi_1+\frac{\tilde{\xi}_2T_{p,l}(\sigma(\sqrt{\tilde{D}}))}{p^l} \preccurlyeq
a^l\eta_1+\frac{\tilde{\eta}_2T_{p,l}(\sigma(\sqrt{\tilde{D}}))}{p^l}.
\end{equation}
\end{lem}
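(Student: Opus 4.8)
The plan is to reduce both \eqref{eq5.8.2} and \eqref{eq5.8.3} to the single integer comparison $T_{p,l}(\sigma(\xi))\le T_{p,l}(\sigma(\eta))$, in the spirit of the auxiliary claim proved inside the proof of Lemma~\ref{lem2.6} (there the order ``$\preccurlyeq$'' is characterized by a dichotomy and matched against a comparison of $\mathfrak D_p^l$-values; here the role of $\mathfrak D_p^l$ is played by $T_{p,l}(\sigma(\cdot))$). Write $U_\xi:=a^l\xi_1-\xi_1/p^l+\tilde\xi_2 T_{p,l}(\sigma(\sqrt{\tilde D}))/p^l$ and $u_\xi:=U_\xi+\xi_1/p^l$ for the quantities occurring in \eqref{eq5.8.2} and \eqref{eq5.8.3}, and likewise $U_\eta,u_\eta$ for $\eta$. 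First I would check that \eqref{eq5.8.1} makes Lemma~\ref{lem5.5} applicable to both $\gamma=\xi$ and $\gamma=\eta$: by \eqref{eq2.0.5} one has $d(\gamma_1)(|\lfloor1-\gamma_1\rfloor|+\langle\gamma_1\rangle+1)\le d_{\bm{\alpha_1,\beta_1}}(M_1/2+3)$, while the numerator of the rational number $\gamma_1^2-\tilde D\tilde\gamma_2^2$ (nonzero because $\gamma\notin\mathbb Z_{\le0}$) is bounded in absolute value well below the right-hand side of \eqref{eq5.8.1}, so $p$ is odd and $v_p(\gamma_1^2-\tilde D\tilde\gamma_2^2)=0$. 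Lemma~\ref{lem5.5} then gives $\langle U_\gamma\rangle=T_{p,l}(\sigma(\gamma))/p^l\in\{1/p^l,\dots,(p^l-1)/p^l\}$, so \eqref{eq5.8.2} is exactly $T_{p,l}(\sigma(\xi))\le T_{p,l}(\sigma(\eta))$. Moreover $p^lU_\gamma=\gamma_1(a^lp^l-1)+\tilde\gamma_2 T_{p,l}(\sigma(\sqrt{\tilde D}))\in\mathbb Z$ (since $a^lp^l\equiv1\mod d(\gamma_1)$), so $\langle u_\gamma\rangle=\langle(T_{p,l}(\sigma(\gamma))+\gamma_1)/p^l\rangle$.

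The crux is a rigidity step that kills wrap‑around: for $\gamma\in\{\xi,\eta\}$ one has $0<T_{p,l}(\sigma(\gamma))+\gamma_1\le p^l$, so that $\langle u_\gamma\rangle=(T_{p,l}(\sigma(\gamma))+\gamma_1)/p^l$ as an equality of real numbers in $(0,1]$. If this failed, then $\sigma(\gamma)\equiv c\mod p^l$ for some integer $c$ with $|c|\le\lceil|\gamma_1|\rceil$; squaring the resulting congruence $\tilde\gamma_2\sigma(\sqrt{\tilde D})\equiv c-\gamma_1\mod p^l$ and using $v_p(\tilde D)=0$ forces, for $p$ as large as in \eqref{eq5.8.1}, either $\tilde\gamma_2=0$ with $\gamma_1=\pm c$ — whence $\gamma$ is a non‑positive integer, excluded, or a positive integer, in which case $T_{p,l}(\sigma(\gamma))=p^l-\gamma_1$ and $T_{p,l}(\sigma(\gamma))+\gamma_1=p^l$, still fine — or else $\tilde D=((c-\gamma_1)/\tilde\gamma_2)^2$ is a rational square, impossible since $D$ is square‑free and $\ne1$. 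The same squaring mechanism, applied to $\sigma(\xi)-\sigma(\eta)=(\xi_1-\eta_1)+(\tilde\xi_2-\tilde\eta_2)\sigma(\sqrt{\tilde D})$, yields two further facts: if $\tilde\xi_2\ne\tilde\eta_2$ then $\langle u_\xi\rangle\ne\langle u_\eta\rangle$ (so the tie‑breaking clause of $\preccurlyeq$ is vacuous there), and $T_{p,l}(\sigma(\xi))=T_{p,l}(\sigma(\eta))$ holds only when $\xi=\eta$.

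With these facts the equivalence is a finite case analysis. Put $\Delta T:=T_{p,l}(\sigma(\eta))-T_{p,l}(\sigma(\xi))\in\mathbb Z$ and $\delta:=\xi_1-\eta_1$. By the no‑wrap‑around formula, $\langle u_\xi\rangle<\langle u_\eta\rangle\iff\Delta T>\delta$, while $\langle u_\xi\rangle=\langle u_\eta\rangle\iff\Delta T=\delta$, which forces $\delta\in\mathbb Z$ and, by the remark above, $\tilde\xi_2=\tilde\eta_2$, so that $u_\xi-u_\eta=a^l\delta$ and the tie‑break $u_\xi\ge u_\eta$ reads $\delta\ge0$. Hence $u_\xi\preccurlyeq u_\eta$ iff $\Delta T>\delta$, or $\Delta T=\delta\ge0$. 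Finally, the congruence $\Delta T\equiv\sigma(\xi)-\sigma(\eta)\mod p^l$, again with the squaring trick and the size of $p$, shows that no value of $\Delta T$ lies strictly between $0$ and $\delta$, and that $\Delta T=0$ forces $\xi=\eta$ hence $\delta=0$; checking the three possibilities $\delta>0$, $\delta=0$, $\delta<0$ one then sees that ``$\Delta T>\delta$ or $\Delta T=\delta\ge0$'' is equivalent to $\Delta T\ge0$, i.e.\ to \eqref{eq5.8.2}.

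I expect the main obstacle to be the bookkeeping in the rigidity step: one must verify that the explicit lower bound \eqref{eq5.8.1} dominates every bounded quantity appearing in the ``forcing'' arguments (products of the various denominators with $M_1$, $M_2$ and $|D|$), and one must isolate cleanly the borderline possibility that $\xi$ or $\eta$ is a positive rational integer — there the relevant fractional part lands exactly on $1$ rather than wrapping, so it is harmless but needs a separate line. Once that is settled, tying the tie‑break rule of $\preccurlyeq$ to the comparison of the $T_{p,l}$‑values is routine.
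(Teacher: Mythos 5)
Your proposal is correct and follows essentially the same route as the paper's proof: both rest on the no-wrap-around identity (the paper's claim (\ref{eq5.8.4}), equivalently your formula $\langle u_\gamma\rangle=(T_{p,l}(\sigma(\gamma))+\gamma_1)/p^l$ obtained from Lemma \ref{lem5.5}), on the separation estimates of Lemmas \ref{lem4.2}--\ref{lem4.3} (your ``squaring mechanism''), and on the observation that a tie in the fractional parts forces $\tilde\xi_2=\tilde\eta_2$ so that the tie-break of $\preccurlyeq$ reduces to $\xi_1\ge\eta_1$. Your repackaging of the final comparison as a trichotomy on the integer $\Delta T$ versus $\delta$ is a clean but essentially cosmetic reorganization of the paper's fractional-part inequalities (\ref{eq5.8.7})--(\ref{eq5.8.13}).
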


\begin{proof}
Let $\gamma=\gamma_1+\tilde{\gamma}_2\sqrt{\tilde{D}}\in\Psi$
with $\gamma_1\in\mathbb{Q}$ and $\tilde\gamma_2\in\mathbb{Z}$.
Since $d(\tilde D)\mid d_{\bm{\alpha_2, \beta_2}}^2$,
$d(\gamma_1)\mid d_{\bm{\alpha_1, \beta_1}}$
and $M_2>|\tilde\gamma_2|$, by (\ref{eq5.8.1})
one then derives that for any positive integer $l$, one has
$$p^l>4d(\tilde{D})d(\gamma_1)^2(d(\tilde{D})
(M_1+d(\gamma_1))^2+d(\tilde{D})|\tilde{D}|\tilde{\gamma}_2^2).$$
Claim that
\begin{equation} \label{eq5.8.4}
\Big\langle a^l\gamma_1-\frac{\gamma_1}{p^l}+\frac{\tilde{\gamma}_2T_{p,l}
(\sigma(\sqrt{\tilde{D}}))}{p^l}\Big\rangle
=\Big\langle a^l\gamma_1+\frac{\tilde{\gamma}_2T_{p,l}
(\sigma(\sqrt{\tilde{D}}))}{p^l}\Big\rangle-\frac{\gamma_1}{p^l}.
\end{equation}
that will be shown in what follows.

First let $\tilde{\gamma}_2\neq 0$. Then it follows from Lemma \ref{lem4.3} that
$$\frac{M_1}{p^l}<\Big\{a^l\gamma_1
+\frac{\tilde{\gamma}_2T_{p,l}(\sigma(\sqrt{\tilde{D}}))}
{p^l}\Big\}<1-\frac{M_1}{p^l}.$$
Then
$$a \gamma_1+\frac{\tilde{\gamma}_2T_{p,l}(\sigma(\sqrt{\tilde{D}}))}{p^l}$$
is not a rational integer and so we have
$$\frac{M_1}{p^l}<\Big\langle a^l\gamma_1+\frac{\tilde{\gamma}_2T_{p,l}(\sigma(\sqrt{\tilde{D}}))}
{p^l}\Big\rangle<1-\frac{M_1}{p^l}.$$
Since
$$\Big|\frac{\gamma_1}{p^l}\Big|<\frac{M_1}{p^l},$$
one then deduces that
$$0<\Big\langle a^l\gamma_1+\frac{\tilde{\gamma}_2T_{p,l}(\sigma(\sqrt{\tilde{D}}))}
{p^l}\Big\rangle-\frac{\gamma_1}{p^l}<1.$$
Obviously,
$$a^l\gamma_1-\frac{\gamma_1}{p^l}+\frac{\tilde{\gamma}_2T_{p,l}(\sigma(\sqrt{\tilde{D}}))}{p^l}
-\Big(\Big\langle a^l\gamma_1+\frac{\tilde{\gamma}_2T_{p,l}(\sigma(\sqrt{\tilde{D}}))}{p^l}
\Big\rangle-\frac{\gamma_1}{p^l}\Big)\in\mathbb{Z}.$$
Hence by the definition of $\langle \cdot \rangle$,
we know that (\ref{eq5.8.4}) is true if $\tilde{\gamma}_2\neq 0$.

Let now $\tilde{\gamma}_2=0$. Then $\gamma_1=\gamma$. If $\gamma_1$ is an integer,
then $\langle a^l\gamma_1 \rangle=1$, and by the assumption that
$\gamma\in \Psi \subset K\setminus\mathbb{Z}_{\le 0}$
one can deduce that $\gamma_1$ is a positive integer. Since $0<\gamma_1<M_1<p\le p^l$,
one has $0<\frac{\gamma_1}{p^l}<1$ and so
$$\Big\langle a^l\gamma_1-\frac{\gamma_1}{p^l}\Big\rangle
=\Big\langle -\frac{\gamma_1}{p^l}\Big\rangle
=1-\Big\langle \frac{\gamma_1}{p^l}\Big\rangle
=1-\frac{\gamma_1}{p^l}
=\langle a^l\gamma_1\rangle-\frac{\gamma_1}{p^l}$$
as (\ref{eq5.8.4}) claimed. Now let $\gamma_1$ be non-integer.
Since $a\in H$, we have $a$ is coprime to $E$.
But $d(\gamma_1)\mid d_{\bm{\alpha_1,\beta_1}}\mid E$. Then $a^l$
is coprime to both of $d(\gamma_1)$ and $d_{\bm{\alpha_1,\beta_1}}$.
Hence $a^l\gamma_1$ is non-integer and so
$\langle a^l\gamma_1 \rangle=i/d_{\bm{\alpha_1,\beta_1}}$ for some integer
$i$ with $1\le i<d_{\bm{\alpha_1,\beta_1}}$.
By (\ref{eq5.8.1}), one has
$$p^l>d_{\bm{\alpha_1,\beta_1}}M_1>d_{\bm{\alpha_1,\beta_1}}|\gamma_1|.$$
It follows that $\frac{|\gamma_1|}{p^l}<1/d_{\bm{\alpha_1,\beta_1}}$.
Thus we have
$$0<\frac{i}{d_{\bm{\alpha_1,\beta_1}}}-\frac{\gamma_1}{p^l}=\langle a^l\gamma_1
\rangle-\frac{\gamma_1}{p^l}<1$$
for $1\le i<d_{\bm{\alpha_1,\beta_1}}.$
This together with the fact
$$a^l\gamma_1-\frac{\gamma_1}{p^l}-\Big(\langle a^l\gamma_1 \rangle-\frac{\gamma_1}{p^l}\Big)\in\mathbb{Z}$$
gives us that
$$\Big\langle a^l\gamma_1-\frac{\gamma_1}{p^l} \Big\rangle=\langle a^l\gamma_1 \rangle-\frac{\gamma_1}{p^l}.$$
So (\ref{eq5.8.4}) is true when $\tilde{\gamma_2}=0$. It concludes the proof of the claim (\ref{eq5.8.4}).
Now the claim applied to $\xi$ and $\eta$ gives us that
\begin{equation} \label{eq5.8.5}
\Big\langle a^l\xi_1-\frac{\xi_1}{p^l}+\frac{\tilde{\xi}_2T_{p,l}
(\sigma(\sqrt{\tilde{D}}))}{p^l}\Big\rangle=\Big\langle a^l\xi_1
+\frac{\tilde{\xi}_2T_{p,l}(\sigma(\sqrt{\tilde{D}}))}{p^l}
\Big\rangle-\frac{\xi_1}{p^l}
\end{equation}
and
\begin{equation} \label{eq5.8.6}
\Big\langle a^l\eta_1-\frac{\eta_1}{p^l}+\frac{\tilde{\eta}_2T_{p,l}
(\sigma(\sqrt{\tilde{D}}))}{p^l}\Big\rangle=\Big\langle a^l\eta_1
+\frac{\tilde{\eta}_2T_{p,l}(\sigma(\sqrt{\tilde{D}}))}{p^l}
\Big\rangle-\frac{\eta_1}{p^l}.
\end{equation}
Then it follows from (\ref{eq5.8.5}) and (\ref{eq5.8.6}) that (\ref{eq5.8.2})
holds if and only if the following inequality is true:
\begin{equation} \label{eq5.8.7}
\Big\langle a^l\xi_{1}+\frac{\tilde{\xi}_{2}T_{p,l}
(\sigma(\sqrt{\tilde{D}}))}{p^l}\Big\rangle-\frac{\xi_{1}}{p^l}
\le \Big\langle a^l\eta_{1}+\frac{\tilde{\eta}_{2}T_{p,l}
(\sigma(\sqrt{\tilde{D}}))}{p^l}\Big\rangle-\frac{\eta_{1}}{p^l}.
\end{equation}

On the other hand, since $d(\tilde D)\mid d_{\bm{\alpha_2, \beta_2}}^2$,
$d(a^l\xi_1-a^l\eta_1)\mid d_{\bm{\alpha_1, \beta_1}}$ and $M_2>|\tilde{\xi}_2-\tilde{\eta}_2|$,
by (\ref{eq5.8.1}), one has
$$p^l\ge p>4d(\tilde{D})d^2(a^l\xi_1-a^l\eta_1)(d(\tilde{D})(M_1+d(a^l\xi_1-a^l\eta_1))^2
+d(\tilde{D})|\tilde{D}|(\tilde{\xi}_2-\tilde{\eta}_2)^2).$$
Let $\tilde\xi_2\neq\tilde\eta_2$. Then one can deduce from Lemma \ref{lem4.3} that
\begin{equation} \label{eq5.8.8}
\frac{M_1}{p^l}< \Big\langle a^l\xi_{1}+\frac{\tilde{\xi}_{2}T_{p,l}(\sigma(\sqrt{\tilde{D}}))}{p^l}
-a^l\eta_{1}-\frac{\tilde{\eta}_{2}T_{p,l}(\sigma(\sqrt{\tilde{D}}))}{p^l}\Big\rangle
<1-\frac{M_1}{p^l}.
\end{equation}
Then
\begin{align} \label{eq5.8.9}
a^l\xi_{1}+\frac{\tilde{\xi}_{2}T_{p,l}(\sigma(\sqrt{\tilde{D}}))}{p^l}
-a^l\eta_{1}-\frac{\tilde{\eta}_{2}T_{p,l}(\sigma(\sqrt{\tilde{D}}))}{p^l}\not\in\mathbb{Z}.
\end{align}
But from Lemma \ref{lem5.4} (ii), one can derive the following fact:
For any real numbers $X$ and $Y$, $\langle X\rangle=\langle Y\rangle$
if and only if $X-Y\in\mathbb{Z}$.
Therefore by this fact and (\ref{eq5.8.9}), we obtain that
$$\Big\langle a^l\xi_{1}+\frac{\tilde{\xi}_{2}T_{p,l}
(\sigma(\sqrt{\tilde{D}}))}{p^l}\Big\rangle
\ne \Big\langle a^l\eta_{1}+\frac{\tilde{\eta}_{2}T_{p,l}
(\sigma(\sqrt{\tilde{D}}))}{p^l}\Big\rangle.$$
Equivalently, if one has
$$\Big\langle a^l\xi_{1}+\frac{\tilde{\xi}_{2}T_{p,l}
(\sigma(\sqrt{\tilde{D}}))}{p^l}\Big\rangle
=\Big\langle a^l\eta_{1}+\frac{\tilde{\eta}_{2}T_{p,l}
(\sigma(\sqrt{\tilde{D}}))}{p^l}\Big\rangle,$$
then $\tilde\xi_2=\tilde\eta_2$. Hence by the definition of $\preccurlyeq$,
(\ref{eq5.8.3}) is true if and only if one has either
\begin{equation} \label{eq5.8.10}
\Big\langle a^l\xi_{1}+\frac{\tilde{\xi}_{2}T_{p,l}
(\sigma(\sqrt{\tilde{D}}))}{p^l}\Big\rangle
< \Big\langle a^l\eta_{1}+\frac{\tilde{\eta}_{2}T_{p,l}
(\sigma(\sqrt{\tilde{D}}))}{p^l}\Big\rangle,
\end{equation}
or
\begin{equation} \label{eq5.8.11}
\Big\langle a^l\xi_{1}+\frac{\tilde{\xi}_{2}T_{p,l}
(\sigma(\sqrt{\tilde{D}}))}{p^l}\Big\rangle
= \Big\langle a^l\eta_{1}+\frac{\tilde{\eta}_{2}T_{p,l}
(\sigma(\sqrt{\tilde{D}}))}{p^l}\Big\rangle\ \ {\rm and}\ \ \xi_1\ge \eta_1.
\end{equation}

Now to finish the proof of Lemma \ref{lem5.8}, we need only
to show that the truth of (\ref{eq5.8.7}) is equivalent to the truth of
one of (\ref{eq5.8.10}) and (\ref{eq5.8.11}).

Let us first show that the truth of (\ref{eq5.8.7}) implies the truth of
one of (\ref{eq5.8.10}) and (\ref{eq5.8.11}). Assume that (\ref{eq5.8.7}) is true.
Claim that
\begin{equation} \label{eq5.8.12}
\Big\langle a^l\xi_{1}+\frac{\tilde{\xi}_{2}T_{p,l}
(\sigma(\sqrt{\tilde{D}}))}{p^l}\Big\rangle
\le\Big\langle a^l\eta_{1}+\frac{\tilde{\eta}_{2}T_{p,l}
(\sigma(\sqrt{\tilde{D}}))}{p^l}\Big\rangle.
\end{equation}
Suppose that (\ref{eq5.8.12}) is not true. Then
$$\Big\langle a^l\xi_{1}+\frac{\tilde{\xi}_{2}T_{p,l}(\sigma(\sqrt{\tilde{D}}))}{p^l}\Big\rangle
> \Big\langle a^l\eta_{1}+\frac{\tilde{\eta}_{2}T_{p,l}(\sigma(\sqrt{\tilde{D}}))}{p^l}\Big\rangle.$$
Then by Lemma \ref{lem5.4} (ii), we have
\begin{align*}
&\Big\langle a^l\xi_{1}+\frac{\tilde{\xi}_{2}T_{p,l}
(\sigma(\sqrt{\tilde{D}}))}{p^l}\Big\rangle-\frac{\xi_{1}}{p^l}
-\Big\langle a^l\eta_{1}+\frac{\tilde{\eta}_{2}T_{p,l}
(\sigma(\sqrt{\tilde{D}}))}{p^l}\Big\rangle+\frac{\eta_{1}}{p^l} \\
=&\Big\langle a^l\xi_{1}+\frac{\tilde{\xi}_{2}T_{p,l}
(\sigma(\sqrt{\tilde{D}}))}{p^l}-a^l\eta_{1}-\frac{\tilde{\eta}_{2}T_{p,l}
(\sigma(\sqrt{\tilde{D}}))}{p^l}\Big\rangle-\frac{\xi_{1}}{p^l}
+\frac{\eta_{1}}{p^l}.
\end{align*}

If $\tilde{\xi_2}\neq\tilde{\eta}_2$, then by (\ref{eq5.8.8}) and
$$\Big|\frac{\xi_{1}}{p^l}-\frac{\eta_{1}}{p^l}\Big|<\frac{M_1}{p^l},$$
one can derive that
$$
\Big\langle a^l\xi_{1}+\frac{\tilde{\xi}_{2}T_{p,l}
(\sigma(\sqrt{\tilde{D}}))}{p^l}-a^l\eta_{1}-\frac{\tilde{\eta}_{2}T_{p,l}
(\sigma(\sqrt{\tilde{D}}))}{p^l}\Big\rangle-\frac{\xi_{1}}{p^l}
+\frac{\eta_{1}}{p^l}>0.
$$

If $\tilde{\xi_2}=\tilde{\eta}_2$, then by $p^l\ge p>d_{\bm{\alpha_1,\beta_1}}M_1$, one has
$$\langle a^l\xi_1-a^l\eta_1\rangle \ge \frac{1} {d(a^l\xi_1-a^l\eta_1)}
\ge \frac{1}{d_{\bm{\alpha_1,\beta_1}}}>\frac{M_1}{p^l}>\Big|\frac{\xi_{1}}{p^l}-\frac{\eta_{1}}{p^l}\Big|.$$
Therefore
$$\langle a^l\xi_1-a^l\eta_1\rangle-\frac{\xi_{1}}{p^l}+\frac{\eta_{1}}{p^l}>0.$$
Hence we conclude that
$$\Big\langle a^l\xi_{1}+\frac{\tilde{\xi}_{2}T_{p,l}
(\sigma(\sqrt{\tilde{D}}))}{p^l}\Big\rangle-\frac{\xi_{1}}{p^l}
-\Big\langle a^l\eta_{1}+\frac{\tilde{\eta}_{2}T_{p,l}
(\sigma(\sqrt{\tilde{D}}))}{p^l}\Big\rangle+\frac{\eta_{1}}{p^l}> 0,$$
which contradicts with (\ref{eq5.8.7}). Therefore (\ref{eq5.8.12}) is true.
The claim is proved.

Now by the claim (\ref{eq5.8.12}), we know that either (\ref{eq5.8.10}) is true,
or the equality in (\ref{eq5.8.11}) holds.
For the latter case, one can then deduce from (\ref{eq5.8.7}) that
$-\frac{\xi_{1}}{p^l}\le -\frac{\eta_{1}}{p^l}$, that is, $\xi_1\ge \eta_1$. Hence
(\ref{eq5.8.11}) is true. So the necessity part is proved.

Now we show the sufficiency part. Let one of (\ref{eq5.8.10}) and (\ref{eq5.8.11}) hold.
In the following we show that (\ref{eq5.8.7}) is true.
If (\ref{eq5.8.11}) holds, then (\ref{eq5.8.7}) is clear true. Now let (\ref{eq5.8.10}) hold.
Then by Lemma \ref{lem5.4} (ii), one has
\begin{align} \label{eq5.8.13}
&\Big\langle a^l\eta_{1}+\frac{\tilde{\eta}_{2}T_{p,l}
(\sigma(\sqrt{\tilde{D}}))}{p^l}\Big\rangle-\frac{\eta_{1}}{p^l}
-\Big\langle a^l\xi_{1}+\frac{\tilde{\xi}_{2}T_{p,l}
(\sigma(\sqrt{\tilde{D}}))}{p^l}\Big\rangle+\frac{\xi_{1}}{p^l} \\
=&\Big\langle a^l\eta_{1}+\frac{\tilde{\eta}_{2}T_{p,l}(\sigma(\sqrt{\tilde{D}}))}{p^l}
-a^l\xi_{1}-\frac{\tilde{\xi}_{2}T_{p,l}(\sigma(\sqrt{\tilde{D}}))}{p^l}\Big\rangle
+\frac{\xi_{1}}{p^l}-\frac{\eta_{1}}{p^l}. \notag
\end{align}
Likewise, we can show that the right-hand side of (\ref{eq5.8.13}) is positive.
Namely, (\ref{eq5.8.7}) holds. This finishes the proof of sufficiency part.

The proof of Lemma \ref{lem5.8} is complete.
\end{proof}

In the following lemmas, we will use the following notations:
For the integers $i$ and $j$ with $1\le i\le r$ and $1\le j\le s$, we define
\begin{equation} \label{eq5.0.2}
y_i:=a^l\alpha_{1i}+\frac{\tilde{\alpha}_{2i}T_{p,l}(\sigma(\sqrt{\tilde{D}}))}{p^l},\ \
w_i:=\Big\langle a^l\alpha_{1i}-\frac{\alpha_{1i}}{p^l}+\frac
{\tilde{\alpha}_{2i}T_{p,l}(\sigma(\sqrt{\tilde{D}}))}{p^l}\Big\rangle,
\end{equation}
\begin{equation} \label{eq5.0.3}
x_j:=a^l\beta_{1j}+\frac{\tilde{\beta}_{2j}T_{p,l}(\sigma(\sqrt{\tilde{D}}))}{p^l}, \ \
z_j:=\Big\langle a^l\beta_{1j}-\frac{\beta_{1j}}{p^l}+\frac{\tilde
{\beta}_{2j}T_{p,l}(\sigma(\sqrt{\tilde{D}}))}{p^l}\Big\rangle.
\end{equation}

\begin{lem} \label{lem5.9}
Let $l$ be any positive integers. Let $a\in H$ and $p$ be a
prime number satisfying (\ref{eq5.8.1}) and $ap\equiv 1\mod E$.
Let $\sigma$ be a monomorphism from $K$ to $\mathbb{C}_p$.
The following statements are equivalent:

{\rm (i).} $\#\{i:y_i\preccurlyeq x_k \}\ge \#\{j: x_j\preccurlyeq x_k \}$.

{\rm (ii).} For any integer $n$ with $1\le n\le p^l$, one has
$$\sum_{i=1}^{r}\Big\lceil\frac{n}{p^l}-w_i\Big\rceil \ge
\sum_{j=1}^{s}\Big\lceil\frac{n}{p^l}-z_j\Big\rceil.$$
\end{lem}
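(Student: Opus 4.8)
The plan is to convert both statements, via Lemma~\ref{lem5.8}, into one and the same inequality for the step function $\phi(a):=\#\{i:w_i\le a\}-\#\{j:z_j\le a\}$ on $\mathbb R$. Two facts about the numbers $w_i,z_j\in(0,1]$ will be needed. Writing $T:=T_{p,l}(\sigma(\sqrt{\tilde D}))\in\mathbb Z$, observe that $p^l$ times the argument of $\langle\cdot\rangle$ in the definition of $w_i$ equals $\alpha_{1i}(a^lp^l-1)+\tilde\alpha_{2i}T$, which is a rational integer: indeed $ap\equiv1\pmod E$ forces $a^lp^l\equiv1\pmod E$, and $d(\alpha_{1i})\mid d_{\bm\alpha_1,\bm\beta_1}\mid E$, so $\alpha_{1i}(a^lp^l-1)\in\mathbb Z$. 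Hence $w_i\in\{1/p^l,2/p^l,\dots,p^l/p^l\}$, and likewise $z_j\in\{1/p^l,\dots,p^l/p^l\}$. Secondly, I would check that in fact $z_j<1$ for every $j$: for irrational $\beta_j$ this follows from identity~(\ref{eq5.8.4}) (and the sub-case analysis in the proof of Lemma~\ref{lem5.8}) together with Lemma~\ref{lem4.3} applied to $v=T_{p,l}(\sigma(\sqrt{\tilde D}))$, which satisfies $v^2\equiv\tilde D\pmod{p^l}$; for rational $\beta_j$ one uses $z_j=\langle T_{p,l}(\beta_j)/p^l\rangle=T_{p,l}(\beta_j)/p^l$, where $T_{p,l}(\beta_j)\ne0$ because $\beta_j\ne0$ and $p$ is as large as in~(\ref{eq5.8.1}). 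Thus $\{z_1,\dots,z_s\}\subseteq\{1/p^l,\dots,(p^l-1)/p^l\}$.

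I would then translate the two statements. For~(ii): since $w_i\in\frac1{p^l}\mathbb Z\cap(0,1]$ and $n/p^l\in(0,1]$, we have $n/p^l-w_i\in(-1,1)$, whence $\lceil n/p^l-w_i\rceil$ equals $1$ if $w_i\le(n-1)/p^l$ and $0$ otherwise, and similarly for $z_j$; therefore the inequality of~(ii) at the index $n$ is exactly $\phi\bigl((n-1)/p^l\bigr)\ge0$, and as $n$ ranges over $\{1,\dots,p^l\}$ the number $(n-1)/p^l$ ranges over $\{0,1/p^l,\dots,(p^l-1)/p^l\}$. So~(ii) is equivalent to $\phi(a)\ge0$ for all $a$ in that finite set. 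For~(i), understood as holding for every $k\in\{1,\dots,s\}$: Lemma~\ref{lem5.8} applied with $(\xi,\eta)=(\alpha_i,\beta_k)$ and with $(\xi,\eta)=(\beta_j,\beta_k)$ gives $y_i\preccurlyeq x_k\Leftrightarrow w_i\le z_k$ and $x_j\preccurlyeq x_k\Leftrightarrow z_j\le z_k$, so~(i) says precisely $\phi(z_k)\ge0$ for all $k$.

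Finally I would match the two. Since $\phi$ is piecewise constant and decreases only at the points $z_j$, its minimum over $\mathbb R$ equals $\min\bigl(0,\min_j\phi(z_j)\bigr)$ — this is exactly the argument of Lemma~\ref{lem2.5} — so "$\phi(z_k)\ge0$ for all $k$'' is equivalent to "$\phi\ge0$ on $\mathbb R$''. Hence (i) $\Leftrightarrow$ ($\phi\ge0$ on $\mathbb R$), and this is in turn equivalent to ($\phi\ge0$ on $\{0,1/p^l,\dots,(p^l-1)/p^l\}$) $\Leftrightarrow$ (ii): one implication is trivial, and the other holds because $\{z_1,\dots,z_s\}$ is contained in that threshold set by the first paragraph, so $\phi\ge0$ there already forces $\min_j\phi(z_j)\ge0$. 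The only step that is more than bookkeeping is the range check of the first paragraph — especially the strict bound $z_j<1$, which is what makes the two threshold sets compatible and which is where the size hypothesis~(\ref{eq5.8.1}) on $p$ and the estimates already established for Lemma~\ref{lem5.8} are actually used; given Lemma~\ref{lem5.8}, everything else is a rearrangement plus the Lemma~\ref{lem2.5} combinatorics.
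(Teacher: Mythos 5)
Your argument is correct and follows essentially the same route as the paper's: Lemma~\ref{lem5.8} converts (i) into $\#\{i:w_i\le z_k\}\ge\#\{j:z_j\le z_k\}$, the ceiling computation converts (ii) into nonnegativity of the same counting function at the thresholds $(n-1)/p^l$, and Lemma~\ref{lem2.5} closes the loop. The only (harmless) divergence is in how you place $w_i,z_j$ in $\tfrac{1}{p^l}\mathbb{Z}\cap(0,1)$ — you use a direct integrality computation plus a separate check that $z_j<1$, whereas the paper gets both at once from Lemma~\ref{lem5.5}, which identifies $w_i$ and $z_j$ with $T_{p,l}(\sigma(\alpha_i))/p^l$ and $T_{p,l}(\sigma(\beta_j))/p^l\in\{0,1/p^l,\dots,(p^l-1)/p^l\}$.
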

\begin{proof}

On the one hand, since Lemma \ref{lem5.8} tells us that
$y_i\preccurlyeq x_k \Leftrightarrow w_i\le z_k$
and $x_j\preccurlyeq x_k \Leftrightarrow z_j\le z_k$, it then follows that
\begin{align} \label{eq5.9.1}
\#\{i:y_i\preccurlyeq x_k \}-\#\{j: x_j\preccurlyeq x_k\}=\#\{i:w_i\le z_k\}-\#\{j:z_j\le z_k\}.
\end{align}

On the other hand, let $\gamma=\gamma_1+\tilde{\gamma}_2\sqrt{\tilde{D}}\in
\Psi $
with $\gamma_1\in\mathbb{Q}$ and $\tilde\gamma_2\in\mathbb{Z}$.
Since $p$ satisfies (\ref{eq5.8.1}), we have
\begin{align} \label{eq5.9.2}
p^l\ge &p>4d_{\bm{\alpha_1,\beta_1}}(M_1+1)^2 \\
>& d_{\bm{\alpha_1,\beta_1}}(4M_1+4)
> d(\gamma_1)(|\gamma_1|+3) \notag \\
\ge & d(\gamma_1)(|\lfloor 1-\gamma_1\rfloor|+\langle \gamma_1\rangle+1), \notag
\end{align}
where the last inequality of (\ref{eq5.9.2}) is induced by (\ref{eq2.0.5}), and
\begin{align}  \label{eq5.9.3}
p^l\ge & p>d_{\bm{\alpha_1,\beta_1}}^2d_{\bm{\alpha_2,\beta_2}}^2(M_1^2+|D|M_2^2) \\
> & d_{\bm{\alpha_1,\beta_1}}^2d_{\bm{\alpha_2,\beta_2}}^2
(\gamma_1^2+|\tilde{D}|\tilde\gamma_2^2) \notag \\
\ge & d(\gamma_1)^2d(\tilde{D})|\gamma_1^2-\tilde{D}\tilde\gamma_2^2|>0  \notag
\end{align}
since the hypothesis that $D$ is square-free implies that
$\gamma_1^2-\tilde{D}\tilde\gamma_2^2\ne 0$.
Clearly, $d(\gamma_1)^2d(\tilde{D})(\gamma_1^2-\tilde{D}\tilde\gamma_2^2)$ is a
rational integer. Then by (\ref{eq5.9.3}), we have
$$v_p(d(\gamma_1)^2d(\tilde{D})^2(\gamma_1^2-\tilde{D}\tilde\gamma_2^2))=0.$$
Moreover, one has $v_p(d(\gamma_1)^2d(\tilde{D})^2)=0$ since $(p,E)=1$ and
both of $d(\gamma_1)$ and $d(\tilde D)$ divide $E$.
It follows that
\begin{align} \label{eq5.9.4}
v_p(\gamma_1^2-\tilde{D}\tilde\gamma_2^2)=0.
\end{align}
Then by Lemma 5.4, this together with (\ref{eq5.9.2}) tells us that
$$w_i=\frac{T_{p,l}(\sigma(\alpha_{1i}+\tilde{\alpha}_{2i}\sqrt{\tilde{D}}))}{p^l}
\ {\rm and}\
z_j=\frac{T_{p,l}(\sigma(\beta_{1j}+\tilde{\beta}_{2j}\sqrt{\tilde{D}}))}{p^l}.$$
So $w_i$ and $z_j$ are both in the set $\{0,\frac{1}{p^l},...,\frac{p^l-1}{p^l}\}$.
Let $n$ be an integer such that $1\le n\le p^l$. Then
$$-1<\frac{n}{p^l}-w_i, \frac{n}{p^l}-z_j<1.$$
Thus
\begin{align*}
\Big\lceil\frac{n}{p^l}-\gamma\Big\rceil
=\bigg\{\begin{array}{cl}
       1, & {\rm if}\ \gamma\le \frac{n-1}{p^l}, \\
       0, & {\rm otherwise},
     \end{array}
\end{align*}
where $\gamma\in \{w_1,...,w_r,z_1,...,z_s\}$. It then follows that
\begin{align}  \label{eq5.9.5}
\sum_{i=1}^{r}\Big\lceil\frac{n}{p^l}-w_i\Big\rceil
-\sum_{j=1}^{s}\Big\lceil\frac{n}{p^l}-z_j\Big\rceil
=\#\Big\{i:w_i\le \frac{n-1}{p^l}\Big\}-\#\Big\{j:z_j\le \frac{n-1}{p^l}\Big\}.
\end{align}

Applied Lemma \ref{lem2.5} to the set $\{0,1,...,\frac{p^l-1}{p^l}\}$
with the order $\le $, we can obtain that
\begin{align}
\#\Big\{i:w_i\le \frac{n-1}{p^l}\Big\}-\#\Big\{j:z_j\le \frac{n-1}{p^l}\Big\}\ge 0
\end{align}
holds for all integer $n$ with $1\le n\le p^l$ if and only if
\begin{align} \label{eq5.9.6}
\#\{i:w_i\le z_k\}-\#\{j:z_j\le z_k\}\ge 0
\end{align}
holds for all integers $k$ with $1\le k\le s$.
Then the equivalence of parts (i) and (ii) follows immediately from
(\ref{eq5.9.5}) to (\ref{eq5.9.6}) and (\ref{eq5.9.1}).
This ends the proof of Lemma \ref{lem5.9}.
\end{proof}

\begin{lem} \label{lem5.10}
Let $\Delta_{\bm{\alpha}, \bm{\beta}}(x,a,\epsilon)\ge 0$
hold for all $x\in\mathbb{R}$, all $a\in H$ and all
$\epsilon\in(0,1)\setminus S_{\bm{\alpha,\beta}}$.
Let $p$ be a prime numbers such that (\ref{eq5.8.1})
holds and that the inverse
of $p$ modulo $E$ is belong to $H$. Then $r\ge s$ and
$\sigma_p(F_{\bm{\alpha,\beta}}(z))\in\mathcal{O}_p[[z]]$
for all monomorphisms $\sigma_p: K\rightarrow\mathbb{C}_p$.
\end{lem}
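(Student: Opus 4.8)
\emph{Proof plan.} First I would extract the inequality $r\ge s$ from the hypothesis alone. Since $S_{\bm{\alpha,\beta}}$ is a finite subset of $(0,1)$, fix any $\epsilon_0\in(0,1)\setminus S_{\bm{\alpha,\beta}}$, and let $a\in H$ denote the inverse of $p$ modulo $E$. Taking $x$ to be a sufficiently large negative rational integer, every real number $y$ of the form $a\alpha_{1i}+\tilde\alpha_{2i}\epsilon_0$ or $a\beta_{1j}+\tilde\beta_{2j}\epsilon_0$ satisfies $y\preccurlyeq x$: if such a $y$ is an integer then $y\ge x$, so $y\preccurlyeq x$, and if $y$ is not an integer then $\langle y\rangle<1=\langle x\rangle$, so again $y\preccurlyeq x$. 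Hence $\Delta_{\bm{\alpha,\beta}}(x,a,\epsilon_0)=r-s$, and the assumption forces $r\ge s$.

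Next I would fix a monomorphism $\sigma_p\colon K\to\mathbb{C}_p$ and prepare the $p$-adic computation. Since $a\in H$ and $ap\equiv1\bmod E$, one has $\big(\frac{D}{p}\big)=1$ with $p$ odd and $p\nmid D$ (as $4D\mid E$ and $(p,E)=1$); therefore $\sqrt{\tilde D}\in\mathbb{Q}_p$, $\sigma_p(K)\subseteq\mathbb{Q}_p$ (as recalled before Lemma~\ref{lem5.2}), and $\sigma_p(\sqrt{\tilde D})$ is a $p$-adic unit because $v_p(\tilde D)=0$. Writing $\alpha_i=\alpha_{1i}+\tilde\alpha_{2i}\sqrt{\tilde D}$ and similarly for the $\beta_j$, and using that $\alpha_{1i},\beta_{1j}$ have denominators prime to $p$, we get $\sigma_p(\alpha_i),\sigma_p(\beta_j)\in\mathbb{Z}_p\setminus\mathbb{Z}_{\le0}$ (the latter because $\sigma_p$ fixes $\mathbb{Q}$ and is injective). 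Then Lemma~\ref{lem2.1}, in the form (\ref{Eq2.3''}), gives for every $n\ge0$ the identity
\begin{align*}
&v_p\Big(\sigma_p\Big(\frac{(\alpha_1)_n\cdots(\alpha_r)_n}{(\beta_1)_n\cdots(\beta_s)_n}\Big)\Big)=\sum_{l=1}^{\infty}E_l(n),\qquad\text{where}\\
&E_l(n):=\sum_{i=1}^{r}\Big\lceil\frac{n+\sigma_p(\alpha_i)}{p^l}-\mathfrak{D}_p^l(\sigma_p(\alpha_i))\Big\rceil-\sum_{j=1}^{s}\Big\lceil\frac{n+\sigma_p(\beta_j)}{p^l}-\mathfrak{D}_p^l(\sigma_p(\beta_j))\Big\rceil,
\end{align*}
and this is a finite sum since the left-hand side is finite. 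It therefore suffices to prove $E_l(n)\ge0$ for all $l\ge1$ and all $n\ge1$.

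Fix $l$, and first treat $n\in\{1,\dots,p^l\}$. By (\ref{eq5.8.1}) the prime $p$ is large enough for the hypotheses of Lemmas~\ref{lem5.5}, \ref{lem5.8}, \ref{lem5.9} to hold (in particular $v_p(\gamma_1^2-\tilde D\tilde\gamma_2^2)=0$ for every $\gamma=\gamma_1+\tilde\gamma_2\sqrt{\tilde D}\in\Psi$), and the number $\epsilon:=T_{p,l}(\sigma_p(\sqrt{\tilde D}))/p^l$ has denominator divisible by $p$ (as $\sigma_p(\sqrt{\tilde D})$ is a $p$-adic unit), hence $\epsilon\in(0,1)\setminus S_{\bm{\alpha,\beta}}$ because every element of $S_{\bm{\alpha,\beta}}$ has denominator less than $p$ by (\ref{eq5.8.1}). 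With $w_i,z_j$ as in (\ref{eq5.0.2})--(\ref{eq5.0.3}), Lemma~\ref{lem5.5} identifies $\mathfrak{D}_p^l(\sigma_p(\alpha_i))-\sigma_p(\alpha_i)/p^l=w_i$ and $\mathfrak{D}_p^l(\sigma_p(\beta_j))-\sigma_p(\beta_j)/p^l=z_j$, so $E_l(n)=\sum_i\lceil n/p^l-w_i\rceil-\sum_j\lceil n/p^l-z_j\rceil$. By Lemma~\ref{lem5.9} this is $\ge0$ for all $n\in\{1,\dots,p^l\}$ once we know that $\#\{i:y_i\preccurlyeq x_k\}\ge\#\{j:x_j\preccurlyeq x_k\}$ for every $k\in\{1,\dots,s\}$. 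But by the definition of $\Delta_{\bm{\alpha,\beta}}$ together with (\ref{eq5.0.2})--(\ref{eq5.0.3}), the left side minus the right side equals $\Delta_{\bm{\alpha,\beta}}(a^l\beta_{1k}+\tilde\beta_{2k}\epsilon,a^l,\epsilon)$; since $H$ is a subgroup of $(\mathbb{Z}/E\mathbb{Z})^\times$ by Lemma~\ref{lem5.1}, the residue of $a^l$ lies in $H$, so the hypothesis gives $\Delta_{\bm{\alpha,\beta}}(x,a^l\bmod E,\epsilon)\ge0$ for all $x$, and Lemma~\ref{lem5.6} propagates this to the positive integer $a^l$. Thus $E_l(n)\ge0$ for $n\in\{1,\dots,p^l\}$; for a general $n\ge1$, write $n=n_0+mp^l$ with $n_0\in\{1,\dots,p^l\}$ and $m\ge0$, and note that replacing $n$ by $n+p^l$ raises each ceiling in $E_l$ by $1$, so $E_l(n)=E_l(n_0)+m(r-s)\ge0$ using $r\ge s$. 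Summing over $l$ yields $v_p(\sigma_p(\cdots))\ge0$ for all $n$, that is, $\sigma_p(F_{\bm{\alpha,\beta}}(z))\in\mathcal{O}_p[[z]]$; as $\sigma_p$ was arbitrary, the lemma follows.

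The heart of the argument — and the place needing the most care — is the middle step: attaching the parameter $\epsilon=T_{p,l}(\sigma_p(\sqrt{\tilde D}))/p^l$ to the splitting prime $p$, checking $\epsilon\notin S_{\bm{\alpha,\beta}}$, and weaving together Lemmas~\ref{lem5.5}, \ref{lem5.6}, \ref{lem5.8}, and \ref{lem5.9}. The one ingredient not already packaged in those lemmas is the preliminary inequality $r\ge s$, which is exactly what lets the bound pass from $n\le p^l$ to all $n\ge1$.
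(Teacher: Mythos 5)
Your proposal is correct and follows essentially the same route as the paper's proof: the same evaluation of $\Delta_{\bm{\alpha},\bm{\beta}}$ at an integer to get $r\ge s$, the same choice $\epsilon_l=T_{p,l}(\sigma_p(\sqrt{\tilde D}))/p^l\notin S_{\bm{\alpha,\beta}}$, and the same chain of Lemmas \ref{lem5.1}, \ref{lem5.5}, \ref{lem5.6} and \ref{lem5.9}, with the reduction from general $n$ to $n\le p^l$ via $E_l(n)=E_l(n_0)+m(r-s)$ matching the paper's use of $\langle n/p^l\rangle$. (Your choice of a large \emph{negative} integer for the $r\ge s$ step is in fact the one consistent with the stated definition of $\preccurlyeq$.)
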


\begin{proof}
Take an $a\in H$ such that $ap\equiv 1\mod E$.
First we show that $r\ge s$. For any fixed
$\epsilon\in (0,1)\setminus S_{\bm{\alpha,\beta}}$,
let $N$ be a positive integer such that
$$N\ge \max_{1\le i\le r \atop 1\le j\le s}\{a\alpha_{1i}+\tilde\alpha_{2i}\epsilon,
a\beta_{1j}+\tilde\beta_{2j}\epsilon\}.$$
If $a\alpha_{1i}+\tilde\alpha_{2i}\epsilon$ is not an rational integer, then
$\langle a\alpha_{1i}+\tilde\alpha_{2i}\epsilon\rangle<1=\langle N\rangle$,
and so $a\alpha_{1i}+\tilde\alpha_{2i}\epsilon\preccurlyeq N$.
If $a\alpha_{1i}+\tilde\alpha_{2i}\epsilon$ is an integer, then the equalities
$\langle a\alpha_{1i}+\tilde\alpha_{2i}\epsilon\rangle=1=\langle N\rangle$
together with $N\ge a\alpha_{1i}+\tilde\alpha_{2i}\epsilon$ tell us that
$a\alpha_{1i}+\tilde\alpha_{2i}\epsilon\preccurlyeq N$.
Hence we conclude that $a\alpha_{1i}+\tilde\alpha_{2i}\epsilon\preccurlyeq N$
for all integers $i$ with $1\le i\le r$. Likewise, we can show that
$a\beta_{1j}+\tilde\beta_{2j}\epsilon\preccurlyeq N$
for all integer $j$ with $1\le j\le s$. Therefore
$$\Delta_{\bm{\alpha}, \bm{\beta}}(N,a,\epsilon)=r-s\ge 0.$$
So $r\ge s$ as one desires.

Let $\sigma_p$ be any given monomorphism
from $K$ to $\mathbb{C}_p$. Then $\sigma_p(K)\subset \mathbb{Q}_p$. Let
$\gamma=\gamma_1+\tilde{\gamma}_2\sqrt{\tilde{D}}\in\Psi $
with $\gamma_1\in\mathbb{Q}$ and $\tilde\gamma_2\in\mathbb{Z}$.
Since $(p,E)=1$ and $d(\gamma_1)\mid E$, $\gamma_1$ is a $p$-adic integer.
Note that $v_p(\tilde{D})=0$ since $\tilde D=D/d_{\bm{\alpha_2,\beta_2}}^2$,
$(p,E)=1$ and both of $d_{\bm{\alpha_2,\beta_2}}$ and $D$ divide $E$.
It follows that
$$v_p(\sigma_p(\sqrt{\tilde{D}}))=\frac{1}{2}v_p(\tilde{D})=0.$$
So $\sigma_p(\sqrt{\tilde{D}})$ is a $p$-adic unit.
One derives that $\sigma_p(\gamma)$ is a $p$-adic integer.
Hence $T_{p,l}(\sigma_p(\gamma))$ is well defined for all positive integers $l$ and
$\gamma\in\Psi $.
By (\ref{eq5.9.4}), one has $v_p(\gamma_1^2-\tilde{D}\tilde\gamma_2^2)=0$. As in
the proof of Lemma \ref{lem5.9}, we know that (\ref{eq5.9.2}) is true. Then
it follows from Lemma \ref{lem5.5} that
$$\frac{T_{p,l}(\sigma_p(\gamma))}{p^l}=\Big\langle a^l\gamma_1-\frac{\gamma_1}{p^l}
+\frac{\tilde{\gamma}_2T_{p,l}(\sigma_p(\sqrt{\tilde{D}}))}{p^l}\Big\rangle.$$
So by Lemma \ref{lem2.1} one deduces that
\begin{align} \label{eq5.10.1}
&v_p\Big(\frac{(\sigma_p(\alpha_1))_n\cdots(\sigma_p(\alpha_r))_n}
{(\sigma_p(\beta_1))_n\cdots(\sigma_p(\beta_s))_n}\Big) \\
=&\sum_{i=1}^{r}\sum_{l=1}^{\infty}\Big\lceil\frac{n-T_{p,l}(\sigma_p(\alpha_i))}{p^l}\Big\rceil
-\sum_{j=1}^{s}\sum_{l=1}^{\infty}\Big\lceil\frac{n-T_{p,l}(\sigma_p(\beta_j))}{p^l}\Big\rceil \notag \\
=&\sum_{i=1}^{r}\sum_{l=1}^{\infty}\Big\lceil\frac{n}{p^l}
-\Big\langle a^l\alpha_{1i}-\frac{\alpha_{1i}}{p^l}+\frac{\tilde{\alpha}_{2i}
T_{p,l}(\sigma_p(\sqrt{\tilde{D}}))}{p^l}\Big\rangle\Big\rceil \notag \\
&-\sum_{j=1}^{s}\sum_{l=1}^{\infty}\Big\lceil\frac{n}{p^l}
-\Big\langle a^l\beta_{1j}-\frac{\beta_{1j}}{p^l}+\frac{\tilde{\beta}_{2j}T_{p,l}
(\sigma_p(\sqrt{\tilde{D}}))}{p^l}\Big\rangle\Big\rceil. \notag
\end{align}

For a fixed positive integer $l$, let
$$\epsilon_l:=\frac{T_{p,l}(\sigma_p(\sqrt{\tilde{D}}))}{p^l}.$$
Since $v_p(\sigma_p(\sqrt{\tilde{D}}))=0$, one has
$$T_{p,l}(\sigma_p(\sqrt{\tilde{D}}))\equiv -\sigma_p(\sqrt{\tilde{D}})\not\equiv 0\mod p.$$
Then $\epsilon_l=\frac{i}{p^l}$ for some integer $i$ with $i\in [1, p^l-1]$ and $i$ coprime to $p$.
By the definition of $S_{\bm{\alpha,\beta}}$, the exact denominator of every element of
$S_{\bm{\alpha,\beta}}$ is not exceeding $M_2$. Since $M_2<p^l$ by (\ref{eq5.8.1}),
one has $\epsilon_l\not\in S_{\bm{\alpha,\beta}}$.
So we conclude that $\epsilon_l\in (0,1)\setminus S_{\bm{\alpha,\beta}}$.
Since Lemma \ref{lem5.1} tells us that $H$ is a group, there is
a $b\in H$ such that $a^l\equiv b \mod E$. Then
$\Delta_{\bm{\alpha,\beta}}(x, b, \epsilon_l)\ge 0$ holds for all $x\in \mathbb{R}$.
So it follows from Lemma \ref{lem5.6} that
\begin{align} \label{eq5.10.2}
\Delta_{\bm{\alpha,\beta}}(x, a^l,\epsilon_l)\ge 0\ \forall x\in \mathbb{R}.
\end{align}
For all integers $i$ and $j$ with $1\le i\le r$ and $1\le j\le s$,
let $y_i, w_i$ and $x_j, z_j$ be given as in (\ref{eq5.0.2}) and (\ref{eq5.0.3}),
respectively. Then by (\ref{eq5.10.2}) we have
$$\Delta_{\bm{\alpha,\beta}}(x_k, a^l,\epsilon_l)\ge 0\ \ \forall\ 1\le k\le s.$$
That is, for all integers $k$ with $1\le k\le s$, we have
$$\#\{i:y_i\preccurlyeq x_k\}-\#\{j:x_j\preccurlyeq x_k\}\ge 0.$$
Thus by Lemma \ref{lem5.9}, we have for all integer $n$ with $1\le n\le p^l$ that
$$
\sum_{i=1}^{r}\Big\lceil\frac{n}{p^l}-w_i\Big\rceil
-\sum_{j=1}^{s}\Big\lceil\frac{n}{p^l}-z_j\Big\rceil\ge 0.
$$
Then for all positive integers $n$, one has
\begin{align} \label{eq5.10.3}
&\sum_{i=1}^{r}\Big\lceil\frac{n}{p^l}-w_i\Big\rceil
-\sum_{j=1}^{s}\Big\lceil\frac{n}{p^l}-z_j\Big\rceil \\
=&\sum_{i=1}^{r}\Big\lceil \frac{n}{p^l}-\Big\langle\frac{n}{p^l}\Big\rangle
+\Big\langle\frac{n}{p^l}\Big\rangle-w_i\Big\rceil
-\sum_{j=1}^{s}\Big\lceil \frac{n}{p^l}-\Big\langle\frac{n}{p^l}\Big\rangle
+\Big\langle\frac{n}{p^l}\Big\rangle-z_j\Big\rceil \notag \\
=&(r-s)\Big(\frac{n}{p^l}-\Big\langle\frac{n}{p^l}\Big\rangle\Big)
+\sum_{i=1}^{r}\Big\lceil\Big\langle\frac{n}{p^l}\Big\rangle-w_i\Big\rceil
-\sum_{j=1}^{s}\Big\lceil\Big\langle\frac{n}{p^l}\Big\rangle-z_j\Big\rceil\ge 0 \notag
\end{align}
since $r\ge s$. Therefore by (\ref{eq5.10.1}) and
(\ref{eq5.10.3}), we deduce that
\begin{align*}
& v_p\Big(\frac{(\sigma_p(\alpha_1))_n\cdots(\sigma_p(\alpha_r))_n}
{(\sigma_p(\beta_1))_n\cdots(\sigma_p(\beta_s))_n}\Big)
= \sum_{l=1}^\infty \Big(\sum_{i=1}^{r}\Big\lceil\frac{n}{p^l}-w_i\Big\rceil
-\sum_{j=1}^{s}\Big\lceil\frac{n}{p^l}-z_j\Big\rceil\Big) \ge 0
\end{align*}
for any positive integer $n$. Hence
$\sigma_p(F_{\bm{\alpha,\beta}}(z))\in\mathcal{O}_p[[z]]$ for
all monomorphism $\sigma_p\in{\rm Hom}(K, \mathbb{C}_p)$.
So Lemma \ref{lem5.10} is proved.
\end{proof}

Now we are in the position to prove Theorem \ref{thm1.4}.  \\

{\it Proof of Theorem \ref{thm1.4}.}
At first, we prove part (i). Let $u<v$. Let $a$ be an arbitrary element of
$I=G\setminus H$ and $p$ be any prime number satisfying (\ref{eq5.3.1}),
(\ref{eq5.8.1}) and $ap\equiv 1\mod E$.
Then by Lemma \ref{lem5.3}, one has that $\mathfrak{p}=p\mathcal{O}_K$
is the unique prime ideal in $K$ that divides $p$ and
$F_{\bm{\alpha,\beta}}(z)\not\in\mathcal{O}_{K, \mathfrak{p}}[[z]]$.
So one can deduce that there are infinitely many prime ideals
$\mathfrak{p}$ in $K$ such that
$F_{\bm{\alpha,\beta}}(z)\not\in\mathcal{O}_{K, \mathfrak{p}}[[z]]$.
It then follows from the equivalence of parts (i) and (ii)
of Theorem \ref{thm1.3} that $F_{\bm{\alpha,\beta}}(z)$
is not N-integral. Hence part (i) is true.

Consequently, we show the sufficiency parts of parts (ii) and (iii).
Let the statements I and IV be true if $u>v$, and the
statements II, III and IV be true if $u=v$.
We show that $F_{\bm{\alpha,\beta}}(z)$ is N-integral.
Let $p$ be any prime number satisfying both of
(\ref{eq5.3.1}) and (\ref{eq5.8.1}),
and let ${\rm Hom}(K,\mathbb{C}_p)$ be the set of
all monomorphisms from $K$ to $\mathbb{C}$.

If $ap \equiv 1\mod E$ for some $a\in H$, since the
statement IV holds, it then follows from Lemma \ref{lem5.10} that
$\sigma(F_{\bm{\alpha,\beta}}(z))\in\mathcal{O}_p[[z]]$
for all monomorphisms $\sigma_p\in {\rm Hom}(K,\mathbb{C}_p)$.

If $ap \equiv 1\mod E$ for some $a\in I$, then $\mathfrak{p}:=p\mathcal{O}_K$
is the unique prime ideal in $K$ such that $\mathfrak{p}\mid p$.
So for any $\sigma_p\in{\rm Hom}(K,\mathbb{C}_p)$, there exists a
monomorphism $\widehat\sigma_p: K_{\mathfrak{p}}\rightarrow \mathbb{C}_p$ such that
$\widehat\sigma_p|_K=\sigma_p$ and $v_{\mathfrak{p}}=v_p\circ\widehat\sigma_p$.
Thus
\begin{equation} \label{eq5.11.1}
\sigma_p(F_{\bm{\alpha,\beta}}(z))=\widehat\sigma_p(F_{\bm{\alpha,\beta}}(z))
\end{equation}
and $v_p(\widehat\sigma_p(\gamma))=v_{\mathfrak{p}}(\gamma)\ge 0$
for all $\gamma\in\mathcal{O}_{K,\mathfrak{p}}$.
So if $\gamma\in\mathcal{O}_{K,\mathfrak{p}}$, then
$\widehat\sigma_p(\gamma)$ belongs to the valuation ring
$\mathcal{O}_p$ of $\mathbb{C}_p$. Hence
\begin{equation} \label{eq5.11.2}
\widehat\sigma_p(\mathcal{O}_{K,\mathfrak{p}})\subset\mathcal{O}_p.
\end{equation}

First let $u>v$. Since the statement I is true, by Lemma \ref{lem5.3}
we have $F_{\bm{\alpha,\beta}}(z)\in \mathcal{O}_{K,\mathfrak{p}}[[z]]$.
It then follows from (\ref{eq5.11.1}) and (\ref{eq5.11.2}) that for all
$\sigma_p\in{\rm Hom}(K,\mathbb{C}_p)$, one has
\begin{equation} \label{eq5.11.3}
\sigma_p(F_{\bm{\alpha,\beta}}(z))=\widehat\sigma_p(F_{\bm{\alpha,\beta}}(z))\in
\widehat\sigma_p(\mathcal{O}_{K,\mathfrak{p}})[[z]]\subset \mathcal{O}_p[[z]].
\end{equation}

Now let $u=v$. Since at this moment, the statements II and III are true,
one can deduce from Lemma \ref{lem5.3} that
$F_{\bm{\alpha,\beta}}(z)\in \mathcal{O}_{K,\mathfrak{p}}[[z]]$.
Again by (\ref{eq5.11.1}) and (\ref{eq5.11.2}), we know that for all
$\sigma_p\in{\rm Hom}(K,\mathbb{C}_p)$, (\ref{eq5.11.3}) still holds.

In conclusion, we know that $\sigma_p(F_{\bm{\alpha,\beta}}(z))\in \mathcal{O}_p[[z]]$
for all prime numbers $p$ satisfying both of
(\ref{eq5.3.1}) and (\ref{eq5.8.1}) and all $\sigma\in{\rm Hom}(K,\mathbb{C}_p)$.
Hence by the equivalence of parts (i) and (iii) of Theorem 1.2, we know that
$F_{\bm{\alpha,\beta}}(z)$ is N-integral in $K$.
This ends the proof of the sufficiencies of parts (ii) and (iii).

Finally, it remains to show the necessity parts of parts (ii) and (iii)
which will be done in the following. Let $F_{\bm{\alpha,\beta}}(z)$
be N-integral in $K$. Then by the equivalence of parts (i) and (ii)
of Theorem \ref{thm1.3}, there is a positive integer $P$ such that
for all prime ideals $\mathfrak{p}$ of $K$ dividing a prime number
$p>P$, we have $F_{\bm{\alpha,\beta}}(z)\in\mathcal{O}_{K,\mathfrak{p}}[[z]]$.

For any $a\in I$, let $p$ be a prime number satisfying
(\ref{eq5.3.1}), (\ref{eq5.8.1}), $p>P$ and $ap\equiv 1\mod E$.
Then $\mathfrak{p}=p\mathcal{O}_K$ is the unique prime ideal of $K$ dividing $p$
and $F_{\bm{\alpha,\beta}}(z)\in\mathcal{O}_{K,\mathfrak{p}}[[z]]$.
If $u>v$, then one can deduce from Lemma \ref{lem5.3} that
$\delta_{\bm{\mu,\nu}}(x,a)\ge 0$ for all $x\in\mathbb{R}$ and $a\in I$.
Thus the statement I is true.

If $u=v$, then one can derive from Lemma \ref{lem5.3} that
$$\sum_{l=1}^{h}\delta_{\bm{\mu,\nu}}(a^l\beta_k,a^l)\ge 0$$
for all $a\in I$, $h\in\{1,...,{\rm ord}(a)\}$ and $k\in\{1,...,v\}$, and
$$\delta_{\bm{\bm{\mu,\nu}}}\Big(a^l\Big(\frac{e}
{d_{\bm{\bm{\mu,\nu}}}}+m_{\bm{\mu,\nu}}\Big),a^l\Big)\ge 0$$
for all $a\in I$, $e\in\{1,...,d_{\bm{\alpha,\beta}}\}$ and
$l\in\{1,...,{\rm ord}(a)\}$.
Thus the statements II and III are true.
So to finish the proof of the necessity parts of parts (ii) and (iii),
it is enough to show that if $F_{\bm{\alpha,\beta}}(z)$ is N-integral in $K$,
then the statement IV is true for the cases that $u>v$ and $u=v$.
This will be done in what follows.

Since $F_{\bm{\alpha,\beta}}(z)$ is N-integral, by the equivalence of
parts (i) and (iii) of Theorem \ref{thm1.3}, there is a positive
integer $P'$ such that
$\sigma_p(F_{\bm{\alpha,\beta}}(z))\in\mathcal{O}_p[[z]]$
for all prime numbers $p$ with $p>P'$ and all monomorphisms
$\sigma_p\in{\rm Hom}(K,\mathbb{C}_p)$.
Let $a\in H$. For all prime numbers $p$ satisfying (\ref{eq5.8.1}), $p>P'$ and
$ap\equiv 1\mod E$, let $\sigma_p$ be a monomorphism from $K$ to $\mathbb{C}_p$.
Since $\sigma_p(F_{\bm{\alpha,\beta}}(z))\in \mathcal{O}_p[[z]]$, by (\ref{eq5.10.1})
one can deduce for all positive integers $n$ that
\begin{align} \label{eq5.11.4}
0\le &v_p\Big(\frac{(\sigma_p(\alpha_1))_n\cdots(\sigma_p(\alpha_r))_n}
{(\sigma_p(\beta_1))_n\cdots(\sigma_p(\beta_s))_n}\Big) \\
=&\sum_{l=1}^{\infty}\Big(\sum_{i=1}^{r}\Big\lceil\frac{n-T_{p,l}(\sigma_p(\alpha_i))}{p^l}\Big\rceil
-\sum_{j=1}^{s}\Big\lceil\frac{n-T_{p,l}(\sigma_p(\beta_j))}{p^l}\Big\rceil\Big). \notag
\end{align}

Let $\gamma=\gamma_1+\tilde{\gamma}_2\sqrt{\tilde{D}}
\in \Psi$ with $\Psi$ being defined as in (\ref{eq2.0.4}),
where $\gamma_1\in\mathbb{Q}$ and $\tilde\gamma_2\in\mathbb{Z}$.
Let $\mathcal S$ denote the arithmetic progression $\{n>0: an\equiv 1\mod E\}$.
Define
$$\mathcal{P}:=\bigcup_{\gamma\in\Psi} \mathcal{P}_{\gamma},$$
where
$$\mathcal{P}_{\gamma}:=\big\{p\in \mathcal S: \exists\ \sigma_p\in {\rm Hom}(K,\mathbb{C}_p)
\ {\rm s.t.}\ T_p(\sigma_p(\gamma))=T_{p,2}(\sigma_p(\gamma))\big\}.$$
Let $p$ be any prime number in $\mathcal{S}\setminus \mathcal{P}$
and $\sigma_p$ be any monomorphism in ${\rm Hom}(K,\mathbb{C}_p)$.
Then (\ref{Eq2.2'}) applied to $\sigma_p(\gamma)$ gives us that
for all integers $l\ge 2$ and all $\gamma\in\Psi$, one has
$$T_p(\sigma_p(\gamma))<T_{p,2}(\sigma_p(\gamma))\le T_{p,l}(\sigma_p(\gamma))$$
since $T_p(\sigma_p(\gamma))\ne T_{p,2}(\sigma_p(\gamma))$. But
$$T_{p,2}(\sigma_p(\gamma))=T_p(\sigma_p(\gamma))+tp$$
with $t\in \mathbb{Z}$. So $t\ge 1$ and $T_{p,2}(\sigma_p(\gamma))\ge p$.
It then follows that
$$T_{p,l}(\sigma_p(\gamma))\ge p$$
holds for all $l\ge 2$. Hence for all integers $n, l$
with $1\le n\le p$ and $l\ge 2$ and all $\gamma\in \Psi$, one has
$$-1<\frac{n-T_{p,l}(\sigma_p(\gamma)}{p^l}\le 0.$$
Thus for all integers $n, l$ with $1\le n\le p$ and $l\ge 2$ and all
$\gamma\in \Psi$, one has
$$\Big\lceil\frac{n-T_{p,l}(\sigma_p(\gamma))}{p^l}\Big\rceil=0.$$
Hence we have for all integers $l$ with $l\ge 2$ that
\begin{equation} \label{eq5.11.5}
\sum_{i=1}^{r}\Big\lceil\frac{n-T_{p,l}(\sigma_p(\alpha_i))}{p^l}\Big\rceil
-\sum_{j=1}^{s}\Big\lceil \frac{n-T_{p,l}(\sigma_p(\beta_j))}{p^l}\Big\rceil=0.
\end{equation}
Then by (\ref{eq5.11.4}), (\ref{eq5.11.5}) and using Lemma \ref{lem5.5},
it follows that for all prime numbers $p$ with $p>P'$ and
$p\in \mathcal{S}\setminus\mathcal{P}$, all $\sigma_p\in{\rm Hom}(K,\mathbb{C}_p)$
and all integers $n$ with $1\le n\le p$, one has

\begin{align*}
0\le &v_p\Big(\frac{(\sigma_p(\alpha_1))_n\cdots(\sigma_p(\alpha_r))_n}
{(\sigma_p(\beta_1))_n\cdots(\sigma_p(\beta_s))_n}\Big) \\
=&\sum_{i=1}^{r}\Big\lceil\frac{n-T_p(\sigma_p(\alpha_i))}{p}\Big\rceil
-\sum_{j=1}^{s}\Big\lceil\frac{n-T_p(\sigma_p(\beta_j))}{p}\Big\rceil \\
=&\sum_{i=1}^{r}\Big\lceil\frac{n}{p}-\Big\langle a\alpha_{1i}-\frac{\alpha_{1i}}{p}
+\frac{\tilde{\alpha}_{2i}T_{p}(\sigma_p(\sqrt{\tilde{D}}))}{p}\Big\rangle\Big\rceil\\
&-\sum_{j=1}^{s}\Big\lceil\frac{n}{p}-\Big\langle a\beta_{1j}-\frac{\beta_{1j}}{p}
+\frac{\tilde{\beta}_{2j}T_{p}(\sigma_p(\sqrt{\tilde{D}}))}{p}\Big\rangle\Big\rceil.
\end{align*}
So from Lemma \ref{lem5.9} one deduces that
\begin{align*}
&\#\Big\{i:a\alpha_{1i}+\frac{\tilde{\alpha}_{2i}T_{p}(\sigma_p(\sqrt{\tilde{D}}))}{p}
\preccurlyeq a\beta_{1k}+\frac{\tilde{\beta}_{2k}T_{p}(\sigma_p(\sqrt{\tilde{D}}))}{p}\Big\} \\
\ge&\#\Big\{j: a\beta_{1j}+\frac{\tilde{\beta}_{2j}T_{p}(\sigma_p(\sqrt{\tilde{D}}))}{p}
\preccurlyeq a\beta_{1k}+\frac{\tilde{\beta}_{2k}T_{p}(\sigma_p(\sqrt{\tilde{D}}))}{p}\Big\}
\end{align*}
holds for all integers $k$ with $1\le k\le s$.
That is, for all prime numbers $p$ with $p>P'$ and $p\in \mathcal{S}\setminus\mathcal{P}$,
all $\sigma_p\in{\rm Hom}(K,\mathbb{C}_p)$ and all integers $k$ with $1\le k\le s$, one has
\begin{equation} \label{eq5.11.6}
\Delta_{\bm{\alpha,\beta}}\Big(a\beta_{1k}+\frac{\tilde{\beta}_{2k}T_p(\sigma_p(\sqrt{\tilde{D}}))}{p},
a,\frac{T_p(\sigma_p(\sqrt{\tilde{D}}))}{p}\Big)\ge 0.
\end{equation}

Now let $\epsilon$ be any real number in the set $(0,1)\setminus S_{\bm{\alpha,\beta}}$.
Let $\delta_1$ and $\delta_2$ be defined as in (\ref{eq5.7.1}) and (\ref{eq5.7.2}),
respectively, and let $\delta_{\epsilon}:=\min(\delta_1,\delta_2)/M_2$.
Since $p\nmid d(\tilde{D})$ for all prime numbers $p\in\mathcal{S}$ and
$d(\tilde{D})z^2-d(\tilde{D})\tilde{D}$ is a primitive polynomial with integer coefficient,
by Lemma \ref{lem5.2}, there is a positive real number $c$ depended only on $\sqrt{\tilde{D}}$
and $\mathcal{S}$ such that
$$\Big\{\frac{T_p(\sigma_p(\sqrt{\tilde D}))}{p}\in (\epsilon-\delta_{\epsilon},\epsilon
+\delta_{\epsilon}): p\in\mathcal{S}, p<x, \sigma_p\in {\rm Hom}(K, \mathbb{C}_p)\Big\}
\sim 2c\delta_{\epsilon}\pi(x).$$
Hence
\begin{align} \label{eq5.11.7}
&\#\Big\{\frac{T_p(\sigma_p(\sqrt{\tilde D}))}{p}\in (\epsilon-\delta_{\epsilon},
\epsilon+\delta_{\epsilon}): p\in\mathcal{S}\setminus \mathcal{P},
p<x, \sigma_p\in {\rm Hom}(K, \mathbb{C}_p)\Big\} \\
\ge &\#\Big\{\frac{T_p(\sigma_p(\sqrt{\tilde D}))}{p}\in (\epsilon-\delta_{\epsilon},
\epsilon+\delta_{\epsilon}): p\in\mathcal{S},p<x,
\sigma_p\in {\rm Hom}(K, \mathbb{C}_p)\Big\} \notag \\
&-\#\Big\{\frac{T_p(\sigma_p(\sqrt{\tilde D}))}{p}: p\in \mathcal{P},
p<x, \sigma_p\in {\rm Hom}(K, \mathbb{C}_p)\Big\} \notag \\
= & 2c\delta_{\epsilon}\pi(x)-2\cdot\#\{p\in\mathcal{P}: p<x\}+o(\pi(x)). \notag
\end{align}

We claim that for any given $\gamma\in\Psi$, one has
\begin{equation} \label{eq5.11.8}
\#\{p\in \mathcal{P}_{\gamma}: p<x \}=o(\pi(x)).
\end{equation}
Before proving this claim, we show that the truth of
(\ref{eq5.11.8}) implies the truth of Statement IV.
Actually, if (\ref{eq5.11.8}) is true, then for all large
enough positive real numbers $x$, one has
$$\#\{p\in\mathcal{P}: p<x\}
\le \sum_{\gamma\in\Psi}\#\{p\in\mathcal{P}_{\gamma}: p<x\}=o(\pi(x)).$$
So it follows from (\ref{eq5.11.7}) that for all large enough
positive real numbers $x$, we have
\begin{align*}
&\#\Big\{\frac{T_p(\sigma_p(\sqrt{\tilde D}))}{p}\in (\epsilon-\delta_{\epsilon},
\epsilon+\delta_{\epsilon}): p\in\mathcal{S}\setminus \mathcal{P},
p<x, \sigma_p\in {\rm Hom}(K, \mathbb{C}_p)\Big\} \\
\ge & 2c\delta_{\epsilon}\pi(x)-o_1(\pi(x))+o_2(\pi (x))\\
= & 2c\delta_{\epsilon}\pi(x)+o(\pi(x))>0.
\end{align*}
Hence there exists a prime number $p$ with $p>P'$,
$p\in\mathcal{S}\setminus \mathcal{P}$
and $\sigma_p\in {\rm Hom}(K, \mathbb{C}_p)$ such that
$$\frac{T_p(\sigma_p(\sqrt{\tilde D}))}{p}\in
(\epsilon-\delta_{\epsilon},\epsilon+\delta_{\epsilon}).$$
Then it follows from Lemma \ref{lem5.7} and (\ref{eq5.11.6}) that
$$\Delta_{\bm{\alpha,\beta}}(a\beta_{1k}+\tilde{\beta}_{2k}\epsilon, a,\epsilon)\ge 0$$
holds for all integers $k$ with $1\le k\le s$.
Therefore from Lemma \ref{lem2.5}, one deduces that
$\Delta_{\bm{\alpha,\beta}}(x,a,\epsilon)\ge 0$ holds for all $x\in\mathbb{R}$.
Namely, the statement IV is true under the assumption that the claim (\ref{eq5.11.8})
is true. Now it remains to show that the claim (\ref{eq5.11.8}) is true,
which will be done in the following. We divide the proof into the
following two cases.

{\sc Case 1}. $\gamma=\gamma_1+\tilde\gamma_2\sqrt{\tilde D}$ is rational.
Then $\tilde\gamma_2=0$ and so $\gamma=\gamma_1$. Since $p$ is a prime
number such that (\ref{eq5.8.1}) holds, by (\ref{eq5.9.2}) we have
$$p>d(\gamma_1)(|\gamma_1|+3)\ge d(\gamma_1)(|\lfloor 1
-\gamma_1\rfloor|+\langle \gamma_1\rangle+1).$$
Since $ap\equiv 1\mod E$ and $d(\gamma_1)\mid E$,
it then follows from Lemma \ref{lem2.3} that
$$\mathfrak{D}_p^l(\gamma_1)=\langle a^l\gamma_1\rangle\ge \frac{1}{d(\gamma_1)}$$
holds for all positive integers $l$.
Thus for all integers $l$ with $l\ge 2$, we have
$$T_{p,l}(\gamma_1)=p^l\mathfrak{D}_p^l(\gamma_1)-\gamma_1
\ge \frac{p^2}{d(\gamma_1)}-\gamma_1>p(|\gamma_1|+3)-\gamma_1>p>T_p(\gamma_1).$$
So for any prime number $p$ such that (\ref{eq5.8.1}) holds,
one has $p\not\in \mathcal{P}_{\gamma}$. Hence the set
$\mathcal{P}_{\gamma}$ is finite. Therefore (\ref{eq5.11.8}) holds
if $\gamma$ is a rational number. So the claim is true in this case.

{\sc Case 2}. $\gamma=\gamma_1+\tilde\gamma_2\sqrt{\tilde D}$ is
irrational. We let
$$f_{\gamma}(z):=(x+\gamma_1+\tilde\gamma_2\sqrt{\tilde D})
(x+\gamma_1-\tilde\gamma_2\sqrt{\tilde D})
=z^2+2\gamma_1z+\gamma_1^2-\tilde{D}\tilde\gamma_2^2.$$
Then $f_{\gamma}(z)$ is a monic irreducible polynomial over $\mathbb{Q}$.
For any prime number $p$ such that $p\in\mathcal{S}$ and (\ref{eq5.8.1}) holds,
since $ap\equiv 1\mod E$ and $d(\gamma_1)\mid E$, we have $v_p(\gamma_1)\ge 0$.
By (\ref{eq5.9.4}), we have $v_p(\gamma_1^2-\tilde{D}\tilde\gamma_2^2)=0$.
Hence $f_{\gamma}(z)\in\mathbb{Z}_p[z]$. Since
$$f_{\gamma}(-\gamma_1-\tilde\gamma_2\sigma_p(\sqrt{\tilde D}))
=f_{\gamma}(-\gamma_1+\tilde\gamma_2\sigma_p(\sqrt{\tilde D}))=0,$$
it follows that $-\gamma_1-\tilde\gamma_2\sigma_p(\sqrt{\tilde D})$ and
$-\gamma_1+\tilde\gamma_2\sigma_p(\sqrt{\tilde D})$ are both $p$-adic integers.
But
$$v_p(\gamma_1+\tilde\gamma_2\sigma_p(\sqrt{\tilde D}))
+v_p(\gamma_1-\tilde\gamma_2\sigma_p(\sqrt{\tilde D}))
=v_p(\gamma_1^2-\tilde{D}\tilde\gamma_2^2)=0.$$
So we have
\begin{align} \label{eq5.11.10}
v_p(\gamma_1+\tilde\gamma_2\sigma_p(\sqrt{\tilde D}))
=v_p(\gamma_1-\tilde\gamma_2\sigma_p(\sqrt{\tilde D}))=0.
\end{align}

Define
$$\mathcal{F}_{\gamma}:=\Big\{\frac{T_p(\sigma_p(\gamma))}{p}: p\in\mathcal{S} \
{\rm and} \  \sigma_p\in{\rm Hom}(K,\mathbb{C}_p)\ {\rm such \ that} \
T_{p,2}(\sigma_p(\gamma))=T_p(\sigma_p(\gamma)) \Big\}.$$
On the one hand, for every prime number $p$ with $p\in \mathcal{P}_{\gamma}$,
there is one $\sigma_p\in{\rm Hom}(K,\mathbb{C}_p)$ such that
$T_{p,2}(\sigma_p(\gamma))=T_p(\sigma_p(\gamma))$.
It follows that for every prime number $p$ with $p\in \mathcal{P}_{\gamma}$,
one can find at least one $\sigma_p\in{\rm Hom}(K,\mathbb{C}_p)$ such that
$$\frac{T_p(\sigma_p(\gamma))}{p}\in \mathcal{F}_{\gamma}.$$
But (\ref{eq5.11.10}) tells us that
$$
T_p(\sigma_p(\gamma))\equiv -\sigma_p(\gamma)\equiv
(\gamma_1\pm \tilde \gamma_2(\sqrt{\tilde D}))\not\equiv 0 \mod p.
$$
Hence $\frac{T_p(\sigma_p(\gamma))}{p}\ne 0$ and so we have
\begin{equation} \label{eq5.11.11}
\#\{p\in \mathcal{P}_{\gamma}: p<x\}\le \#\Big\{\frac{T_p(\sigma_p(\gamma))}{p}
\in \mathcal{F}_{\gamma}: p<x\Big\}.
\end{equation}

On the other hand, since
$$T_{p,l}(\sigma_p(\gamma))+\sigma_p(\gamma)\equiv 0\mod p^l$$
and $f_{\gamma}(-\sigma_p(\gamma))=0$, it follows that
$$f_{\gamma}(T_{p,l}(\sigma_p(\gamma)))\equiv 0\mod p^l$$
holds over $\mathbb{Z}_p$ for all positive integers $l$
and all $\sigma_p\in{\rm Hom}(K,\mathbb{C}_p)$.
So for any $\frac{T_p(\sigma_p(\gamma))}{p}\in\mathcal{F}_{\gamma}$,
i.e., for any $p\in \mathcal{S}$ and $\sigma_p\in{\rm Hom}(K,\mathbb{C}_p)$ such that
$T_p(\sigma_p(\gamma))=T_{p,2}(\sigma_p(\gamma))$, we have over $\mathbb{Z}_p$ that
$$f_{\gamma}(T_p(\sigma_p(\gamma)))=f_{\gamma}(T_{p,2}(\sigma_p(\gamma)))\equiv 0\mod p^2.$$
Since $$d(\gamma_1)\mid d_{\bm{\alpha_1,\beta_1}}|E,
d(\tilde{D})\mid d_{\bm{\alpha_2,\beta_2}}|E$$
and $p$ is coprime to $E$,
there is a positive integer $c$ which is coprime to $p$ such that
$cf_{\gamma}(z)$ is a primitive polynomial with integer coefficients. Then
$$cf_{\gamma}(T_p(\sigma_p(\gamma)))=cf_{\gamma}(T_{p,2}(\sigma_p(\gamma)))\equiv 0\mod p^2. $$
Thus for any $T_p(\sigma_p(\gamma))/p\in\mathcal{F}_{\gamma}$, $T_p(\sigma_p(\gamma))/p$
is belonging to the following set:
$$\mathcal{F}_{\gamma}':=\Big\{\frac{v}{p}: p\in\mathcal{S},
0\le v<p, cf_{\gamma}(v)\equiv 0\mod p^2\Big\}.$$
Hence $\mathcal{F}_{\gamma}\subset \mathcal{F}_{\gamma}'$ and
\begin{align}
\#\Big\{\frac{T_p(\sigma_p(\gamma))}{p}\in \mathcal{F}_{\gamma}: p<x\Big\}
\le \#\Big\{\frac{v}{p}\in\mathcal{F}': p<x \Big\}.
\end{align}
But Lemma \ref{lem4.4} (ii) applied to $\mathcal{F}_{\gamma}'$ gives us that
\begin{align} \label{eq5.11.12}
\#\Big\{\frac{v}{p}\in\mathcal{F}': p<x \Big\}=o(\pi(x)).
\end{align}
Then by (\ref{eq5.11.11}) to (\ref{eq5.11.12}), one derives that
(\ref{eq5.11.8}) holds if $\gamma$ is irrational. So (\ref{eq5.11.8})
is true in this case. This finishes the proof of the claim, and hence
the proof of the necessities of parts (ii) and (iii).

The proof of Theorem \ref{thm1.4} is complete.  \hfill$\Box$\\


In the conclusion of this paper, we supply the following example
to illustrate our main result Theorem \ref{thm1.4}.\\
\\
{\bf Example 5.11.} Let $D$ be any square-free integer other than 1 and let
$$\bm{\alpha}=\big(\sqrt{D}, -\sqrt{D}, \frac{1}{2}+\sqrt{D}, \frac{1}{2}-\sqrt{D}\big)
\ {\rm and}\ \bm{\beta}=(2\sqrt{D},-2\sqrt{D}).$$
We denote by $F_{\bm{\alpha,\beta}}(z)$ the hypergeometric series
associated to $\bm{\alpha}$ and $\bm{\beta}$, that is,
\begin{equation*}
F_{\bm{\alpha},\bm{\beta}}(z):
=\sum_{n=0}^{\infty}\frac{(\sqrt{D})_n (-\sqrt{D})_n
(\frac{1}{2}+\sqrt{D})_n (\frac{1}{2}-\sqrt{D})_n}
{(2\sqrt{D})_n (-2\sqrt{D})_n}z^n.
\end{equation*}
Let
$$\alpha_1(a,\epsilon):=\epsilon, \alpha_2(a,\epsilon):=-\epsilon,
\alpha_3(a,\epsilon):=\frac{a}{2}+\epsilon,
\alpha_4(a,\epsilon):=\frac{a}{2}-\epsilon$$
and $\beta_1(a,\epsilon)=2\epsilon,\beta_2(a,\epsilon)=-2\epsilon.$
Then $u=v=0$, $E=4D$ and
\begin{align} \label{eq5.12.1}
\Delta_{\bm{\alpha,\beta}}(x,a,\epsilon)
=\#\{1\le i\le 4: \alpha_i(a,\epsilon)\preccurlyeq x\}
-\#\{1\le j\le 2: \beta_j(a,\epsilon)\preccurlyeq x\}.
\end{align}
Since $u=v=0$, $\delta_{\bm{\mu,\nu}}(x,a)=0$ for all $x\in \mathbb{R}$
and $a\in(\mathbb{Z}/E\mathbb{Z})^{\times}$. So the statements
II and III are satisfied. We claim that
\begin{align} \label{eq5.12.2}
\Delta_{\bm{\alpha,\beta}}(\beta_j(a,\epsilon),a,\epsilon)\ge 0
\end{align}
holds for all $x\in\mathbb{R}$, $a\in(\mathbb{Z}/E\mathbb{Z})^{\times}$
and $j=1,2$. Then by Lemma 2.5, we have
$\Delta_{\bm{\alpha,\beta}}(x,a,\epsilon) \ge 0$
for all $x\in\mathbb{R}$, $a\in(\mathbb{Z}/E\mathbb{Z})^{\times}$
and $\epsilon\in (0,1)$. Hence the statement IV is true, and so
by Theorem \ref{thm1.4} (iii), we know that $F_{\bm{\alpha,\beta}}(z)$
is N-integral. In what follows, we show the claim. We divide
the proof of the claim into the following three cases.

{\sc Case 1.} $0< \epsilon<\frac{1}{2}$. Then
$\langle\epsilon\rangle< \langle 2\epsilon\rangle$
and $\langle \frac{a}{2}-\epsilon\rangle< \langle-2\epsilon\rangle$
since the hypothesis that $a$ is coprime to $E=4D$ implies that
$a$ is an odd positive integer.
Then $\alpha_1(a,\epsilon)\preccurlyeq \beta_1(a,\epsilon)$
and $\alpha_4(a,\epsilon)\preccurlyeq \beta_2(a,\epsilon)$.

If $\beta_1(a,\epsilon)\preccurlyeq \beta_2(a,\epsilon)$, then
$$\alpha_1(a,\epsilon)\preccurlyeq \beta_1(a,\epsilon)\preccurlyeq\beta_2(a,\epsilon)
\ {\rm and} \ \alpha_4(a,\epsilon)\preccurlyeq \beta_2(a,\epsilon).$$
It follows that (\ref{eq5.12.2}) holds
for both $j=1$ and $2$. So the claim is true in this case.

If $\beta_2(a,\epsilon)\preccurlyeq \beta_1(a,\epsilon)$, then
$$\alpha_4(a,\epsilon)\preccurlyeq \beta_2(a,\epsilon)\preccurlyeq \beta_1(a,\epsilon)
\ {\rm and} \ \alpha_1(a,\epsilon)\preccurlyeq \beta_1(a,\epsilon).$$
Thus (\ref{eq5.12.2}) is true for both $j=1$ and $2$. That is,
the claim is true in this case. So the claim is proved when
$\epsilon\in (0,\frac{1}{2})$.

{\sc Case 2.} $\frac{1}{2}<\epsilon<1$. Then one can easily check that
$\langle-\epsilon\rangle< \langle -2\epsilon\rangle$
and $\langle \frac{a}{2}+\epsilon \rangle< \langle 2\epsilon\rangle$.
It then follows that $\alpha_2(a,\epsilon)\preccurlyeq \beta_2(a,\epsilon)$
and $\alpha_3(a,\epsilon)\preccurlyeq\beta_1(a,\epsilon)$.

If $\beta_1(a,\epsilon)\preccurlyeq \beta_2(a,\epsilon)$, then
$$\alpha_3(a,\epsilon)\preccurlyeq\beta_1(a,\epsilon)\preccurlyeq \beta_2(a,\epsilon)
\ {\rm and} \ \alpha_2(a,\epsilon)\preccurlyeq\beta_2(a,\epsilon).$$
This tells us that (\ref{eq5.12.2}) holds for both $j=1$ and $2$.
Hence the claim is true in this case.

If $\beta_2(a,\epsilon)\preccurlyeq \beta_1(a,\epsilon)$, then
$$\alpha_2(a,\epsilon)\preccurlyeq\beta_2(a,\epsilon)\preccurlyeq \beta_1(a,\epsilon)
\ {\rm and} \ \alpha_3(a,\epsilon)\preccurlyeq\beta_1(a,\epsilon).$$
Therefore (\ref{eq5.12.2}) is true for both $j=1$ and $2$, i.e.,
the claim is true in this case.
Thus the claim is proved when $\epsilon\in (\frac{1}{2},1)$.

{\sc Case 3.} $\epsilon=\frac{1}{2}$. Then
$\langle\epsilon\rangle=\langle -\epsilon\rangle
<\langle 2\epsilon\rangle= \langle -2\epsilon\rangle$
and $\epsilon >-\epsilon$ and $2\epsilon >-2\epsilon$.
It then follows that
$\epsilon\preccurlyeq -\epsilon\preccurlyeq 2\epsilon\preccurlyeq -2\epsilon$.
Namely,
$$\alpha_1(a,\epsilon)\preccurlyeq\alpha_2(a,\epsilon)
\preccurlyeq \beta_1(a,\epsilon)\preccurlyeq \beta_2(a,\epsilon).$$
Thus (\ref{eq5.12.2}) holds for $j=1$ and $2$.
In other words, the claim is true if $\epsilon =\frac{1}{2}$.

Hence the claim is proved and so $F_{\bm{\alpha,\beta}}(z)$
is N-integral. \hfill$\Box$


\end{document}